\documentclass[12pt,oneside]{amsart}
\usepackage{amssymb,amscd,amsxtra,calc}
\usepackage{amsmath}
\usepackage{mathrsfs}
\usepackage{xypic}
\usepackage[graph]{xy}
\usepackage{supertabular}
\usepackage{longtable}
\usepackage{graphicx}
\usepackage{cases}
\usepackage{tikz}
\usepackage{longtable}
\usepackage{colortbl}
\usepackage{comment}


\author{Takayuki Miura} 
\title{Classification of del Pezzo surfaces with $\frac{1}{3}$(1,1)- and $\frac{1}{4}$(1,1)-singularities}
\date{\today}
\subjclass[2010]{}
\keywords{del Pezzo surface, minimal model program}
\address{}
\email{miura29@ms.u-tokyo.ac.jp {\rm or} t.29.mess@gmail.com}

\newcommand{\pr}{\mathbb{P}}
\newcommand{\PP}{\mathbb{P}^1 \times \mathbb{P}^1}
\newcommand{\N}{\mathbb{N}}
\newcommand{\Z}{\mathbb{Z}}
\newcommand{\Q}{\mathbb{Q}}

\newcommand{\C}{\mathbb{C}}
\newcommand{\F}{\mathbb{F}}

\newcommand{\Exc}{\operatorname{Exc}}
\newcommand{\Sing}{\operatorname{Sing}}

\newcommand{\Pic}{\operatorname{Pic}}

\newcommand{\sC}{\mathcal{C}}
\newcommand{\sO}{\mathcal{O}}

\newcommand{\sS}{\mathcal{S}}

\newcommand{\sB}{\mathcal{B}}

\newcommand{\tA}{\mathscr{A}}
\newcommand{\tB}{\mathscr{B}}

\newcommand{\fmor}{{\rm I}}
\newcommand{\smor}{{\rm I\hspace{-.1em}I}}
\newcommand{\tmor}{{\rm I\hspace{-.1em}I\hspace{-.1em}I}}


\newtheorem{thm}{Theorem}[section]
\newtheorem{lemma}[thm]{Lemma}
\newtheorem{proposition}[thm]{Proposition}
\newtheorem{corollary}[thm]{Corollary}
\newtheorem{claim}[thm]{Claim}

\theoremstyle{definition}
\newtheorem{definition}[thm]{Definition}
\newtheorem{proposition-definition}[thm]{Proposition-Definition}
\newtheorem{remark}[thm]{Remark}
\newtheorem{notation}[thm]{Notation}

\newtheorem*{ack}{Acknowledgments}


\begin{document}

\maketitle

\begin{abstract}

We classify all the del Pezzo surfaces with $\frac{1}{3}(1,1)$- and $\frac{1}{4}(1,1)$-singularities having no floating $(-1)$-curves into 39 types. 

\end{abstract}

\setcounter{tocdepth}{2}
\tableofcontents

\section{Introduction}\label{intro_sec}

\subsection{Main theorem}

Throughout this paper, we work over the complex number field $\mathbb{C}$. 
A {\it{del Pezzo surface}} is a normal projective surface whose anti-canonical divisor is an ample $\Q$-Cartier divisor.

Study of del Pezzo surfaces is one of the principal topics in the theory of algebraic surfaces.
They have fascinated many people since the 19th century (cf. \cite{Do05}).
In particular, del Pezzo surfaces with quotient singularities play important roles in klt minimal model program and many people are concerned in the classification of them nowadays.

In this paper, we obtain the complete classification of del Pezzo surfaces with at most $\frac{1}{3}(1,1)$- and $\frac{1}{4}(1,1)$-singularities having no floating $(-1)$-curves, where a {\it $\frac{1}{n}(a,b)$-singularity} is a surface cyclic quotient singularity $\mathbb{C}^2 / \mu_n$ where $\mu_n$ acts linearly on $\mathbb{C}^2$ with weights $a,b \in (\frac{1}{n} \mathbb{Z})/\mathbb{Z}$, and
a {\it floating $(-1)$-curve} is a $(-1)$-curve contained in the smooth locus of the surface.
More precisely, our main result is the following theorem.

\begin{thm}\label{main_thm}

Let $X$ be a del Pezzo surface with at most $\frac{1}{3}(1,1)$- and $\frac{1}{4}(1,1)$-singularities having no floating $(-1)$-curves.
Then $X$ is one of the surfaces in Table \ref{main_table}.
Moreover, all surfaces in Table \ref{main_table} really exist.

{\small \rm
\begin{table}[htb]
\caption{Del Pezzo surfaces with at most $\frac{1}{3}(1,1)$- and $\frac{1}{4}(1,1)$-singularities having no floating $(-1)$-curves}\label{main_table} 
\renewcommand{\arraystretch}{1.4}
  \begin{tabular}{c|c|c|c|c|c|c} 
    \multicolumn{1}{l|}{No.} & \multicolumn{1}{c|}{$X_{min}$} & directed seq. & \multicolumn{1}{c|}{($n_3, n_4$)}
      &  \multicolumn{1}{c|}{$(-K_{X})^2$} & $\rho (X)$ & $h^0(-K_X)$  \\ \hline \hline
     1 &  $M_{13}$ & $\tmor_1 \circ \tmor_1 \circ \tmor_1 \circ \tmor_1$ & (4,4) & $\frac{4}{3}$ & 6 & 1   \\ \rowcolor[gray]{0.9}
     2 &  $\pr(1,1,4)$ & $\smor_8 \circ \smor_8$  & (4,3) & $\frac{1}{3}$ & 7  & 0 \\  
     3 &  $\pr(1,1,3)$ & $\smor_7 \circ \smor_4$ & (4,3) & $\frac{4}{3}$  & 6 & 1  \\  \rowcolor[gray]{0.9}
     4 &  $\pr(1,1,3)$ & $\smor_4 \circ \smor_4$ & (5,2) & $\frac{5}{3}$  & 5 & 1   \\  
     5 &  $\pr(1,1,3)$ & $\smor_7 \circ \smor_5$ & (3,3) & 1  & 7 & 1   \\  \rowcolor[gray]{0.9}
     6 &  $\pr(1,1,3)$ & $\smor_7 \circ \smor_3$& (4,2) & $\frac{4}{3}$  & 6  & 1  \\
     7 &  $\PP$ & $\smor_4 \circ \smor_4$ & (4,2) & $\frac{4}{3}$ & 6 & 1  \\  \rowcolor[gray]{0.9}
     8 &  $\pr(1,1,3)$ & $\smor_4 \circ \smor_3$ & (5,1) & $\frac{5}{3}$  & 5 & 1   \\  
     9 &  $M_{8}$ & $\tmor_5 \circ \tmor_5 \circ \tmor_5$ & (6,0) & 2 & 4 & 1   \\ \rowcolor[gray]{0.9}
     10 & $\pr(1,1,3)$ & $\smor_7 \circ \smor_2$ & (2,3) & $\frac{5}{3}$  & 7 & 2   \\ 
     11 &  $\pr(1,1,4)$ & $\smor_8 \circ \smor_1$ & (3,2) & 2 & 6 & 2  \\  \rowcolor[gray]{0.9}
     12 &  $\pr(1,1,3)$ & $\smor_7 \circ \smor_1$ & (3,2) & 2  & 6 & 2 \\     
     13 &  $\pr(1,1,3)$ & $\smor_4 \circ \smor_1$ & (4,1) & $\frac{7}{3}$ & 5 & 2  \\    \rowcolor[gray]{0.9}
  \end{tabular}
\end{table}
}

\clearpage

{\small \rm
\begin{table}[htb]
\renewcommand{\arraystretch}{1.4} 
  \begin{tabular}{c|c|c|c|c|c|c} 
\rowcolor[gray]{1.0}    \multicolumn{1}{l|}{No.} &   \multicolumn{1}{c|}{$X_{min}$} &
      directed seq.  & \multicolumn{1}{c|}{($n_3, n_4$)} & \multicolumn{1}{c|}{$(-K_{X})^2$} & $\rho (X)$ & $h^0(-K_X)$  \\ \hline \hline \rowcolor[gray]{0.9}
     14 &  $\pr(1,1,3)$ & $\smor_3 \circ \smor_3$ & (5,0) & $\frac{5}{3}$  & 5 & 1   \\   
     15 &  $\pr(1,1,4)$ & $\smor_8$ & (2,2) & $\frac{14}{3}$ & 4 & 5 \\  \rowcolor[gray]{0.9}
     16 &  $\pr(1,1,3)$ & $\smor_7$ & (2,2) & $\frac{14}{3}$  & 4 & 5   \\
     17 &  $\PP$ & $\smor_4 \circ \smor_2$ & (2,2) & $\frac{5}{3}$ & 7 & 2 \\   \rowcolor[gray]{0.9}
     18 &  $\pr(1,1,3)$ & $\smor_4$ & (3,1) & 5  & 3 & 5  \\ 
     19 &  $\PP$ & $\smor_4 \circ \smor_1$ & (3,1) & 2 & 6 & 2  \\ \rowcolor[gray]{0.9}
     20 &  $\pr(1,1,4)$ & $\smor_1 \circ \smor_1 \circ \smor_1$ & (3,1) & 1 & 7 & 1  \\    
     21 &  $\pr(1,1,3)$ & $\smor_3 \circ \smor_1$ & (4,0) & $\frac{7}{3}$  & 5 & 2  \\ \rowcolor[gray]{0.9}
     22 &  $\pr(1,1,4)$ & $\smor_6$ & (1,2) & $\frac{16}{3}$ & 4 & 6  \\ 
     23 &  $\pr(1,1,4)$ & $\smor_3$ & (2,1) & $\frac{17}{3}$ & 3 & 6  \\ \rowcolor[gray]{0.9}
     24 &  $\pr(1,1,3)$ & $\smor_5$ & (2,1) & $\frac{14}{3}$  & 4 & 5  \\  
     25 &  $\PP$ & $\smor_4$ & (2,1) & $\frac{14}{3}$ & 4 & 5  \\  \rowcolor[gray]{0.9}
     26 &  $\pr(1,1,4)$ & $\smor_1 \circ \smor_1$ & (2,1) & $\frac{11}{3}$ & 5 & 4  \\   
     27 &  $\pr(1,1,3)$ & $\smor_3$ & (3,0) & 5  & 3 & 5   \\ \rowcolor[gray]{0.9}
     28 &  $\PP$ & $\smor_2 \circ \smor_2$ & (0,2) & 2 & 8 & 3 \\    
     29 &  $\pr(1,1,4)$ & $\smor_1$ & (1,1) & $\frac{19}{3}$ & 3 & 7  \\ \rowcolor[gray]{0.9}
     30 &  $\pr(1,1,3)$ & $\smor_2$ & (1,1) & $\frac{16}{3}$  & 4 & 6  \\  
     31 &  $\PP$ & $\smor_2 \circ \smor_1$ & (1,1) & $\frac{7}{3}$ & 7  & 3  \\    \rowcolor[gray]{0.9}
     32 &  $\pr(1,1,3)$ & $\smor_1$ & (2,0) & $\frac{17}{3}$  & 3   & 6\\ 
     33 &  $\PP$ & $\smor_1 \circ \smor_1$ & (2,0) & $\frac{8}{3}$ & 6 & 3 \\  \rowcolor[gray]{0.9}
     34 &  $\pr(1,1,4)$ & - & (0,1) & 9 & 1 & 10 \\
     35 &  $\PP$ & $\smor_2$ & (0,1) & 5 & 5 & 6  \\   \rowcolor[gray]{0.9}
     36 &  $\pr(1,1,3)$ & - & (1,0) & $\frac{25}{3}$  & 1 & 9  \\ 
     37 &  $\PP$ & $\smor_1$ & (1,0) & $\frac{16}{3}$ & 4 & 6  \\  \rowcolor[gray]{0.9}
     38 &  $\pr^2$ & - & (0,0) & 9 & 1 & 10 \\ 
     39 &  $\PP$ & - & (0,0) & 8 & 2 & 9  \\ 
  \end{tabular}
\end{table}
}

\end{thm}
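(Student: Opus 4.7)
The plan is to factor $X \to X_{min}$ by a $K_X$-MMP and then enumerate. Since $-K_X$ is ample and the singularities are klt, every $K_X$-negative extremal contraction exists, and its output is either a smaller-rank surface in the same singularity class (divisorial) or a Mori fiber space. The earlier sections of the paper will have classified the divisorial extremal contractions between del Pezzo surfaces carrying only $\frac{1}{3}(1,1)$- and $\frac{1}{4}(1,1)$-singularities---these are the morphisms labelled $\smor_i$ (type II, contraction of a curve to a curve) and $\tmor_j$ (type III, contraction of a curve to a point) appearing in Table \ref{main_table}---and identified the possible Mori fiber outputs. I would begin by recalling that classification together with the resulting list of candidate $X_{min}$, namely $\pr^2$, $\PP$, $\pr(1,1,3)$, $\pr(1,1,4)$, $M_8$, and $M_{13}$.

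For each such $X_{min}$, I would then run the procedure in reverse: starting from $X_{min}$, compose finite sequences of inverse extremal morphisms taken from the finite menu $\{\smor_i,\tmor_j\}$, pruning at each step those branches that violate ampleness of $-K$, introduce a singularity outside the allowed two types, or create a floating $(-1)$-curve. After each elementary step the invariants $(-K)^2$, $\rho$, $(n_3,n_4)$, and $h^0(-K_X)$ update by a computable amount also worked out in the preliminary sections, so I can read them off and match against the table. Existence of each row then follows by performing the sequence of (weighted) blow-ups on $X_{min}$ indicated by the \emph{directed seq.}\ column and verifying that the resulting surface satisfies all three hypotheses.

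The main obstacle will be (i) proving termination of the backwards procedure, i.e.\ bounding the length of the directed sequences, and (ii) avoiding double counting. For (i), the combination of $-K$-ampleness with the no-floating-$(-1)$-curve hypothesis is essential: it forces $(-K_X)^2$ to stay positive while also forbidding further inverse contractions whose output would be too singular or non-Fano, so the tree of possibilities is finite. For (ii), because distinct directed sequences from different minimal models can produce surfaces that share several invariants (compare for instance rows 11 and 12, both with $(n_3,n_4)=(3,2)$ and $(-K)^2=2$ but distinct $X_{min}$), I would use the full invariant package $X_{min}, \rho, (-K)^2, (n_3,n_4), h^0(-K_X)$ as a separating data set and verify pairwise non-isomorphism of the 39 entries. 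The bulk of the argument thus reduces to a careful, finite case analysis on the graph of inverse contractions emanating from the six candidate minimal models.
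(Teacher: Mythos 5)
Your overall architecture (factor $X\to X_{min}$ through an MMP, classify the elementary contractions, enumerate backwards from the minimal models, then construct and distinguish) does match the paper's five-step strategy. However, there is a genuine gap at the very first step: you assert that a divisorial $K_X$-negative contraction produces ``a smaller-rank surface in the same singularity class.'' This is false, and the failure of this statement is precisely what makes the paper long. When the contracted curve passes through singular points, new singularity types appear: a curve through a $\frac{1}{3}(1,1)$-point contracts to an $A_1$-point, a curve through a $\frac{1}{4}(1,1)$-point contracts to a $\frac{1}{3}(1,1)$-point, and a curve through one point of each type contracts to a $\frac{1}{5}(1,2)$-point (Table \ref{bir_ext_cont}). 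Consequently the intermediate surfaces and the minimal models live in the strictly larger class $\tB$ (allowing $A_1$, $A_2$, $A_3$ and $\frac{1}{5}(1,2)$ singularities), and the classification of minimal surfaces must be carried out there. That classification is not a short recall of known results: it occupies all of Section \ref{minimal_sec} and requires the determinant-square lattice criterion of Lemma \ref{lattice_square} and the two-ray games of Subsection \ref{2_ray} to eliminate a dozen spurious candidates. Your list of six minimal models is the correct final answer, but your proposal contains no mechanism for producing or justifying it. Relatedly, your description of the $\smor_i$ as ``contraction of a curve to a curve'' misreads the dichotomy: both $\smor_i$ and $\tmor_j$ are compositions of birational contractions of curves to points; they are distinguished by whether the image point is smooth or singular, and the intermediate decomposition $X\to S\to T_{min}\to X_{min}$ (with a canonical well-ordering making the sequence unique) is what keeps the backwards enumeration finite and free of double counting.

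A second, smaller gap concerns your treatment of non-isomorphism. The invariant package $(X_{min},\rho,(-K)^2,(n_3,n_4),h^0(-K_X))$ does \emph{not} separate the 39 rows unless you have already proved that the minimal directed sequence attached to a surface is an isomorphism invariant; a priori one surface could admit MMP runs terminating at two different minimal models. The paper handles the four ambiguous pairs (Nos.~6/7, 11/12, 15/16, 24/25) by a direct geometric argument on the minimal resolutions: it exhibits a configuration of negative curves (e.g.\ a $(-1)$-curve disjoint from all $(-3)$-curves) present on one member of the pair and provably absent on the other. Some such intrinsic distinguishing argument is unavoidable, and your proposal should not rely on the table's $X_{min}$ column as if it were automatically well defined.
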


\begin{notation} The notation in Table \ref{main_table} is the following:

\noindent {$\bullet$ $X_{min}$} : a minimal surface obtained from $X$ by extremal contractions.
A del Pezzo surface is {\it minimal} if it has no birational extremal contractions.
Such minimal surfaces are listed in Tables \ref{min_rank1_table} and \ref{min_rank2_table}.

\noindent {$\bullet$ $M_8$} : a minimal surface isomorphic to $xyz - w^3 = 0$ in $ \pr^3$, which has three singular points of type $A_2$.

\noindent {$\bullet$ $M_{13}$} : a minimal surface having four singular points of type $\frac{1}{5}(1,2)$.

\noindent $\bullet$ directed seq. : a {\it minimal directed sequence} defined in Theorem \ref{directed}.

\noindent {$\bullet$ $\smor_i$ and $\tmor_j$} : types of compositions of extremal contractions listed in Tables \ref{2nd_mor} and \ref{3rd_mor}.
A minimal directed sequence is expressed as these compositions.
For example, if $X$ is of No.1, then $X$ has a minimal directed sequence $X \overset{\tmor_1}{\to} X_1 \overset{\tmor_1}{\to} X_2 \overset{\tmor_1}{\to} X_3 \overset{\tmor_1}{\to} M_{13}$.
  
\noindent {$\bullet$ $n_3$} : the number of singular points of type $\frac{1}{3}(1,1)$ on $X$.

\noindent {$\bullet$ $n_4$} : the number of singular points of type $\frac{1}{4}(1,1)$ on $X$.

\noindent {$\bullet$ $(-K_X)^2$} : the anti-canonical volume of $X$.

\noindent {$\bullet$ $\rho (X)$} : the Picard number of $X$.

\noindent {$\bullet$ $h^0(-K_X) := \dim_{\C} H^0( X, \sO_X (-K_X))$} 

\end{notation}

By Theorem \ref{main_thm}, we obtain the optimal bound of the numbers of singular points on a del Pezzo surface $X$ with $\frac{1}{3}(1,1)$- and $\frac{1}{4}(1,1)$-singularities.
Indeed, we have a sequence of contractions of floating $(-1)$-curves $X \to \cdots \to S$, where $S$ is listed in Table \ref{main_table}. 
Since the numbers of singular points of each type on $X$ and $S$ are equal, we obtain the following corollary.

\begin{corollary}\label{num_of_sing}

The possibilities of $(n_3, n_4)$ are plotted by points $\bullet$ in the following figure.
Moreover, for each $(a, b)$ where a point $\bullet$ is plotted, there are some del Pezzo surfaces whose $(n_3, n_4) = (a, b)$.

\begin{figure}[htbp]
\centering\includegraphics[width=8cm, bb=0 0 700 700]{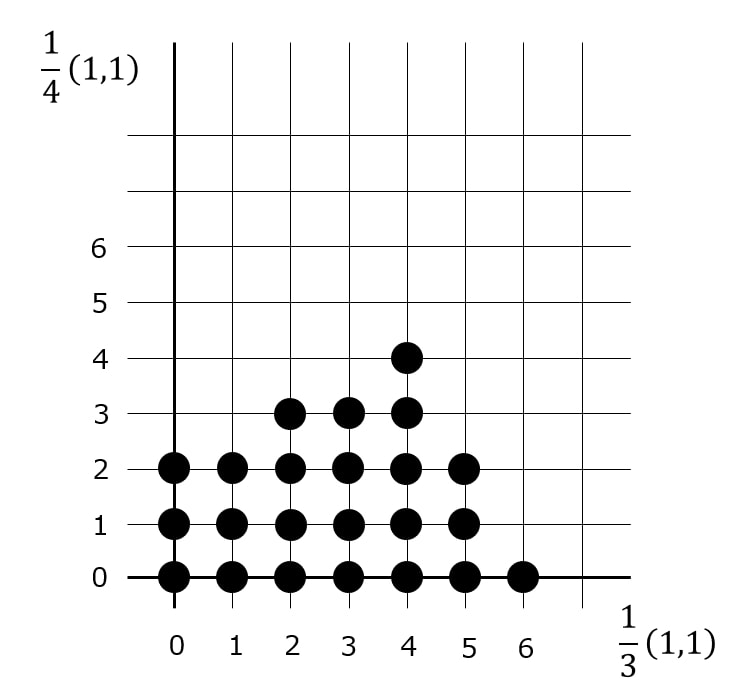}
\end{figure}

\end{corollary}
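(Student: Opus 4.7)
The plan is to deduce the corollary directly from Theorem \ref{main_thm} by observing that contracting a floating $(-1)$-curve does not affect the list of singular points, and then verifying the two assertions (necessity of the plotted points and realizability).

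First, let $X$ be any del Pezzo surface with at most $\frac{1}{3}(1,1)$- and $\frac{1}{4}(1,1)$-singularities, and let $E \subset X$ be a floating $(-1)$-curve. By definition $E$ lies in the smooth locus of $X$, so the contraction $X \to X'$ of $E$ is a morphism between normal surfaces that is an isomorphism on a neighborhood of $\Sing X$. In particular $X'$ is again a del Pezzo surface (anti-canonical ampleness is preserved by blowing down a $(-1)$-curve on the smooth locus) with exactly the same singular points as $X$, so the pair $(n_3, n_4)$ is unchanged. Iterating this procedure strictly decreases the Picard number, so after finitely many steps we arrive at a del Pezzo surface $S$ with the same $(n_3, n_4)$ which has no floating $(-1)$-curves. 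By Theorem \ref{main_thm}, $S$ appears in Table \ref{main_table}.

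Consequently, the set of pairs $(n_3, n_4)$ realized by del Pezzo surfaces with $\frac{1}{3}(1,1)$- and $\frac{1}{4}(1,1)$-singularities coincides with the set of pairs appearing in the column $(n_3, n_4)$ of Table \ref{main_table}. The first claim of the corollary is thus verified by reading off the table and checking that the resulting pairs are exactly the points $\bullet$ in the figure; this is a finite inspection. The second claim (existence of surfaces attaining each such pair) is immediate, since Theorem \ref{main_thm} asserts that every surface listed in Table \ref{main_table} really exists.

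The only step with any content is the first one, namely that a floating $(-1)$-curve can always be contracted within the category of del Pezzo surfaces with the prescribed singularities; but this is essentially by construction of ``floating,'' since the contraction takes place entirely inside the smooth locus. I do not foresee a genuine obstacle: the corollary is a bookkeeping consequence of the classification theorem together with this elementary observation.
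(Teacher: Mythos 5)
Your proposal is correct and follows essentially the same route as the paper: contract floating $(-1)$-curves (which lie in the smooth locus, so $(n_3,n_4)$ is unchanged and ampleness of $-K$ is preserved, cf. Lemma \ref{keep_ldP} and Corollary \ref{float}) until none remain, invoke Theorem \ref{main_thm} to place the resulting surface in Table \ref{main_table}, and read off the pairs; existence follows from the existence assertion in the theorem. No gaps.
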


\subsection{Known results and this work} 

As mentioned above, del Pezzo surfaces with quotient singularities are important in klt minimal model program.
There are many results about classifications of such del Pezzo surfaces.
Below we quote some of them, which are strongly related to this work.

\subsubsection{Gorenstein index}

The {\it Gorenstein index}, which is the smallest positive integer $m$ such that $mK$ is a Cartier divisor, is an important invariant of del Pezzo surfaces.
Del Pezzo surfaces with small indices have been studied by many people.
Those of index one are called Gorenstein del Pezzo surfaces.
They are classified, for example, by F. Hidaka and K. Watanabe (\cite{HW81}).
In the case of index two, V. Alexeev and V. Nikulin classify them over the complex number field using K3 surface theory.
Later, Nakayama gives the complete classification of them in any characteristic (\cite{Na07}).
Those of index three are classified by K. Fujita and K. Yasutake (\cite{FY17}).
There are no complete classifications of del Pezzo surfaces in the case where the index is more than three.
We note that the index of del Pezzo surfaces classified in this paper is six if $n_3$ and $n_4$ are positive.

\subsubsection{Types of singularities}

Restricting types of singularities is an effective perspective to classify del Pezzo surfaces.
From this perspective, A. Corti and L. Heuberger classify those with $\frac{1}{3}(1,1)$-singularities (\cite{CH17}), which inspires this work.
Their work is part of a program to study mirror symmetry for del Pezzo surfaces with cyclic quotient singularities.
Their classification overlaps the one of K. Fujita and K. Yasutake (\cite{FY17}) since the index of those with $\frac{1}{3}(1,1)$-singularities is three, but they classify those by a {\it cascade} which is a relation of birational morphisms between surfaces.
This terminology is introduced in \cite{RS03}.
We are inspired by this way to classify del Pezzo surfaces.
We will explain our strategy for classification in Subsection \ref{strategy}.
We also mention that del Pezzo surfaces with only one $\frac{1}{k}(1,1)$-singularity are classified (\cite{CP17}).

\subsubsection{Picard number}
As for the Picard number, many authors are interested in del Pezzo surfaces with Picard number one, which are called {\it rank one} del Pezzo surfaces.
There are many preceding studies of rank one del Pezzo surfaces. 
For example, the optimal upper bound of the numbers of singular points and the orbifold Euler numbers (cf. Definition \ref{orb_Euler}) of them are known (\cite{Be09}, \cite{Hw14}).
Rank one del Pezzo surfaces with a unique singular point are classified (\cite{Ko99}).
In this paper, we also classify some rank one del Pezzo surfaces and use them for the classification (cf. Section \ref{minimal_sec}).

\subsection{Preliminary}
We introduce definitions of basic concepts and notation we use throughout this paper.

\begin{definition}\label{neg-curve}

Let $X$ be a normal projective surface.
A smooth rational curve $C$ whose self intersection number is $-n$ is called a $(-n)$-$curve$.
Let $\pi : Y \to X$ be the minimal resolution.
An irreducible curve $C$ on $X$ is called a {\it{quasi-$(-n)$-curve}} if its strict transform $C_Y$ on $Y$ is a $(-n)$-curve.
In particular, a quasi-$(-1)$-curve is called a {\it{quasi-line}}.
A curve $C \subset X$ is called a {\it floating $(-1)$-curve} if $C$ is a $(-1)$-curve and contained in the smooth locus $X_{\rm sm}$.
If $n \ge 1$, a $(-n)$-curve is called a {\it negative curve}.

\end{definition}

\begin{definition}

A normal projective surface is called {\it{of type $\tA$}} if it has at most $\frac{1}{3}$(1,1)- or $\frac{1}{4}$(1,1)-singularities.
A normal projective surface is called {\it{of type $\tB$}} if it has at most  $A_1$-, $A_2$-, $A_3$-, $\frac{1}{3}$(1,1)-, $\frac{1}{4}$(1,1)- and $\frac{1}{5}$(1,2)-singularities.
Here an $A_n$-singularity is a $\frac{1}{n+1}(1,n)$-singularity.

For a normal projective surface $X$ of type $\tB$, denote the singular points on $X$ by $P_1, \ldots , P_q$.
Then we set
\[
\sS(X) := \{ *_1, \ldots, *_q \} ,
\]
where $P_i$ is $*_i$-singularity for each $1 \le i \le q$.
For example, if the singular locus of a projective surface $X$ consists of one singular point of type $\frac{1}{4}(1,1)$ and two singular points of type $A_3$, then we write $\sS(X) = \{ \frac{1}{4}(1,1), A_3, A_3 \}$. 

\end{definition}

Note that a normal projective surface of type $\tA$ is also of type $\tB$.
The purpose of this paper is to classify del Pezzo surfaces of type $\tA$ with no floating $(-1)$-curves.
In the course of classification, del Pezzo surfaces of type $\tB$ play an important role (Section \ref{minimal_sec}).

\begin{definition}\label{dual_graph}

In a dual graph of curves on a projective surface, we denote a $(-1)$-curve by $\bullet$, a $(-2)$-curve by $\triangle$, a $(-3)$-curve by $\square$ and a $(-4)$-curve by $\bigcirc$.

\end{definition}

\begin{remark}

Let $P$ be an $A_1$-, $A_2$-, $A_3$-, $\frac{1}{3}(1,1)$-, $\frac{1}{4}(1,1)$- or $\frac{1}{5}(1,2)$-singularity.
The dual graphs of the exceptional curves of the minimal resolution of $P$ are the following:

{ \small
\begin{table}[htb]
\renewcommand{\arraystretch}{1.7}
  \begin{tabular}{ccc|ccc|ccc} 
$A_1$ & : & \xygraph{
    \triangle ([]!{+(0,-.4)} {-2}) 
} &
$A_2$ & : & \xygraph{
    \triangle ([]!{+(0,-.4)} {-2}) - [r]
    \triangle ([]!{+(0,-.4)} {-2}) 
} &
$A_3$ & : & \xygraph{
    \triangle ([]!{+(0,-.4)} {-2}) - [r]
    \triangle ([]!{+(0,-.4)} {-2}) - [r] 
    \triangle ([]!{+(0,-.4)} {-2}) 
} \\ \hline
$\frac{1}{3}(1,1)$ & : & 
\xygraph{
    \square ([]!{+(0,-.4)} {-3})
}  &
$\frac{1}{4}(1,1)$ & : & 
\xygraph{
    \bigcirc ([]!{+(0,-.4)} {-4})
}  &
$\frac{1}{5}(1,2)$ & : & 
\xygraph{
    \triangle ([]!{+(0,-.4)} {-2}) - [r]
    \square ([]!{+(0,-.4)} {-3}) 
}  \\
  \end{tabular} \\
\end{table}
}

\end{remark}

\begin{notation}

Let $f: Y \to X$ be a birational morphism and $C$ a curve on $X$.
Then $C_Y$ denotes the strict transform of $C$ by $f$.
For a divisor $D := \sum a_i C_i$, we set $D_Y := \sum a_i (C_i)_Y$.

\end{notation}

\begin{notation}

We denote by $\mathbb{F}_n$ the Hirzebruch surface of degree $n$.
We also denote the minimal section by $\sigma$ and a fiber by $l$.
$\sigma_{\infty}$ denotes an irreducible curve linearly equivalent to $\sigma + nl$, which is called a section at infinity.
\end{notation}

\subsection{The strategy for the proof of the main theorem}\label{strategy}

In the study of \cite{CH17}, Corti and Heuberger use the Riemann-Roch theorem and lattice theory to obtain an effective bound of the number of singular points on a del Pezzo surface.
In this paper, however, their method does not work.
Thus we use a different method.

Roughly speaking, the proof of Theorem \ref{main_thm} is divided into the following 5 steps.
Let $X$ be a del Pezzo surface with at most $\frac{1}{3}(1,1)$- and $\frac{1}{4}(1,1)$-singularities.

\noindent{\bf Step 1 : Construction of a minimal directed sequence}

One of the main idea of this paper is to introduce a sequence of (compositions of) extremal contractions
\[
X  \overset{\alpha_1}{\rightarrow} \cdots \overset{\alpha_l}{\rightarrow} 
S \overset{\beta_1}{\rightarrow} \cdots \overset{\beta_m}{\rightarrow}
T_{min} \overset{\gamma_1}{\rightarrow} \cdots \overset{\gamma_n}{\rightarrow} X_{min},
\]
which is constructed in Theorem \ref{directed}.
This is called a {\it minimal directed sequence}.
Here we call each $\alpha_i$ a first morphism, each $\beta_j$ a second morphism and each $\gamma_k$ a third morphism (Definition \ref{fst}).
A first morphism is nothing but a contraction of a floating $(-1)$-curve.
Second morphisms and third morphisms are compositions of extremal contractions and classified into 8 types and 9 types respectively as in Tables \ref{2nd_mor} and \ref{3rd_mor}.
$X_{min}$ is a minimal surface, which is classified in the next step.
$T_{min}$ is a $\smor$-minimal surface (cf. Definition \ref{2_min}), which is also introduced in this paper.
In Step 1, we prove the existence of this sequence and classify second morphisms and third morphisms.

\noindent{\bf Step 2 : Classification of minimal surfaces}

In Section \ref{contractions_sec}, we prove that a minimal surface $X_{min}$ is of type $\tB$.
We classify minimal surfaces of type $\tB$ into 19 cases in Section \ref{minimal_sec}.
In addition to standard methods to classify rank one del Pezzo surfaces, we use two ray games, which are often used in the classification of Fano 3-folds with Picard number one.

\noindent{\bf Step 3 : Determination of candidates of $X$}

From the results in Steps 1 and 2, we can list all the possibilities of minimal directed sequences in Section \ref{candidate_sec}.
The list, however, is huge.
Hence  we need to restrict these possibilities.
In this step, we first restrict the possibilities of a $\smor$-minimal del Pezzo surface $T_{min}$ into six cases.
Let $T_1$ be a del Pezzo surface such that there exists a second morphism $T_1 \to T_{min}$.
Next, by using the candidates of $T_{min}$, we restrict the possibilities of $T_1$ into 13 cases.
Similarly, by using the candidates of $T_1$, we restrict the possibilities  of a del Pezzo surface $T_2$ such that a second morphism $T_2 \to T_1$ exists.
They are restricted into 19 cases.
The possibility of a del Pezzo surface $T_3$, which has a second morphism $T_3 \to T_2$, is also restricted into one case.
Moreover, by Corollary \ref{T_>4}, for $m \ge 4$, we also see that there is no examples of $T_m$ such that $T_m \to T_{m-1} \to \cdots \to T_1 \to T_{min}$ exists.
Thus we restrict the possibilities of  $T_{min}$, $T_1, T_2$ and $T_3$ into 39 cases.
Then we see that they are nothing but the candidates of the surface $X$ which we are going to classify.

\noindent{\bf Step 4 : Construction of examples for each candidate of $X$}

In Section \ref{construct_sec}, we check the existence of each candidate of $X$.
Let $Y \to X$ be the minimal resolution.
In Step 3, we also see that how to construct $Y$ from a Hirzebruch surface by explicit blow-ups.
Then starting from the surface $Y$, we obtain the surface $X$ by contracting several negative curves and show that $-K_X$ is ample.

\noindent{\bf Step 5 : Distinction of surfaces with the same invariants}

In Table \ref{main_table}, there are four pairs of del Pezzo surfaces $X_1, X_2$ with the same number of singular points of each type, the same anti-canonical volume and the same Picard number. 
We distinguish such $X_1$ and $X_2$ by observing the configurations of negative curves on the minimal resolutions of them.  \\

We expect that this method can be applied to the other cases, for example, del Pezzo surfaces with $\frac{1}{k}(1,1)$-singularities where $k \ge 5$.

\begin{ack}
The author would like to express great gratitude to his supervisor Professor Hiromichi Takagi for his encouragement and valuable advice.
The author is also grateful to Takeru Fukuoka, who found an important fact which advanced this study in our seminar.
The author also would like to thank Professors Yoshinori Gongyo and Masanori Kobayashi for their helpful comments and suggestions.

\end{ack}

\section{Contractions between surfaces of type $\tB$}\label{contractions_sec}

In this section, we classify $K_V$-negative extremal contractions appearing in a minimal model program which starts from del Pezzo surfaces of type $\tA$. 

\subsection{Basic properties of extremal contractions}

We first introduce some basic properties of extremal contractions for minimal model programs.
In this paper, an extremal contraction means the contraction of a $K$-negative extremal ray. 

\begin{definition}

Let $V$ be a normal projective surface.
A quasi-line $C$ on $V$ passing through at least two singular points of type $\frac{1}{4}(1,1)$ is called a {\it $T$-line}.

\end{definition}
The following lemma is suggested by T. Fukuoka, which will play an important role throughout the paper.

\begin{lemma}\label{Fuku}

There is no $K_V$-negative extremal contraction $f : V \to V_1$ contracting a $T$-line.
In particular, there is no $T$-line on a del Pezzo surface.

\end{lemma}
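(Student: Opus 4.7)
My plan is to derive a direct numerical contradiction on the minimal resolution via the standard discrepancy identity. Let $\pi\colon Y\to V$ be the minimal resolution, let $C$ be a supposed $T$-line, and let $C_Y$ be its strict transform, which by the definition of a quasi-line (Definition \ref{neg-curve}) is a $(-1)$-curve, so in particular $K_Y\cdot C_Y=-1$. Since $V$ has only cyclic quotient singularities it is $\Q$-factorial, so $C$ is $\Q$-Cartier; writing $\pi^{\ast}K_V = K_Y - \sum_j a_j E_j$ with discrepancies $a_j$ on the exceptional curves $E_j$, the projection formula gives
\[
K_V\cdot C \;=\; \pi^{\ast}K_V\cdot C_Y \;=\; K_Y\cdot C_Y - \sum_j a_j\,(E_j\cdot C_Y) \;=\; -1 + \sum_j |a_j|\,(E_j\cdot C_Y).
\]
Each allowed singularity type is klt with non-positive discrepancy on the minimal resolution (explicitly $-\tfrac13$ on the $(-3)$-curve over a $\tfrac13(1,1)$-point and $-\tfrac12$ on the $(-4)$-curve over a $\tfrac14(1,1)$-point), so every term in the sum above is non-negative.

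Because $C$ is a $T$-line, it passes through at least two $\tfrac14(1,1)$-singularities $P_1,P_2$, whose $(-4)$-exceptional curves $E_1,E_2$ therefore satisfy $E_i\cdot C_Y\ge 1$ for $i=1,2$. These two terms alone contribute $\tfrac12+\tfrac12=1$, and every other summand is non-negative, so
\[
K_V\cdot C \;\ge\; -1 + \tfrac12 + \tfrac12 \;=\; 0.
\]

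This immediately contradicts both parts of the statement: a $K_V$-negative extremal contraction that contracts $C$ would force $K_V\cdot C<0$ (whether the contraction is divisorial or of fibre type, since in either case $C$ lies in a $K_V$-negative extremal ray), and on a del Pezzo surface the ampleness of $-K_V$ forces $K_V\cdot C<0$ for \emph{every} curve. There is no genuine obstacle in the argument; the only items requiring care are recording the explicit discrepancies at each singularity type appearing on $V$ and the elementary observation that $C_Y\cdot E_i\ge 1$ whenever $C$ meets the singular point $P_i$. The whole proof should fit in a few lines once the discrepancy formula is written down.
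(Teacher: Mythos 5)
Your proof is correct and is essentially the same as the paper's: both compute $K_V\cdot C$ on the minimal resolution and show it is $\ge 0$ because the $(-1)$-curve $C_Y$ contributes $-1$ while each of the two $\frac{1}{4}(1,1)$-points contributes $+\frac{1}{2}$ and all other singular points contribute non-negatively. The only cosmetic difference is that you pull back $K_V$ (via discrepancies) whereas the paper pulls back $C$ (writing $\pi^*C = C_Y + \frac14 E_1 + \frac14 E_2 + \sum a_iE_i$) and intersects with $-K_Y$; by the projection formula these are the same computation.
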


\begin{proof}

Let $C \subset V$ be an $f$-exceptional curve. Assume that $C$ is a $T$-line by contradiction. 
Let $\pi : Y \to V$ be the minimal resolution.
We denote by $E_1, E_2$ irreducible components of the exceptional curves over the singular points of type $\frac{1}{4}(1,1)$ and by $E_i$ ($i \ge 3$) the exceptional curves over the other singular points on $V$. 
Then it holds
\[
\pi^{*} C = C_Y + \frac{1}{4} E_1 + \frac{1}{4} E_2 + \sum_{i \ge 3} a_i E_i,
\]
where $a_i \ge 0$ for $i \ge 3$. Hence we obtain
\[
-K_Y \cdot \pi^{*} C = -K_Y \cdot C_Y + (-K_Y) \cdot (\frac{1}{4} E_1 + \frac{1}{4} E_2 + \sum a_i E_i ) . 
\]
Thus we have 
\[
-K_V \cdot C  =  1 - \frac{1}{2} - \frac{1}{2} -K_Y \cdot  \sum a_i E_i   \ \le \   0 .
\]
This contradicts the fact that $f : V \to V_1$ is $K_V$-negative.

\end{proof}

\begin{lemma}\label{keep_ldP}

Let $V$ be a del Pezzo surface with at most quotient singularities and $f : V \to V_1$ a birational extremal contraction.
Then $V_1$ is also a del Pezzo surface with at most quotient singularities.

\end{lemma}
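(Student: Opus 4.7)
The plan is to verify three things in order: (i) $V_1$ is a normal projective surface, (ii) the singularities of $V_1$ are quotient singularities, and (iii) $-K_{V_1}$ is ample and $\Q$-Cartier. Normality and projectivity of $V_1$ come for free from the contraction theorem: the contracted locus is a single irreducible curve $C$ on $V$ (since we are on a surface and $f$ contracts one extremal ray), so $V_1$ is normal and is projective because a contraction of a $K$-negative extremal ray in the klt setting is projective. Writing out the relation $K_V \equiv f^{*}K_{V_1} + aC$ and intersecting with $C$ gives $K_V \cdot C = aC^2$; since $K_V \cdot C < 0$ and $C^{2} < 0$, we conclude $a > 0$, a fact we will reuse.

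For (ii), I would invoke that quotient singularities on a surface are klt, so $V$ is klt. A standard result in MMP for surfaces says that a $K$-negative extremal contraction of a klt surface produces a klt surface, i.e., $V_1$ is klt. Finally, a classical theorem (due essentially to Brieskorn/Kawamata) asserts that every klt surface singularity is a quotient singularity. Hence $V_1$ has at most quotient singularities. As a consequence, $V_1$ is automatically $\Q$-factorial, so $-K_{V_1}$ is $\Q$-Cartier.

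For (iii), ampleness of $-K_{V_1}$ will be verified via Nakai's criterion. For any irreducible curve $D \subset V_1$, let $\tilde{D}$ be its strict transform on $V$; since $D \neq f(C)$, the curves $\tilde{D}$ and $C$ are distinct irreducible curves, hence $C \cdot \tilde{D} \ge 0$. By the projection formula,
\[
-K_{V_1} \cdot D \;=\; f^{*}(-K_{V_1}) \cdot \tilde{D} \;=\; (-K_V + aC) \cdot \tilde{D} \;>\; 0,
\]
because $-K_V \cdot \tilde{D} > 0$ and $aC \cdot \tilde{D} \ge 0$. For the self-intersection, using $K_V \cdot C = aC^{2}$,
\[
(-K_{V_1})^{2} \;=\; (f^{*}(-K_{V_1}))^{2} \;=\; (-K_V + aC)^{2} \;=\; K_V^{2} - a^{2}C^{2} \;>\; K_V^{2} \;>\; 0.
\]
Nakai's criterion then yields that $-K_{V_1}$ is ample, completing the proof.

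The only place where real content is imported from outside is the implication ``klt surface singularity $\Rightarrow$ quotient singularity'', together with the preservation of the klt property under a $K$-negative extremal contraction; both are classical and will be cited rather than reproven. Apart from that, the argument is a direct computation on the contracted curve, and I do not expect further obstacles.
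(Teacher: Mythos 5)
Your proposal is correct and follows essentially the same route as the paper: both reduce the statement to the klt property being preserved under a $K$-negative extremal contraction (plus the classical fact that klt surface singularities are quotient singularities) and then verify ampleness of $-K_{V_1}$ via the relation $K_V = f^{*}K_{V_1} + aC$ with $a>0$, checking positivity on curves by the projection formula and the inequality $K_{V_1}^2 > K_V^2 > 0$. Your write-up merely makes explicit a few steps the paper leaves implicit (the derivation of $a>0$ from $K_V\cdot C = aC^2$, and the appeal to the Nakai--Moishezon criterion).
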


\begin{proof}

Note that a del Pezzo surface $V$ has at most quotient singularities if and only if $(V,0)$ is a klt pair.
Since $(V, 0)$ is a klt pair, $(V_1, 0)$ is also a klt pair. 
Denote the exceptional curve by $E$.
We may write
\[
K_{V} = f^{*} K_{V_1} + aE,
\]
where $a>0$.
Therefore, it is enough to show that $-K_{V_1}$ is ample.
Since $V$ is a del Pezzo surface, we see that $K_{V_1}^2 > K_V^2 >0$. 
Let $C \subset V_1$ be an irreducible curve. 
Then we have 
\[
K_{V_1} \cdot C = K_{V_1} \cdot f_{*}C_V = f^{*} K_{V_1} \cdot C_V = (K_{V} - aE) \cdot C_V  < 0 .
\]
Thus we see that $V_1$ is a del Pezzo surface with at most quotient singularities.

\end{proof}

\begin{lemma}\label{nor}

Let $V$ be a normal projective surface with at most quotient singularities.
Let $f : V \to V_1$ be a birational contraction of an extremal ray and denote the exceptional curve by $E$.
Then $E$ passes through at most two singular points.
Moreover, the intersection number between each connected component of the exceptional divisor and $E_Y$ is at most one, where $\pi : Y \to V$ is the minimal resolution.

\end{lemma}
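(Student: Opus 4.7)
The plan is to transfer the problem to the minimal resolution $\pi:Y\to V$ and extract both assertions from the two numerical inequalities imposed by $E^{2}<0$ and $-K_V\cdot E>0$. Write $\pi^{*}E = E_Y + \sum_{i,j} c_{i,j}F_{i,j}$ and $K_Y = \pi^{*}K_V+\sum_{i,j} a_{i,j}F_{i,j}$, where $\{F_{i,j}\}_i$ are the exceptional curves of $\pi$ over the singular point $P_j\in V$. Because $V$ has only quotient singularities, each local intersection matrix $M_j$ is negative definite with nonnegative off-diagonal entries, so $-M_j^{-1}$ has nonnegative entries; this forces $c_{i,j}\ge 0$ and $a_{i,j}\le 0$. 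Set $m_{i,j}:=F_{i,j}\cdot E_Y\ge 0$, $n_{i,j}:=-F_{i,j}^{2}\ge 2$, $\epsilon_j:=\sum_i c_{i,j}m_{i,j}$, $\Delta_j:=\sum_i c_{i,j}(n_{i,j}-2)$, and write $n_j:=|\det M_j|$ for the local index.

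First I would show that $E_Y$ is a smooth $(-1)$-curve on $Y$. By the projection formula, $K_V\cdot E = K_Y\cdot E_Y + \sum(-a_{i,j})m_{i,j}\ge K_Y\cdot E_Y$ and $E^{2} = E_Y^{2}+\sum_j\epsilon_j\ge E_Y^{2}$, so both $K_Y\cdot E_Y$ and $E_Y^{2}$ are strictly negative. Adjunction on $Y$ gives $K_Y\cdot E_Y+E_Y^{2} = 2p_a(E_Y)-2\ge -2$, which forces $K_Y\cdot E_Y = E_Y^{2} = -1$ and $p_a(E_Y) = 0$. Plugging $E_Y^{2}=-1$ and $-K_Y\cdot E_Y=1$ back in, $E^{2}<0$ yields $\sum_j\epsilon_j<1$, while $-K_V\cdot E = 1-\sum_j\Delta_j>0$ yields $\sum_j\Delta_j<1$.

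Next, I would identify $D_j := \epsilon_j+\Delta_j$ with the contribution of $P_j$ to $\deg \Diff_E$, obtained by intersecting the identity $\pi^{*}(K_V+E) = K_Y+E_Y + \sum(c_{i,j}-a_{i,j})F_{i,j}$ with $E_Y$, and establish two lower bounds on $D_j$: \textbf{(a)} $D_j \ge 2$ whenever $\sum_i m_{i,j}\ge 2$; \textbf{(b)} $D_j \ge 1-1/n_j$ whenever $E$ passes through $P_j$. Both reduce to explicit estimates for the form $\vec{m}^{\,T}(-M_j^{-1})(\vec{m}+(\vec{n}-\vec{2}))$; (a) is a ``jump'' estimate saying the different increases by at least $2$ once $E$ meets a quotient singularity non-transversely, while (b) is the standard log adjunction estimate. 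Combining (a) with $\sum_j D_j<2$ from the previous step forces $\sum_i m_{i,j}\le 1$ at every $P_j$ on $E$, which is the moreover clause.

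For the cardinality bound I would also invoke the elementary estimate $\epsilon_j\ge 1/n_j$, which follows from $(-M_j^{-1})_{ii} = |\det M_j^{(i)}|/n_j \ge 1/n_j$ (principal minors of the negative-definite integer matrix $M_j$ have absolute determinant at least $1$). If $k$ singular points of $V$ lie on $E$, then $\sum_j\epsilon_j<1$ together with $\epsilon_j\ge 1/n_j$ gives $\sum_j 1/n_j<1$, while $\sum_j D_j<2$ together with (b) gives $\sum_j 1/n_j > k-2$; for $k\ge 3$ these are incompatible, forcing $k\le 2$. The main obstacle is the pair of lower bounds (a) and (b): (b) is the essential log adjunction input and is standard, but (a) --- the jump from $1-1/n_j$ up to $2$ --- requires either a uniform quadratic-form argument exploiting the $M$-matrix structure of $-M_j$ or an explicit case check across the quotient singularity types appearing in the minimal resolution data.
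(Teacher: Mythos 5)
Your route is genuinely different from the paper's: you work entirely with the numerical data of the minimal resolution and read both assertions off the log-adjunction identity $\pi^{*}(K_V+E)\cdot E_Y=-2+\sum_j D_j$, whereas the paper factors the induced map $g\colon Y\to Y_1$ between minimal resolutions into point blow-ups and quotes Brieskorn's description of the exceptional configurations. Your first half is in good shape: the deduction that $E_Y$ is a $(-1)$-curve is correct, the identities $\sum_j\epsilon_j<1$ and $\sum_j\Delta_j<1$ follow as you say, the bound $\epsilon_j\ge 1/n_j$ via principal minors is fine, and combining these with the standard estimate (b) does give $k\le 2$. So the first assertion of the lemma is essentially complete modulo the citable fact (b).

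The genuine gap is inequality (a), $D_j\ge 2$ whenever $\sum_i m_{i,j}\ge 2$, on which the entire ``moreover'' clause rests. You state it, observe that it is the crux, and offer two possible routes without carrying either out; as written it is an unproven claim, and unlike (b) it is not an off-the-shelf fact one can cite (the standard classification only gives $D_j>1$ when $(V,E)$ fails to be plt at $P_j$, not $D_j\ge 2$). The inequality does appear to be true --- writing $D_j=\sum_i m_{i,j}+\vec m^{\,T}(-M_j)^{-1}(\vec m+\vec v-\vec 2)$, where $\vec v$ is the valence vector, reduces it to $\vec m^{\,T}(-M_j)^{-1}\vec m\ge\sum_{\ell\in\{\text{leaves}\}}c_{\ell,j}-\sum_{\{\text{branch pts}\}}c_{b,j}$, which can be checked using the concavity of $\vec c$ along the legs of the (chain or T-shaped) resolution graph and the diagonal dominance of $(-M_j)^{-1}$ --- but equality is attained (e.g.\ a single $(-n)$-curve met twice, or a chain met once at each end), so there is no slack and a careful argument over all quotient-singularity graphs is unavoidable. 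Note that the paper's approach sidesteps this entirely: since $E_Y$ is a $(-1)$-curve contracted by $g$ and $E_Y^2=-1$ forces no infinitely near centre of $g$ to lie on it, $E_Y$ coincides with the exceptional curve of a single point blow-up of an SNC configuration with tree dual graph, hence meets the remaining exceptional locus transversally in at most two points; you may find it easier to patch your write-up by substituting that observation for (a) than to prove (a) directly.
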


\begin{proof}

$V_1$ is also a normal projective surface with at most quotient singularities.
Let $\pi :Y_1 \to V_1$ be the minimal resolution.
Then a birational morphism $g : Y \to Y_1$ is induced such that $\pi_1 \circ g = f \circ \pi$.
Since $Y$ and $Y_1$ are smooth, $g$ is decomposed into several blow-ups at a point.
We denote them by $\sigma_1, \ldots , \sigma_N$, where $g = \sigma_N \circ \cdots \circ \sigma_1$.

In \cite{Br67}, the configurations of the exceptional divisors over quotient singularities are determined.
For all quotient singularities, all irreducible components of its exceptional divisor are smooth and normal crossing.

From these facts, we obtain this assertion.

\end{proof}

\begin{lemma}\label{no_neg}

Let $V$ be a del Pezzo surface and $\pi : Y \to V$ the minimal resolution.
For an irreducible curve $C$ on $V$, if $C_Y$ is a negative curve, then $C_Y$ is a $(-1)$-curve.

\end{lemma}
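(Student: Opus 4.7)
The plan is to combine adjunction on the smooth minimal resolution $Y$ with the ampleness of $-K_V$. Writing the discrepancy formula
\[
K_Y = \pi^*K_V + \sum_i a_i E_i,
\]
where the $E_i$ are the $\pi$-exceptional curves and $a_i \in \mathbb{Q}$, I will intersect with the strict transform $C_Y$ and use adjunction on the smooth rational curve $C_Y$. Note that $C_Y$ is not $\pi$-exceptional, since it is the strict transform of the honest curve $C \subset V$, hence $E_i \cdot C_Y \geq 0$ for every $i$.

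The first step is to observe that the discrepancies $a_i$ are all non-positive. This is a standard consequence of the minimality of $\pi$: by minimality $Y$ carries no $\pi$-exceptional $(-1)$-curve, so $K_Y \cdot E_j \geq 0$ for every $j$ (by adjunction on each $E_j$), which translates into the linear system $\sum_i a_i (E_i \cdot E_j) \geq 0$. The intersection matrix $(E_i \cdot E_j)$ is negative definite with non-negative off-diagonal entries, so $-(E_i \cdot E_j)$ is an M-matrix; its inverse has non-negative entries, and solving for $(a_i)$ then forces $a_i \leq 0$ for every $i$.

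Given this, I compute
\[
K_Y \cdot C_Y \ =\ \pi^*K_V \cdot C_Y + \sum_i a_i (E_i \cdot C_Y) \ =\ K_V \cdot C + \sum_i a_i (E_i \cdot C_Y).
\]
The first term on the right is strictly negative because $-K_V$ is ample, and every summand in the second term is non-positive by the previous paragraph. Hence $K_Y \cdot C_Y < 0$, and since $Y$ is smooth this intersection number is an integer, so $K_Y \cdot C_Y \leq -1$. On the other hand, $C_Y$ is by definition a smooth rational curve with $C_Y^2 \leq -1$, so adjunction gives $K_Y \cdot C_Y = -2 - C_Y^2 \geq -1$. Combining the two inequalities forces $K_Y \cdot C_Y = -1$ and thus $C_Y^2 = -1$, i.e.\ $C_Y$ is a $(-1)$-curve. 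The only non-formal ingredient is the non-positivity of the $a_i$; beyond that the argument is a direct intersection-theoretic calculation, and I do not anticipate any serious obstacle.
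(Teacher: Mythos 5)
Your proof is correct, and it takes a route dual to the paper's. The paper decomposes the pullback of the curve, writing $\pi^* C = C_Y + \sum_i a_i E_i$ with $a_i \ge 0$, pairs with $-K_Y$, and combines $-K_Y\cdot \pi^*C = -K_V\cdot C > 0$ with $-K_Y\cdot E_i \le 0$ (minimality of the resolution) to get $-K_Y\cdot C_Y > 0$, i.e.\ $2-n>0$. You instead decompose the pullback of the canonical class, $K_Y = \pi^* K_V + \sum_i a_i E_i$ with $a_i \le 0$, pair with $C_Y$, and use $E_i\cdot C_Y \ge 0$ to get $K_Y\cdot C_Y \le K_V\cdot C < 0$. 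The two computations are equivalent (by symmetry of the intersection pairing the correction terms agree), and each hinges on the projection formula plus a sign condition on coefficients that ultimately comes from $K_Y$ being $\pi$-nef; your version makes the non-positivity of the discrepancies explicit and actually justifies it with the M-matrix argument, whereas the paper asserts the non-negativity of the coefficients of $\pi^*C$ without proof. Both arguments finish by adjunction on the smooth rational curve $C_Y$ (the paper implicitly, via $-K_Y\cdot C_Y = 2-n$; you explicitly, via integrality of $K_Y\cdot C_Y$). There is no gap in your argument.
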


\begin{proof}

Assume that $C_Y$ is a $(-n)$-curve.
We have $\pi^* C = C_Y + \sum a_i E_i$, where $E_i$ is the exceptional curve and $a_i \ge 0$ for each $i$.
Hence we have
\[
-K_Y \cdot \pi^* C = -K_Y \cdot C_Y + (-K_Y) \cdot \sum a_i E_i . 
\]
Since we see that $-K_V \cdot C > 0$ and $-K_Y \cdot \sum a_i E_i \le 0$, we have $n<2$.

\end{proof}

\begin{lemma}\label{sm_on_qline}

Let $V$ be a del Pezzo surface and $f : V \to V_1$ a birational extremal contraction whose center is a smooth point $P$.
Then there is no quasi-line passing through $P$.

\end{lemma}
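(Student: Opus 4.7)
The plan is to argue by contradiction, assuming a quasi-line $C \subset V_1$ passes through the smooth point $P$, and then to extract a contradiction from $V$ being del Pezzo by computing $-K_V \cdot C_V$ on the minimal resolutions. Let $C_V$ be the strict transform of $C$ on $V$; since $C$ passes through $P = f(E)$, the curve $C_V$ must meet the exceptional curve $E$ of $f$. Pass to the minimal resolutions $\pi : Y \to V$ and $\pi_1 : Y_1 \to V_1$, and let $g : Y \to Y_1$ be the induced birational morphism. Because $P$ is smooth on $V_1$, $\pi_1$ is an isomorphism over a neighbourhood of $P$, so $g$ is a composition of blow-ups of smooth points lying over $P$. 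In particular $g^{-1}(P) = \pi^{-1}(E)$, which consists of the strict transform $E_Y$ together with the exceptional curves $F_i$ of $\pi$ over the (at most two, by Lemma \ref{nor}) singular points of $V$ that lie on $E$. Note also that $C_Y$ must meet $\pi^{-1}(E) = g^{-1}(P)$ because $C_V$ meets $E$.

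Next I will compute $-K_V \cdot C_V$ in two steps. First, by the projection formula and the usual formula $K_Y = \pi^* K_V + \sum d_i F_i$ (with discrepancies $d_i \le 0$, all non-positive because our singularities are canonical or klt), one has
\[
-K_V \cdot C_V \;=\; -K_Y \cdot C_Y \;+\; \sum_{i} d_i (F_i \cdot C_Y).
\]
Second, writing $K_Y = g^* K_{Y_1} + e_0 E_Y + \sum_{F_i \subset g^{-1}(P)} e_i F_i$ with $e_0, e_i \ge 1$ (since $g$ is a tower of smooth point blow-ups), and using $-K_{Y_1}\cdot C_{Y_1} = 1$ by adjunction on the $(-1)$-curve $C_{Y_1}$, I get
\[
-K_Y \cdot C_Y \;=\; 1 - e_0(E_Y \cdot C_Y) \;-\!\!\sum_{F_i \subset g^{-1}(P)}\!\! e_i (F_i \cdot C_Y).
\]
Combining the two identities yields
\[
-K_V \cdot C_V = 1 - e_0(E_Y\cdot C_Y) + \!\!\sum_{F_i\subset g^{-1}(P)}\!\! (d_i-e_i)(F_i\cdot C_Y) + \!\!\sum_{F_i\not\subset g^{-1}(P)}\!\! d_i (F_i\cdot C_Y).
\]

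Finally I carry out the case analysis: since $C_Y$ meets $g^{-1}(P)$, either $E_Y \cdot C_Y \ge 1$ or $F_i \cdot C_Y \ge 1$ for some $F_i \subset g^{-1}(P)$. In the first case $-e_0(E_Y \cdot C_Y) \le -1$; in the second case $(d_i - e_i)(F_i \cdot C_Y) \le -1$, using that $d_i \le 0$ and $e_i \ge 1$ force $d_i - e_i \le -1$. All remaining terms are non-positive, so $-K_V \cdot C_V \le 0$, contradicting the ampleness of $-K_V$. The only real subtlety — and what I expect to be the most delicate point — is keeping the conventions for discrepancies and coefficients straight and verifying the crucial inequality $d_i - e_i \le -1$ uniformly: this works because the allowed singularities of $V$ are either canonical (so $d_i = 0$) or cyclic quotient with negative discrepancy, while each $e_i \ge 1$ is automatic from $g$ being a chain of smooth point blow-ups.
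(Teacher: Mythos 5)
Your argument is correct, but it is organized differently from the paper's. The paper's proof is indirect and shorter: since $P$ is smooth, the induced morphism $g\colon Y\to Y_1$ is a tower of point blow-ups whose first center is $P\in C_{Y_1}$, so the strict transform $C_Y$ of the $(-1)$-curve $C_{Y_1}$ satisfies $C_Y^2\le -2$; this contradicts Lemma \ref{no_neg}, which says that on a del Pezzo surface the strict transform of an irreducible curve on the minimal resolution can never be a $(-n)$-curve with $n\ge 2$. You instead bypass the self-intersection of $C_Y$ entirely and compute $-K_V\cdot C_V$ directly by splicing together the two canonical bundle formulas $K_Y=\pi^*K_V+\sum d_iF_i$ (with $d_i\le 0$) and $K_Y=g^*K_{Y_1}+e_0E_Y+\sum e_iF_i$ (with $e_0,e_i\ge 1$), concluding $-K_V\cdot C_V\le 0$ from $C_Y$ meeting $g^{-1}(P)$. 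Both routes bottom out in the same contradiction with ampleness, but yours is self-contained (it does not invoke Lemma \ref{no_neg}) and makes the mechanism explicit: the loss of at least $1$ comes either from $-e_0(E_Y\cdot C_Y)$ or from $(d_i-e_i)(F_i\cdot C_Y)$, and the uniform inequality $d_i-e_i\le -1$ holds because minimal resolutions of normal surface singularities always have non-positive discrepancies (your appeal to ``canonical or klt'' is slightly more than is needed here, but harmless). The paper's version buys brevity and reusability of Lemma \ref{no_neg} elsewhere; yours buys a single transparent computation that needs no auxiliary lemma beyond Lemma \ref{nor}. Both are complete proofs.
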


\begin{proof}

Assume there is a quasi-line $C \subset V_1$ passing through $P$.
Denote the $f$-exceptional curve by $E$.
Let $\pi : Y \to V$ and $\pi_1 : Y_1 \to V_1$ be the minimal resolutions.
Then a birational morphism $g: Y \to Y_1$ such that $f \circ \pi = \pi_1 \circ g$ is induced.
We write $C_{Y_1}$ for the strict transform of $C$ by $\pi_1$ and $C_{V}$ for the one by $f$. 
Let $C_Y$ be the strict transform of $C_{Y_1}$ by $g$, which is also the strict transform of $C_V$ by $f$. 
Since $Y$ and $Y_1$ are smooth, $g$ is a composition of blow-ups at a point.
Since $C$ passes through $P$, there is at least one blow-up at a point on $C_{Y_1}$.
Since $C_{Y_1}$ is a $(-1)$-curve, we see that $C_Y$ is $(-n)$-curve, where $n \ge 2$.
This contradicts Lemma \ref{no_neg}.

\end{proof}

\begin{corollary}\label{float}

Let V be a del Pezzo surface and $f : V \to V_1$ a birational extremal contraction.
If there is no floating $(-1)$-curves on $V$, then there is also no floating $(-1)$-curves on $V_1$.

\end{corollary}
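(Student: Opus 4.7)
The plan is to proceed by contradiction. Suppose $V_1$ admits a floating $(-1)$-curve $C$. Let $P \in V_1$ denote the image of the $f$-exceptional curve $E$, so that $f$ restricts to an isomorphism $V \setminus E \xrightarrow{\sim} V_1 \setminus \{P\}$ which identifies the singular loci on the two sides. I will distinguish the two cases $P \notin C$ and $P \in C$, and derive a contradiction in each.

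When $P \notin C$, the morphism $f$ is an isomorphism in a neighborhood of $C_V$. Hence $C_V$ is again a smooth rational curve, the projection formula (or the local description of $f$) gives $C_V^2 = C^2 = -1$, and since singular points of $V$ correspond to singular points of $V_1 \setminus \{P\}$ under $f$, the inclusion $C \subset (V_1)_{\sm} \setminus \{P\}$ forces $C_V \subset V_{\sm}$. Thus $C_V$ is a floating $(-1)$-curve on $V$, contradicting the hypothesis.

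When $P \in C$, the assumption $C \subset (V_1)_{\sm}$ forces $P$ itself to be a smooth point of $V_1$. Moreover, since the minimal resolution $\pi_1 : Y_1 \to V_1$ is an isomorphism over the smooth locus, the strict transform $C_{Y_1}$ is a $(-1)$-curve, i.e. $C$ is a quasi-line on $V_1$ passing through the smooth point $P$. But then $f : V \to V_1$ is a birational extremal contraction from the del Pezzo surface $V$ whose center is a smooth point admitting a quasi-line, which directly contradicts Lemma \ref{sm_on_qline}.

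The main (and minor) obstacle is purely bookkeeping: one must verify that a floating $(-1)$-curve is in particular a quasi-line, and that the ``floating'' property is inherited by strict transforms taken away from the center of $f$. Both points follow immediately once one observes that $f$ and $\pi_1$ are isomorphisms off their exceptional loci, so the case split above is exhaustive and each branch terminates in a direct contradiction.
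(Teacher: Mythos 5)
Your proof is correct and follows essentially the same route as the paper: assume a floating $(-1)$-curve $C$ on $V_1$, split on whether the center of $f$ lies on $C$, pull back to a floating $(-1)$-curve on $V$ in the first case, and invoke Lemma \ref{sm_on_qline} in the second. The paper states this in three lines; you merely make explicit the (correct) observations that $P \in C \subset (V_1)_{\sm}$ forces the center to be a smooth point and that $C$ is then a quasi-line, which is exactly what the lemma requires.
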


\begin{proof}

Assume that there exists a floating $(-1)$-curve $C$ on $V_1$.
If the center of $f$ is not in $C$, then $C_V$ is also a floating $(-1)$-curve.
This is a contradiction.
If the center of $f$ is in $C$, then it also contradicts Lemma \ref{sm_on_qline}.

\end{proof}

\subsection{Classification of extremal contractions}

In this subsection, we consider a sequence of extremal contractions which starts from a del Pezzo surface of type $\tA$ and classify such extremal contractions.
By Proposition \ref{keep_B}, we see that all del Pezzo surfaces appearing in minimal model programs are of type $\tB$.

Let $X$ be a del Pezzo surface of type $\tA$. 
By running a minimal model program, there exist a sequence of birational extremal contractions between del Pezzo surfaces,
$X =: X_0 \overset{f_1}{\rightarrow} X_1 \overset{f_2}{\rightarrow}  \cdots \overset{f_n}{\rightarrow} X_n = X_{min} $, and a minimal surface $X_{min}$.
Let $\pi_i : Y_i \to X_i$ be the minimal resolution for $1 \le i \le n$.

\begin{lemma}\label{min_Pic}

If $X_{min}$ is a minimal del Pezzo surface, it holds that $\rho (X_{min}) = 1 \ or \ 2$.

\end{lemma}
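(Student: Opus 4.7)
The plan is to argue by contradiction, using the Cone Theorem for klt pairs together with the Picard-number identity $\rho(X) = \rho(Y) + 1$ for a $K$-negative extremal contraction $X \to Y$ of a $\Q$-factorial klt variety. Concretely, I will show that $\rho(X_{min}) \geq 3$ would force some extremal contraction from $X_{min}$ to have target of Picard number $\geq 2$, which is incompatible with the minimality hypothesis on a surface.

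First, I would set up the hypotheses. By Lemma \ref{keep_ldP}, $X_{min}$ is a del Pezzo surface with at most quotient singularities, so $X_{min}$ is $\Q$-factorial, $(X_{min}, 0)$ is klt, and $-K_{X_{min}}$ is ample. The Cone Theorem for klt pairs then gives that $\overline{\NE}(X_{min})$ is a finite polyhedral cone, generated entirely by $K_{X_{min}}$-negative extremal rays, each of which contracts to a normal projective variety via the standard extremal contraction in the $\Q$-factorial klt setting. Since $X_{min}$ is projective, $\rho(X_{min}) \geq 1$, so it suffices to rule out $\rho(X_{min}) \geq 3$.

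Suppose for contradiction that $\rho(X_{min}) \geq 3$. Then $\overline{\NE}(X_{min})$ has at least three extremal rays; pick any one, say $R$, with associated extremal contraction $f \colon X_{min} \to Y$. By the definition of minimality used in this paper, $f$ cannot be birational, so $\dim Y \leq 1$ and hence $\rho(Y) \leq 1$. However, the relative Picard-number identity yields $\rho(X_{min}) = \rho(Y) + 1 \leq 2$, contradicting the assumption. Therefore $\rho(X_{min}) \in \{1, 2\}$.

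The argument is essentially a one-line application of standard Mori-theoretic tools; the only point to be careful about is that the Cone Theorem and the $\rho(X) = \rho(Y) + 1$ identity genuinely apply to $\Q$-factorial klt surfaces with quotient singularities, but this is classical (quotient singularities are klt and automatically give $\Q$-factoriality in dimension two), so I do not anticipate any real obstacle.
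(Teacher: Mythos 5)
Your proposal is correct and follows essentially the same route as the paper: assume $\rho(X_{min}) \ge 3$, note that minimality forces any extremal contraction to be a fibration onto a base of dimension at most one, and derive a contradiction from the fact that such a contraction drops the Picard number by exactly one. The paper phrases this via the relative Picard number $\rho(X_{min}/Z) \ge 2$ rather than the identity $\rho(X_{min}) = \rho(Y)+1$, but the computation is the same.
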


\begin{proof}

Assume $\rho (X_{min}) \ge 3$.
Let $X_{min} \to Z$  be an extremal contraction.
Since $\dim Z \le 1$, the relative Picard number $\rho (X_{min}/ Z) \ge 2$.
This is a contradiction.

\end{proof}

\begin{proposition}\label{keep_B} 

For $1 \le i \le n$, $X_i$ is a del Pezzo surface of type $\tB$.

\end{proposition}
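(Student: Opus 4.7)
The plan is to argue by induction on $i$. The base case $i=0$ is immediate: $X_0 = X$ is of type $\tA$ by hypothesis, and a surface of type $\tA$ is also of type $\tB$. For the inductive step, I assume $X_i$ is a del Pezzo surface of type $\tB$ and show the same for $X_{i+1}$. Lemma \ref{keep_ldP} already gives that $X_{i+1}$ is a del Pezzo surface with quotient singularities, so the remaining task is to verify that no singularity of $X_{i+1}$ falls outside the list $\{A_1, A_2, A_3, \frac{1}{3}(1,1), \frac{1}{4}(1,1), \frac{1}{5}(1,2)\}$. Since the singularities of $X_{i+1}$ away from the image point $f_{i+1}(E)$ coincide with those of $X_i$ not meeting the contracted curve $E$, the whole analysis is local at $f_{i+1}(E)$.

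To set up this local analysis, I collect the structural constraints on $E$ and on the exceptional configuration of $\pi_i$ near $E_{Y_i}$. By Lemma \ref{no_neg}, $E_{Y_i}$ is a $(-1)$-curve; by Lemma \ref{nor}, $E$ meets at most two singular points of $X_i$ and $E_{Y_i}$ meets each connected component of the exceptional divisor in at most one point; and by Lemma \ref{Fuku}, $E$ does not pass through two $\frac{1}{4}(1,1)$-singularities. The dual graph of the minimal resolution of the new singularity at $f_{i+1}(E)$ is then obtained by taking the tree consisting of $E_{Y_i}$ together with the exceptional chains over the singular points lying on $E$ and iteratively contracting $(-1)$-curves until none remain.

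The argument then splits into three cases according to how many singular points of $X_i$ lie on $E$. The zero-point case contracts $E_{Y_i}$ to a smooth point. The one-point case runs through the six singularities allowed by type $\tB$: in each, attaching a $(-1)$-curve to one end of the chain and iteratively contracting $(-1)$-curves produces either a smooth point or another type $\tB$ chain (for instance, a $\frac{1}{4}(1,1)$-singularity becomes a $\frac{1}{3}(1,1)$-singularity, and a $\frac{1}{5}(1,2)$-singularity becomes $A_1$ or $A_2$ according to the attachment end). The two-point case proceeds by enumerating admissible ordered pairs of singularities together with their attachment ends, subject to the three structural lemmas above, and checking the contracted chain against the type $\tB$ list.

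The main obstacle is the two-point case: the combinatorics a priori permit several configurations whose contracted chains lie outside type $\tB$, and the crux of the argument is ruling these out. The plan is to combine the $K$-negativity condition $-K_{X_i} \cdot E > 0$, which is computable on $Y_i$ via the identity $-K_{X_i} \cdot E = -K_{Y_i} \cdot \pi_i^{*} E$ together with the known discrepancies of the type $\tB$ singularities, with Lemmas \ref{Fuku} and \ref{nor}, to prune the enumeration down to a finite list of admissible configurations, and then to verify case by case that each remaining configuration produces a singularity in the type $\tB$ list, closing the induction.
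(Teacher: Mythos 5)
There is a genuine gap in your two-point case, and it is exactly the part of the argument where the paper has to work hardest. Your plan is to prune the enumeration of pairs $(P_1,P_2)$ using only data local to the single step $X_i \to X_{i+1}$: the $K$-negativity of $E$, negative definiteness of the contracted configuration, Lemma \ref{nor}, and Lemma \ref{Fuku}. Two configurations survive all of these filters and nevertheless produce singularities outside type $\tB$: (a) $E$ passing through a $\frac{1}{4}(1,1)$-point and a $\frac{1}{5}(1,2)$-point (meeting the $(-3)$-end of its chain), whose contraction yields a $\frac{1}{7}(1,3)$-point, and (b) $E$ passing through two $\frac{1}{5}(1,2)$-points (meeting both $(-3)$-ends), whose contraction yields an $A_4$-point. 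In these cases one computes $-K_{X_i}\cdot E = \frac{1}{10}$ and $\frac{1}{5}$ respectively, so the contractions are $K$-negative; the resulting chains $[3,2,2]$ and $[2,2,2,2]$ are negative definite; $E$ meets only two singular points, each exceptional component once; and $E$ is not a $T$-line, since at most one of the two points is of type $\frac{1}{4}(1,1)$. So your pruning cannot eliminate them, and the induction does not close with the hypothesis ``$X_i$ is of type $\tB$'' alone.

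The paper's proof gets around this by strengthening the induction hypothesis: it carries along the additional statements that every $\frac{1}{5}(1,2)$-point on $X_i$ was produced by contracting a quasi-line through a $\frac{1}{3}(1,1)$-point and a $\frac{1}{4}(1,1)$-point, and that no $\frac{1}{4}(1,1)$-point is ever produced by a contraction (the latter being automatic at the start because $X$ is of type $\tA$). With this historical information, in cases (a) and (b) one composes the relevant contractions back to an earlier surface $X_{j-1}$ in the sequence and exhibits there a quasi-line through two $\frac{1}{4}(1,1)$-points, i.e.\ a $T$-line on a del Pezzo surface, contradicting Lemma \ref{Fuku}. This non-local step, and the extra bookkeeping in the induction needed to make it available, is missing from your proposal; without it, or some substitute global argument, the two-point case cannot be completed. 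The rest of your outline (base case, reduction to a local analysis at $f_{i+1}(E)$, the zero- and one-point cases) matches the paper and is fine.
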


\begin{proof}
We prove that all $X_i$ satisfy the following three conditions by induction on $i$. 
\begin{enumerate}
  \item $X_i$ is of type $\tB$; 
  \item If $X_i$ has a singular point of type $\frac{1}{5}(1,2)$, then it is produced by contracting a curve through a singular point of type $\frac{1}{3}(1,1)$ and a singular point of type $\frac{1}{4}(1,1)$; 
  \item If $X_i$ has a singular point of type $\frac{1}{4}(1,1)$, then $F_i$ is isomorphic near the point, where $F_i := f_i \circ f_{i-1} \circ \cdots \circ f_1 : X \to X_{i}$. It means that singular points of type $\frac{1}{4}(1,1)$ cannot be produced by any extremal contractions.
\end{enumerate}

Set $X_0 := X$.
Since $X$ is of type $\tA$, $X_0$ satisfies these three conditions. 
Assume that $X_i$ satisfies the three conditions.
Let us prove that $X_{i+1}$ also satisfies them.
Denote by $E$ the exceptional curve of $f_{i+1} : X_i \to X_{i+1}$.
We denote by $Q$ the point to which $E$ is contracted.
By Lemma \ref{nor}, $E$ passes through at most two singular points.
If $E$ does not pass through any singular points, then $E$ is a $(-1)$-curve.
Therefore, $X_{i+1}$ also satisfies the three conditions in this case.
If $E$ passes through only one singular point $P$, then $f_{i+1}$ is one in the following table. 

{\small
\begin{table}[htb]
\caption{$(P, Q)$}\label{sing_1_table}
\renewcommand{\arraystretch}{1.4}
  \begin{tabular}{|c||c|c|c|c|c|c|c|c|c|} \hline
$P$  &   $A_1$ & $\frac{1}{3}$(1,1) & $\frac{1}{4}$(1,1) & $A_2$ & \multicolumn{2}{c|}{$\frac{1}{5}$(1,2)} & \multicolumn{2}{c|}{$A_3$}  \\ \hline
  & & & & & a & b & a & b \\ \hline \hline
$Q$ & sm & $A_1$ & $\frac{1}{3}$(1,1) & sm & $A_1$ & $A_2$ & sm & fib \\ \hline
  \end{tabular} \\
\end{table}
}
\noindent Here if $P$ is a singular point of type $\frac{1}{5}(1,2)$ or a singular point of type $A_3$, there are two possible ways to contract $E$ respectively.
The possibilities of the dual graph of $\pi_i^{-1} (f_{i+1}^{-1} (Q) )$ are two cases respectively: 
\begin{center}
$\frac{1}{5}(1,2)$ : \ \ \ type a \xygraph{
    \bullet ([]!{+(0,-.4)} {-1}) - [r]
    \triangle ([]!{+(0,-.4)} {-2}) - [r] 
    \square ([]!{+(0,-.4)} {-3}) 
}
 \ \ \ \ 
 type b \xygraph{
    \triangle ([]!{+(0,-.4)} {-2}) - [r] 
    \square ([]!{+(0,-.4)} {-3}) - [r]
    \bullet ([]!{+(0,-.4)} {-1}) 
}
\end{center}
\begin{center}
$A_3$ : \ \ \ type a \xygraph{
	\triangle ([]!{+(.3,-.3)} {-2}) 
       		(- [d] \bullet ([]!{+(.5,0)} {-1}))
	- [r]	\triangle ([]!{+(.0,-.3)} {-2}) 
        - [r]	\triangle ([]!{+(.0,-.3)} {-2})
}
\ \ \ \ type b \xygraph{
	\triangle ([]!{+(0,-.3)} {-2}) 
	- [r]	\triangle ([]!{+(.3,-.3)} {-2}) 
       		(- [d] \bullet ([]!{+(.5,0)} {-1}))
        - [r]	\triangle ([]!{+(.0,-.3)} {-2})
} 
\end{center}
By Table \ref{sing_1_table}, we see that $X_{i+1}$ also satisfies the three conditions in this case.

From now on, we assume that $E$ passes through exactly two singular points $P_1, P_2$.
By Lemma \ref{Fuku}, $E$ is not a $T$-line.
Thus we can eliminate the case $( P_1, P_2 ) = ( \frac{1}{4}(1,1), \frac{1}{4}(1,1) )$.
We write ``/" where this case is in Table \ref{ext_cont_table}.
The other cases where ``/" is written in Table \ref{ext_cont_table} are eliminated by a contradiction to negative definiteness.
The following are the cases we must consider especially.

\noindent{\bf Case 1 : $(P_1, P_2, Q) = (\frac{1}{4}(1,1), \frac{1}{5}(1,2), \frac{1}{7}(1,3) )$} \\ 
By the assumption of induction, there exists $0 < j < i+1$ such that $f_j : X_{j-1} \to X_j$ contracts a quasi-line passing through a singular point of type $\frac{1}{3}(1,1)$ and a singular point of type $\frac{1}{4}(1,1)$.
Set $f := f_{i+1} \circ f_i \circ \cdots \circ f_j : X_{j-1} \to X_{i+1}$.
By Lemma \ref{sm_on_qline}, any exceptional curves are not contracted to smooth points.
Hence we see that the dual graph of $\pi_{j-1}^{-1}(f^{-1}(Q))$ is as follows:
\begin{center}
\  \xygraph{
    \bigcirc ([]!{+(0,-.4)} {-4}) - [r]
    \bullet ([]!{+(0,-.4)} {-1}) - [r] 
    \bigcirc ([]!{+(0,-.4)} {-4}) - [r] 
    \bullet ([]!{+(0,-.4)} {-1}) - [r] 
    \square ([]!{+(0,-.4)} {-3}) }
\end{center}
This means that there exists a $T$-line on $X_{j-1}$.
This contradicts the fact $X$ is a del Pezzo surface.
Thus this case does not occur.
 
\noindent{\bf Case 2 : $(P_1, P_2, Q) = (\frac{1}{5}(1,2), \frac{1}{5}(1,2), A_4 )$} \\ 
By the assumption of induction, there exist $0 < j < k < i+1$ such that $f_j : X_{j-1} \to X_j$ and $f_k : X_{k-1} \to X_k$ contract a quasi-line passing through a singular point of type $\frac{1}{3}(1,1)$ and a singular point of type $\frac{1}{4}(1,1)$ respectively.
Set $f := f_{i+1} \circ f_i \circ \cdots \circ f_j : X_{j-1} \to X_{i+1}$.
By Lemma \ref{sm_on_qline}, any exceptional curves are not contracted to smooth points.
Hence we see that the dual graph of $\pi_{j-1}^{-1}(f^{-1}(Q))$ is as follows:
\begin{center}
\  \xygraph{
    \square ([]!{+(0,-.4)} {-3}) - [r]
    \bullet ([]!{+(0,-.4)} {-1}) - [r]
    \bigcirc ([]!{+(0,-.4)} {-4}) - [r]
    \bullet ([]!{+(0,-.4)} {-1}) - [r] 
    \bigcirc ([]!{+(0,-.4)} {-4}) - [r] 
    \bullet ([]!{+(0,-.4)} {-1}) - [r] 
    \square ([]!{+(0,-.4)} {-3}) }
\end{center}
This means that there exists a $T$-line on $X_{j-1}$.
This also contradicts the fact $X$ is a del Pezzo surface.
Thus this case does not occur.

Thus we obtain the following Table \ref{ext_cont_table}.

{\small
\begin{table}[htb]
\caption{$(P_1, P_2, Q)$}\label{ext_cont_table}
\renewcommand{\arraystretch}{1.4}
  \begin{tabular}{|c|c||c|c|c|c|c|c|c|c|} \hline
  & $P_2$ &  $A_1$ & $A_2$ & \multicolumn{2}{c|}{$A_3$}  & $\frac{1}{3}$(1,1) & $\frac{1}{4}$(1,1) &  \multicolumn{2}{c|}{$\frac{1}{5}$(1,2)}  \\ \hline
 $P_1$ & & & & a & b & & & a & b \\ \hline \hline
 $A_1$ & &  fib & & & & & & & \\ \hline
 $A_2$ & & / & / &  & & & & & \\ \hline
 $A_3$ & a &  /  & / & / &  & & &  & \\ \hline
 $A_3$ & b &  / & / & / & /  &   &  &  &  \\ \hline
 $\frac{1}{3}$(1,1) & & sm & fib & / & / & $A_2$ & & & \\ \hline
 $\frac{1}{4}$(1,1) & & $A_1$ & sm & fib & / & $\frac{1}{5}(1,1)$ & Lem \ref{Fuku} & & \\ \hline
 $\frac{1}{5}$(1,2) & a &  /  & / & / & / & sm & $A_2$ & / & \\ \hline
 $\frac{1}{5}$(1,2) & b &  sm & / & / & / & / & Case 1 & fib & Case 2 \\ \hline
  \end{tabular} \\
\end{table}
}

By this table, we see that $X_{i+1}$ satisfies the three conditions if it exists.
Thus we see that each $X_i$ satisfies the three conditions by induction.
In particular, each $X_i$ is a del Pezzo surface of type $\tB$.

\end{proof}

\begin{notation}

Let $f : V \to V_1$ be a birational morphism of surfaces.
Then $d_{V/V_1}$ denotes the value of difference of anti-canonical volumes $K_{V_1}^2 - K_V^2$. 

\end{notation}

\begin{proposition}\label{all_ext_cont}

Let $V$ be a del Pezzo surface of type $\tB$ which is obtained from a del Pezzo surface of type $\tA$.
Let $f : V \to V_1$ be an extremal contraction.
If $\dim V_1 = 2$, that is, $f$ is birational, then $f$ is one in Table \ref{bir_ext_cont}.

{\rm \small
\begin{table}[htb]
\caption{Birational extremal contractions}\label{bir_ext_cont}
\renewcommand{\arraystretch}{1.35}
  \begin{tabular}{|c|c|c|c|c|} \hline
    \multicolumn{1}{|l|}{} & \multicolumn{1}{c|}{From}
      & \multicolumn{1}{c|}{To} & \multicolumn{1}{c|}{$d_{V/V_1}$} & configurations  \\ \hline \hline
    $\sB_0$ & - & sm pt & 1  & $\bullet$ \\ \hline
    $\sB_{1}$ & $A_1$ & sm pt & 2 & $\bullet - \triangle$ \\
    $\sB_{2}$ & $A_2$ & sm pt & 3 & $\bullet - \triangle - \triangle$  \\
    $\sB_{3}$ & $A_3$ & sm pt & 4 & $\bullet - \triangle - \triangle - \triangle$  \\
    $\sB_{4}$ & $\frac{1}{3}$(1,1) & $A_1$  &  $\frac{2}{3}$ & $\bullet - \square$ \\ 
    $\sB_{5}$ & $\frac{1}{4}$(1,1) & $\frac{1}{3}$(1,1)  &  $\frac{1}{3}$ & $\bullet - \bigcirc$ \\ 
    $\sB_{6}$ & $\frac{1}{5}$(1,2) & $A_1$ &  $\frac{8}{5}$ & $\bullet - \triangle - \square$ \\ 
    $\sB_{7}$ & $\frac{1}{5}$(1,2) & $A_2$ & $\frac{3}{5}$ & $\bullet - \square - \triangle$ \\ \hline
    $\sB_{8}$ & $A_1$ and $\frac{1}{3}$(1,1) & sm pt  &  $\frac{8}{3}$ & $\triangle - \bullet - \square$ \\
    $\sB_{9}$ & $A_1$ and $\frac{1}{5}$(1,2) & sm pt &  $\frac{18}{5}$ & $\triangle - \bullet - \square - \triangle$  \\ 
    $\sB_{10}$ & $A_2$ and $\frac{1}{4}$(1,1) & sm pt  &  3 & $\triangle -  \triangle - \bullet - \bigcirc$ \\
    $\sB_{11}$ & $\frac{1}{3}$(1,1) and $\frac{1}{5}$(1,2)  & sm pt  &  $\frac{49}{15}$ & $\square - \bullet - \triangle - \square$ \\
    $\sB_{12}$ & $A_1$ and $\frac{1}{4}$(1,1) & $A_1$ &  1 & $\triangle - \bullet - \bigcirc$ \\
    $\sB_{13}$ & $\frac{1}{3}$(1,1) and $\frac{1}{3}$(1,1)  & $A_2$  & $\frac{1}{3}$ & $\square - \bullet - \square$ \\
    $\sB_{14}$ & $\frac{1}{3}$(1,1) and $\frac{1}{4}$(1,1)  & $\frac{1}{5}$(1,2)  & $\frac{1}{15}$ & $\square - \bullet - \bigcirc$ \\
    $\sB_{15}$ & $\frac{1}{5}$(1,2) and $\frac{1}{4}$(1,1)  &$A_2$ &  $\frac{3}{5}$ & $\square - \triangle - \bullet - \bigcirc$ \\
    $\sB_{16}$ & \ $\frac{1}{5}$(1,2) and $\frac{1}{3}$(1,1) \  &$A_3$  & $\frac{4}{15}$ & $\triangle - \square - \bullet - \square$ \\ \hline 
  \end{tabular}
\end{table}
}

If $\dim V_1 = 1$, that is, $f$ is a $\pr^1$-fibration, then $f$ is one in Table \ref{fibers}.

{\rm \small
\begin{table}[htb]
\caption{Non birational extremal contractions ($\pr^1$-fibration)}\label{fibers}
\renewcommand{\arraystretch}{1.3}
  \begin{tabular}{|c|c|c|c|c|} \hline
    \multicolumn{1}{|l|}{} & \multicolumn{1}{c|}{From}  & configurations \\ \hline \hline
    $\sC_1$ & -  &  \\
    $\sC_2$ & $A_3$ & \xygraph{
	\triangle ([]!{+(0,-.3)} ) 
	- [r]	\triangle ([]!{+(.3,-.3)} ) 
       		(- [d] \bullet ([]!{+(.5,0)}))
        - [r]	\triangle ([]!{+(.0,-.3)} )
}  \\
    $\sC_3$ & $A_1$ and $A_1$ & \hspace{16mm} \xygraph{
	\triangle ([]!{+(0,-.3)} ) 
	- [r]	\bullet ([]!{+(.3,-.3)} ) 
        - [r]	\triangle ([]!{+(.0,-.3)} )
} \hspace{16mm} \\
  \end{tabular}
\end{table}
}

{\rm \small
\begin{table}[htb]
\renewcommand{\arraystretch}{1.4}
  \begin{tabular}{|c|c|c|c|c|} 
    \multicolumn{1}{|l|}{No.} & \multicolumn{1}{c|}{From}  & configurations \\ \hline \hline
    $\sC_4$ & $\frac{1}{3}$(1,1) and $A_2$ & \xygraph{
	\square ([]!{+(0,-.3)}) 
	- [r]	\bullet ([]!{+(.3,-.3)}) 
        - [r]	\triangle ([]!{+(.0,-.3)})
        - [r]	\triangle ([]!{+(.0,-.3)})
} \\ 
    $\sC_5$ & $\frac{1}{4}$(1,1) and $A_3$ & \xygraph{
	\bigcirc ([]!{+(0,-.3)}) 
	- [r]	\bullet ([]!{+(.3,-.3)}) 
        - [r]	\triangle ([]!{+(.0,-.3)})
        - [r]	\triangle ([]!{+(.0,-.3)})
        - [r]	\triangle ([]!{+(.0,-.3)})
}  \\ 
    $\sC_6$ & $\frac{1}{5}$(1,2) and $\frac{1}{5}$(1,2) & \xygraph{
	\triangle ([]!{+(0,-.3)}) 
        - [r]	\square ([]!{+(.0,-.3)})
	- [r]	\bullet ([]!{+(.3,-.3)}) 
        - [r]	\triangle ([]!{+(.0,-.3)})
        - [r]	\square ([]!{+(.0,-.3)})
} \\ \hline
  \end{tabular}
\end{table}
}

Here `` {\rm From}" means singular points the exceptional curve passing through and `` {\rm To}" means a singular point to which the exceptional curve contracted.
The meaning of $\bullet$, $\triangle$, $\square$ and $\bigcirc$ is defined in Definition \ref{dual_graph}.

\end{proposition}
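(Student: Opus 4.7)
The plan is to split into two cases depending on whether $f$ is birational or a $\pr^1$-fibration, and in each case to enumerate the admissible local configurations on the minimal resolution $\pi \colon Y \to V$ using Lemmas \ref{Fuku}, \ref{nor}, \ref{no_neg}, and \ref{sm_on_qline} together with Proposition \ref{keep_B}.

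For the birational case, let $E$ denote the exceptional curve of $f$. By Lemma \ref{nor}, $E$ passes through at most two singular points of $V$, and $E_Y$ meets each connected component of $\Exc(\pi)$ in at most one irreducible curve; by Lemma \ref{no_neg}, $E_Y$ is a $(-1)$-curve. I would enumerate three subcases by the number $k$ of singular points on $E$. When $k=0$, $E=E_Y$ is a floating $(-1)$-curve, giving $\sB_0$. When $k=1$, for each of the six singularity types permitted in $\tB$ and each choice of end of its exceptional chain where $E_Y$ attaches, I would contract $E_Y$ and read off the image singularity via a standard Hirzebruch--Jung continued fraction calculation, recovering $\sB_1$--$\sB_7$. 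When $k=2$, I would enumerate the unordered pairs $(P_1, P_2)$ together with attachment data: Lemma \ref{Fuku} excludes $\{\frac{1}{4}(1,1), \frac{1}{4}(1,1)\}$; failure of negative-definiteness of the resulting linear chain on $Y$ rules out many combinations (these are precisely the ``$/$'' entries of Table \ref{ext_cont_table}); and the two delicate cases in which the contracted singularity would be $\frac{1}{7}(1,3)$ or $A_4$ (hence outside $\tB$) are excluded by the $T$-line arguments of Cases 1 and 2 in the proof of Proposition \ref{keep_B}. The nine surviving configurations give $\sB_8$--$\sB_{16}$. In each case the volume difference $d_{V/V_1}$ is computed from $K_V = f^{*}K_{V_1} + a E$ with $a>0$ (by Lemma \ref{keep_ldP}) together with the explicit intersection data on $Y$.

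For the $\pr^1$-fibration case, let $F$ be a general fiber of $f$. By numerical equivalence of fibers and adjunction, $F^2=0$ and $-K_V \cdot F = 2$. Writing $\pi^{*}F = F_Y + \sum a_i E_i$ with $a_i \in \Q_{\ge 0}$, the strict transform $F_Y$ is a $(-1)$-curve by Lemma \ref{no_neg}, and Lemma \ref{nor} forces $F$ to pass through at most two singular points. For each choice of the (at most two) singular points on $F$ and each compatible attachment pattern, I would solve the linear system $\pi^{*}F \cdot E_i = 0$ together with $(\pi^{*}F)^{2} = 0$ to determine the $a_i$; Lemma \ref{Fuku} rules out the $\{\frac{1}{4}(1,1), \frac{1}{4}(1,1)\}$ pair, and compatibility with the history of $V$ recorded in Proposition \ref{keep_B} eliminates the remaining impossible configurations. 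The surviving configurations are exactly $\sC_1$ (smooth fiber), $\sC_2$, $\sC_3$, $\sC_4$, $\sC_5$, and $\sC_6$.

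The main obstacle is the systematic bookkeeping of Case $k=2$ in the birational part: more than twenty pairs of singularity types, each with up to two attachment choices, must be tested for contractibility and, when contractible, for membership of the image singularity in $\tB$. Fortunately, Table \ref{ext_cont_table} in the proof of Proposition \ref{keep_B} already carries out the bulk of this enumeration; the present proof essentially reorganises and completes that table by recording the image singularity and the value of $d_{V/V_1}$ in each surviving case.
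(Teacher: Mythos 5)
Your proposal is correct and follows essentially the same route as the paper: the paper's own proof consists of the single line ``The assertion follows from Tables \ref{sing_1_table} and \ref{ext_cont_table}'', i.e.\ it delegates the entire enumeration to the case analysis in the proof of Proposition \ref{keep_B}, which is exactly the enumeration you reconstruct (splitting by the number of singular points on the exceptional curve, invoking Lemmas \ref{Fuku}, \ref{nor}, \ref{no_neg} and the two $T$-line arguments, and discarding the non-negative-definite pairs). Your added details on computing $d_{V/V_1}$ and on the fibration case via adjunction are consistent elaborations of what the paper leaves implicit in the tables.
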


\begin{proof}

The assertion follows from Tables \ref{sing_1_table} and \ref{ext_cont_table}.

\end{proof}

Moreover, observing the proof of Proposition \ref{keep_B}, we see that we can use the same notation even for non del Pezzo surfaces.
The following lemma is needed when we play two ray games (Subsection \ref{2_ray}).

\begin{lemma}\label{non_del}

Let $Y$ be a rational surface of rank two with at most $A_1, A_2, A_3$, $\frac{1}{3}(1,1)$ and $\frac{1}{4}(1,1)$-singularities.
Let $\psi : Y \to Z$ be an extremal contraction.
Then $\psi$ is of one type in $\sB_0$, $\sB_1$, $\sB_2$, $\sB_3$, $\sB_4$, $\sB_5$, $\sB_8$, $\sB_{10}$, $\sB_{12}$, $\sB_{13}$, $\sB_{14}$, $\sC_1$, $\sC_2$, $\sC_3$, $\sC_4$ and $\sC_5$ in Tables \ref{bir_ext_cont} and \ref{fibers}.

\end{lemma}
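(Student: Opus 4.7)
The plan is to observe that the classification in Propositions \ref{keep_B} and \ref{all_ext_cont} is essentially local near the $\psi$-exceptional curve or fiber, and that the del Pezzo hypothesis enters only to exclude two auxiliary subcases involving $\frac{1}{5}(1,2)$-singularities, neither of which can arise on the $Y$ of this lemma.

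First I would note that $\psi$, being extremal, is automatically $K_Y$-negative, so the two workhorses from this section apply verbatim: Lemma \ref{Fuku}, whose proof uses only $K$-negativity of the contraction, forbids $\psi$ from contracting a $T$-line, and Lemma \ref{nor}, which holds for any normal projective surface with quotient singularities, bounds the number of singular points on the $\psi$-exceptional curve by two and bounds by one the intersection of its strict transform in the minimal resolution with each connected component of the exceptional locus. These two facts together force the $\psi$-exceptional curve to realize one of the combinatorial configurations enumerated in Tables \ref{sing_1_table} and \ref{ext_cont_table} in the proof of Proposition \ref{keep_B}.

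I would then walk through those tables restricted to source singularities in $\{A_1,A_2,A_3,\frac{1}{3}(1,1),\frac{1}{4}(1,1)\}$. The ``/''-entries are eliminated either by Lemma \ref{Fuku} (for the $\frac{1}{4}(1,1)+\frac{1}{4}(1,1)$ slot) or by negative-definiteness of the contracted configuration, the latter being a purely local intersection-form argument independent of any global positivity. The ``Case 1'' and ``Case 2'' arguments inside the proof of Proposition \ref{keep_B} both involve $\frac{1}{5}(1,2)$-singularities on the source and are therefore vacuous here. What survives among the birational types is exactly $\sB_0,\sB_1,\sB_2,\sB_3,\sB_4,\sB_5,\sB_8,\sB_{10},\sB_{12},\sB_{13},\sB_{14}$, namely those entries of Table \ref{bir_ext_cont} whose source singularities lie in the allowed set. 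For $\dim Z = 1$ one has $Z \cong \pr^1$ since $Y$ is rational of rank two; the general fiber is a smooth $\pr^1$ by the adjunction formula applied to the extremal ray, and each singular fiber in the minimal resolution $Y' \to Y$ is a tree of rational curves containing a unique non-exceptional component, which must be a $(-1)$-curve. A short local enumeration over the five allowed singularity types leaves exactly the configurations $\sC_1,\sC_2,\sC_3,\sC_4,\sC_5$ of Table \ref{fibers}, $\sC_6$ being excluded since it requires two $\frac{1}{5}(1,2)$-points.

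The point where care is needed is the fibration case, since the del Pezzo hypothesis is dropped. The task reduces to verifying, for a $K_Y$-negative extremal fibration $\psi:Y\to\pr^1$ on a rank-two rational surface with only the allowed quotient singularities, that the general fiber is a smooth $\pr^1$ and that any reducible fiber of the induced fibration on $Y'$ contains a unique non-exceptional $(-1)$-component. Both statements are local near the fiber and follow from extremality together with the local structure of the allowed singularities, so the enumeration proceeds exactly as in the classification leading to Table \ref{fibers}.
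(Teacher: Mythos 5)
Your proposal is correct and follows essentially the same route as the paper: the paper's proof simply invokes Lemma \ref{nor} and the absence of $\frac{1}{5}(1,2)$-points to read the candidates off Tables \ref{bir_ext_cont} and \ref{fibers}, relying on the remark that the classification in Proposition \ref{keep_B} carries over to non--del Pezzo surfaces. Your version just makes explicit what the paper leaves implicit — that Lemmas \ref{Fuku} and \ref{nor} need only $K$-negativity and quotient singularities, and that the two special cases in Proposition \ref{keep_B} requiring global positivity are vacuous here — which is a welcome but not substantively different elaboration.
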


\begin{proof}

By Lemma \ref{nor}, we see that the exceptional curve passes through at most two singular points.
Since $Y$ does not have singular points of type $\frac{1}{5}(1,2)$, we see the candidates easily.

\end{proof}

\begin{notation}

Let $\varphi_i$ be an extremal contraction of type $\sB_{s_i}$ for $1 \le i \le n$ and $\varphi = \varphi_1 \circ \cdots \circ \varphi_n$.
Then we call $\varphi$ of type $\sB_{s_1} \circ \cdots \circ \sB_{s_n}$.

\end{notation}

\subsection{Compositions of extremal contractions}

\begin{definition}\label{2_min}

Let $X$ be a del Pezzo surface of type $\tA$.
$X$ is called {\it $\smor$-minimal} if for any composition of birational extremal contractions $\varphi : X \to X_1$, the center of $\varphi$ is contained in $\Sing X_1$.

\end{definition}

$\smor$-minimal surfaces play important roles in Section \ref{candidate_sec}.
There are six classes of $\smor$-minimal surfaces, which is proved in Proposition \ref{T_min}.
By definition, we obtain the following lemma which justifies the name $\smor$-minimal.

\begin{lemma}\label{A-min}

Let $T_{min}$ be a $\smor$-minimal del Pezzo surface and $f : T_{min} \to U_1$ a composition of extremal contractions. 
Then $U_1$ does not have any birational extremal contractions $\psi : U_1 \to Z$ whose center is a smooth point.

\end{lemma}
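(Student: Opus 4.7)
The plan is to argue by contradiction, directly from the definition of $\smor$-minimality. Suppose for contradiction that there exists a birational extremal contraction $\psi : U_1 \to Z$ whose center is a smooth point $p \in Z$. I would then consider the composition $\psi \circ f : T_{min} \to Z$ and derive a contradiction to the $\smor$-minimality of $T_{min}$.

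First I would verify that $\psi \circ f$ qualifies as a composition of birational extremal contractions in the sense of Definition \ref{2_min}. By hypothesis $f$ is such a composition, and by iterated application of Lemma \ref{keep_ldP} each intermediate surface, and in particular $U_1$, remains a del Pezzo surface with at most quotient singularities. Thus $\psi$ is a legitimate further birational extremal contraction, and $\psi \circ f : T_{min} \to Z$ is again a composition of the kind considered in the definition.

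Next I would examine the center of $\psi \circ f$ on $Z$. By construction this center contains the image $p$ of the $\psi$-exceptional curve, and $p$ is smooth on $Z$ by our assumption. Hence the center of $\psi \circ f$ is not entirely contained in $\Sing Z$, which directly contradicts the defining property of $\smor$-minimality of $T_{min}$.

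The argument is essentially a tautological unwinding of Definition \ref{2_min}, so no real obstacle is expected. The only point requiring mild care is the interpretation of ``center of a composition'' as the set of points in the target to which the total exceptional locus is mapped; once this is granted, the presence of the single smooth point $p$ in that set is already enough to force the contradiction.
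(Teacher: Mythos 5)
Your proof is correct and follows essentially the same route as the paper: assume such a $\psi$ exists, form the composition $\psi \circ f : T_{min} \to Z$, and observe that its center contains a smooth point, contradicting Definition \ref{2_min}. The paper's own proof is just a terser version of this same unwinding of the definition.
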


\begin{proof}

Assume that there is a composition of birational extremal contractions $\psi : U_1 \to Z$ such that its center is a smooth point.
Then $\psi \circ f : T_{min} \to Z$ is a birational contraction whose center is a smooth point.
This contradicts the definition of $\smor$-minimal.

\end{proof}

\begin{definition}\label{fst}
An extremal contraction of type $\sB_0 =: \fmor_1$ is called a {\it first morphism}. 
A composition of extremal contractions $\varphi$ is called a {\it second morphism} (resp. {\it third morphism}) if it satisfies the following two conditions {\rm (1)}, {\rm (2)} (resp.   two conditions {\rm (1)}, {\rm (3)}):  \\
  {\rm (1)} Each irreducible component of the exceptional curves passes through at least one singular point and the singular points which they pass through are only of type $\frac{1}{3}(1,1)$ or $\frac{1}{4}(1,1)$;  \\
  {\rm (2)} The center of $\varphi$ is a smooth point; \\
  {\rm (3)} The center of $\varphi$ is a singular point. 

\end{definition}

\begin{definition}

Let $S$ be a del Pezzo surface of type $\tA$ with no floating $(-1)$-curves.
A {\it $\smor$-sequence} from $S$ is a sequence of second morphisms
\[
S =: X_0 \overset{\beta_1}{\rightarrow} X_1 \overset{\beta_2}{\rightarrow} \cdots \overset{\beta_m}{\rightarrow} X_m =: T_{min}
\]
such that $T_{min}$ is $\smor$-minimal.

\end{definition}

\begin{lemma}\label{2nd_part}

Let $S$ be a del Pezzo surface of type $\tA$ with no floating $(-1)$-curves.
There exists a $\smor$-sequence from $S$.

\end{lemma}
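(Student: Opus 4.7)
The plan is to proceed by induction on the Picard number $\rho(S)$. For the base case $\rho(S)=1$, any nontrivial birational contraction would strictly decrease the Picard number below $1$, which is impossible; hence $S$ admits no nontrivial composition of birational extremal contractions and is vacuously $\smor$-minimal, so the empty sequence is a $\smor$-sequence from $S$.

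For the inductive step, assume the result for all del Pezzo surfaces of type $\tA$ with no floating $(-1)$-curves of smaller Picard number. If $S$ is $\smor$-minimal the empty sequence again works, so suppose it is not. By Definition \ref{2_min} there exists a composition $\psi : S \to S_1$ of birational extremal contractions whose center $P$ is a smooth point of $S_1$. The heart of the argument is to show that $\psi$ itself is a second morphism.

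Condition (2) in Definition \ref{fst} holds by the choice of $\psi$, so the task is to verify (1). Let $C\subset S$ be any irreducible component of the exceptional locus of $\psi$ and let $C_Y$ be its strict transform on the minimal resolution $\pi : Y\to S$. The morphism $\psi\circ \pi : Y\to S_1$ sends $C_Y$ to the smooth point $P$, and the Hodge index theorem applied to the pull-back of an ample divisor on $S_1$ forces $C_Y^2<0$. By Lemma \ref{no_neg} the curve $C_Y$ is then a $(-1)$-curve. If $C$ lay entirely in $S_{\sm}$ we would have $C=C_Y$, a floating $(-1)$-curve on $S$, contradicting the hypothesis; hence $C$ passes through a singular point of $S$, which is necessarily of type $\frac{1}{3}(1,1)$ or $\frac{1}{4}(1,1)$ because $S$ is of type $\tA$. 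This verifies (1), so $\psi$ is a second morphism. I expect this negativity argument---combining ``no floating $(-1)$-curve'' with ``type $\tA$'' via Lemma \ref{no_neg}---to be the main technical point of the proof; everything else is bookkeeping.

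To close the induction, I would observe that $\psi$ is an isomorphism away from its exceptional locus and replaces the contracted tree with a single smooth point, so $S_1$ retains only $\frac{1}{3}(1,1)$- and $\frac{1}{4}(1,1)$-singularities and remains of type $\tA$. By Lemma \ref{keep_ldP} the surface $S_1$ is again del Pezzo, by Corollary \ref{float} it has no floating $(-1)$-curves, and $\rho(S_1)<\rho(S)$ since $\psi$ is nontrivial. The inductive hypothesis then supplies a $\smor$-sequence $S_1\to X_2\to\cdots\to T_{min}$, and prepending $\psi$ produces the desired $\smor$-sequence from $S$.
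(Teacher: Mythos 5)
Your proposal is correct and follows essentially the same route as the paper: take a contraction with smooth center (which exists unless $S$ is already $\smor$-minimal), check it is a second morphism using that $S$ is of type $\tA$ and has no floating $(-1)$-curves, note the target inherits the hypotheses via Lemma \ref{keep_ldP} and Corollary \ref{float}, and terminate by the strict drop in Picard number. The only difference is presentational — you phrase the descent as induction on $\rho$ and spell out the verification of condition (1) via Lemma \ref{no_neg}, which the paper asserts in a single sentence.
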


\begin{proof}

We may assume that there exists a composition of extremal contractions $\beta_1$ whose center is a smooth point since otherwise $S$ is $\smor$-minimal and $S = T_{min}$.
Then $\beta_1$ satisfies the condition (2) in Definition \ref{fst}.
We also see that $\beta_1$ satisfies the condition (1) in Definition \ref{fst} since $S$ is of type $\tA$ and has no floating $(-1)$-curves.
Thus $\beta_1$ is a second morphism.
Then $X_1$ is also a del Pezzo surface of type $\tA$ with no floating $(-1)$-curves by Lemma \ref{sm_on_qline}.
Since $X_1$ satisfies the same assumption as $S$, we can repeat such contractions as many times as possible.
Since $\rho (S)$ is finite and $\rho(S) > \rho(X_1)$, a sequence of second morphisms is finite.
Therefore, we obtain a sequence $S \overset{\beta_1}{\rightarrow} \cdots \overset{\beta_m}{\rightarrow} T_{min}$ and a $\smor$-minimal surface $T_{min}$.

\end{proof}

\begin{definition}

Let $T_{min}$ be a $\smor$-minimal del Pezzo surface.
A { \it $\tmor$-sequence} from $T_{min}$ is a sequence of third morphisms
\[
T_{min}  =: X_0 \overset{\gamma_1}{\rightarrow} X_{1} \overset{\gamma_2}{\rightarrow} \cdots \overset{\gamma_n}{\rightarrow} X_n =: X_{min}
\]
satisfying the following conditions: \\
{\rm (1)} $X_{min}$ is minimal and of type $\tB$; \\
{\rm (2)} The centers of $\gamma_1, \ldots , \gamma_n$ are distinct singular points on $X_{min}$.

\end{definition}

\begin{lemma}\label{3rd_part}

Let $T_{min}$ be a $\smor$-minimal del Pezzo surface.
There exists a $\tmor$-sequence from $T_{min}$.

\end{lemma}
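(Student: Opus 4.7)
The plan is to proceed by induction on $\rho(T_{min})$. For the base case, if $T_{min}$ is already minimal, I would take the empty sequence and set $X_{min} := T_{min}$; this is of type $\tB$ by Proposition \ref{keep_B}.

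For the inductive step, assume $T_{min}$ is not minimal. Then $T_{min}$ admits a birational extremal contraction $\psi$, and by Lemma \ref{A-min} its center is a singular point. Since $T_{min}$ is of type $\tA$, its singularities are only of type $\frac{1}{3}(1,1)$ and $\frac{1}{4}(1,1)$, so inspection of Table \ref{bir_ext_cont} forces $\psi$ to be of type $\sB_4$, $\sB_5$, $\sB_{13}$ or $\sB_{14}$. In each of these cases the exceptional curve meets only $\frac{1}{3}(1,1)$ and $\frac{1}{4}(1,1)$ singularities and the image point is singular, so $\psi$ already qualifies as a third morphism in the sense of Definition \ref{fst}. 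I would set $\gamma_1 := \psi$ and $X_1 := \psi(T_{min})$, which is again a del Pezzo surface of type $\tB$ with smaller Picard number by Proposition \ref{keep_B} and Lemma \ref{keep_ldP}.

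To iterate the argument, I would strengthen the induction hypothesis to allow as source any del Pezzo surface $X$ of type $\tB$ reached from a $\smor$-minimal surface by a composition of third morphisms. By Lemma \ref{A-min}, every birational extremal contraction from such $X$ still has singular center. The difficulty is that $X$ may now carry $A_1$, $A_2$, or $\frac{1}{5}(1,2)$ singularities, so a single extremal contraction can fail condition (1) of Definition \ref{fst}. I would handle this by bundling: whenever the next extremal contraction sweeps out an $A_n$ or $\frac{1}{5}(1,2)$ singularity, that singularity was created by an earlier extremal contraction whose exceptional curve on $X$ did pass through $\frac{1}{3}(1,1)$ or $\frac{1}{4}(1,1)$ singularities; grouping them into a single composition yields a third morphism. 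Table \ref{3rd_mor} is expected to enumerate exactly the compositions that can arise in this way. Termination is guaranteed by the strict decrease of the Picard number.

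The part I expect to be the main obstacle is condition (2) of a $\tmor$-sequence, requiring that the centers of $\gamma_1, \ldots, \gamma_n$ trace to \emph{distinct} singular points on $X_{min}$. This demands a careful choice of extremal rays so that the singular point arising as the center of $\gamma_i$ is preserved by all subsequent $\gamma_j$ (i.e., none of them contracts a curve through it) and so that no two $\gamma_i$'s collide at the same point of $X_{min}$. I would arrange this by always preferring extremal rays whose exceptional loci are disjoint from the previously chosen centers, using the fact that distinct singular points on a del Pezzo surface are isolated and that the intermediate configurations are tightly constrained by Proposition \ref{all_ext_cont}.
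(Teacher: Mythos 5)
Your inductive set-up is reasonable as far as it goes, but there is a genuine gap exactly where you flag ``the main obstacle'': condition (2), the distinctness of the centers on $X_{min}$, is never established, and the mechanism you propose for it does not work. ``Preferring extremal rays whose exceptional loci are disjoint from the previously chosen centers'' presupposes that such a ray is always available, and it need not be: if every remaining birational extremal ray on the intermediate surface $X_i$ is generated by a curve through the center $Q_j$ of an earlier $\gamma_j$, then $Q_j$ does not survive to $X_{min}$, and the only remedy is to go back and enlarge $\gamma_j$ by absorbing the later contraction. That re-grouping is the same operation as the ``bundling'' you invoke for condition (1) --- both difficulties amount to a later contraction meeting a singular point created by, or chosen as the center of, an earlier one --- yet your argument treats them as separate and resolves neither: the bundling is asserted rather than shown to terminate in pairwise independent compositions, and the appeal to isolatedness of singular points and to Proposition \ref{all_ext_cont} has no bearing on whether a contracted curve passes through a previously chosen center.

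The paper closes both issues at once with a global argument your proposal never invokes. Fix the entire composition $f = f_N \circ \cdots \circ f_1 : T_{min} \to X_{min}$ and decompose its exceptional divisor on $T_{min}$ into connected components $\Gamma_1, \ldots, \Gamma_n$. Since $f$ is a birational morphism of normal surfaces its fibers are connected, so distinct components are contracted to distinct points of $X_{min}$, and condition (2) holds automatically. By $\smor$-minimality each such point is singular (otherwise contracting that component would be a composition of extremal contractions with smooth center), and each irreducible component of each $\Gamma_k$ must meet a singular point of $T_{min}$, necessarily of type $\frac{1}{3}(1,1)$ or $\frac{1}{4}(1,1)$ because $T_{min}$ is of type $\tA$; hence the contraction $\gamma_k$ of $\Gamma_k$ is a third morphism. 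Your step-by-step construction is in effect trying to rediscover this partition into connected fibers of $f$; to repair your proof you would need to show that the bundled compositions you build have exceptional loci equal to unions of connected components of the total exceptional divisor, at which point the global argument is both shorter and already complete.
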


\begin{proof}

Take a sequence of extremal contractions $T_{min} \overset{f_1}{\rightarrow} \cdots \overset{f_N}{\rightarrow} X_{min} $ and set $f := f_N \circ \cdots \circ f_1$.
Denote the connected components of the $f$-exceptional divisor by $\Gamma_1, \ldots , \Gamma_n$.
Since $T_{min}$ is $\smor$-minimal, each $\Gamma_i$ is contracted to a singular point $P_k$ on $X_{min}$.
Then we denote the contraction of $\Gamma_k$ by $\gamma_k$ and obtain a sequence
\[
T_{min}  =: X_0 \overset{\gamma_1}{\rightarrow} X_{1} \overset{\gamma_2}{\rightarrow} \cdots \overset{\gamma_n}{\rightarrow} X_n =: X_{min} .
\]
Then we can confirm that each $\gamma_k$ is a third morphism since $T_{min}$ is $\smor$-minimal.

\end{proof}

\begin{proposition}\label{mor_list}
Let $T$ and $U$ be del Pezzo surfaces.

A second morphism $\varphi : T \to T_1$ is one in the following list:

{\rm \small
\begin{table}[htb]
\caption{Second morphisms}\label{2nd_mor}
\renewcommand{\arraystretch}{1.35}
  \begin{tabular}{|c|c|c|c|c|} \hline
    \multicolumn{1}{|l|}{Name}  & \multicolumn{1}{c|}{Compositions} & \multicolumn{1}{c|}{($\frac{1}{3}$(1,1), $\frac{1}{4}$(1,1))} &  \multicolumn{1}{c|}{$d_{T/{T_1}}$}  \\ \hline \hline
    $\smor_1$ &  $\sB_{1} \circ \sB_4$ & (1,0) & $\frac{8}{3}$  \\ 
    $\smor_2$ &  $\sB_{1} \circ \sB_4 \circ \sB_5$ & (0,1) & 3 \\ 
    $\smor_3$ &  $\sB_{8} \circ \sB_4$ & (2,0) & $\frac{10}{3}$  \\ 
    $\smor_4$ &  $\sB_{10} \circ \sB_{13}$ & (2,1) & $\frac{10}{3}$  \\
    $\smor_5$ &  $\sB_8 \circ \sB_5 \circ \sB_4$ & (1,1) & $\frac{11}{3}$  \\
    $\smor_6$ &  $\sB_8 \circ \sB_4 \circ \sB_5$ & (1,1) & $\frac{11}{3}$   \\
    $\smor_7$ &  $\sB_{10} \circ \sB_{13} \circ \sB_5$ & (1,2) & $\frac{11}{3}$  \\
    $\smor_8$ &  $\sB_8 \circ \sB_{12} \circ \sB_4$ & (2,1) & $\frac{13}{3}$  \\   \hline
  \end{tabular}
\end{table}
}

A third morphism $\varphi : U \to U_1$ is one in the following list:

{\rm \small
\begin{table}[htb]
\caption{Third morphisms}\label{3rd_mor}
\renewcommand{\arraystretch}{1.4}
  \begin{tabular}{|c|c|c|c|c|c|} \hline
    \multicolumn{1}{|l|}{Name} & \multicolumn{1}{c|}{To}
      & \multicolumn{1}{c|}{Compositions} & \multicolumn{1}{c|}{($\frac{1}{3}$(1,1), $\frac{1}{4}$(1,1))} &  \multicolumn{1}{c|}{$d_{U/{U_1}}$} & \ \ configurations \ \ \\ \hline \hline
    $\tmor_1$ & $\frac{1}{5}$(1,2) &  $\sB_{14}$ & (1,1) & $\frac{1}{15}$  &  $\bigcirc - \bullet - \square$ \\ \hline
    $\tmor_2$ & $\frac{1}{3}$(1,1) &  $\sB_{5}$ & (0,1) & $\frac{1}{3}$  & $\bigcirc - \bullet$ \\ 
  \end{tabular}
\end{table}
}

{\rm \small
\begin{table}[htb]
\renewcommand{\arraystretch}{1.4}
  \begin{tabular}{|c|c|c|c|c|c|} 
    \multicolumn{1}{|l|}{Name} & \multicolumn{1}{c|}{To}
      & \multicolumn{1}{c|}{Compositions} & \multicolumn{1}{c|}{($\frac{1}{3}$(1,1), $\frac{1}{4}$(1,1))} &  \multicolumn{1}{c|}{$d_{U/{U_1}}$} & configurations \\ \hline \hline
    $\tmor_3$ & $A_3$ &  $\sB_{16} \circ \sB_{14}$ & (2,1) & $\frac{1}{3}$ & $\square - \bullet - \bigcirc - \bullet - \square$ \\ \hline
    $\tmor_4$ & $A_2$ &  $\sB_{15} \circ \sB_{14}$ & (1,2) & $\frac{2}{3}$ & $\bigcirc - \bullet - \square - \bullet - \bigcirc$ \\
    $\tmor_5$ & $A_2$ &  $\sB_{13}$ & (2,0) & $\frac{1}{3}$  & $\square - \bullet - \square$ \\ 
    $\tmor_6$ & $A_2$ &  $\sB_{13} \circ \sB_{5}$ & (1,1) & $\frac{2}{3}$ & $\square - \bullet - \bigcirc - \bullet$ \\  \hline
    $\tmor_7$ & $A_1$ &  $\sB_{4}$ & (1,0) & $\frac{2}{3}$ &  $\square - \bullet$ \\ 
    $\tmor_8$ & $A_1$ &  $\sB_{4} \circ \sB_5$ & (0,1) & 1  &  $\bullet - \bigcirc - \bullet$ \\ 
    $\tmor_9$ & $A_1$ &  $\sB_{12} \circ \sB_4$ & (1,1) & $\frac{5}{3}$  & $\bigcirc - \bullet - \square - \bullet$ \\ \hline
  \end{tabular}
\end{table}
}

Here the exceptional curves of a second morphism of type $\smor_i$ (resp. $\tmor_j$ ) is called a $\smor_i$-line pair (resp. $\tmor_j$-line pair).
{ \rm  ``($\frac{1}{3}$(1,1), $\frac{1}{4}$(1,1))"} means the numbers of singular points contracted by the morphisms.
{ \rm ``configurations"} means the dual gragh of total transform of $\tmor_j$-line pair by the minimal resolution of $U$.
The ones of second morphisms are listed in Corollary \ref{config}.

\end{proposition}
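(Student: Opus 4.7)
The plan is to lift $\varphi$ to the minimal resolutions and to carry out a combinatorial enumeration of the admissible compositions of extremal contractions.

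Let $\pi : T_Y \to T$ and $\pi_1 : (T_1)_Y \to T_1$ be the minimal resolutions. Writing $\varphi = \varphi_k \circ \cdots \circ \varphi_1$ as a composition of extremal contractions between del Pezzo surfaces (justified by Proposition \ref{keep_B}), each $\varphi_i$ is one of the seventeen types $\sB_0, \ldots, \sB_{16}$ listed in Table \ref{bir_ext_cont} (Proposition \ref{all_ext_cont}). Condition (1) of Definition \ref{fst} rules out $\sB_0$ and, more generally, forces the strict transform on $T$ of each $\varphi_i$-contracted curve to pass only through singularities of $T$ of type $\frac{1}{3}(1,1)$ or $\frac{1}{4}(1,1)$. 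Equivalently, every singularity of an intermediate surface $T_{i-1}$ touched by $\varphi_i$ must be either an original $\frac{1}{3}(1,1)$- or $\frac{1}{4}(1,1)$-singularity of $T$ still present on $T_{i-1}$, or a singularity created by some earlier $\varphi_j$ from such an original singularity.

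Next I would enumerate all sequences $(\varphi_1, \ldots, \varphi_k)$ satisfying these constraints and such that $\varphi$ has a single center, smooth in the $\smor$ case and singular in the $\tmor$ case. Each $\varphi_i$ can be read from Table \ref{bir_ext_cont} as a transition on multisets of singular types (for example $\sB_4$ replaces $\frac{1}{3}(1,1)$ by $A_1$, $\sB_5$ replaces $\frac{1}{4}(1,1)$ by $\frac{1}{3}(1,1)$, $\sB_{13}$ fuses two $\frac{1}{3}(1,1)$'s into an $A_2$, $\sB_{14}$ fuses a $\frac{1}{3}(1,1)$ and a $\frac{1}{4}(1,1)$ into a $\frac{1}{5}(1,2)$, and so on). I would run the enumeration by induction on the pair $(n_3, n_4)$ counting the original singularities consumed: the base case involves only a handful of local possibilities, and Lemma \ref{nor} bounds the number of new $\varphi_i$ attachable at any fixed singularity, keeping the enumeration finite at each step. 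The terminal condition (smooth center, or a single singularity of one of the six types $A_1, A_2, A_3, \frac{1}{3}(1,1), \frac{1}{4}(1,1), \frac{1}{5}(1,2)$ allowed on a surface of type $\tB$) then filters the enumerated sequences down to the lists in Tables \ref{2nd_mor} and \ref{3rd_mor}. The invariant $d_{T/T_1} = K_{T_1}^2 - K_T^2$ is additive over the factorization, so it is computed by summing the $d_{V/V_1}$ column of Table \ref{bir_ext_cont}; the configuration column of Table \ref{3rd_mor} is the dual graph of the total transform of the $\varphi$-exceptional divisor on the minimal resolution of $U$, read directly from the factorization.

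The main obstacle is to exclude the ``parasitic'' sequences that respect the local transitions of Table \ref{bir_ext_cont} but violate global constraints. The crucial exclusion rules out sequences forcing an intermediate del Pezzo surface to contain a $T$-line, which is prohibited by Lemma \ref{Fuku}: for instance, in $\sB_{14} \circ \sB_5$ the $\frac{1}{3}(1,1)$ newly created by $\sB_5$ is fused with a further $\frac{1}{4}(1,1)$ by $\sB_{14}$, and the pull-back of the $\sB_{14}$-contracted curve to $T$ is a quasi-line through two $\frac{1}{4}(1,1)$-singularities, i.e.\ a $T$-line, which is forbidden. Analogous exclusions follow from Lemma \ref{sm_on_qline} (no quasi-line passes through a smooth center) and Corollary \ref{float} (no floating $(-1)$-curves appear on intermediate surfaces). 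Once these restrictions are imposed, the enumeration produces precisely the eight types $\smor_i$ and the nine types $\tmor_j$ claimed in the tables.
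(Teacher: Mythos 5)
Your plan is essentially the paper's own proof: both decompose $\varphi$ into the extremal contractions of Table \ref{bir_ext_cont}, enumerate admissible compositions backward from the terminal condition (the paper organizes this by the type of the produced singularity for third morphisms and by the final contraction $\sB_{i_t}$ for second morphisms, which amounts to the same bookkeeping as your induction on consumed singularities), and prune parasitic sequences exactly as you describe via Lemma \ref{Fuku}, Lemma \ref{sm_on_qline} and the $T$-line argument for $\sB_{14}\circ\sB_5$. The only point to keep in mind when executing the enumeration is that the classification is by configuration rather than by symbolic composition — e.g.\ $\sB_8\circ\sB_4\circ\sB_5$ admits two distinct configurations, realizing both $\smor_5$ and $\smor_6$ — so the multiset-of-singularities bookkeeping must be refined to dual graphs at the final step, as you indicate.
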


\begin{proof}

We first consider third morphisms.
Let $\varphi : T \to T_1$ be a third morphism.
Denote by $P$ the singular point to which $\varphi$ contracts curves.
By definition, $\varphi$ is decomposed into several birational cotractions of extremal rays Table \ref{bir_ext_cont}.
Assume that $\varphi$ is of type $\sB_{i_s} \circ \cdots \circ \sB_{i_1}$.
By Lemma \ref{sm_on_qline}, there is no contraction contracting a curve to a smooth point on a quasi-line.
Hence we consider only contractions of extremal rays whose centers are singular points.
Thus we see that $i_j = 4, 5, 6, 7, 12, 13, 14, 15$ or 16 for all $\sB_{i_j}$. 

\noindent{\bf Case 1 : $P$ is a singular point of type $\frac{1}{4}(1,1)$} \\
We see that singular points of type $\frac{1}{4}(1,1)$ cannot be produced by any contractions by Proposition \ref{keep_B}.

\noindent{\bf Case 2 :  $P$ is a singular point of type $\frac{1}{3}(1,1)$} \\ 
A singular point of type $\frac{1}{3}(1,1)$ on a minimal surface is produced only by a contraction of type $\sB_{5}$ if it is produced by some contractions.
We also see that singular points of type $\frac{1}{4}(1,1)$ cannot be produced by any contractions.
Hence $\varphi$ is of type  $\sB_5$.
We denote this type by $\tmor_2$.

\noindent{\bf Case 3 :  $P$ is a singular point of type $\frac{1}{5}(1,2)$} \\ 
A singular point of type $\frac{1}{5}(1,2)$ on a minimal surface is produced only by a contraction of type $\sB_{14}$.
A contraction of type $\sB_{14}$ needs one singular point of type $\frac{1}{3}(1,1)$ and one singular point of type $\frac{1}{4}(1,1)$.
If the singular point of type $\frac{1}{3}(1,1)$ is produced by some contractions, the contraction is of type $\sB_5$.
If $\varphi$ is of type $\sB_{14} \circ \sB_5$, however, then $\Exc (\varphi )$ has a $T$-line. 
Hence $\varphi$ is of type $\sB_{14}$.
We denote this type by $\tmor_1$.

\noindent{\bf Case 4 :  $P$ is a singular point of type $A_3$} \\ 
A singular point of type $A_3$ on a minimal surface is produced only by a contraction of type $\sB_{16}$.
A contraction of type $\sB_{16}$ needs one singular point of type $\frac{1}{5}(1,2)$ and one singular point of type $\frac{1}{3}(1,1)$.
Singular points of type $\frac{1}{5}(1,2)$ do not exist on a del Pezzo surface of type $\tA$ and they are produced only by a contraction of type $\sB_{14}$.
If the singular point of type $\frac{1}{3}(1,1)$ is produced by an extremal contraction, then we see that $\Exc (\varphi )$ has a $T$-line.
Hence $\varphi$ is of type $\sB_{16} \circ \sB_{14}$.
We denote this type by $\tmor_3$.

\noindent{\bf Case 5 :  $P$ is a singular point of type $A_2$} \\ 
The type of last extremal contraction $\sB_{i_s}$ must be $\sB_{13}$ or $\sB_{15}$.
In the same manner we see that there are four possible types of $\varphi$, $\sB_{15} \circ \sB_{7}$, $\sB_{13}$, $\sB_{13} \circ \sB_5$ or $\sB_{15} \circ \sB_{14}$.
We see that $\sB_{15} \circ \sB_{7} = \sB_{13} \circ \sB_5$.
Hence $\varphi$ is of type $\sB_{13}$, $\sB_{13} \circ \sB_5$ or $\sB_{15} \circ \sB_{14}$.
We denote them by $\tmor_4, \tmor_5, \tmor_6$ respectively.

\noindent{\bf Case 6 :  $P$ is a singular point of type $A_1$} \\ 
The last type of extremal contraction $\sB_{i_s}$ must be $\sB_{4}, \sB_6$ or $\sB_{12}$.
A contraction of type $\sB_4$ needs one singular point of type $\frac{1}{3}(1,1)$.
If the singular point of type $\frac{1}{3}(1,1)$ is produced by some contractions, then the contraction is of type $\sB_5$.
We denote the type $\sB_4$ by $\tmor_7$ and $\sB_4 \circ \sB_5$ by $\tmor_8$.
A contraction of type $\sB_6$ needs one singular point of type $\frac{1}{5}(1,2)$.
Singular points of type $\frac{1}{5}(1,2)$ must be produced by a contraction of type $\sB_{14}$.
We denote $\sB_6 \circ \sB_{14}$ by $\tmor_9$.
A contraction of type $\sB_{12}$ needs one singular point of type $\frac{1}{4}(1,1)$ and one singular point of type $A_1$.
Then the singular point of $A_1$ is produced only by a contraction of type $\sB_4$. 
Otherwise, it is produced by a contraction of type $\sB_6$ or $\sB_{12}$ and then $\Exc (\sB_{12} \circ \sB_6)$ and $\Exc (\sB_{12} \circ \sB_{12})$ have a $T$-line, which is a contradiction. 
Then we also see that $\sB_{12} \circ \sB_4 = \tmor_9$.

By these considerations, we obtain Table \ref{3rd_mor}.

Next, we consider second morphisms.
Let $\varphi : U \to U_1$ be a second morphism.
By definition, $\varphi$ is decomposed into several birational extremal contractions in Table \ref{bir_ext_cont}.
We may assume that $\varphi$ is of type $\sB_{i_t} \circ \cdots \circ \sB_{i_1}$.
Since the center of $\varphi$ is a smooth point, we see that the center of the last extremal contraction is also a smooth point.
Thus we see that the type $\sB_{i_t}$ of the last extremal contraction is one of the seven types $\sB_1, \sB_2, \sB_3, \sB_8, \sB_9, \sB_{10}$ and $\sB_{11}$.

\noindent{\bf Case 7 :  $\sB_{i_t}$ = $\sB_1$} \\
An extremal contraction of type $\sB_1$ contracts a curve passing through a singular point of type $A_1$.
Therefore, this case can be reduced to how to produce a singular point of type $A_1$ (Case 6).
Thus we see that candidates of the type of $\varphi$ is one of $\sB_1 \circ \sB_4$, $\sB_1 \circ \sB_4 \circ \sB_5$ and $\sB_1 \circ \sB_{12} \circ \sB_4$.
We denote them by $\smor_1, \smor_2, \smor_5$ respectively.

\noindent{\bf Case 8 :  $\sB_{i_t}$ = $\sB_2$} \\
An extremal contraction of type $\sB_2$ contracts a curve passing through a singular point of type $A_2$.
Therefore, this case can be reduced to how to produce a singular point of type $A_2$ (Case 5).
Thus we see that candidates of the type of $\varphi$ is one of $\sB_2 \circ \sB_{13}$, $\sB_2 \circ \sB_{13} \circ \sB_5$ and $\sB_2 \circ \sB_{15} \circ \sB_{14}$.
We denote the type $\sB_2 \circ \sB_{13}$ by $\smor_3$ and the type $\sB_2 \circ \sB_{15} \circ \sB_{14}$ by $\smor_7$.
A remarkable point is that there are two possible types of $\sB_2 \circ \sB_{13} \circ \sB_5$.
Moreover, one of the two types is the same type as $\smor_5$.
Thus we denote the other type by $\smor_6$.

\noindent{\bf Case 9 :  $\sB_{i_t}$ = $\sB_3$} \\
An extremal contraction of type $\sB_3$ contracts a curve passing through a singular point of type $A_3$.
Therefore, this case can be reduced to how to produce a singular point of type $A_3$ (Case 4).
Thus we see that the candidate of the type of $\varphi$ is $\sB_{3} \circ \sB_{16} \circ \sB_{14}$.
We denote it by $\smor_8$.

\noindent{\bf Case 10 :  $\sB_{i_t}$ = $\sB_8$} \\
An extremal contraction of type $\sB_8$ contracts a curve passing through a singular point of type $A_1$ and a singular point of type $\frac{1}{3}(1,1)$.
Therefore, this case can be reduced to how to produce a singular point of type $A_1$ and a singular point of type $\frac{1}{3}(1,1)$ (Case 2 and Case 6).
Hence there are six candidates of the type of $\varphi$.
In them, the ones which does not have any $T$-lines are the three cases $\sB_8 \circ \sB_4$, $\sB_8 \circ \sB_4 \circ \sB_5$ and $\sB_8 \circ \sB_{12} \circ \sB_4$.
Then we see that of type $\sB_8 \circ \sB_4$ is the same type as $\smor_3$ and $\sB_8 \circ \sB_{12} \circ \sB_4$ is the same as $\smor_8$.
A remarkable point is that there are two possible types of $\sB_8 \circ \sB_4 \circ \sB_5$.
Then we see that one is the type $\smor_5$ and the other is the type $\smor_6$. 

\noindent{\bf Case 11 :  $\sB_{i_t}$ = $\sB_9$} \\
An extremal contraction of type $\sB_9$ contracts a curve passing through a singular point of type $A_1$ and a singular point of type $\frac{1}{5}(1,2)$.
Therefore, this case can be reduced to how to produce a singular point of type $A_1$ and a singular point of type $\frac{1}{5}(1,2)$ (Case 3 and Case 6).
As in previous cases, we see that the possible case is only $\sB_{9} \circ \sB_{14} \circ \sB_{4}$.
This type is the same as $\smor_8$.

\noindent{\bf Case 12 :  $\sB_{i_t}$ = $\sB_{10}$} \\
An extremal contraction of type $\sB_{10}$ contracts a curve passing through a singular point of type $A_2$ and a singular point of type $\frac{1}{4}(1,1)$.
Therefore, this case can be reduced to how to produce a singular point of type $A_2$ (Case 5).
As in previous cases, we see that the possible cases are $\sB_{10} \circ \sB_{13}$ and $\sB_{10} \circ \sB_{13} \circ \sB_{5}$.
The type $\sB_{10} \circ \sB_{13} \circ \sB_{5}$ is the same as $\smor_7$.
We denote $\sB_{10} \circ \sB_{13}$ by $\smor_4$.

\noindent{\bf Case 13 :  $\sB_{i_t}$ = $\sB_{11}$} \\
An extremal contraction of type $\sB_{11}$ contracts a curve passing through a singular point of type $\frac{1}{3}(1,1)$ and a singular point of type $\frac{1}{5}(1,2)$ (Case 2 and Case 3).
As in previous cases, we see that the possible cases are $\sB_{11} \circ \sB_{14}$ and $\sB_{11} \circ \sB_{14} \circ \sB_{5}$.
We see that they are the same types as $\smor_4$ and $\smor_7$ respectively.

By these considerations, we obtain Table \ref{2nd_mor}.

\end{proof}

\subsection{Minimal directed sequences}
In this subsection, we define a direction for $\smor$-sequences and $\tmor$-sequences, which is the essential ingredient in this paper.

\begin{notation}

We prepare notation in order to define a direction for $\smor$-sequences and $\tmor$-sequences.

\noindent{\bf $\bullet$ Sets ${\rm Mor}_{\smor}(S)$ and ${\rm Mor}_{\tmor}(T_{min})$}

Let $S$ be a del Pezzo surface of type $\tA$ with no floating $(-1)$-curves.
By Lemma \ref{sm_on_qline}, we see that the centers of all second morphisms in a $\smor$-sequence are disjoint.
Thus we can change the order of second morphisms in a $\smor$-sequence.
Hence we consider only the well-ordered set ${\rm Mor}_{\smor}(S) := \{ S =: X_0 \overset{\smor_{a_1}}{\to} X_1 \cdots \overset{\smor_{a_m}}{\to} X_m =T_{min} \ | \ a_i \le a_j \ {\rm for \ any} \ i < j \}$.
For a $\smor$-minimal del Pezzo surface $T_{min}$, we can also change the order of third morphisms in a $\tmor$-sequence.
Hence we can define ${\rm Mor}_{\tmor}(T_{min}) := \{ T_{min} =: X_0 \overset{\tmor_{b_1}}{\to} X_1 \cdots \overset{\tmor_{b_n}}{\to} X_n =X_{min}  \ | \ b_i \le b_j \ {\rm for \ any} \ i < j \}$.
Note that ${\rm Mor}_{\smor}(S)$ and ${\rm Mor}_{\tmor}(T_{min})$ are finite sets since $S$ and $T_{min}$ are del Pezzo surfaces.

\noindent{\bf $\bullet$ Ordered sets $D_{\smor}$ and $D_{\tmor}$}

Set $D_{\smor} := \{ (a_1, \ldots , a_m ) \in \bigsqcup_{k \in \N} \{ 1, \ldots , 8 \}^k \ | \ a_i \le a_j \ {\rm for \ any} \ i < j \} $ and
 $D_{\tmor} := \{ (b_1, \ldots , b_n ) \in \bigsqcup_{k \in \N} \{ 1, \ldots , 9 \}^k \ | \ b_i \le b_j \ {\rm for \ any} \ i < j \}$, where $\bigsqcup$ is the notation of disjoint union and $\N$ is the set of positive integers.
We define a total order $\prec$ for $D_{\smor}$ and $D_{\tmor}$ as follows.
For $(a_1, \ldots , a_m ) , (b_1, \ldots , b_n) \in D_{i}$, 
$(a_1, \ldots , a_m ) \prec (b_1, \ldots , b_n)$ if and only if they satisfy

(1) $m > n$, or 

(2) $m = n$ and $(a_1, \ldots , a_m) \le_{lex} (b_1, \ldots , b_m )$ , where $\le_{lex}$ is the lexicographical order in $\N^m$.

\noindent{\bf $\bullet$ Maps $N_{\smor}$ and $N_{\tmor}$}

We can define the following maps, $N_{\smor}, N_{\tmor}$:
\[
\begin{array}{cccc}
N_{\smor} \  : \ &  {\rm Mor}_{\smor}(S)                   & \longrightarrow & D_{\smor}                     \\[1pt]
& \rotatebox{90}{$\in$} &                 & \rotatebox{90}{$\in$} \\[-4pt]
& \{ S =: X_0 \overset{\smor_{a_1}}{\to} X_1 \cdots \overset{\smor_{a_m}}{\to} X_m =T_{min} \}                     & \longmapsto     & (a_1, \ldots , a_m)
\end{array}
\]

\[
\begin{array}{cccc}
N_{\tmor} \  : \ &  {\rm Mor}_{\tmor}(T_{min})                   & \longrightarrow & D_{\tmor}                     \\[1pt]
& \rotatebox{90}{$\in$} &                 & \rotatebox{90}{$\in$} \\[-4pt]
& \{ T_{min} =: X_0 \overset{\tmor_{b_1}}{\to} X_1 \cdots \overset{\tmor_{b_n}}{\to} X_n =X_{min} \}                     & \longmapsto     & (b_1, \ldots , b_n)
\end{array}
\]
Note that each $N_i$ is not necessary injective for $i \in \{ \smor, \tmor \}$.
${\rm Im} N_i \subset D_i$ is finite for $i \in \{ \smor, \tmor \}$.
Hence there are the minimal elements $s \in {\rm Im} N_{\smor}$ and $t \in {\rm Im} N_{\tmor}$.

\end{notation}

\begin{definition}\label{dir_seq}

A $\smor$-sequence (resp. $\tmor$-sequence) whose image by $N_{\smor}$ (resp. $N_{\tmor}$) is the minimal element $s \in {\rm Im} N_{\smor}$ (resp. $t \in {\rm Im} N_{\tmor}$) is called a {\it minimal directed $\smor$-sequence} of $S$ (resp. {\it minimal directed $\tmor$-sequence} $T_{min}$).

\end{definition}

\begin{thm}\label{directed}

Let $X$ be a del Pezzo surface of type $\tA$.
Then there exists a sequence of first morphisms $\alpha_i$, second morphisms $\beta_j$ and third morphisms $\gamma_k$
\[
X  \overset{\alpha_1}{\rightarrow} \cdots \overset{\alpha_l}{\rightarrow} 
S \overset{\beta_1}{\rightarrow} \cdots \overset{\beta_m}{\rightarrow}
T_{min} \overset{\gamma_1}{\rightarrow} \cdots \overset{\gamma_n}{\rightarrow} X_{min} 
\]
satisfying the following four conditions: \\
{\rm (1)} $S$ is a del Pezzo surface of type $\tA$ with no floating $(-1)$-curves, $T_{min}$ is a $\smor$-minimal del Pezzo surface and $X_{min}$ is a minimal surface. \\
{\rm (2)} For $1 \le i \le l$, $\alpha_i$ is of type $\fmor_1$. \\
{\rm (3)} $S \overset{\beta_1}{\rightarrow} \cdots \overset{\beta_m}{\rightarrow} T_{min}$ is a minimal directed $\smor$-sequence of $S$. \\
{\rm (4)} $T_{min} \overset{\gamma_1}{\rightarrow} \cdots \overset{\gamma_n}{\rightarrow} X_{min}$ is a minimal directed $\tmor$-sequence of $T_{min}$. 

We call this ordered sequence a {\it minimal directed sequence}.

\end{thm}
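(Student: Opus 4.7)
The plan is to construct the sequence in three phases corresponding to the $\alpha$, $\beta$, $\gamma$ segments, each phase invoked only after the previous one has terminated.

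First I would contract floating $(-1)$-curves one at a time until none remain. Each such contraction is by definition of type $\sB_0 = \fmor_1$; its center is a smooth point, so the property of being of type $\tA$ is preserved, and Lemma \ref{keep_ldP} gives that the target is again a del Pezzo surface. Because $\rho$ strictly drops and is finite, after some $l \ge 0$ steps I land at a del Pezzo surface $S$ of type $\tA$ with no floating $(-1)$-curves, furnishing $\alpha_1, \ldots, \alpha_l$ and the $S$-half of condition (1).

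Next I would apply Lemma \ref{2nd_part} to obtain at least one $\smor$-sequence from $S$, showing ${\rm Mor}_{\smor}(S) \neq \emptyset$. The set is finite as already recorded in the notation preceding Definition \ref{dir_seq}: each second morphism is one of the eight types in Table \ref{2nd_mor} and strictly reduces $\rho$, which bounds both the length and the branching of an ordered $\smor$-sequence. Lemma \ref{sm_on_qline} guarantees that the centers of successive second morphisms are disjoint, which justifies the reordering convention built into the definition of ${\rm Mor}_{\smor}(S)$. Choosing any representative whose image under $N_{\smor}$ is $\prec$-minimal in the finite set ${\rm Im}\, N_{\smor} \subset D_{\smor}$ gives the segment $S \overset{\beta_1}{\to} \cdots \overset{\beta_m}{\to} T_{min}$; that $T_{min}$ is $\smor$-minimal is built into the definition of a $\smor$-sequence.

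The $\gamma$-segment is obtained by the same template applied to $T_{min}$: Lemma \ref{3rd_part} supplies at least one $\tmor$-sequence from $T_{min}$ to a minimal surface $X_{min}$, the nine types in Table \ref{3rd_mor} together with the fact that the connected components of $\Exc(T_{min} \to X_{min})$ lie over distinct singular points of $X_{min}$ give finiteness and disjointness (hence reorderability) of ${\rm Mor}_{\tmor}(T_{min})$, and a $\prec$-minimum in ${\rm Im}\, N_{\tmor} \subset D_{\tmor}$ produces $T_{min} \overset{\gamma_1}{\to} \cdots \overset{\gamma_n}{\to} X_{min}$. Concatenating the three segments yields a sequence satisfying conditions (1)--(4) by construction. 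The only point I expect to need explicit care — and would want to record rather than leave implicit — is the reordering claim in Phases 2 and 3: swapping two consecutive morphisms with disjoint centers alters the intermediate surface but preserves source, target and type multiset, so the notion of being \emph{minimal directed} is a function of the unordered data alone. Granting this, everything else reduces to invoking Lemmas \ref{keep_ldP}, \ref{sm_on_qline}, \ref{2nd_part}, \ref{3rd_part} and picking minima of finite subsets of the totally ordered sets $(D_{\smor}, \prec)$ and $(D_{\tmor}, \prec)$.
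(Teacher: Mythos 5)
Your proposal is correct and follows essentially the same route as the paper: contract floating $(-1)$-curves until none remain to reach $S$, then invoke Lemma \ref{2nd_part} (resp.\ Lemma \ref{3rd_part}) for the existence of a $\smor$-sequence (resp.\ $\tmor$-sequence) and select a $\prec$-minimal element of the finite image of $N_{\smor}$ (resp.\ $N_{\tmor}$). The extra care you take with termination, finiteness, and the reordering of morphisms with disjoint centers is exactly what the paper delegates to the Notation block preceding Definition \ref{dir_seq} and to Lemma \ref{sm_on_qline}.
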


\begin{proof}

By Corollary \ref{float}, we can obtain a sequence $X  \overset{\alpha_1}{\rightarrow} \cdots \overset{\alpha_l}{\rightarrow} S$, where $\alpha_i$ is of type $\fmor_1$ and $S$ is a del Pezzo surface with no floating $(-1)$-curves.
By Definition \ref{dir_seq}, there exists a minimal directed $\smor$-sequence $S \overset{\beta_1}{\rightarrow} \cdots \overset{\beta_m}{\rightarrow} T_{min}$.
By Definition \ref{dir_seq}, there also exists a minimal directed $\tmor$-sequence $T_{min} \overset{\gamma_1}{\rightarrow} \cdots \overset{\gamma_n}{\rightarrow} X_{min}$.

\end{proof}

For Section \ref{minimal_sec} and \ref{candidate_sec}, we prepare the following corollaries.
They follow from Proposition \ref{mor_list}.

\begin{corollary}\label{bound_vol}

Let $X_{min}$ be a minimal surface of type $\tB$.
Assume that $X_{min}$ is obtained from a del Pezzo surface of type $\tA$, that is, there exist a del Pezzo surface $X$ of type $\tA$ and a sequence of first morphisms, second morphisms and third morphisms $X \to \cdots \to X_{min}$.
Denote the numbers of singular points of type $\frac{1}{5}(1,2)$, $A_1$, $A_2$ and $A_3$ by $a$, $b$, $c$ and $d$ respectively.
Then we have 
\[
(-K_{X_{min}})^2 > \frac{1}{15}a + \frac{1}{3}b + \frac{1}{3}c + \frac{2}{3}d .
\]

\end{corollary}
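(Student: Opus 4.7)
The plan is to telescope the anti-canonical volume along the given sequence. First I would refine each first, second, and third morphism in $X \to \cdots \to X_{min}$ into its constituent atomic extremal contractions $V_{j-1} \to V_j$ of the types $\sB_k$ classified in Proposition \ref{all_ext_cont}, obtaining
\[
(-K_{X_{min}})^2 - (-K_X)^2 \;=\; \sum_{j=1}^{N} d_{V_{j-1}/V_j}.
\]
By Lemma \ref{keep_ldP} every intermediate $V_{j-1}$ is again a del Pezzo surface with quotient singularities, so each summand $d_{V_{j-1}/V_j}$ is strictly positive; moreover $(-K_X)^2 > 0$ since $X$ itself is del Pezzo. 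Hence $(-K_{X_{min}})^2 > \sum_{j} d_{V_{j-1}/V_j}$, and the task reduces to bounding this sum below by the right-hand side of the corollary.

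The heart of the argument is to set up a bijection between the $a+b+c+d$ singular points of $X_{min}$ of types $\frac{1}{5}(1,2)$, $A_1$, $A_2$, $A_3$ and a distinguished subset of the third morphisms. Since $X$ is of type $\tA$, none of these four singularity types appear on $X$. First morphisms are of type $\sB_0$ and so have smooth centers, and second morphisms have smooth centers by Definition \ref{fst}(2); in both cases the singular set of the source maps bijectively onto its image, so no new singularity of the listed types can arise. Only third morphisms can do so, and by construction of the $\tmor$-sequence the centers of distinct third morphisms are distinct points of $X_{min}$. Inspecting Table \ref{3rd_mor}, the third morphisms creating singularities of types $\frac{1}{5}(1,2)$, $A_1$, $A_2$, $A_3$ are precisely $\tmor_1$, $\tmor_{7,8,9}$, $\tmor_{4,5,6}$, $\tmor_3$ respectively, yielding the desired bijection.

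To finish, I would read off the minimum of $d_{U/U_1}$ in each class from Table \ref{3rd_mor} and sum these minima across the bijection. The strict positivity of $(-K_X)^2$, together with the discarded positive contributions from the remaining first and second morphisms and from any $\tmor_2$'s (which only create $\frac{1}{3}(1,1)$-singularities on $X_{min}$, not counted in $a,b,c,d$), absorbs any slack into the final strict inequality. The main and essentially only subtlety is justifying the bijection in the second paragraph; once that is in place, the bound is a direct consultation of Proposition \ref{mor_list} and Table \ref{3rd_mor}.
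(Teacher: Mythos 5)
Your overall strategy --- telescope $(-K)^2$ along the sequence, observe that only third morphisms can create singular points of types $\frac{1}{5}(1,2)$, $A_1$, $A_2$, $A_3$ on $X_{min}$ (since $X$ is of type $\tA$ and first and second morphisms have smooth centers), match each such point with the unique third morphism whose center it is, and then lower-bound each contribution by the minimal $d_{U/U_1}$ in its class --- is exactly the argument the paper intends; the paper itself offers nothing beyond ``they follow from Proposition \ref{mor_list}''. Your set-up is sound, with one small quibble: under a second morphism the singular locus does \emph{not} map bijectively onto its image (the contracted singular points land on a smooth point); what you actually need, and what is true, is only that no new singular point is created.

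The difficulty is in the final step. If you actually read the minima off Table \ref{3rd_mor}, you get $\frac{1}{15}$ for $\frac{1}{5}(1,2)$ (only $\tmor_1$), $\frac{1}{3}$ for $A_3$ (only $\tmor_3$, with $d_{U/U_1}=\frac{1}{15}+\frac{4}{15}=\frac{1}{3}$), $\frac{1}{3}$ for $A_2$ (attained by $\tmor_5$), and $\frac{2}{3}$ for $A_1$ (attained by $\tmor_7$). The telescoping argument therefore proves
\[
(-K_{X_{min}})^2 > \tfrac{1}{15}a + \tfrac{2}{3}b + \tfrac{1}{3}c + \tfrac{1}{3}d ,
\]
in which the coefficients of $b$ (the $A_1$ count) and $d$ (the $A_3$ count) are transposed relative to the statement. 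For $b$ the discrepancy is harmless, since $\frac{2}{3}b \ge \frac{1}{3}b$; but for $d$ your method delivers only $\frac{1}{3}d$, and no amount of consulting the table will upgrade this to the claimed $\frac{2}{3}d$, because $\tmor_3$ genuinely drops the anticanonical volume by only $\frac{1}{3}$. So as written your proof establishes a different inequality from the one asserted. You should either point out that the statement appears to have its $A_1$ and $A_3$ coefficients interchanged --- the transposed inequality is what the argument yields, and it suffices for every application of the corollary in the paper (the exclusions in Table \ref{min_Gor_table}, Table \ref{19candi} No.1, and Proposition \ref{candi_rank2} No.7 all go through with it) --- or supply a genuinely new argument for why each $A_3$ point costs $\frac{2}{3}$ of volume; the minima alone do not give this.
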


\begin{corollary}\label{config}

Let $\varphi : T \to T_1$ be of type $\smor_1, \ldots , \smor_7$ or $\smor_8$.
Let $\pi : Y \to T$ and $\pi_1 : Y_1 \to T_1$ be the minimal resolutions.
Then a birational morphism $g := \sigma_N \circ \cdots \circ \sigma_1 : Y \to Y_1$ is induced such that $\pi_1 \circ g = \varphi  \circ \pi$, where $\sigma_1, \ldots , \sigma_N$ are blow-ups at a smooth point.
Denote the exceptional curve of $\sigma_i$ by $E_i$.
The dual graphs of $E_1, \ldots , E_N$ on $Y$ are follwing:

{\rm \small
\begin{table}[htb]
\caption{Exceptional curves of second morphisms}\label{dual_graph_table}
\renewcommand{\arraystretch}{1.4}
  \begin{tabular}{|c|cc|} \hline
    \multicolumn{1}{|l|}{No.}   &  \multicolumn{2}{c|}{configurations} \\ \hline \hline
    $\smor_1$ & \xygraph{
    \bullet ([]!{+(0,-.4)} {E_{i}}) - [r]
    \square ([]!{+(0,-.4)} {E_1}) - [r] 
    \bullet ([]!{+(0,-.4)} {E_{j}}) 
} & $\{ i, j \} = \{ 2,3 \}$ \\ 
    $\smor_2$  & \xygraph{
    \bullet ([]!{+(0,-.3)} {E_{i}}) - [r]
    \bigcirc ([]!{+(.3,-.3)} {E_1}) (
        - [d] \bullet ([]!{+(.5,0)} {E_{j}}),
        - [r] \bullet ([]!{+(.2,-.3)} {E_{k}})
} & $\{ i, j, k \} = \{ 2,3,4 \}$ \\ 
    $\smor_3$ &  \xygraph{
    \square ([]!{+(0,-.4)} {E_1}) - [r]
    \bullet ([]!{+(0,-.4)} {E_{i}}) - [r] 
    \square ([]!{+(0,-.4)} {E_2}) - [r]
    \bullet ([]!{+(0,-.4)} {E_{j}})
} & $\{ i, j \} = \{ 3,4 \}$ \\ 
    $\smor_4$ &  \xygraph{
    \bigcirc ([]!{+(0,-.4)} {E_1}) - [r]
    \bullet ([]!{+(0,-.4)} {E_{4,5}}) - [r] 
    \square ([]!{+(0,-.4)} {E_3}) - [r] 
    \bullet ([]!{+(0,-.4)} {E_{4,5}}) - [r]
    \square ([]!{+(0,-.4)} {E_2})
}  & $\{ i, j \} = \{ 4,5 \}$ \\
    $\smor_5$ & \xygraph{
    \bullet ([]!{+(0,-.4)} {E_{i}}) - [r]
    \bigcirc ([]!{+(0,-.4)} {E_1}) - [r] 
    \bullet ([]!{+(0,-.4)} {E_{j}}) - [r] 
    \square ([]!{+(0,-.4)} {E_2}) - [r]
    \bullet ([]!{+(0,-.4)} {E_{k}})
} & $\{ i, j, k \} = \{ 3,4,5 \}$ \\
    $\smor_6$ & \xygraph{
    \square ([]!{+(0,-.4)} {E_1}) - [r]
    \bullet ([]!{+(0,-.4)} {E_{i}}) - [r]
    \bigcirc ([]!{+(.3,-.3)} {E_2}) (
        - [d] \bullet ([]!{+(.5,0)} {E_{j}}),
        - [r] \bullet ([]!{+(.2,-.4)} {E_{k}})
} & $\{ i, j, k \} = \{ 3,4,5 \}$ \\
    $\smor_7$ &  \xygraph{
    \bigcirc ([]!{+(0,-.4)} {E_1}) - [r]
    \bullet ([]!{+(0,-.4)} {E_{i}}) - [r] 
    \square ([]!{+(0,-.4)} {E_3}) - [r] 
    \bullet ([]!{+(0,-.4)} {E_{j}}) - [r] 
    \bigcirc ([]!{+(0,-.4)} {E_2}) - [r]
    \bullet ([]!{+(0,-.4)} {E_{k}})
} & $\{ i, j, k \} = \{ 4,5,6 \}$  \\
    $\smor_8$ & \xygraph{
    \square ([]!{+(0,-.4)} {E_1}) - [r]
    \bullet ([]!{+(0,-.4)} {E_{i}}) - [r] 
    \bigcirc ([]!{+(0,-.4)} {E_2}) - [r] 
    \bullet ([]!{+(0,-.4)} {E_{j}}) - [r] 
    \square ([]!{+(0,-.4)} {E_3}) - [r]
    \bullet ([]!{+(0,-.4)} {E_{k}})
}  & $\{ i, j, k \} = \{ 4,5,6 \}$ \\   \hline
  \end{tabular}
\end{table}
}

\end{corollary}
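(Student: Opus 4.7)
The plan is to proceed case by case through the eight types $\smor_1, \ldots, \smor_8$ listed in Proposition \ref{mor_list}, and in each case explicitly reconstruct the sequence of blow-ups $\sigma_1, \ldots, \sigma_N$ by lifting the decomposition $\varphi = \sB_{i_t} \circ \cdots \circ \sB_{i_1}$ to the minimal resolutions and tracking self-intersections.

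The key reduction is as follows. Each elementary contraction $\sB_j$ in Table \ref{bir_ext_cont} contracts a single quasi-line $C$; on the minimal resolution of the source, the strict transform $C_Y$ is a $(-1)$-curve meeting the exceptional divisors in the configuration recorded in that table. Dually, going upstream from the minimal resolution $Y_1$ of $T_1$ to that of $T$, lifting one step $\sB_j$ amounts to blowing up one point of the current surface: if $\sB_j$ has a smooth center we blow up a smooth point, if $\sB_j$ has a singular center lying on an exceptional curve $E$ over the predecessor we blow up a point of $E$ (which decreases $E^2$ by one). Since $Y$ and $Y_1$ are smooth, the induced map $g: Y \to Y_1$ is a composition of blow-ups at points, and the total number $N$ equals $d_{T/T_1}$ minus the contribution coming from the singularities appearing on $T_1$ that are absent on $T$, which matches the count in Table \ref{2nd_mor}.

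For each type I would carry out the following routine. (1) Read off $\sB_{i_t} \circ \cdots \circ \sB_{i_1}$ and the singularity bookkeeping from Proposition \ref{mor_list}. (2) Start from the smooth center $P$ of $\varphi$ on $Y_1$, and perform blow-ups in the reverse order of the decomposition, using the ``configurations'' column of Table \ref{bir_ext_cont} to determine which point of the evolving exceptional locus must be blown up next so that the contraction chain $\sB_{i_1}, \sB_{i_2}, \ldots$ can be realized downstream. (3) Record the self-intersection of each $E_k$ as successive blow-ups occur on it (each subsequent blow-up at a point of $E_k$ drops its self-intersection by one). For instance, for $\smor_1 = \sB_1 \circ \sB_4$, $\sigma_1$ blows up $P$ giving $E_1$ with $E_1^2 = -1$; two further blow-ups $\sigma_2, \sigma_3$ at distinct points of $E_1$ produce $(-1)$-curves $E_2, E_3$ and lower $E_1$ to a $(-3)$-curve, giving the chain $E_i - E_1 - E_j$ with $\{i,j\}=\{2,3\}$, matching the first row. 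The cases $\smor_3, \smor_4, \smor_5, \smor_7, \smor_8$ are analogous chain computations, while $\smor_2$ and $\smor_6$ require one ``fork'' blow-up at the intersection of two existing curves (producing the trivalent vertex shown in the table).

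The only genuine subtlety is the labeling convention for the $(-1)$-curves: several of the blow-ups $\sigma_k$ occur at distinct points of the same exceptional curve and therefore commute, so the indices of the $(-1)$-curves are only determined up to the indicated permutation (hence the ``$\{i,j\} = \{2,3\}$'' etc.\ in the statement). The main bookkeeping obstacle is ensuring, in $\smor_7$ and $\smor_8$ where the decomposition has length three with two distinct singular-point contractions, that the order of blow-ups is chosen so that the intermediate surfaces actually admit the extremal contractions $\sB_{i_2}, \sB_{i_3}$ in the decomposition (i.e., no unwanted $T$-line appears); this is guaranteed by the no-$T$-line analysis already used in the proof of Proposition \ref{mor_list}. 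Once the eight pictures are verified, the corollary follows.
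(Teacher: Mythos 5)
Your approach is essentially the paper's: the paper gives no argument beyond the remark that the corollary follows from Proposition \ref{mor_list}, and your case-by-case reconstruction of the blow-up sequence $\sigma_1,\ldots,\sigma_N$ from the decompositions in Table \ref{2nd_mor} together with the configurations column of Table \ref{bir_ext_cont} is exactly the intended verification. One detail in your sketch should be corrected: for $\smor_2 = \sB_1 \circ \sB_4 \circ \sB_5$ there is \emph{no} blow-up at an intersection of two exceptional curves --- the trivalent $(-4)$-vertex $E_1$ arises from three blow-ups at three distinct points of $E_1$ away from the previously created $(-1)$-curves (blowing up a point of $E_1$ lying on an earlier $E_j$ would turn that $E_j$ into a $(-2)$-curve, contradicting the table); only $\smor_6$ genuinely requires a blow-up at the intersection point $E_1 \cap E_2$ (undoing the $\sB_8$-contraction), and there the trivalent vertex is $E_2$, completed by the two subsequent blow-ups on it. With that adjustment your routine reproduces all eight dual graphs.
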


\section{Minimal surfaces}\label{minimal_sec}

In this section, we classify minimal surfaces which can be obtained by extremal contractions from del Pezzo surfaces of type $\tA$. 
By Proposition \ref{keep_B}, we  know that such surfaces are of type $\tB$. 

\begin{thm}\label{min_rank1}

Let $X$ be a rank one minimal del Pezzo surface of type $\tB$ obtained from one of type $\tA$.
Then $X$ is one of the surfaces in Table \ref{min_rank1_table}.

{\small \rm
\begin{table}[htb]
\caption{Rank one minimal surfaces of type $\tB$}\label{min_rank1_table}
\renewcommand{\arraystretch}{1.4}
  \begin{tabular}{|c|c|c|c|}  \hline
    \multicolumn{1}{|l|}{Name} & \multicolumn{1}{c|}{$\sS(M_i)$}
      & \multicolumn{1}{c|}{description} & \multicolumn{1}{c|}{$(-K)^2$}  \\ \hline \hline
    $M_{1}$ & $\frac{1}{5}$(1,2), $A_3, A_2$ & $\pr$(3,4,5) & $\frac{12}{5}$ \\
    $M_{2}$ & $\frac{1}{5}$(1,2), $A_2$ & $\pr$(1,3,5) & $\frac{27}{5}$ \\ 
    $M_{3}$ & $\frac{1}{5}$(1,2), $A_1$ & $\pr$(1,2,5) & $\frac{32}{5}$  \\ \hline
    $M_{4}$ & $\frac{1}{3}$(1,1), $A_3$ & $\pr$(1,3,4) & $\frac{16}{3}$ \\
    $M_{5}$ & $\frac{1}{3}$(1,1) & $\pr$(1,1,3) & $\frac{25}{3}$ \\ \hline
    $M_{6}$ & $\frac{1}{4}$(1,1) & $\pr$(1,1,4) & 9 \\ \hline
    $M_{7}$ & $A_3, A_3, A_1$ & cf. Remark \ref{explicit} & 2  \\
    $M_{8}$ & $A_2, A_2, A_2$ & cf. Remark \ref{explicit}  & 3  \\
    $M_{9}$ & $A_3, A_1, A_1$ & cf. Remark \ref{explicit} & 4 \\
    $M_{10}$ & $A_2, A_1$ & $\pr$(1,2,3) & 6  \\
    $M_{11}$ & $A_1$ & $\pr$(1,1,2) & 8 \\  \hline
    $M_{12}$ & - & $\pr^2$ & 9 \\ \hline
  \end{tabular}
\end{table}
}

\end{thm}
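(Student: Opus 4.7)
The plan is to constrain the singular locus $\sS(X)$ by combining the rank-one condition with Corollary \ref{bound_vol} and smoothness of the minimal resolution, and then to identify $X$ for each admissible case. Let $\pi : Y \to X$ be the minimal resolution and write $(n_3, n_4, a, b, c, d)$ for the numbers of singularities of type $\tfrac{1}{3}(1,1), \tfrac{1}{4}(1,1), \tfrac{1}{5}(1,2), A_1, A_2, A_3$. Since $\rho(X)=1$, one has
\[
\rho(Y) = 1 + n_3 + n_4 + b + 2c + 3d + 2a,
\]
and combining Noether's formula $K_Y^2 = 10 - \rho(Y)$ (valid because $Y$ is smooth rational) with the standard discrepancy contributions of each singularity (crepant for $A_n$, and $-\tfrac{1}{3}, -1, -\tfrac{2}{5}$ respectively for $\tfrac{1}{3}(1,1), \tfrac{1}{4}(1,1), \tfrac{1}{5}(1,2)$) gives the identity
\[
K_X^2 = 9 - \tfrac{2}{3} n_3 - \tfrac{8}{5} a - b - 2c - 3d.
\]
The condition $K_X^2 > 0$ together with the lower bound of Corollary \ref{bound_vol} then yields an explicit finite list of numerically admissible tuples $(n_3, a, b, c, d)$; the parameter $n_4$ is bounded separately using Lemma \ref{Fuku}, which rules out quasi-lines joining two $\tfrac{1}{4}(1,1)$-singularities, together with a two-ray game on $Y$ that forbids multiple $\tfrac{1}{4}(1,1)$-points on a rank-one surface.

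For each numerically admissible $\sS(X)$, I would identify $X$ up to isomorphism by analysing the configuration of $(-n)$-curves on $Y$ and reconstructing $Y$ as a blow-up of $\pr^2$ or $\mathbb{F}_n$. The toric possibilities are exactly the weighted projective planes $\pr(p,q,r)$ whose weights realise the prescribed singularities and satisfy $(-K)^2 = (p+q+r)^2/(pqr)$; this produces the nine toric entries $M_1, \ldots, M_6$ and $M_{10}, M_{11}, M_{12}$ of Table \ref{min_rank1_table}. The remaining three candidates $M_7, M_8, M_9$ are non-toric: $M_8$ is modelled by the singular cubic $xyz - w^3 = 0 \subset \pr^3$, while $M_7, M_9$ are constructed as explicit contractions of smooth rational surfaces carrying the prescribed configuration of $(-2)$-curves.

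For existence, each listed surface $X$ must be realisable as $X_{min}$ of some del Pezzo surface of type $\tA$. The surfaces $\pr^2, \pr(1,1,3)$ and $\pr(1,1,4)$ are themselves of type $\tA$, and for the remaining surfaces one exhibits a sequence of first, second and third morphisms from a concrete del Pezzo surface of type $\tA$ to $X$; the morphism types are classified in Proposition \ref{mor_list}, and existence of a suitable starting surface follows by reverse-engineering the contractions listed there.

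The main obstacle is twofold. First, bounding $n_4$ is not provided by Corollary \ref{bound_vol} and requires a separate geometric argument via Lemma \ref{Fuku} combined with a two-ray game; Lemma \ref{Fuku} prohibits two $\tfrac{1}{4}(1,1)$-points from being joined by a quasi-line, and together with the rank-one condition this forces $n_4 \le 1$. Second, the three non-toric surfaces $M_7, M_8, M_9$ demand explicit constructions and a careful case analysis of the negative-curve configurations on $Y$ to establish both existence and uniqueness up to isomorphism; this is where the two-ray games alluded to in Subsection \ref{strategy} do the real work.
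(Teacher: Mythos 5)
Your numerical identity $K_X^2 = 9 - \tfrac{2}{3}n_3 - \tfrac{8}{5}a - b - 2c - 3d$ is correct (and you rightly observe that $\tfrac{1}{4}(1,1)$-points contribute zero to it), but the sieve you build from it is too coarse to reach the table, and the two places where you acknowledge difficulty are exactly where the argument is missing. First, your claim that Lemma \ref{Fuku} together with ``a two-ray game'' forces $n_4 \le 1$ is not a proof: Lemma \ref{Fuku} only excludes two $\tfrac{1}{4}(1,1)$-points lying on a common quasi-line, and says nothing about a rank-one surface carrying several such points in general position. The paper disposes of the pure-$\tfrac{1}{4}(1,1)$-plus-Du-Val cases by observing that such a surface has Gorenstein index two and invoking Kojima's classification (Theorem \ref{min_index2}, giving only $\pr(1,1,4)$), and it eliminates the mixed multi-$\tfrac{1}{4}(1,1)$ candidates (Nos.\ 7--9, 11, 16, 19 of Table \ref{19candi}) via Lemma \ref{rat_pic_le_5} (a count of $(-3)$- and $(-4)$-curves on smooth rational surfaces of Picard number at most five) and explicit two-ray games. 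None of that is replaced by anything in your proposal.

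Second, your list of ``numerically admissible'' $\sS(X)$ from $K_X^2>0$ and Corollary \ref{bound_vol} alone is far larger than the paper's nineteen cases, and you give no mechanism for killing the spurious ones. For example $\sS(X)=\{\tfrac{1}{3}(1,1), A_1\}$ and $\sS(X)=\{\tfrac{1}{5}(1,2)\}$ pass your sieve but do not occur; the paper excludes them because $\det(R+\langle K_Y\rangle)$ (which equals $44$ and $37$ respectively) fails to be a perfect square, using the Hwang--Keum criterion of Lemma \ref{lattice_square} together with Belousov's bound of at most four singular points (Theorem \ref{num_sing_pt_rank1}). Proving non-existence by ``analysing the configuration of negative curves and reconstructing $Y$ as a blow-up'' is not a substitute: a failed attempt at construction is not a proof that no construction exists. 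Even after the lattice sieve, cases 11--19 of Table \ref{19candi} survive and require the extraction-plus-extremal-contraction arguments of Propositions \ref{non_exi_11_16}--\ref{non_exi_19}, including explicit intersection-number computations on $\pr(1,2,3)$, $\pr(1,1,2)$, $\pr(1,1,3)$ and $\pr(1,3,5)$ that derive contradictions of integrality. Your proposal names the right family of techniques but does not carry out, or even outline, the steps that do the actual elimination.
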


\begin{proof}

If $X$ is smooth, then we see that $X \cong \pr^2$.
Assume that $X$ is singular.
If the index of $X$ is two, thent $X \cong \pr(1,1,4)$ by Theorem \ref{min_index2}.
If $X$ is Gorenstein, then we see that $X$ is one in Table \ref{min_Gor_table}.
In particular, by Corollary \ref{bound_vol}, we see that candidates of $X$ are five cases in Table \ref{min_Gor_table}.
We may assume that $X$ has at least one singular point of type $\frac{1}{3}(1,1)$ or $\frac{1}{5}(1,2)$.
By Proposition \ref{candi_1/3_1/5}, candidates of $X$ are 19 cases.
By Corollary \ref{bound_vol}, Proposition \ref{non_exi_7_10}, Proposition \ref{non_exi_11_16}, Proposition \ref{non_exi_17_18} and Proposition \ref{non_exi_19} , we see that candidates of $X$ are five cases.

\end{proof}

\begin{remark}

We can confirm that the isomorphic class of each of $M_1, \ldots , M_{12}$ is unique respectively.
The uniqueness of $M_2, M_5, M_{10}$ and $M_{11}$ is used in this section. 
The uniqueness of $M_{10}$ and $M_{11}$ is proved in \cite{Ye02}.
The uniqueness of $M_2$ and $M_5$ is also proved by Lemma \ref{unique_1/n} and Lemma \ref{unique_1/5_A2}.

\end{remark}

\begin{thm}\label{min_rank2}

Let $X$ be a rank two minimal del Pezzo surface of type $\tB$ obtained from one of type $\tA$.
Then $X$ is one of the surfaces in Table \ref{min_rank2_table}.

{\small \rm
\begin{table}[htb]
\caption{Rank two minimal surfaces of type $\tB$}\label{min_rank2_table}
\renewcommand{\arraystretch}{1.4}
  \begin{tabular}{|c|c|c|} \hline
    \multicolumn{1}{|l|}{Name} & \multicolumn{1}{c|}{$\sS(M_i)$}
      & \multicolumn{1}{c|}{$(-K)^2$}   \\ \hline \hline
    $M_{13}$ & $\frac{1}{5}(1,2), \frac{1}{5}(1,2), \frac{1}{5}(1,2), \frac{1}{5}(1,2)$ & $\frac{8}{5}$   \\ \hline
    $M_{14}$ & $\frac{1}{4}(1,1), \frac{1}{4}(1,1), A_3, A_3$  & 2     \\ \hline
    $M_{15}$ & $\frac{1}{4}$(1,1), $A_3, A_3$ & 2   \\  \hline
    $M_{16}$ & $\frac{1}{3}(1,1), \frac{1}{3}(1,1), A_2, A_2$ & $\frac{8}{3}$   \\ \hline
    $M_{17}$ &  $A_3, A_3$  & 2      \\ \hline
    $M_{18}$ & $A_1, A_1, A_1, A_1$ & 4  \\ \hline
    $M_{19}$ & - ($\PP$)& 8  \\ \hline
  \end{tabular}
\end{table}
}

\end{thm}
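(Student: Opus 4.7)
The plan is to exploit the existence of two $\pr^{1}$-fibrations on $X$, extract a clean anti-canonical volume formula, and combine it with the numerical bound of Corollary~\ref{bound_vol} to narrow down the candidates. Since $X$ is minimal with $\rho(X)=2$, Lemma~\ref{min_Pic} and Proposition~\ref{all_ext_cont} together imply that both extremal rays of $\NE(X)$ are non-birational, so they produce $\pr^{1}$-fibrations $f_{1},f_{2}\colon X\to\pr^{1}$ whose reducible fibers are among the types $\sC_{2},\ldots,\sC_{6}$ of Table~\ref{fibers}.

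First I would study the product morphism $\Phi=(f_{1},f_{2})\colon X\to \pr^{1}\times\pr^{1}$, which is finite and surjective of some degree $d:=F_{1}\cdot F_{2}\in\Z_{>0}$, where $F_{i}$ is the class of a fiber of $f_{i}$ (so $F_{i}^{2}=0$). Writing $-K_{X}=aF_{1}+bF_{2}$ rationally and using $-K_{X}\cdot F_{i}=2$, obtained from adjunction on a general smooth fiber (which by Table~\ref{fibers} avoids $\Sing X$), one finds $a=b=2/d$, whence the clean formula
\[
(-K_{X})^{2} \;=\; 8/d.
\]
Second, combining this identity with Corollary~\ref{bound_vol} yields
\[
\frac{8}{d} \;>\; \frac{1}{15}n_{1/5}+\frac{1}{3}n_{A_{1}}+\frac{1}{3}n_{A_{2}}+\frac{2}{3}n_{A_{3}},
\]
which bounds the total singularity weight and restricts $d$ to a small range. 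I would then read off the purely combinatorial constraints forced by Table~\ref{fibers}: every $A_{1}$-point lies on a $\sC_{3}$-fiber (whence $n_{A_{1}}$ is even), every $\frac{1}{5}(1,2)$-point lies on a $\sC_{6}$-fiber ($n_{1/5}$ even), every $\frac{1}{3}(1,1)$-point is paired with an $A_{2}$-point via a $\sC_{4}$-fiber (so $n_{1/3}=n_{A_{2}}$), and every $\frac{1}{4}(1,1)$-point is paired with an $A_{3}$-point via a $\sC_{5}$-fiber, the remaining $A_{3}$-points lying on $\sC_{2}$-fibers (so $n_{A_{3}}\ge n_{1/4}$).

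Third, I would rule out the degenerate configurations $n_{A_{1}}=2$, $n_{1/5}=2$, $n_{1/3}=n_{A_{2}}=1$ and $n_{A_{3}}=n_{1/4}=1$ by observing that in any such case the two incident singularities would have to share the unique relevant singular fiber of both $f_{1}$ and $f_{2}$; then $\Phi$ identifies them, forcing a contribution to $d$ (via intersection multiplicities at this common image point) that contradicts the bound above. These combinatorial constraints together with the bound leave exactly the seven numerical types listed as $M_{13},\ldots,M_{19}$.

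Finally, for each surviving type I would exhibit a concrete model: $M_{19}=\pr^{1}\times\pr^{1}$ is the smooth case, while the remaining $M_{i}$ are realized as appropriate weighted-projective quotients or as contractions of reducible configurations on a Hirzebruch surface possessing two compatible rulings. Uniqueness will follow from the fact that both fibrations are canonically determined by the Mori cone, so $X$ is reconstructed up to isomorphism from the pair of combinatorial fiber configurations together with their incidence data. The main obstacle will be distinguishing $M_{14}$, $M_{15}$ and $M_{17}$, all of which have $(-K_{X})^{2}=2$ and $d=4$: here the bipartite matchings between $\frac{1}{4}(1,1)$-points and $A_{3}$-points induced by $f_{1}$ and $f_{2}$ differ, and showing that each of the three possible matching patterns is realized by a unique surface requires a careful separate analysis of the blow-up/blow-down geometry.
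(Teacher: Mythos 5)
Your overall strategy shares several ingredients with the paper's: the paper also uses the two $\pr^1$-fibrations to get $(-K_X)^2=8/d$ (that is exactly Lemma \ref{nonexi_9}, used to kill the single-$A_3$ case), it also encodes the pairing of singular points along fibers of types $\sC_2,\ldots,\sC_6$ (Lemma \ref{two_fib}), and it also invokes Corollary \ref{bound_vol} to discard a surviving Gorenstein configuration. However, as written your proposal has a genuine gap: you never determine $(-K_X)^2$ from the singularity configuration. The identity $(-K_X)^2=8/d$ leaves $d$ as a free parameter, so the inequality $8/d>\frac{1}{15}n_{1/5}+\frac{1}{3}n_{A_1}+\frac{1}{3}n_{A_2}+\frac{2}{3}n_{A_3}$ only bounds the weighted sum by $8$, which is far too weak. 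The paper instead computes $K_X^2=8-r-\bigl(\sum a_iE_i\bigr)^2$ on the minimal resolution (Remark \ref{Noeth_RR}, used in the proof of Proposition \ref{candi_rank2}); this is what pins down the $(-K)^2$ column and, via $(-K_X)^2>0$, eliminates most configurations. A concrete failure: a surface with exactly four $A_3$-points satisfies every constraint you list ($n_{A_3}=4\ge n_{1/4}=0$, all parities fine, and $\frac{2}{3}\cdot 4=\frac{8}{3}<8/d$ for $d\le 2$), yet Noether gives $K_X^2=8-12=-4$, so it cannot occur. Similarly, since a $\frac{1}{4}(1,1)$-point contributes $0$ to $8-r-(\sum a_iE_i)^2$, even Noether plus positivity cannot bound $n_{1/4}$; the paper needs the orbifold Miyaoka--Yau bound $\sum\frac{\#\pi_{X,x}-1}{\#\pi_{X,x}}\le 4$ (Theorem \ref{Miyaoka-Yau}, Corollary \ref{rank2_ineq}) to cap the number of singular points of each type. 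Your proposal uses neither tool, so the claimed reduction to seven numerical types does not follow from the constraints you state.

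Two smaller points. First, your elimination of the ``two paired points forming a $2$-cycle'' configurations is sketched too loosely: for, say, $n_{A_1}=2$ the two matchings coincide and your intersection-multiplicity argument only forces $d\ge 4$, hence $(-K_X)^2\le 2$, which contradicts nothing in your list; it is again the Noether value $(-K_X)^2=6$ that produces the contradiction. Second, the theorem as stated (and as proved in the paper) is only a restriction of candidates; the existence and uniqueness discussion in your final paragraph, while not wrong to attempt, is not needed here and would require substantial extra work (the paper handles existence of $M_{13}$, etc., elsewhere, and cites Qiang's classification for the Gorenstein rank-two cases).
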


\begin{proof}

By Proposition \ref{candi_rank2} and Corollary \ref{bound_vol}, we see that candidates of $X$ is eight cases.
Thus by Lemma \ref{nonexi_8} and Lemma \ref{nonexi_9}, we obtain Table \ref{min_rank2_table}.

\end{proof}

\subsection{Minimal surfaces of rank one}

First of all, we will classify rank one surfaces. 

\subsubsection{Known results of rank one surfaces}

By using some known results, we can determine rank one del Pezzo surfaces of type $\tB$ when their index is two or they are Gorenstein.
Rank one del Pezzo surfaces of index two are already classified by Kojima \cite{Ko03}.

\begin{thm}\label{min_index2}\cite[Kojima]{Ko03}
If X is a rank one del Pezzo surface of index two and of type $\tB$, then X $\cong$ $\pr(1,1,4) $.

\end{thm}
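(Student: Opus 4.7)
The plan is to invoke Kojima's complete classification of rank one log del Pezzo surfaces of Gorenstein index two from \cite{Ko03}, which provides a finite explicit list of such surfaces together with their singularity types, and then pick out from that list precisely those of type $\tB$. So the proof is essentially a bookkeeping reduction from a larger known classification.

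First I would isolate the constraint that ``type $\tB$'' imposes, given the index-two hypothesis. The global Gorenstein index is the least common multiple of the local Gorenstein indices at the singular points, so every local Gorenstein index must divide $2$. Among the six singularity types allowed in class $\tB$, the Du Val points $A_1,A_2,A_3$ are Gorenstein (local index $1$) and $\frac{1}{4}(1,1)$ has local index $2$, whereas $\frac{1}{3}(1,1)$ and $\frac{1}{5}(1,2)$ have local indices $3$ and $5$ respectively. Therefore $X$ cannot have any singularity of type $\frac{1}{3}(1,1)$ or $\frac{1}{5}(1,2)$, and we may assume $\Sing X \subset \{A_1,A_2,A_3,\tfrac{1}{4}(1,1)\}$.

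Next I would traverse Kojima's list and retain only the entries whose singular locus is contained in the four types above. Most entries on the list involve singularities such as $\frac{1}{2m+1}(1,m)$ or Du Val singularities of type $D_n$ or $E_n$ coming from various anticanonical cover constructions; these are instantly ruled out. The weighted projective plane $\pr(1,1,4)$, whose unique singular point is of type $\frac{1}{4}(1,1)$, has Picard rank one, anticanonical self-intersection $9$, and Gorenstein index $2$, so it certainly appears and is of type $\tB$.

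The main obstacle is then the verification that no other surface on Kojima's list survives the singularity filter: this is a finite but non-trivial case check, which I would organize by sorting the entries of the list by their singular configurations and eliminating them one by one until only $\pr(1,1,4)$ remains. Since the theorem is quoted directly from \cite{Ko03}, this final check is where the whole content of the proof lives.
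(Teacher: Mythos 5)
Your proposal is correct and matches the paper's treatment: the paper simply quotes this statement as a consequence of Kojima's classification in \cite{Ko03}, which is exactly the reduction you describe (local Gorenstein indices must divide $2$, eliminating $\frac{1}{3}(1,1)$ and $\frac{1}{5}(1,2)$, followed by a finite filter of Kojima's list). Your explicit computation of the local indices is accurate, and the remaining case check is precisely the content delegated to the cited reference.
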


Rank one Gorenstein del Pezzo surfaces are also already classified by Qiang \cite{Ye02} explicitly. 

\begin{thm}\label{min_Gor}\cite[Qiang]{Ye02}
If X is a rank one Gorenstein del Pezzo surface of type $\tB$, then X is one in the surfaces in Table \ref{min_Gor_table}.

{\small \rm
\begin{table}[htb]
\caption{Rank one Gorenstein del Pezzo surfaces of type $\tB$}\label{min_Gor_table}
\renewcommand{\arraystretch}{1.4}
  \begin{tabular}{|c|c|c|c|} \hline
    \multicolumn{1}{|l|}{No.} & \multicolumn{1}{c|}{$\sS(X)$}
      & \multicolumn{1}{c|}{$(-K_{X})^2$} & \multicolumn{1}{c|}{iso. class} \\ \hline \hline
    1 & $A_3, A_3, A_1, A_1$ &  1 & 1 \\
    2 & $A_2, A_2, A_2, A_2$ &  1 & 1 \\
    3 & $A_3, A_3, A_1$ &  2 & 1 \\
    4 & $A_2, A_2, A_2$ &  3 & 1 \\
    5 & $A_3, A_1, A_1$ &  4 & 1 \\
    6 & $A_2, A_1$ & 6 & 1 \\
    7 & $A_1$ & 8 & 1 \\  \hline
  \end{tabular}
\end{table}
}

\end{thm}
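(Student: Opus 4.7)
The plan is to case-split by the singularity types present on $X$ and to invoke existing classifications whenever $X$ has only ``tame'' singularities, then to handle the remaining cases by enumerating admissible singularity pools $\sS(X)$ cut down by the volume inequality of Corollary \ref{bound_vol}.

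If $X$ is smooth, then it is a smooth rank-one del Pezzo surface, so $X \cong \pr^2 = M_{12}$. If $X$ has index two, Kojima's theorem (Theorem \ref{min_index2}) gives $X \cong \pr(1,1,4) = M_6$. If $X$ is Gorenstein but singular, then $\sS(X)$ is one of the seven configurations of Theorem \ref{min_Gor}; the two entries with $(-K)^2=1$ carry four $A$-type singular points whose contribution to the right-hand side of Corollary \ref{bound_vol} exceeds the anti-canonical volume, so they are excluded, leaving exactly $M_7, M_8, M_9, M_{10}, M_{11}$.

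In the remaining regime $X$ must carry at least one singular point of type $\frac{1}{3}(1,1)$ or $\frac{1}{5}(1,2)$. I would first produce a finite list of admissible singularity pools (this is Proposition \ref{candi_1/3_1/5} in the paper, yielding 19 candidates), using the dual graphs of the quotient-singularity resolutions together with negative-definiteness of the intersection form on the minimal resolution $\pi\colon Y \to X$ and the lower bound from Corollary \ref{bound_vol}. Then for each of the 19 candidates I would try two things in parallel: either realise $X$ as an explicit weighted projective space (identifying it with one of $M_1,\ldots,M_5$ via the listed $\pr(a,b,c)$), or exclude the candidate via an analysis of $Y$. The main exclusion tools are Lemma \ref{Fuku} (no $T$-lines can exist on any del Pezzo ancestor of $X$), the volume inequality, and a study of the Mori cone of $Y$, which tightly constrains any $K_Y$-negative extremal ray once the exceptional configuration of $\pi$ is fixed.

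The hard part will be the 14 exclusions needed to cut the list from 19 to 5, together with the uniqueness of the surviving weighted projective models. Uniqueness of $\pr(1,3,5)=M_2$ and $\pr(1,1,3)=M_5$ under the prescribed singularity pools (proved in the paper as Lemmas \ref{unique_1/n} and \ref{unique_1/5_A2}) requires ad hoc arguments, presumably via two-ray games on a rank-two partial resolution or via explicit identification of the anti-canonical linear system; and I expect no uniform technique handles all 14 exclusions, so each demands its own mixture of the three tools above, which is why the paper breaks the work across four separate non-existence propositions.
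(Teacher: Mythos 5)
Your proposal does not actually prove the statement in question. Theorem \ref{min_Gor} asserts the classification of rank one \emph{Gorenstein} del Pezzo surfaces of type $\tB$ (so only $A_1$, $A_2$, $A_3$ singularities are in play), and in the paper this is not proved at all: it is quoted verbatim from Ye's classification of Gorenstein log del Pezzo surfaces \cite{Ye02}. What you have written is instead a proof sketch for Theorem \ref{min_rank1}, the classification of \emph{all} rank one minimal del Pezzo surfaces of type $\tB$ obtained from type $\tA$ — that is where the index-two case, the 19 candidates of Proposition \ref{candi_1/3_1/5}, the $T$-line exclusions, and the uniqueness lemmas for $\pr(1,3,5)$ and $\pr(1,1,n)$ belong. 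For the theorem actually at issue, your argument is circular: the sentence ``if $X$ is Gorenstein but singular, then $\sS(X)$ is one of the seven configurations of Theorem \ref{min_Gor}'' assumes precisely the statement to be established, and then merely prunes that list using Corollary \ref{bound_vol} (which, incidentally, is a pruning relevant to Theorem \ref{min_rank1}, not to Theorem \ref{min_Gor} itself — the two degree-one entries are genuinely part of Ye's Gorenstein classification and must not be discarded here).

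A self-contained proof of Theorem \ref{min_Gor} would have to enumerate the admissible multisets of $A_1$, $A_2$, $A_3$ singularities on a rank one Gorenstein del Pezzo surface — for instance via Theorem \ref{num_sing_pt_rank1} (at most four singular points), the Noether formula of Remark \ref{Noeth_RR} forcing $K_X^2 = 9 - r$ for Du Val singularities with $r$ exceptional curves, and the square-determinant obstruction of Lemma \ref{lattice_square} — and then establish existence and uniqueness of the isomorphism class for each surviving configuration (the ``iso.\ class $= 1$'' column), e.g.\ by exhibiting the explicit models of Remark \ref{explicit} and arguing uniqueness as in Lemmas \ref{unique_1/n} and \ref{unique_1/5_A2}. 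None of this appears in your proposal for the Gorenstein case, so as written there is a genuine gap: the target theorem is used as an input rather than derived.
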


\begin{remark}\label{explicit}

We see that a surface of No.6 is $\pr(1,2,3)$ and a surface of No.7 is $\pr(1,1,2)$.
A surface of No.3 can be expressed by the following equation:
\[
xy(z^2 - xy) = w^2 \  {\rm in} \  \pr(1,1,1,2),
\]
where $\deg x,y,z = 1$ and $\deg w = 2$.
A surface of No.4 can be expressed by the following equation:
\[
xyz - w^3 = 0 \ {\rm in} \  \pr^3.
\] 
A surface of No.5 can be expressed by the following equation:
\[
\begin{cases}
\ xy - z^2 = 0 \\
\ zv - w^2 = 0 \ \ \ \ \ {\rm in} \  \pr^4.
\end{cases}
\]

\end{remark}

By Theorem \ref{min_index2} and \ref{min_Gor}, it remains to classify only the cases that surfaces having at least a singular point of type $\frac{1}{5}(1,2)$ or $\frac{1}{3}(1,1)$.
The upper bound of the number of singular points on a rank one del Pezzo surface is known as in the following theorem.

\begin{thm}\label{num_sing_pt_rank1}\cite[Theorem 1.1]{Be09}
A rank one del Pezzo surface with at most quotient singularities has at most four singular points. 

\end{thm}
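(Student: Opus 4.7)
The natural approach combines the orbifold Bogomolov-Miyaoka-Yau inequality with a root-lattice analysis. First I would take the minimal resolution $\pi : Y \to X$ and write $E_i := \pi^{-1}(P_i)$ for the exceptional divisor over each singular point $P_1, \ldots, P_q$. Since $X$ is klt with $-K_X$ ample, $X$ is a log Fano surface, hence rationally connected and (being a surface) rational, so $Y$ is smooth rational. By the classification of quotient surface singularities, each $E_i$ is a tree of smooth rational curves with $e(E_i) = |E_i| + 1$, where $|E_i|$ is the number of irreducible components. Combining $e(Y) = 2 + \rho(Y) = 3 + \sum_i |E_i|$ (using $\rho(X) = 1$) with $e(X) = e(Y) - \sum_i (e(E_i) - 1)$ then gives $e(X) = 3$.

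Next I would introduce the orbifold Euler number
\[
e_{\mathrm{orb}}(X) := e(X) - \sum_{P \in \Sing X}\left(1 - \frac{1}{|G_P|}\right) = 3 - \sum_{P}\left(1 - \frac{1}{|G_P|}\right),
\]
where $G_P$ is the local fundamental group at $P$. The orbifold BMY inequality for klt log del Pezzo surfaces (due to Miyaoka, Sakai, Kobayashi-Nakamura, in various refinements) yields $e_{\mathrm{orb}}(X) > 0$, so $\sum_{P}(1 - 1/|G_P|) < 3$. Since $|G_P| \ge 2$ for every quotient singularity, each summand is at least $1/2$, whence $q/2 < 3$ and $q \le 5$.

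The main obstacle is to exclude the boundary case $q = 5$. The constraint $\sum (1 - 1/|G_P|) < 3$ with five terms forces every $|G_P|$ to be small, and the extremal subcase is five $A_1$-singularities. Then $\pi$ is crepant, so $Y$ is a smooth weak del Pezzo surface of degree $10 - \rho(Y) = 4$, and the exceptional configuration would be a $5A_1$ sub-root-system of $K_Y^{\perp} \cong D_5$. However, $D_5$ contains at most four pairwise orthogonal roots: any maximal orthogonal quadruple has the shape $\{e_1 \pm e_2,\, e_3 \pm e_4\}$, and any root involving the coordinate $e_5$ necessarily shares an index with one of these, so $5A_1$ cannot embed as a sub-root-system, a contradiction. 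The remaining subcases of $q = 5$ — such as $4A_1 + A_2$ on a weak del Pezzo of degree $3$ with $K_Y^{\perp} \cong E_6$, or configurations involving non-canonical cyclic quotient singularities where $\pi$ is only partially crepant and one must track the discrepancies in the computation of $K_Y^2$ and $\rho(Y)$ — would be eliminated by analogous root-lattice or Picard-rank analyses. This case-by-case enumeration of forbidden $q=5$ configurations is the delicate step of the proof.
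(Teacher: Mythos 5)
The paper does not prove this statement at all: it is imported wholesale from Belousov \cite[Theorem 1.1]{Be09} and used as a black box, so there is no internal proof to measure yours against. Judged on its own, your opening is sound. The computation $e(X)=3$ from the minimal resolution is correct, and combining it with the orbifold Euler number gives $\sum_P\bigl(1-\tfrac{1}{|G_P|}\bigr)\le 3$; note, however, that the version of the inequality available in this paper (Theorem \ref{Miyaoka-Yau}) is only the non-strict $e_{orb}(X)\ge 0$ for $-K_X$ nef, so you would need to source the strict form for the ample case to get from $q\le 6$ down to $q\le 5$. Your exclusion of $5A_1$ via the absence of five pairwise orthogonal roots in $D_5$ is correct.

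The genuine gap is everything after that. The sentence asserting that the remaining $q=5$ subcases ``would be eliminated by analogous root-lattice or Picard-rank analyses'' defers the entire content of the theorem, and the deferred cases are not a finite list attackable by the method you actually used. For instance, $4A_1+\frac{1}{n}(1,1)$ satisfies $\sum_P\bigl(1-\tfrac{1}{|G_P|}\bigr)=3-\tfrac{1}{n}<3$ for every $n\ge 2$, so the Euler-number bound excludes none of this infinite family; and because the $\frac{1}{n}(1,1)$ point is non-canonical, its exceptional $(-n)$-curve is not orthogonal to $K_Y$, so there is no sub-root-system of $K_Y^{\perp}$ to analyze --- the $D_5$ argument structurally does not extend. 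Killing these families requires a different tool (for example the determinant-square criterion of Lemma \ref{lattice_square}, which this paper deploys for exactly such purposes, or Belousov's own geometric analysis of the resolution), together with an a priori reason the enumeration terminates. As written, your proposal establishes $q\le 5$ (modulo the strict BMY input) plus one boundary subcase, not the theorem.
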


\subsubsection{Lattice theory}
The following lemma is useful for eliminating impossible cases.

\begin{lemma}\cite[Lemma 3.3]{HK11}\label{lattice_square}
Let X be a rank one normal projective surface with quotient singularities and assume $K_X$ is not numerically trivial. 
Let $\pi : Y \to X $ be the minimal resolution. 
Then $H^2(Y, \mathbb{Z})_{free}$ is a unimodular lattice.
Let $R \subset H^2(Y, \mathbb{Z})_{free} := H^2(Y, \mathbb{Z} )/ ({\rm torsion \ part } )$ be a sublattice spanned by irreducible components of the exceptional divisors.
Then $\det (X) := | \det (R + \langle K_Y \rangle) |$ is a square number.

\end{lemma}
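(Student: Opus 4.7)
The plan is to prove the two assertions in turn using standard lattice-theoretic facts, with the key input being that $Y$ is a smooth projective surface carrying the unimodular intersection form inherited from Poincaré duality.

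First, for the unimodularity claim, I would appeal directly to Poincaré duality on the smooth projective surface $Y$: the cup product pairing $H^2(Y,\mathbb{Z})\times H^2(Y,\mathbb{Z})\to\mathbb{Z}$ is unimodular modulo torsion, so $\Lambda:=H^2(Y,\mathbb{Z})_{\mathrm{free}}$ is a unimodular lattice of rank $b_2(Y)=\rho(Y)$ (since $Y$ is a smooth rational surface, as the resolution of a klt rational surface $X$).

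Next, I would show that the sublattice $L:=R+\langle K_Y\rangle\subset\Lambda$ has full rank $\rho(Y)$. Since $X$ has quotient (hence rational) singularities, $\rho(Y)=\rho(X)+\#\{\text{exceptional components}\}$ and the exceptional intersection form is negative definite, so $\mathrm{rank}(R)=\rho(Y)-\rho(X)=\rho(Y)-1$. It therefore suffices to see that $K_Y\notin R\otimes\mathbb{Q}$. Writing $K_Y=\pi^*K_X+\Delta$ with $\Delta\in R\otimes\mathbb{Q}$ (the discrepancy divisor), the projection formula gives $\pi^*K_X\cdot E_i=K_X\cdot\pi_*E_i=0$ for every exceptional component $E_i$, so $\pi^*K_X$ lies in the orthogonal complement $R^{\perp}\subset N^1(Y)_{\mathbb{Q}}$. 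By hypothesis $K_X$ is not numerically trivial and $\pi^*$ is injective on numerical classes, so $\pi^*K_X\neq 0$; thus $K_Y$ has a nonzero component along $R^{\perp}$ and is not in $R\otimes\mathbb{Q}$. Hence $\mathrm{rank}(L)=\mathrm{rank}(R)+1=\rho(Y)$.

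Finally, for a full-rank sublattice $L\subset\Lambda$, the standard relation $|\det L|=[\Lambda:L]^2\cdot|\det\Lambda|$ holds. Combined with unimodularity of $\Lambda$ this gives $|\det L|=[\Lambda:L]^2$, which is a square integer, proving the claim.

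The only place where I expect to pause is the second step: one must verify carefully that $K_X$ being not numerically trivial on the rank-one surface $X$ really forces $\pi^*K_X\neq 0$ in $N^1(Y)_{\mathbb{Q}}$. This is clean once one recalls that $\pi^*$ is injective on $N^1$ for a birational morphism of normal projective surfaces, so the argument reduces to a short formal check rather than any delicate geometric input.
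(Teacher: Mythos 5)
Your proof is correct and follows essentially the same route as the paper: both arguments reduce to the fact that the Gram determinant of the integral classes $E_1,\dots,E_r,K_Y$ inside the unimodular lattice $H^2(Y,\mathbb{Z})_{\rm free}$ equals $(\det A)^2$ for the integer change-of-basis matrix $A$, which is exactly your index formula $|\det L|=[\Lambda:L]^2\,|\det\Lambda|$. The one point you treat more carefully than the paper is the explicit verification that $K_Y\notin R\otimes\mathbb{Q}$ (using that $K_X$ is not numerically trivial and that $R$ is negative definite), so that $R+\langle K_Y\rangle$ really has full rank $\rho(Y)=r+1$; the paper leaves this implicit, and your check is the right way to see where that hypothesis enters.
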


\begin{proof}

Let $E_1, \ldots ,E_r$ be the irreducible components of the exceptional divisors of $\pi$.
We may write
\[
K_Y = \pi^{*} K_X + \sum_{i=1}^{r} a_i E_i ,
\]
where $a_i \ge 0 $ for $1 \le i \le n$. 
 Let $v_1, \ldots , v_{r+1} \in H^2(Y, \mathbb{Z})_{free}$ be a basis as a lattice. 
Let ${\bf{v}} = (v_1 \cdots  v_{r+1})$ and $\bf{e}$ $= \left( K_Y \ E_1 \cdots E_r \right)$.
Since $H^2(Y, \mathbb{Z})_{free}$ is a unimodular, $\det ({}^t {\bf{v}} {\bf{v}}) =1$.
Since $ K_Y, E_1, \ldots , E_r$ are elements of $H^2(Y, \mathbb{Q} )$, there exists $A \in M_{r+1}(\mathbb{Q})$ such that ${\bf{e}} = {\bf{v}} A$.
Then we can compute $ \det (X)$ as follows:
\begin{eqnarray*}
   \det (X) &=&  \det ({}^{t}{\bf{e}} \bf{e})  \\
   &=& \det ({}^t ({\bf{v}} A) {\bf{v}} A) \\
   &=& \det ({}^t A {}^t {\bf{v}} {\bf{v}} A) \\
   &=& \det {}^t A \cdot \det ({}^t {\bf{v}} {\bf{v}}) \cdot \det A \\
   &=& (\det A)^2,
\end{eqnarray*} 
which is a square of an integer.

\end{proof}

\begin{remark}

Del Pezzo surfaces with at most quotient singularities are rational.
This is proved in \cite{Na07} for example.
Thus, for the minimal resolution $\pi : Y \to X$, $Y$ is a smooth rational surface. 

\end{remark}

\begin{remark}\label{Noeth_RR}

Let $Y$ be a smooth rational surface.
Then we see that 
\[
K_Y^2 + \rho(Y) = 10 .
\]
Let $X$ be a del Pezzo surface of type $\tA$.
Then we have
\[
K_X^2 + \rho (X) + \frac{2}{3} n_3 = 10
\]
by the Noether formula.
We also have
\[
h^0(X, -K_X ) = K_X^2 + 1 - \frac{n_3}{3}
\]
by the Riemann-Roch theorem.

\end{remark}

The following lemma tells us how to calculate $\det (X)$.

\begin{lemma}\label{lattice_cal}

Let the notation be as in the proof of Lemma \ref{lattice_square}.
It holds that
\[
\det (X) = \det (E_i \cdot E_j)_{ij} \cdot \left( 9 - r - \left( \sum a_i E_i  \right)^2 \right) .
\]

\end{lemma}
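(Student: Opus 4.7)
The plan is to compute the Gram matrix $G := {}^t\mathbf{e}\,\mathbf{e}$ of the ordered basis $(K_Y, E_1, \ldots, E_r)$ by elementary row operations, exploiting the fact that the $E_i$ are $\pi$-exceptional so that $\pi^{*}K_X \cdot E_j = 0$.

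First, using $K_Y = \pi^{*}K_X + \sum_i a_i E_i$ together with the projection formula, I obtain
\[
K_Y \cdot E_j \;=\; \sum_i a_i\,(E_i \cdot E_j) \qquad (1 \le j \le r).
\]
This means that the first row of $G$ is an $a_i$-linear combination of the other rows, except in the $(1,1)$-slot. Hence subtracting $\sum_i a_i \cdot(\text{row }i{+}1)$ from row $1$ kills every off-diagonal entry of the first row, leaving only
\[
K_Y^2 \;-\; \sum_i a_i \bigl(K_Y \cdot E_i\bigr) \;=\; K_Y \cdot\!\Bigl(K_Y - \sum_i a_i E_i\Bigr) \;=\; K_Y \cdot \pi^{*}K_X \;=\; (\pi^{*}K_X)^2 \;=\; K_X^{2}.
\]
Expanding along the new first row then yields
\[
\det G \;=\; K_X^{2}\cdot \det(E_i\cdot E_j)_{ij}.
\]

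The remaining step is to identify $K_X^{2}$ with $9 - r - \bigl(\sum a_i E_i\bigr)^2$. Squaring $K_Y = \pi^{*}K_X + \sum_i a_i E_i$ and again using $\pi^{*}K_X\cdot E_i = 0$ gives $K_Y^{2} = K_X^{2} + \bigl(\sum a_i E_i\bigr)^{2}$. On the other hand, as recalled in Remark \ref{Noeth_RR}, the Noether formula applied to the smooth rational surface $Y$ gives $K_Y^{2} + \rho(Y) = 10$; since $\pi$ replaces the $r$ exceptional components by a single point on the rank-one surface $X$, we have $\rho(Y) = \rho(X) + r = 1 + r$, hence $K_Y^{2} = 9 - r$. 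Combining the two expressions for $K_Y^{2}$ gives $K_X^{2} = 9 - r - \bigl(\sum a_i E_i\bigr)^{2}$, which substituted into the previous display yields the claimed identity.

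No genuine obstacle arises here; the argument is essentially a determinantal manipulation combined with the Noether formula. The only point that requires a little care is the sign convention: the intersection matrix $(E_i \cdot E_j)$ is negative definite of rank $r$, so its determinant has sign $(-1)^{r}$, whereas $\det(X)$ as introduced in Lemma \ref{lattice_square} carries an absolute value. Since $K_X^{2} > 0$, the two sides of the stated formula agree up to sign, and one can either suppress signs or take absolute values throughout without affecting the applications.
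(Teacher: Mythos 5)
Your proof is correct and takes essentially the same route as the paper's: the paper's proof simply lists the four relations $\det (X) = \det (E_i \cdot E_j)_{ij} \cdot K_X^2$, $K_X^2 = K_Y^2 - (\sum a_i E_i)^2$, $1+r = \rho(Y)$, and $K_Y^2 + \rho(Y) = 10$, and you supply the row-operation justification of the first (which the paper asserts without proof, it being the orthogonal decomposition $R + \langle K_Y\rangle = R \perp \langle \pi^{*}K_X\rangle$ over $\Q$) together with the same Noether-formula step. Your closing remark on the sign of $\det(E_i\cdot E_j)_{ij}$ versus the absolute value in the definition of $\det(X)$ is a fair observation about a convention the paper glosses over, and does not affect the applications.
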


\begin{proof}
We can obtain this relation by the following relations:
\[
\begin{cases}
\ \det (X) = \det R \cdot K_X^2 = \det (E_i \cdot E_j)_{ij} \cdot K_X^2 ,\\
\ K_X^2 = K_Y^2 - \left( \sum a_i E_i  \right)^2 , \\
\ 1 + r = \rho (Y) , \\
\ K_Y^2 + \rho(Y) = 10 \ \ \  ({\rm Remark}  \ \ref{Noeth_RR}).
\end{cases}
\]
\end{proof}

\begin{proposition}\label{candi_1/3_1/5}

Let $X$ be a rank one del Pezzo surface of type $\tB$.
If $X$ has at least one singular point of type $\frac{1}{3}(1,1)$ or $\frac{1}{5}(1,2)$, then $X$ is one of the 19 cases in Table \ref{19candi}.

{\small \rm
\begin{table}[htb]
\caption{}\label{19candi}
\renewcommand{\arraystretch}{1.4}
  \begin{tabular}{|c|c|c|c|c|}  \hline
    \multicolumn{1}{|l|}{No.} & \multicolumn{1}{c|}{$\sS (X)$} & \multicolumn{1}{c|}{$(-K_{X})^2$}  & $ \det (X)$  \\ \hline \hline
    1 & $\frac{1}{5}$(1,2), $A_3, A_3, A_1$ & $\frac{2}{5}$ & 64   \\
    2 & $\frac{1}{5}$(1,2), $A_3, A_2$ &  $\frac{12}{5}$ & 144  \\ 
    3 & $\frac{1}{5}$(1,2), $A_2$  & $\frac{27}{5}$ & 81  \\  
    4 & $\frac{1}{5}$(1,2), $A_1$ & $\frac{32}{5}$ & 64 \\
    5 & $\frac{1}{3}$(1,1), $A_3$  & $\frac{16}{3}$ & 64  \\  
    6 & $\frac{1}{3}$(1,1)  & $\frac{25}{3}$ & 25  \\  
    7 & $\frac{1}{4}$(1,1), $\frac{1}{3}$(1,1) & $\frac{16}{3}$ & 100  \\
    8 & $\frac{1}{4}$(1,1), $\frac{1}{4}$(1,1), $\frac{1}{3}$(1,1) & $\frac{25}{3}$ & 400  \\
    9 & $\frac{1}{4}$(1,1), $\frac{1}{4}$(1,1), $\frac{1}{4}$(1,1), $\frac{1}{3}$(1,1) & $\frac{25}{3}$ & 1600  \\
    10 & $\frac{1}{3}$(1,1), $\frac{1}{3}$(1,1), $\frac{1}{3}$(1,1), $A_1$ & 6 & 324  \\
    11 & $\frac{1}{5}$(1,2), $\frac{1}{4}$(1,1), $\frac{1}{4}$(1,1), $A_1$  & $\frac{32}{5}$ & 1024   \\ 
    12 & $\frac{1}{5}$(1,2), $\frac{1}{3}$(1,1), $\frac{1}{3}$(1,1), $\frac{1}{3}$(1,1) & $\frac{27}{5}$ & 729   \\
    13 & $\frac{1}{5}$(1,2), $\frac{1}{4}$(1,1), $A_1$ & $\frac{32}{5}$ & 256   \\
    14 & $\frac{1}{3}$(1,1), $A_3, A_2, A_2$ & $\frac{4}{3}$ & 144  \\ 
    15 & $\frac{1}{5}$(1,2), $\frac{1}{4}$(1,1), $A_3, A_2$ &  $\frac{12}{5}$ & 576   \\ 
    16 & $\frac{1}{5}$(1,2), $\frac{1}{4}$(1,1), $\frac{1}{4}$(1,1), $A_2$ &  $\frac{27}{5}$ & 1296 \\
    17 & $\frac{1}{5}$(1,2), $\frac{1}{4}$(1,1), $A_2$ & $\frac{27}{5}$ & 324   \\
    18 & $\frac{1}{4}$(1,1), $A_3$, $\frac{1}{3}$(1,1)  &  $\frac{16}{3}$ & 256  \\  
    19 & $\frac{1}{4}$(1,1), $\frac{1}{4}$(1,1), $A_3$, $\frac{1}{3}$(1,1) & $\frac{16}{3}$ & 1024   \\ \hline
  \end{tabular}
\end{table}
}

\end{proposition}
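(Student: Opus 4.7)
The plan is an exhaustive enumeration of candidate singularity configurations, cut down by two numerical necessary conditions.

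First I will use Theorem \ref{num_sing_pt_rank1} to bound $|\sS(X)| \le 4$. Since $X$ is of type $\tB$ and by hypothesis carries at least one $\frac{1}{3}(1,1)$ or $\frac{1}{5}(1,2)$ singularity, the candidate multisets $\sS(X)$ form a short finite list (on the order of one hundred). For each, I will compute $(-K_X)^2$ from the minimal resolution $\pi \colon Y \to X$: since $\rho(X) = 1$ and $\pi$ introduces $r$ exceptional curves, Remark \ref{Noeth_RR} gives $K_Y^2 = 9 - r$; writing $\pi^* K_X = K_Y + \sum a_i E_i$ and imposing $\pi^* K_X \cdot E_j = 0$ on the dual graphs from the preliminary section determines the discrepancies $a_i$ once and for all (namely $0$ on $A_n$-chains, $\tfrac{1}{3}$ on the $(-3)$-curve of $\frac{1}{3}(1,1)$, $\tfrac{1}{2}$ on the $(-4)$-curve of $\frac{1}{4}(1,1)$, and $(\tfrac{1}{5},\tfrac{2}{5})$ on the $(-2),(-3)$ pair of $\frac{1}{5}(1,2)$). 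Then $(-K_X)^2 = 9 - r - (\sum a_i E_i)^2$ is an explicit local-to-global rational number depending only on $\sS(X)$.

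Second, I will filter each candidate by two tests: (i) the del Pezzo condition $(-K_X)^2 > 0$, which eliminates the overly singular configurations, and (ii) the square condition of Lemma \ref{lattice_square}, which via Lemma \ref{lattice_cal} reads
\[
|\det(X)| \;=\; \Bigl(\prod_P n_P\Bigr)\cdot (-K_X)^2 \;\in\; (\Z_{>0})^2,
\]
where $n_P \in \{2,3,4,3,4,5\}$ is the order of the local class group at $P$ (for $A_1, A_2, A_3, \frac{1}{3}(1,1), \frac{1}{4}(1,1), \frac{1}{5}(1,2)$ respectively, which equals the absolute value of the intersection-matrix determinant of the exceptional divisor at $P$). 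The squareness filter (ii) is the sharper of the two and does the bulk of the work, killing many configurations that pass positivity but fail lattice-theoretically.

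The main obstacle is organizational rather than conceptual: one must systematically enumerate the $\sim 100$ multisets and apply the fractional discrepancy computation and the squareness test to each without arithmetic slips, with particular care for the product of local class-group orders in mixed configurations. I expect, and will verify directly, that exactly the 19 multisets of Table \ref{19candi} survive both tests; the stronger volume bound of Corollary \ref{bound_vol} is deliberately not invoked here, being reserved for the subsequent non-existence propositions that further reduce the 19 candidates to the five surfaces $M_1, \ldots, M_5$ of Table \ref{min_rank1_table}.
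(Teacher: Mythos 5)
Your proposal is correct and follows essentially the same route as the paper: bound the number of singular points by four via Theorem \ref{num_sing_pt_rank1}, enumerate the finitely many type-$\tB$ configurations containing a $\frac{1}{3}(1,1)$ or $\frac{1}{5}(1,2)$ point, and eliminate candidates using positivity of $(-K_X)^2$ together with the squareness of $\det(X)$ from Lemmas \ref{lattice_cal} and \ref{lattice_square}. Your explicit discrepancy values and local determinants are all correct, so the only remaining content is the (routine) arithmetic verification that exactly the 19 listed multisets survive.
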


\begin{proof}

By Theorem \ref{num_sing_pt_rank1}, the number of singular points on $X$ is at most four.
For all combinations of singularities, we calculate $\det (X)$ by using Lemma \ref{lattice_cal}.
We see that $\det (X)$ is a square number by Lemma \ref{lattice_square}.
The candidate whose $\det (X)$ is a square number is one of the 19 cases in Table \ref{19candi}.

\end{proof}

The surface of No.1 does exist.
By Corollary \ref{bound_vol}, we, however, see that we cannot obtain such a surface from a del Pezzo surface of type $\tA$.
We will prove non-existence of cases from No.7 to No.19.
We first prepare a lemma for the next subsection. 

\begin{lemma}\label{rat_pic_le_5}

Let $Y$ be a smooth rational surface.
Assume that $\rho(Y) \le 5$ and for a negative curve $C$ on $Y$, the inequality $-4 \le C^2 \le -1$ holds.
Denote the numbers of $(-3)$-curves and $(-4)$-curves on $Y$ by $N_3, N_4$ respectively.
Then it holds that $N_3 \le 2$ and $N_4 \le 1$.

\end{lemma}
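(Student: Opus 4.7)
\emph{Plan.} The idea is to reduce $Y$ to a minimal rational surface and then argue that only a small number of blow-ups can intervene, so that $(-3)$- and $(-4)$-curves cannot proliferate. Since $Y$ is a smooth rational surface with $\rho(Y) \le 5$, Noether's formula gives $K_Y^2 = 10 - \rho(Y) \ge 5$, and there is a birational morphism $\pi : Y \to Y_0$ onto a minimal rational surface, with $Y_0 \cong \pr^2$ (so $\rho(Y_0)=1$) or $Y_0 \cong \F_n$ with $n = 0$ or $n \ge 2$ (so $\rho(Y_0)=2$). The total number of blow-ups composing $\pi$ is $\rho(Y)-\rho(Y_0)$, hence at most $4$ in the first case and at most $3$ in the second. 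Since the strict transform of the minimal section of $\F_n$ has self-intersection $\le -n$, the hypothesis $-4 \le C^2$ on all negative curves of $Y$ forces $n \le 4$, leaving $Y_0 \in \{\pr^2,\F_0,\F_2,\F_3,\F_4\}$.

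Next I classify the possible sources of a $(-3)$- or $(-4)$-curve on $Y$. Any such curve is the strict transform of one of the following: (a) the minimal $(-n)$-section of $\F_n$ when $n \in \{3,4\}$; (b) an intermediate exceptional divisor $E_i$ that has been hit by subsequent blow-ups on its strict transform; or (c) a non-negative curve on $Y_0$ (a line on $\pr^2$, a fiber or non-minimal section of $\F_n$) that passes through enough blow-up centers. The self-intersection of $E_i$ drops by one for each later blow-up centered on its current strict transform, so to obtain an exceptional $(-3)$-curve one needs at least $2$ later blow-ups over $E_i$ (i.e.\ a tower of length $\ge 3$), and a $(-4)$-exceptional requires a tower of length $\ge 4$. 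A line on $\pr^2$ becomes a $(-3)$-curve only after $4$ collinear centers are blown up; a fiber of $\F_n$ becomes a $(-3)$-curve only after $3$ centers on it; and lowering the minimal section of $\F_n$ to $-3$ or $-4$ consumes $3-n$ or $4-n$ blow-ups on it.

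Finally, combining these minimum ``costs'' with the budget of at most $4$ (resp.\ $3$) blow-ups from $\pr^2$ (resp.\ $\F_n$), I expect to rule out $N_3 \ge 3$ and $N_4 \ge 2$ by a direct accounting. For example, to obtain two $(-3)$-curves from $\pr^2$, one can take a tower $P_1, P_2, P_3, P_4$ of infinitely near blow-ups with $P_3$ at the intersection of the strict transforms of $E_1,E_2$ and $P_4$ at the intersection of those of $E_2,E_3$, realizing $N_3=2$; any attempt to produce a third $(-3)$-curve would require a further blow-up, exceeding the budget. Similar case-by-case checks cover $Y_0 = \F_n$ and rule out a second $(-4)$-curve. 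The main obstacle is the bookkeeping: infinitely near blow-ups can simultaneously create exceptional negative curves and lower self-intersections of curves coming from $Y_0$, so the enumeration of configurations must be carried out carefully in each of the five cases for $Y_0$.
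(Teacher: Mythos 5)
Your overall strategy (reduce to a minimal model, bound the number of blow-ups by $\rho(Y)-\rho(Y_0)\le 4$, and account for how each $(-3)$- or $(-4)$-curve must arise) is a reasonable geometric reformulation, and the reduction to $Y_0\in\{\pr^2,\F_0,\F_2,\F_3,\F_4\}$ is sound. But the write-up stops exactly where the content of the lemma begins, and there are two concrete gaps. First, step (c) silently assumes that the only curves on $Y_0$ that can descend from a $(-3)$- or $(-4)$-curve on $Y$ are lines on $\pr^2$ and fibers or sections on $\F_n$; this needs justification, since a priori the image could be a higher-degree rational curve with multiple points at the centers. Ruling that out requires a numerical bound: from adjunction $-K_Y\cdot C=2-n$ and $C^2=-n$ one gets two equations on the class of $C$, and a Cauchy--Schwarz estimate (this is exactly what the paper does, and what is needed to show e.g.\ that a degree-$d$ plane curve can only work for $d\le 1$). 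Without it, your list of ``sources'' is an assertion, not a proof.

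Second, and more seriously, the final ``direct accounting'' is never carried out, and it cannot be a naive sum of costs against the budget of $4$: a single blow-up centered at a point lying on several curves lowers all of their self-intersections simultaneously (your own $N_3=2$ configuration exploits precisely this), so the costs are not additive and the impossibility of $N_3\ge 3$ or $N_4\ge 2$ genuinely requires enumerating configurations of (possibly infinitely near) centers in each of the five cases --- the part you explicitly defer. For comparison, the paper sidesteps the configuration combinatorics: it reduces to $\rho(Y)=5$, writes $Y$ as three blow-ups of a Hirzebruch surface $\F_m$ (normalized to $m=1$ or $3$), fixes the resulting basis of $\Pic Y$, solves the two numerical conditions for a $(-3)$- or $(-4)$-class with the Cauchy--Schwarz bound to get a short finite list of candidate classes, and then uses the pairwise intersection numbers of these candidates together with the fact that distinct irreducible curves meet non-negatively to conclude that at most two (resp.\ one) can be irreducible at once. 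To complete your version you would either have to carry out the full case-by-case enumeration of blow-up configurations, or switch to this kind of numerical argument on $\Pic Y$.
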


\begin{proof}

It is enough to show only the case $\rho(Y) = 5$.
A rank two smooth rational surface is a Hirzebruch surface. 
Hence we have a sequence of blow-ups at points, $Y \overset{\tau_3}{\rightarrow} Y_2 \overset{\tau_2}{\rightarrow} Y_1 \overset{\tau_1}{\rightarrow} \mathbb{F}_m$.
By assumption, we see $m \le 4$.
Set $f := \tau_1 \circ \tau_2 \circ \tau_3$.
Denote the exceptional curves of $\tau_1, \tau_2, \tau_3$ by $E_1, E_2, E_3$ respectively.
Then we have $\Pic Y = \mathbb{Z} [(\tau_2 \circ \tau_3)^* E_1] \oplus \mathbb{Z} [\tau_3^* E_2] \oplus \mathbb{Z} [E_3] \oplus \mathbb{Z} [f^* \sigma] \oplus \mathbb{Z} [f^* l]$, where $\sigma$ is the minimal section and $l$ is a fiber.
Considering the configuration of negative curves on $Y_1$, we may assume $m=1, 3$ by choosing blow-downs.
Set $e_1 := (\tau_2 \circ \tau_3)^* E_1$, $e_2 := \tau_3^* E_2$, $e_3 := E_3$, $e_4 := f^* \sigma$ and $e_5 := f^* l$.
Then we see that $e_1^2 = e_2^2 = e_3^2 = -1, e_4^2 = -m, e_5^2 = 0$ and $e_4 \cdot e_5 = 1, e_i \cdot e_j = 0$ for the rest. 
Let $C \in \Pic Y$ be a $(-n)$-curve and we set $C \sim \sum^3_{i=1} a_i e_i + xe_4 + ye_5$ with some integer $a_i, x, y$.
Since $f_* C$ is effective, we have $x , y \ge 0$.
Since it holds that $-K_Y \cdot C = 2-n$ and $C^2 = -n$, we obtain the two equations by computing intersection numbers:
\begin{equation*}
( \bigstar ) \ \ 
\begin{cases}
\ A := a_1 + a_2 + a_3  =  (m-2) x -2 y + 2 - n \\
\ B := a_1^2 + a_2^2 + a_3^2  =  -mx^2 + 2xy + n \ .
\end{cases}
\end{equation*}
By the Cauchy-Schwarz inequality, we have $3B \ge A^2$, that is,
\[
3(-mx^2 + 2 xy + n) \ge ((m-2) x -2y + 2 - n)^2  .
\]
Hence we have
\begin{eqnarray*}
  0 & \ge & (m^2 -m +4) x^2 + 4y^2 - (4m -2)xy  \\
  & & - \ 2(m-2)(n-2) x + 4(n-2) y  + n^2 -7n + 4  .
\end{eqnarray*}
This method is used in \cite{Ha77}.

\noindent{\bf Case 1-1 : $m = 1$ and $n=3$} \\
If $C$ is a $(-3)$-curve, then by the inequality, we have 
\[
8 \ge 4x^2 + 4y^2 - 2xy + 2x + 4y  .
\]
Hence we have
\[
31 \ge 3 (x-y)^2 + (3x + 1)^2 + 6(y+1)^2 .
\]
Thus we see that $(x,y) = (0,1)$, $(0,0)$ or $(1,0)$.
If $(x,y) = (0,1)$, we have $A = -3$ and $B = 3$ by $(\bigstar)$.
Hence we have $(a_1, a_2, a_3) = (-1, -1, -1)$.
Set $C_1 := -e_1 - e_2 - e_3 + e_5 $.
If $(x,y) = (0,0)$, we have $A = -1$ and $B = 3$ by $(\bigstar)$.
Hence we have $(a_1, a_2, a_3) = (1, -1, -1)$, $(-1, 1, -1)$ or $(-1, -1, 1)$.
Set $C_2 := e_1 - e_2 - e_3$, $C_3 := -e_1 + e_2 - e_3$ and $C_4 := -e_1 - e_2 + e_3$.
If $(x,y) = (1,0)$, we have $A = -2$ and $B = 2$ by $(\bigstar)$.
Hence we have $(a_1, a_2, a_3) = (0, -1, -1)$, $(-1, 0, -1)$ or $(-1, -1, 0)$.
Set $C_5 := - e_2 - e_3 + e_4$, $C_6 := -e_1 - e_3 + e_4$ and $C_7 := -e_1 - e_2 + e_4$.

In summary, we have the seven candidates of $(-3)$-curve $C_1, \ldots , C_7$.
What we will prove is that $N_3 \le 2$.
The following Table \ref{1_3} is the intersection numbers between the candidates.

{\small
\begin{table}[htb]
\caption{Intersection numbers $C_i \cdot C_j$ when $(m,n) = (1,3)$}\label{1_3}
\renewcommand{\arraystretch}{1.4}
  \begin{tabular}{|l||c|c|c|c|c|c|c|} \hline
 & $C_1$ &  $C_2$ &  $C_3$ &  $C_4$ &  $C_5$ &  $C_6$ &  $C_7$   \\ \hline \hline
$C_1 := -e_1 - e_2 - e_3 + e_5 $ & -3 & & & & & & \\ \hline
$C_2 := e_1 - e_2 - e_3$ & -1 & -3 & & & & & \\ \hline
$C_3 := -e_1 + e_2 - e_3$ & -1 & 1 & -3 & & & & \\ \hline
$C_4 := -e_1 - e_2 + e_3$ & -1 & 1 & 1 & -3 & & & \\ \hline
$C_5 := - e_2 - e_3 + e_4$ & -1 & -2 & 0 & 0 & -3 & & \\ \hline
$C_6 := -e_1 - e_3 + e_4$ & -1 & 0 & -2 & 0 & -2 & -3 & \\ \hline
$C_7 := -e_1 - e_2 + e_4$ & -1 & 0 & 0 & -2 & -2 & -2 & -3 \\ \hline
  \end{tabular}
\end{table}
} 
Note that $C \cdot D \ge 0$ for distinct irreducible curves $C, D$.  

If $C_1$ is an irreducible curve, we see that the other $C_i$'s are not irreducible curves since $C_1 \cdot C_i < 0$ as in Table \ref{1_3}.
Therefore, $N_3 = 1 \le 2$ in this case.
From now on we may assume that $C_1$ is not an irreducible curve.

Since $C_2, C_3$ and $C_4$ are spanned by exceptional divisors, they are exceptional curves if they are irreducible curves.
The exceptional curve which can be a $(-3)$-curve is only the strict transform of $E_1$ by $\tau_2 \circ \tau_3$.
Hence we see that $C_3$ and $C_4$ cannot be $(-3)$-curves.
We may assume that $C_3$ and $C_4$ are not irreducible curves.
If $C_2$ is an irreducible curve, $C_5$ is not an irreducible curve since $C_2 \cdot C_5 = -2 < 0$.
Since $C_6 \cdot C_7 < 0$, at least one of them is not an irreducible curve.
Therefore, we see $N_3 \le 2$. 
Hence we may assume that $C_2, C_3$ and $C_4$ are also not irreducible curves. 

If one of $C_5, C_6$ and $C_7$ is an irreducible curve, the other cases are not irreducible curves.
Hence we see $N_3 = 1 \le 2$.

\noindent{\bf Case 1-2 : $m = 1$ and $n=4$}  \\
By the inequality, we have
\[
8 \ge 4x^2 + 4y^2 - 2xy + 4x + 8y  .
\]
Hence we have
\[
44 \ge 3 (x-y)^2 + (3x + 12)^2 + (3y+4)^2 .
\]
Thus we see that $(x,y) = (0, 0)$ or $(1,0)$.
If $(x,y) = (0,0)$, we have $A = -2$ and $B = 4$ by $(\bigstar)$.
Thus we have $(a_1, a_2, a_3) = (-2, 0, 0), (0, -2, 0)$ or $(0,0,-2)$.
We see, however, that neither $-2e_1$, $-2e_2$ nor $-2e_3$ cannot be realized as an irreducible curve.
Hence this case is impossible.
If $(x,y) = (1,0)$, we have $A = -3$ and $B = 3$ by $(\bigstar)$.
Thus we have $(a_1, a_2, a_3) = (-1, -1, -1)$.
Hence we see that there is at most one candidate.
Therefore, we see that $N_4 \le 1$.

\noindent{\bf Case 2-1 : $m = 3$ and $n=3$}  \\
By the inequality, we have
\begin{eqnarray*}
8 &\ge& 10x^2 + 4y^2 - 10xy - 2x + 4y  .
\end{eqnarray*}
Hence we have
\[
189 \ge 5 (3y - 5x)^2 + (5x - 3)^2 + 15(y+2)^2 .
\]
Thus we see that $(x,y) = (0,0)$, $(1,0)$, $(0,1)$ or $(1,1)$.
If $(x,y) = (0,0)$, we have $A = -1$ and $B = 3$ by $(\bigstar)$.
Thus we have $(a_1, a_2, a_3) = (1, -1, -1)$, $(-1, 1, -1)$ or $(-1, -1, 1)$.
Set $C_1 := e_1 - e_2 - e_3 $, $C_2 := -e_1 + e_2 - e_3$ and $C_3 := -e_1 - e_2 + e_3$.
If $(x,y) = (1,0)$, we have $A = 0$ and $B = 0$ by $(\bigstar)$.
Thus we have $(a_1, a_2, a_3) = (0, 0, 0)$.
Set $C_4 := e_4$.
If $(x,y) = (0,1)$, we have $A  = -3$ and $B = 3$ by $(\bigstar)$.
Thus we have $(a_1, a_2, a_3) = (-1,-1,-1)$.
Set $C_5 := - e_1 - e_2 - e_3 + e_5$.
If $(x,y) = (1,1)$, we have $A = -2$ and $B = 2$ by $(\bigstar)$.
Thus we have $(a_1, a_2, a_3) = (0,-1,-1), (-1,0,-1)$ or $(-1,-1,0)$.
Set $C_6 := -e_2 - e_3 + e_4 + e_5$, $C_7 := -e_1 - e_3 + e_4 + e_5$ and $C_8 := -e_1 - e_2 + e_4 + e_5$.

Hence we have the eight candidates of $(-3)$-curve $C_1, \ldots , C_8$.
The following Table \ref{3_3} is the intersection numbers between the candidates.

{\small
\begin{table}[htb]
\caption{Intersection numbers $C_i \cdot C_j$ when $(m,n) = (3,3)$}\label{3_3}
\renewcommand{\arraystretch}{1.4}
  \begin{tabular}{|l||c|c|c|c|c|c|c|c|} \hline
 & $C_1$ &  $C_2$ &  $C_3$ &  $C_4$ &  $C_5$ &  $C_6$ &  $C_7$ & $C_8$  \\ \hline \hline
$C_1 := e_1 - e_2 - e_3 $          & -3 & & & & & & & \\ \hline
$C_2 := -e_1 + e_2 - e_3$         & 1  & -3 & & & & & & \\ \hline
$C_3 := -e_1 - e_2 + e_3$         & 1  & 1  & -3  & & & & & \\ \hline
$C_4 := e_4 $                         & 0  & 0   & 0  & -3 & & & & \\ \hline
$C_5 := - e_1 - e_2 - e_3 + e_5$ & -1 & -1 & -1 & 1 & -3 & & & \\ \hline
$C_6 := -e_2 - e_3 + e_4 + e_5$  & -2 & 0  & 0   & -2 & -1 & -3 & & \\ \hline
$C_7 := -e_1 - e_3 + e_4 + e_5$  & 0  & -2 & 0   & -2 & -1 & -2 & -3 &  \\ \hline
$C_8 := -e_1 - e_2 + e_4 + e_5$  & 0  & 0 & -2   & -2 & -1 & -2 & -3 & -3  \\ \hline  
  \end{tabular}
\end{table}
}

Since $C_1, C_2$ and $C_3$ are spanned by exceptional divisors, they are exceptional curves if they are irreducible curves.
The exceptional curve which can be a $(-3)$-curve is only the strict transform of $E_1$ by $\tau_2 \circ \tau_3$.
Hence we see that $C_2$ and $C_3$ cannot be $(-3)$-curves.
We may assume that $C_2$ and $C_3$ are not irreducible curves.
If $C_1$ is an irreducible curve, then $C_5$ and $C_6$ are not irreducible curves.
Then if $C_4$ is also an irreducible curve, $C_7$ and $C_8$ are not irreducible curves.
Hence we see $N_3 \le 2$.
If $C_4$ is not an irreducible curve, we also have $N_3 \le 2$ since at least one of $C_7$ and $C_8$ is not an irreducible curve.
Hence we may assume that $C_1$ is not an irreducible curve.

If $C_4$ is an irreducible curve, $C_6, C_7$ and $C_8$ are not irreducible curves.
Hence we have $N_3 \le 2$.
We may assume that $C_4$ is also not an irreducible curve.

If $C_5$ is an irreducible curve, $C_6, C_7$ and $C_8$ are not irreducible curves.
Hence $N_3 = 1 \le 2$.
We may assume that $C_5$ is also not an irreducible curve.

If one of $C_6, C_7$ and $C_8$ is not an irreducible curve, then the other candidates are not irreducible curves.
Hence we see that $N_3 \le 2$.

\noindent{\bf Case 2-2 : $m = 3$ and $n=4$} \\
By the inequality, we have  
\begin{eqnarray*}
8 &\ge& 10x^2 + 4y^2 - 10xy - 4x + 8y  .
\end{eqnarray*}
Hence we have
\[
396 \ge 5 (3y - 5x)^2 + (5x - 6)^2 + 15(y+4)^2 .
\]
Thus we see that $(x,y) = (0,0)$, $(1,0)$ or $(1,1)$.
If $(x,y) = (0,0)$, we have $A = -2$ and $B = 4$ by $(\bigstar)$.
Thus we have $(a_1, a_2, a_3) = (-2,0,0), (0,-2,0)$ or $(0,0,-2)$.
We see, however, that neither $-2e_1$, $-2e_2$ nor $-2e_3$ cannot be realized as an irreducible curve.
Hence this case is impossible.
If $(x,y) = (1,0)$, we have $A = -1$ and $B = 1$ by $(\bigstar)$.
Thus we have $(a_1, a_2, a_3) = (-1,0,0), (0,-1,0)$ or $(0,0,-1)$.
Set $C_1 := -e_1 + e_4$, $C_2 := -e_2 + e_4$ and $C_3 := -e_3 + e_4$.
If $(x,y) = (1,1)$, we have $A = -3$ and $B = 3$ by $(\bigstar)$.
Thus we have $(a_1, a_2, a_3) = (-1,-1,-1)$.
Set $C_4 := -e_1 - e_2 - e_3 + e_4 + e_5$.

Hence we have the eight candidates of $(-4)$-curve $C_1, \ldots , C_4$.
The following Table \ref{3_4} is the intersection numbers between the candidates.

{\small
\begin{table}[htb]
\caption{Intersection numbers $C_i \cdot C_j$ when $(m,n) = (3,4)$}\label{3_4}
\renewcommand{\arraystretch}{1.4}
  \begin{tabular}{|l||c|c|c|c|} \hline
 & $C_1$ &  $C_2$ &  $C_3$ &  $C_4$   \\ \hline \hline
$C_1 := -e_1 + e_4$ & -4 & & &  \\ \hline
$C_2 := -e_2 + e_4$ & -3 & -4 & &  \\ \hline
$C_3 := -e_3 + e_4$ & -3 & -3 & -4 &  \\ \hline
$C_4 := -e_1 - e_2 - e_3 + e_4 + e_5$ & -3 & -3 & -3 & -4  \\ \hline
  \end{tabular}
\end{table}
}

Since $C_i \cdot C_j < 0$ for all $i \neq j$, we see that $N_4 \le 1$.
 
\end{proof}

\begin{proposition}\label{non_exi_7_10}

There are no examples of the case $No.7$, $No.8$, $No.9$ nor $No.10$.

\end{proposition}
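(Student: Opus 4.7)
The strategy is uniform across the four cases: let $\pi : Y \to X$ be the minimal resolution, which is a smooth rational surface. In each case the number of exceptional components is small enough that $\rho(Y) \le 5$, so Lemma \ref{rat_pic_le_5} supplies the bounds $N_3 \le 2$ and $N_4 \le 1$ on the numbers of $(-3)$- and $(-4)$-curves on $Y$.

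For No.8, the two $\frac{1}{4}(1,1)$-singularities of $X$ contribute two disjoint $(-4)$-curves on $Y$, so $N_4 \ge 2 > 1$. For No.9, the three $\frac{1}{4}(1,1)$-singularities force $N_4 \ge 3 > 1$. For No.10, the three $\frac{1}{3}(1,1)$-singularities force $N_3 \ge 3 > 2$. Each of these directly contradicts Lemma \ref{rat_pic_le_5}, so these three cases do not occur.

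Case No.7 is more delicate: the counts $N_3 = N_4 = 1$ on a surface with $\rho(Y) = 3$ are consistent with Lemma \ref{rat_pic_le_5}, so a finer argument is required. Since $Y$ is smooth rational with $\rho(Y) = 3$, one has $Y \cong \Bl_p \F_m$ for some $m \ge 0$ and some $p \in \F_m$. The plan is to classify the irreducible negative curves on $Y$ directly, in the spirit of the proof of Lemma \ref{rat_pic_le_5}. Writing an irreducible curve on $\F_m$ as $C \sim a S + b F$ (with $S$ the minimal section and $F$ a fiber), the effectivity bound $b \ge am$ together with the multiplicity bound $m_p(C) \le a$ at the blown-up point yields
\[
\widetilde{C}^2 \;\ge\; -a^2 m + 2ab - a^2 \;\ge\; a^2(m-1) \;\ge\; 0 \qquad \text{for } a \ge 2,\ m \ge 1,
\]
and a parallel computation handles $m = 0$ (where $\widetilde{C}^2 \ge 2ab - \min(a,b)^2 \ge ab \ge 0$). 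Hence the only irreducible negative curves on $Y$ are the exceptional divisor, the strict transform of the fiber through $p$ (both $(-1)$-curves), and the strict transform of $S$ (of self-intersection $-m$ if $p \notin S$ or $-m-1$ if $p \in S$). Since at most one of these three curves can have self-intersection in $\{-3,-4\}$, the surface $Y$ cannot simultaneously carry a $(-3)$-curve and a disjoint $(-4)$-curve, contradicting the configuration demanded by No.7.

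The principal obstacle is precisely case No.7: the bounds of Lemma \ref{rat_pic_le_5} are attained but not exceeded, so one must combine the multiplicity bound on $\F_m$ with the explicit structure of one-point blow-ups of Hirzebruch surfaces to rule out coexisting $(-3)$- and $(-4)$-curves.
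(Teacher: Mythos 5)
Your treatment of Nos.\ 8, 9 and 10 is exactly the paper's: count the exceptional components to get $\rho(Y)\le 5$ and read off a violation of the bounds $N_3\le 2$, $N_4\le 1$ from Lemma \ref{rat_pic_le_5}. Where you diverge is No.\ 7, and both routes are correct. The paper keeps using Lemma \ref{rat_pic_le_5}: it blows up a general point of the $(-3)$-curve on $Y$, so that the resulting rank-four surface $Y'$ carries two $(-4)$-curves, contradicting $N_4\le 1$. You instead classify all irreducible negative curves on a rank-three smooth rational surface $Y\cong \Bl_p\F_m$ directly, using $b\ge am$ (from $C\cdot S\ge 0$ for $C\ne S$) and $m_p(C)\le C\cdot F_p=a$; this shows the only irreducible curve on $Y$ of self-intersection $\le -2$ is the strict transform of the minimal section, so a $(-3)$-curve and a $(-4)$-curve cannot coexist. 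Your estimate $-a^2m+2ab-a^2\ge a^2(m-1)\ge 0$ in fact already works for $a=1$ (since $C\ne S$ forces $b\ge m$ there too), so the restriction to $a\ge 2$ is unnecessary rather than a gap. The paper's trick is shorter because it recycles the case analysis already done in Lemma \ref{rat_pic_le_5}; your argument is more self-contained and gives the stronger structural statement that a Picard-rank-three smooth rational surface supports at most one irreducible curve of self-intersection $\le -2$, which rules out No.\ 7 without any auxiliary blow-up.
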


\begin{proof}

Let $X$ be a del Pezzo surface of type $\tB$.
Let $\pi : Y \to X$ be the minimal resolution.
Then $Y$ is a smooth rational surface.
Assume that $X$ is one of the surfaces of No.8, No.9 and No.10. 
We see that $\rho (Y) \le 5$.
For each case, we see that the number of $(-3)$-curves or the number of $(-4)$-curves contradicts Lemma \ref{rat_pic_le_5}.
Therefore, there are no examples of the case No.8, No.9 nor No.10.
Next, assume that $X$ is of No.7.
Then $Y$ has one $(-3)$-curve and one $(-4)$-curve.
Let $\tau : Y' \to Y$ be the blow-up at a general point on the $(-3)$-curve.
$Y'$ has two $(-4)$-curves and $\rho (Y' ) = 4$.
This contradicts Lemma \ref{rat_pic_le_5}.
Therefore, there is no example of No.7.
\end{proof}

\subsubsection{Two ray games}\label{2_ray}

To eliminate the possibilities of from No.11 to No.19, we play two ray games in this subsection.

\begin{definition}

Let $X$ be a normal projective surface. 
Let $\varphi : Y \to X$ be a contraction of an irreducible curve $C$ on $Y$ and assume that $C$ passes through only one singular point $P$.
If $\varphi$ contracts $C$ to a singular point $x \in X$, then $\varphi$ is called an {\it extraction of $x$}.

\end{definition}

\begin{remark}\label{extractions}

In general, there are several possibilities of extractions for a singular point.

In this paper, the {\it extraction of $\frac{1}{5}(1,2)$} means $\varphi : Y \to X$ where $x \in X$ is a singular point of type $\frac{1}{5}(1,2)$ and $P$ is a singular point of type $A_1$.
Then we have $C^2 = -\frac{5}{2}$ and $K_Y \cdot C = 1$.
The {\it extraction of $A_3$} means $\varphi : Y \to X$ where $x \in X$ is a singular point of type $A_3$ and $P$ is a singular point of type $A_2$.
Then we have $C^2 = -\frac{4}{3}$ and $K_Y \cdot C = 0$.
The {\it extraction of $A_2$} means $\varphi : Y \to X$ where $x \in X$ is a singular point of type $A_2$ and $P$ is a singular point of type $A_1$.
Then we have $C^2 = -\frac{3}{2}$ and $K_Y \cdot C = 0$.
In particular, the extraction of $A_1$ is the minimal resolution of a singular point of type $A_1$.

\end{remark}

\begin{lemma}\label{two_ray_game}

Let $X$ be a rank one del Pezzo surface with at least one singular point $P$ of type $\frac{1}{5}(1,2)$ (resp. $A_3$, $A_2$).
Let $\varphi : Y \to X$ be the extraction of $\frac{1}{5}(1,2)$ (resp. $A_3$, $A_2$).
Then there exists a $K_Y$-negative extremal contraction, which we call $\psi\colon Y\to Z$.
Moreover, if $\dim Z = 2$, $Z$ is a del Pezzo surface.

\end{lemma}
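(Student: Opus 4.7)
The plan is to run a two-ray game on $Y$. Since $\varphi$ contracts a single irreducible curve $C$ out of a surface $X$ of Picard number one, $\rho(Y) = 2$ and the Mori cone $\overline{\NE}(Y)$ is a two-dimensional cone with $\R_{\ge 0}[C]$ as one of its extremal rays. From Remark \ref{extractions} we have $K_Y \cdot C \ge 0$ in all three cases, so this ray is not $K_Y$-negative; I first show that the other extremal ray is $K_Y$-negative and can be contracted, then verify that the target is a del Pezzo surface when the contraction is birational.

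Write $K_Y = \varphi^* K_X + aC$. Since $\varphi^* K_X \cdot C = 0$, one has $a = (K_Y \cdot C)/C^2$, so $a = 0$ in the $A_3$ and $A_2$ cases and $a = -2/5$ in the $\frac{1}{5}(1,2)$ case. To show $K_Y$ is not nef I intersect against $\varphi^*(-K_X)$:
\[
K_Y \cdot \varphi^*(-K_X) \ =\ \varphi^* K_X \cdot \varphi^*(-K_X) + a\, C \cdot \varphi^*(-K_X) \ =\ -K_X^2 \ <\ 0.
\]
As $\varphi^*(-K_X)$ is nef (pullback of an ample divisor) and the intersection of two nef classes on a projective surface is non-negative, $K_Y$ cannot be nef. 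Since $Y$ has only quotient singularities it is $\Q$-factorial and klt, so the cone and contraction theorems for klt surfaces produce a $K_Y$-negative extremal ray $R \ne \R_{\ge 0}[C]$ together with its contraction $\psi : Y \to Z$.

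Now assume $\dim Z = 2$, so $\psi$ is birational and contracts a single irreducible curve $D$ with $[D] \in R$. Then $Z$ is a normal projective klt surface with $\rho(Z) = 1$. Writing $K_Y = \psi^* K_Z + bD$ with $b > 0$, the projection formula together with $\psi_* D = 0$ gives, for any ample divisor $H$ on $Z$,
\[
-K_Z \cdot H \ =\ -K_Y \cdot \psi^* H \ =\ \varphi^*(-K_X) \cdot \psi^* H \ -\ a\, C \cdot \psi^* H.
\]
The first summand is strictly positive: both $\varphi^*(-K_X)$ and $\psi^* H$ are nef and big with positive self-intersections $K_X^2$ and $H^2$, so the Hodge index theorem forces their intersection to be strictly positive. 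In the $\frac{1}{5}(1,2)$ case the second summand equals $\tfrac{2}{5}\, \psi_* C \cdot H > 0$, since $[C] \ne [D]$ implies $C$ is not $\psi$-contracted and so $\psi_* C$ is a nonzero effective $1$-cycle. In either case $-K_Z \cdot H > 0$, and $\rho(Z) = 1$ then forces $-K_Z$ to be ample, so $Z$ is a del Pezzo surface.

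The main obstacle is the strict positivity of the first summand in the crepant extractions ($A_3$ and $A_2$), where $a = 0$ so the second summand vanishes. There one cannot split the pairing into a nef part and a manifestly positive correction, and must instead invoke the Hodge index theorem for two nef and big divisors to upgrade the a priori nef-nef bound $\ge 0$ to $> 0$.
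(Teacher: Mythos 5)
Your proof is correct, and the overall two-ray-game strategy is the same as the paper's; the difference is in how the key positivity is extracted. For the first claim both arguments test $K_Y$ against a pullback from $X$: the paper pairs $K_Y$ with $D_Y=\varphi^*D$ for a curve $D\subset X$ avoiding $P$, obtaining $-K_Y\cdot D_Y=-K_X\cdot D>0$, while you pair it with the nef class $\varphi^*(-K_X)$ itself — essentially the same computation. For the second claim the routes diverge. The paper keeps the auxiliary curve and computes $-K_Z\cdot\psi_*D_Y=-K_Y\cdot D_Y+aE\cdot D_Y>0$, where $E$ is the $\psi$-exceptional curve and $a\ge 0$ its discrepancy; this uses only the projection formula but tacitly requires $D_Y\ne E$ so that $\psi_*D_Y$ is a genuine curve on $Z$. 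You instead decompose $-K_Y=\varphi^*(-K_X)-aC$, pair with $\psi^*H$, and invoke the Hodge index theorem to get strict positivity of the product of the two nef and big classes $\varphi^*(-K_X)$ and $\psi^*H$. This cleanly covers the crepant cases $a=0$ (the $A_2$ and $A_3$ extractions), where no manifestly positive correction term is available, and it sidesteps the issue of the auxiliary curve coinciding with the exceptional curve; the price is the appeal to Hodge index on the singular surface $Y$, which is legitimate (pull both classes back to the minimal resolution, where the pullbacks remain nef with the same self-intersections). Your value $a=(K_Y\cdot C)/C^2$, namely $-\tfrac{2}{5}$ for the $\tfrac{1}{5}(1,2)$ extraction and $0$ otherwise, agrees with Remark \ref{extractions}, and both proofs conclude identically from $\rho(Z)=1$.
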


\begin{proof}

There exists a curve $D$ such that it does not pass through $P$.
Since $D$ does not pass through $P$, we have $\varphi^* D = D_Y$.
Hence $-K_Y \cdot D_Y = \varphi_{*}(-K_Y) \cdot D = -K_X \cdot D > 0$.
This means $K_Y$ is not nef. 
Therefore, there is a $K_Y$-negative extremal contraction and we denote it by $\psi : Y \to Z$. 
Since $\psi$ is $K_Y$-negative, we see $Z \neq X$.

Assume that $\dim Z = 2$.
Denote the $\psi$-exceptional curve by $E$.
Since $K_Y \cdot D_Y > 0$ and $K_Y \cdot E < 0$, $D_Y$ and $E$ are distinct curves.
Thus we see that $D_Y \cdot E \ge 0$.
Then we have
\[
K_Y = \psi^{*} K_Z + aE ,
\]
where $a \ge 0$.
Since $\rho (Z) = 1$, we see that $-K_Z$ is ample or $K_Z$ is nef.
We have $-K_Z \cdot \psi_{*} D_Y =  \psi^{*}(-K_Z) \cdot D_Y = -K_Y \cdot D_Y + aE \cdot D_Y > 0$.
Therefore, $Z$ is a del Pezzo surface.

\end{proof}

\begin{proposition}\label{non_exi_11_16}

There are no examples of the cases $No.11$, $No.12$, $No.13$, $No.14$, $No.15$ nor $No.16$.

\end{proposition}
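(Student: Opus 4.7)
The plan is to play a two ray game, case by case, exactly as suggested by the subsection title. In each case I would assume such an $X$ exists, pick an appropriate singular point to extract, and use Lemma \ref{two_ray_game} to produce a diagram $Z \overset{\psi}{\leftarrow} Y \overset{\varphi}{\rightarrow} X$, where $\varphi$ is the extraction (giving a rank two rational surface $Y$ with controlled singularities as in Remark \ref{extractions}) and $\psi$ is a $K_Y$-negative extremal contraction. Since $Y$ has only $A_1, A_2, A_3, \frac{1}{3}(1,1), \frac{1}{4}(1,1)$-singularities, Lemma \ref{non_del} restricts $\psi$ to one of the sixteen listed types; I then rule out every one.

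For cases No.11, No.13, No.15, No.16 (each carries a $\frac{1}{5}(1,2)$), I would extract the $\frac{1}{5}(1,2)$ so that $Y$ inherits an $A_1$ at that center plus the remaining singular points of $X$. In No.11 and No.16 the surface $Y$ then carries two $\frac{1}{4}(1,1)$-singularities; any $\psi$ whose exceptional curve meets both would, after pushing forward by $\varphi$, realise a $T$-line on $X$, contradicting Lemma \ref{Fuku}. Every other admissible $\psi$ with $\dim Z = 2$ yields a birational $Z$ whose singularity profile I compute via Proposition \ref{all_ext_cont}; this $Z$ is then a rank one del Pezzo surface of type $\tB$ whose invariants $(\sS(Z), (-K_Z)^2)$ must already appear in Theorem \ref{min_index2}, Theorem \ref{min_Gor}, or among the cases No.1--No.10 (which have been excluded by Proposition \ref{non_exi_7_10} or violate Corollary \ref{bound_vol}). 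The cases $\dim Z = 1$ are handled by scanning Table \ref{fibers}: the exceptional fibre is forced through particular singular points, and counting the total contribution of singular fibres against $(-K_X)^2 - 2K_Y\cdot E_\varphi$ produces a numerical contradiction.

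For case No.12 I would extract $\frac{1}{5}(1,2)$; then $Y$ has three $\frac{1}{3}(1,1)$-singularities and one $A_1$, and the candidate types for $\psi$ reduce to $\sB_4, \sB_8, \sB_{13}$ and the fibrations $\sC_1, \sC_4$. Each either produces a forbidden configuration (a birational contraction through two of the $\frac{1}{3}(1,1)$'s combined with $\varphi$ recreates a configuration that forces a rank one surface of type $\tB$ already excluded, or a $\pr^1$-fibration whose singular fibre analysis violates the anti-canonical volume computed from the extraction formulas). For case No.14, I would extract an $A_3$ or an $A_2$; the small volume $(-K_X)^2 = \frac{4}{3}$ combined with the volume change formulas $d_{Y/Z}$ from Table \ref{bir_ext_cont} makes $(-K_Z)^2$ very restricted, and the resulting $Z$ never matches any surface in Theorem \ref{min_Gor} or Theorem \ref{min_rank1} already established.

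The main obstacle I anticipate is the bookkeeping in No.15 and No.16, where two distinct extractions (of $\frac{1}{5}(1,2)$ and of $A_3$) are both available and each opens many branches for $\psi$; the case tree is large and the identification of $Z$'s singular type requires carefully tracing the exceptional configurations from Proposition \ref{all_ext_cont} together with the relevant extraction diagram. A second subtle point is that a given $\psi$ might a priori be non-birational onto a curve, so in every case I need to treat the $\dim Z = 1$ alternative separately, using the classification in Lemma \ref{non_del} and the numerical identity $(-K_Y)^2 = (-K_X)^2 + K_Y\cdot E_\varphi$ that follows from $\varphi^\ast K_X = K_Y - aE_\varphi$ with the explicit $a$ given in Remark \ref{extractions}.
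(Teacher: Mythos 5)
Your proposal follows essentially the same route as the paper: for each of No.11--No.16 one extracts the $\frac{1}{5}(1,2)$ point (the $A_3$ point for No.14), invokes Lemma \ref{two_ray_game} and Lemma \ref{non_del} to list the possible second contractions $\psi\colon Y\to Z$, kills the birational cases by showing the resulting rank one surface $Z$ cannot exist (the paper does this by computing $\det(Z)$ and citing Lemma \ref{lattice_square}, Theorem \ref{min_index2}, Proposition \ref{non_exi_7_10} and, for case 16-3, Lemma \ref{rat_pic_le_5}; your appeal to the already-established classifications is the same criterion repackaged), and kills the fibration cases by checking against Table \ref{fibers}. The only substantive slip is your candidate list for No.12, which omits $\sB_0$ and $\sB_1$ and wrongly includes $\sC_4$ (the surface $Y$ there has no $A_2$ point); the omitted $\sB_0$ case is exactly the one whose output is case No.10 of Table \ref{19candi} and so needs Proposition \ref{non_exi_7_10}, which your general framework does anticipate.
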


\begin{proof}

Let $X$ be a del Pezzo surface.

Assume that $X$ is of No.11. 
Let $\varphi : Y \to X$ be the extraction of $\frac{1}{5}(1,2)$.
By Lemma \ref{two_ray_game}, there is a $K_Y$-negative contraction $\psi : Y \to Z$.
We see that $\sS (Y) = \{ \frac{1}{4}(1,1), \frac{1}{4}(1,1), A_1, A_1 \} $.
By Lemma \ref{non_del}, candidates of $\psi : Y \to Z$ are the following.

{\small
\begin{table}[htb]
\renewcommand{\arraystretch}{1.4}
  \begin{tabular}{|c|c|c|c|c|c|} \hline
    \multicolumn{1}{|l|}{No.} & $\psi$ & \multicolumn{1}{c|}{From}
      & \multicolumn{1}{c|}{To} & \multicolumn{1}{c|}{$\sS (Z)$} & $ \det (Z) $ \\ \hline \hline
    11-1 & \ $\sB_0$ \ & \hspace{7mm} - \hspace{7mm} & \ \ sm pt \ \ & $\frac{1}{4}(1,1)$, $\frac{1}{4}(1,1)$, $A_1, A_1$ & 448 \\ 
  \end{tabular}
\end{table}
}
{\small
\begin{table}[htb]
\renewcommand{\arraystretch}{1.4}
  \begin{tabular}{|c|c|c|c|c|c|} 
    \multicolumn{1}{|l|}{No.} & $\psi$ & \multicolumn{1}{c|}{From}
      & \multicolumn{1}{c|}{To} & \multicolumn{1}{c|}{$\sS (Z)$} & $ \det (Z) $ \\ \hline \hline
    11-2 & $\sB_1$ & $A_1$ & sm pt &  $\frac{1}{4}(1,1)$, $\frac{1}{4}(1,1)$, $A_1$ & 256  \\
    11-3 & $\sB_5$ & $\frac{1}{4}(1,1)$ & $\frac{1}{3}(1,1)$ & $\frac{1}{4}(1,1)$, $\frac{1}{3}(1,1)$, $A_1, A_1$ & 304  \\
    11-4 & $\sB_{12}$ & $A_1$, $\frac{1}{4}(1,1)$ & $A_1$ & $\frac{1}{4}(1,1)$, $A_1, A_1$ & 112  \\
    11-5 & $\sC_3$ & $A_1, A_1$ & pt on $\pr^1$ &  &  \\  \hline 
  \end{tabular}
\end{table}
}
The cases 11-1, 11-3 and 11-4 are eliminated by Lemma \ref{lattice_square}, since their values of $\det (Z)$ are not square numbers. 
In the case 11-2, $Z$ is a del Pezzo surface of index two.
By Theorem \ref{min_index2}, we see that this case is impossible.
In the case 11-5, the rest two singular points of type $\frac{1}{4}(1,1)$ must be contracted to a point of $\pr^1$. 
By Table \ref{fibers}, however, it is impossible.
Thus we see that the all cases are impossible.
Therefore, we see that there is no del Pezzo surface of No.11.

Next, assume that $X$ is of No.12. 
Let $\varphi : Y \to X$ be the extraction of $\frac{1}{5}(1,2)$.
By Lemma \ref{two_ray_game}, there is a $K_Y$-negative contraction $\psi : Y \to Z$. 
We see that $\sS (Y) = \{ \frac{1}{3}(1,1), \frac{1}{3}(1,1), \frac{1}{3}(1,1), A_1 \} $.
By Lemma \ref{non_del}, candidates of $\psi : Y \to Z$ are following.

{\small
\begin{table}[htb]
\renewcommand{\arraystretch}{1.4}
  \begin{tabular}{|c|c|c|c|c|c|} \hline
    \multicolumn{1}{|l|}{No.} & $\psi$  & \multicolumn{1}{c|}{From}
      & \multicolumn{1}{c|}{To} & \multicolumn{1}{c|}{$\sS (Z)$} & $ \det (Z) $ \\ \hline \hline
    12-1 & $\sB_0$ &  - & sm pt & $\frac{1}{3}(1,1)$, $\frac{1}{3}(1,1)$, $\frac{1}{3}(1,1)$, $A_1$ & 324 \\ 
    12-2 & $\sB_1$ & $A_1$ & sm pt &  $\frac{1}{3}(1,1)$, $\frac{1}{3}(1,1)$, $\frac{1}{3}(1,1)$ & 189  \\
    12-3 & $\sB_4$ & $\frac{1}{3}(1,1)$ & $A_1$ & $\frac{1}{3}(1,1)$, $\frac{1}{3}(1,1)$, $A_1, A_1$ & 204  \\
    12-4 & $\sB_8$ & $A_1$, $\frac{1}{3}(1,1)$ & sm pt & $\frac{1}{3}(1,1)$, $A_1$ & 69  \\
    12-5 & $\sB_{13}$ & $\frac{1}{3}(1,1)$, $\frac{1}{3}(1,1)$ & $A_2$ & $\frac{1}{3}(1,1)$, $A_2, A_1$ & 96 \\  \hline 
  \end{tabular}
\end{table}
}

The cases 12-2, 12-3, 12-4 and 12-5 are eliminated by Lemma \ref{lattice_square}. 
In the case 12-1, $Z$ is of No.10.
Hence it is impossible by Proposition \ref{non_exi_7_10}.
Thus we see that the all cases are impossible.
Therefore, we see that there is no del Pezzo surface of No.12.

Assume that $X$ is of No.13. 
Let $\varphi : Y \to X$ be the extraction of $\frac{1}{5}(1,2)$.
By Lemma \ref{two_ray_game}, there is a $K_Y$-negative contraction $\psi : Y \to Z$. 
We see that $\sS (Y) = \{ \frac{1}{4}(1,1), A_1, A_1 \} $.
By Lemma \ref{non_del}, candidates of $\psi : Y \to Z$ are following.

{\small
\begin{table}[htb]
\renewcommand{\arraystretch}{1.4}
  \begin{tabular}{|c|c|c|c|c|c|} \hline
    \multicolumn{1}{|l|}{No.} & $\psi$ & \multicolumn{1}{c|}{From}
      & \multicolumn{1}{c|}{To} & \multicolumn{1}{c|}{$\sS (Z)$} & $ \det (Z) $ \\ \hline \hline
    13-1 & \ $\sB_0$ \ & \hspace{7mm} - \hspace{7mm} & sm pt & $\frac{1}{4}(1,1)$, $A_1, A_1$ & 112 \\ 
  \end{tabular}
\end{table}
}

{\small
\begin{table}[htb]
\renewcommand{\arraystretch}{1.4}
  \begin{tabular}{|c|c|c|c|c|c|} 
    \multicolumn{1}{|l|}{No.} & $\psi$ & \multicolumn{1}{c|}{From}
      & \multicolumn{1}{c|}{To} & \multicolumn{1}{c|}{$\sS (Z)$} & $ \det (Z) $ \\ \hline \hline
    13-2 & $\sB_1$ & $A_1$ & sm pt &  $\frac{1}{4}(1,1)$, $A_1$ & 64  \\
    13-3 & $\sB_5$ & $\frac{1}{4}(1,1)$ & $\frac{1}{3}(1,1)$ & $\frac{1}{3}(1,1)$, $A_1, A_1$ & 76  \\  
    13-4 & $\sB_{10}$ & $A_1$, $\frac{1}{4}(1,1)$ & $A_1$ & $A_1, A_1$ & 28  \\
    13-5 & $\sC_3$ & $A_1, A_1$ & pt on $\pr^1$ &  &  \\ \hline 
  \end{tabular}
\end{table}
}

The cases 13-1, 13-3 and 13-4 are eliminated by Lemma \ref{lattice_square}. 
In the 13-5, the rest singular point $\frac{1}{4}(1,1)$ must be contracted to a point of $\pr^1$. 
By Table \ref{fibers}, it is impossible.
In the case 13-2, $Z$ is a del Pezzo surface of index two.
By Theorem \ref{min_index2}, we see that this case is impossible.
Thus we see that the all cases are impossible.
Therefore, we see that there is no del Pezzo surface of No.13.

Assume that $X$ is of No.14. 
Let $\varphi : Y \to X$ be the extraction of $A_3$.
By Lemma \ref{two_ray_game}, there is a $K_Y$-negative contraction $\psi : Y \to Z$. 
We see that $\sS (Y) = \{ \frac{1}{3}(1,1), A_2, A_2, A_2 \} $.
By Lemma \ref{non_del}, candidates of $\psi : Y \to Z$ are following.

{\small
\begin{table}[htb]
\renewcommand{\arraystretch}{1.4}
  \begin{tabular}{|c|c|c|c|c|c|} \hline
    \multicolumn{1}{|l|}{No.} & $\psi$ & \multicolumn{1}{c|}{From}
      & \multicolumn{1}{c|}{To} & \multicolumn{1}{c|}{$\sS (Z)$} & $ \det (Z) $ \\ \hline \hline
    14-1 & $\sB_0$ & - & sm pt & $\frac{1}{3}(1,1)$, $A_2, A_2, A_2$ & 189 \\ 
    14-2 & $\sB_2$ & $A_2$ & sm pt &  $\frac{1}{3}(1,1)$, $A_2, A_2$ & 117  \\
    14-3 & $\sB_4$ & $\frac{1}{3}(1,1)$ & $A_1$ & $A_2, A_2, A_2, A_1$ & 108  \\
    14-4 & $\sC_4$ & $\frac{1}{3}(1,1)$, $A_2$ & pt on $\pr^1$ &  &  \\  \hline 
  \end{tabular}
\end{table}
}

The cases 14-1, 14-2 and 14-3 are eliminated by Lemma \ref{lattice_square}. 
In the case 14-4, the rest two singular points (both are of type $A_2$) must be contracted to a point of $\pr^1$.
By Table \ref{fibers}, it is impossible.
Thus we see that the all cases are impossible.
Therefore, we see that there is no del Pezzo surface of No.14.

Assume that $X$ is of No.15. 
Let $\varphi : Y \to X$ be the extraction of $\frac{1}{5}(1,2)$.
By Lemma \ref{two_ray_game}, there is a $K_Y$-negative contraction $\psi : Y \to Z$. 
We see that $\sS (Y) = \{ \frac{1}{4}(1,1), A_3, A_2, A_1 \} $.
By Lemma \ref{non_del}, candidates of $\psi : Y \to Z$ are following.

{\small
\begin{table}[htb]
\renewcommand{\arraystretch}{1.4}
  \begin{tabular}{|c|c|c|c|c|c|} \hline
    \multicolumn{1}{|l|}{No.} & $\psi$ & \multicolumn{1}{c|}{From}
      & \multicolumn{1}{c|}{To} & \multicolumn{1}{c|}{$\sS (Z)$} & $ \det (Z) $ \\ \hline \hline
    15-1 & $\sB_0$ & \hspace{7mm}  - \hspace{7mm}  & \ \ sm pt \ \ & $\frac{1}{4}(1,1)$, $A_3, A_2, A_1$ & 288 \\ 
    15-2 & $\sB_1$ & $A_1$ & sm pt & $\frac{1}{4}(1,1)$, $A_3, A_2$ & 192 \\ 
    15-3 & \ $\sB_2$ \ & $A_2$ & sm pt & $\frac{1}{4}(1,1)$, $A_3, A_1$ & 160 \\
  \end{tabular}
\end{table}
}
{\small
\begin{table}[htb]
\renewcommand{\arraystretch}{1.4}
  \begin{tabular}{|c|c|c|c|c|c|}
    \multicolumn{1}{|l|}{No.} & $\psi$ & \multicolumn{1}{c|}{From}
      & \multicolumn{1}{c|}{To} & \multicolumn{1}{c|}{$\sS (Z)$} & $ \det (Z) $ \\ \hline \hline
    15-4 & $\sB_3$ & $A_3$ &  sm pt   & $\frac{1}{4}(1,1)$, $A_2, A_1$ & 144 \\
    15-5 & $\sB_5$ & $\frac{1}{4}(1,1)$ & $\frac{1}{3}(1,1)$ & $\frac{1}{3}(1,1)$, $A_3, A_2, A_1$ & 168 \\
    15-6 & $\sB_{10}$ & $\frac{1}{4}(1,1)$, $A_2$ & sm pt & $A_3, A_1$ & 40 \\
    15-7 & $\sB_{12}$ & $\frac{1}{4}(1,1)$, $A_1$ & $A_1$ & $A_3, A_2, A_1$ & 72  \\
    15-8 & $\sC_5$ & $\frac{1}{4}(1,1)$, $A_3$ & pt on $\pr^1$ &  &  \\   \hline 
  \end{tabular}
\end{table}
}

The cases 15-1, 15-2, 15-3, 15-5, 15-6 and 15-7 are eliminated by Lemma \ref{lattice_square}. 
In the case 15-4, $Z$ is a del Pezzo surface of index two.
By Theorem \ref{min_index2}, we see that this case is impossible.
In the case 15-8, the rest one singular point of type $A_1$ and one singular point of type $A_2$ must be contracted to a point of $\pr^1$.
By Table \ref{fibers}, it is impossible.
Thus we see that the all cases are impossible.
Therefore, we see that there is no del Pezzo surface of No.15.

Assume that $X$ be is of No.16. 
Let $\varphi : Y \to X$ be the extraction of $\frac{1}{5}(1,2)$.
By Lemma \ref{two_ray_game}, there is a $K_Y$-negative contraction $\psi : Y \to Z$. 
We see that $\sS (Y) = \{ \frac{1}{4}(1,1), \frac{1}{4}(1,1), A_2, A_1 \} $.
By Lemma \ref{non_del}, candidates of $\psi : Y \to Z$ are following.

{\small
\begin{table}[htb]
\renewcommand{\arraystretch}{1.4}
  \begin{tabular}{|c|c|c|c|c|c|} \hline
    \multicolumn{1}{|l|}{No.} & $\psi$ & \multicolumn{1}{c|}{From}
      & \multicolumn{1}{c|}{To} & \multicolumn{1}{c|}{$\sS (Z)$} & $ \det (Z)$ \\ \hline \hline
    16-1 & $\sB_0$ & - & sm pt & $\frac{1}{4}(1,1)$, $\frac{1}{4}(1,1)$, $A_2, A_1$ & 576 \\ 
    16-2 & $\sB_1$ & $A_1$ & sm pt & $\frac{1}{4}(1,1)$, $\frac{1}{4}(1,1)$, $A_2$ & 336 \\
    16-3 & $\sB_2$ & $A_2$ & sm pt & $\frac{1}{4}(1,1)$, $\frac{1}{4}(1,1)$, $A_1$ & 256 \\
    16-4 & $\sB_5$ & $\frac{1}{4}(1,1)$ & $\frac{1}{3}(1,1)$ & $\frac{1}{4}(1,1)$, $\frac{1}{3}(1,1)$, $A_2, A_1$ & 384 \\ 
    16-5 & $\sB_{10}$ & $\frac{1}{4}(1,1)$, $A_2$ & sm pt & $\frac{1}{4}(1,1)$, $A_1$  & 64 \\ 
    16-6 & $\sB_{12}$ & $\frac{1}{4}(1,1)$, $A_1$ & $A_1$ &  $\frac{1}{4}(1,1)$, $A_2, A_1$ & 144  \\    \hline 
  \end{tabular}
\end{table}
}

The cases 16-2 and 16-4 are eliminated by Lemma \ref{lattice_square}. 
The case 16-3 is also eliminated by Lemma \ref{rat_pic_le_5}.
In the cases 16-1, 16-5 and 16-6, $Z$ is a del Pezzo surface of index two.
By Theorem \ref{min_index2}, we see that these cases are impossible.
Thus we see that the all cases are impossible.
Therefore, we see that there is no del Pezzo surface of No.16.

\end{proof}

\begin{lemma}\label{unique_1/n}

If $X$ is a rank one del Pezzo surface with only one singular point of type $\frac{1}{n}(1,1)$, then $X \cong \pr(1,1,n)$.

\end{lemma}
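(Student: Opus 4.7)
The plan is to pass to the minimal resolution and exploit the fact that a rank two smooth rational surface is a Hirzebruch surface.

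Let $\pi : Y \to X$ be the minimal resolution. Since the only singular point of $X$ is of type $\frac{1}{n}(1,1)$, the exceptional divisor of $\pi$ is a single smooth rational curve $E$ with $E^2 = -n$. We may assume $n \ge 2$, as otherwise $X$ is smooth and there is nothing to prove. Because $X$ is a del Pezzo surface with at most quotient singularities, it is rational (as noted in the remark following Lemma \ref{lattice_square}), so $Y$ is a smooth rational surface. Moreover
\[
\rho(Y) = \rho(X) + 1 = 2,
\]
since $\pi$ contracts the single irreducible curve $E$ and $\rho(X) = 1$.

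A smooth projective rational surface of Picard number two is a Hirzebruch surface $\mathbb{F}_m$ for some $m \ge 0$. On $\mathbb{F}_m$ with $m \ge 1$, the unique irreducible curve of negative self-intersection is the minimal section $\sigma$, which satisfies $\sigma^2 = -m$; on $\mathbb{F}_0 = \mathbb{P}^1 \times \mathbb{P}^1$ there is no negative curve at all. Since $E \subset Y$ is irreducible with $E^2 = -n \le -2 < 0$, we must have $m = n$ and $E = \sigma$.

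It remains to identify $X$ with the contraction of the minimal section $\sigma \subset \mathbb{F}_n$. Since $\pi$ contracts exactly $E = \sigma$ to a single point and is an isomorphism elsewhere, $X$ is uniquely determined by $Y = \mathbb{F}_n$ and the choice of curve $\sigma$ being contracted. Realizing $\mathbb{F}_n = \mathbb{P}(\mathcal{O}_{\mathbb{P}^1} \oplus \mathcal{O}_{\mathbb{P}^1}(-n))$ as the projectivization of the cone, the contraction of the zero-section $\sigma$ is precisely the weighted projective space $\mathbb{P}(1,1,n)$, with the image of $\sigma$ being the cone point $(0\!:\!0\!:\!1)$. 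Thus $X \cong \mathbb{P}(1,1,n)$, as desired. The only step requiring any care is identifying the Hirzebruch surface from the Picard rank and negative curve; the remaining identifications are classical.
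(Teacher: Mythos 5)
Your proposal is correct and follows essentially the same route as the paper: pass to the minimal resolution, note that it is a smooth rational surface of Picard number two and hence a Hirzebruch surface, and identify it as $\mathbb{F}_n$ via the $(-n)$-curve. You simply spell out the details (why $\rho(Y)=2$, why the exceptional curve must be the minimal section, and the identification of the contraction with $\pr(1,1,n)$) that the paper leaves implicit.
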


\begin{proof}

Let $\alpha : Y \to X$ be the minimal resolution.
Then $Y$ is a smooth rational surface and $\rho(Y) = 2$. 
Hence we have $Y \cong \mathbb{F}_n$.

\end{proof}

\begin{lemma}\label{unique_1/5_A2}

If X is a rank one del Pezzo surface with only one singular point of type $\frac{1}{5}(1,2)$ and one $A_2$ singular point, then $X \cong \pr(1,3,5)$.

\end{lemma}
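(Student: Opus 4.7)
The plan is to run the two ray game already employed in Proposition~\ref{non_exi_11_16}. Let $X$ satisfy the hypothesis and let $\varphi : Y \to X$ be the extraction of the $\frac{1}{5}(1,2)$-point, so that $\rho(Y) = 2$ and $\sS(Y) = \{A_2, A_1\}$. By Lemma~\ref{two_ray_game} there exists a $K_Y$-negative extremal contraction $\psi : Y \to Z$ distinct from $\varphi$, and whenever $\dim Z = 2$ the target $Z$ is a del Pezzo surface. Lemma~\ref{non_del} restricts the type of $\psi$ to $\sB_0$, $\sB_1$, $\sB_2$, or one of the fibrations $\sC_1, \ldots, \sC_6$.

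First I dispose of the fibrations and the case $\sB_1$. The type $\sC_1$ would force $Y$ to be a Hirzebruch surface, contradicting the fact that $Y$ is singular, while $\sC_2, \ldots, \sC_6$ each demand singular points of a type not present on $Y$ (an $A_3$, a second $A_1$, a $\frac{1}{3}(1,1)$, a $\frac{1}{4}(1,1)$, or a $\frac{1}{5}(1,2)$). In the $\sB_1$ case $Z$ would be a rank one del Pezzo with $\sS(Z) = \{A_2\}$, and Lemma~\ref{lattice_cal} gives $\det(Z) = 3 \cdot (9 - 2) = 21$, contradicting Lemma~\ref{lattice_square}. In the $\sB_0$ case $Z$ is a rank one del Pezzo with $\sS(Z) = \{A_2, A_1\}$, so $Z \cong \pr(1,2,3)$ by Theorem~\ref{min_rank1}; in the $\sB_2$ case $Z$ has $\sS(Z) = \{A_1\}$, so $Z \cong \pr(1,1,2)$ by Lemma~\ref{unique_1/n}.

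In each surviving case $Y$ is reconstructed as an explicit (iterated) blow-up of $Z$ realising $\sB_0^{-1}$ or $\sB_2^{-1}$, and the extraction curve $C \subset Y$ is pinned down by its numerical class in $\Pic(Y) \otimes \Q$ subject to $C^2 = -\frac{5}{2}$, $K_Y \cdot C = 1$, and the requirement that $C$ pass through the $A_1$-point. Combined with the transitivity of $\Aut(Z)$ on admissible base points, these constraints force $Y$, and hence $X$, to be unique up to isomorphism. Since $\pr(1,3,5)$ is a rank one del Pezzo of the required singularity type, the unique $X$ must be isomorphic to it. The main obstacle I foresee is the rigidity step, where one must verify that the numerical conditions on $C$ combined with the $\Aut(Z)$-action really isolate a single blow-up configuration; once this is in hand, the identification $X \cong \pr(1,3,5)$ follows mechanically.
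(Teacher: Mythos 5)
Your overall strategy is the same as the paper's: extract the $\frac{1}{5}(1,2)$-point, play the two ray game, note $\sS(Y)=\{A_1,A_2\}$ so that $\psi$ is of type $\sB_0$, $\sB_1$ or $\sB_2$, and kill $\sB_1$ via $\det(Z)=21$ not being a square. The fibration cases are indeed vacuous (for $\rho(Y)=2$ every fiber is irreducible, each singular point must lie on a fiber, and no row of Table \ref{fibers} accommodates the singularities of $Y$), so that part is fine.

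The genuine gap is your treatment of the $\sB_0$ case. You keep it as a ``surviving case'' alongside $\sB_2$ and argue that \emph{within each case} the constraints pin down $Y$ uniquely. That logic cannot deliver the lemma: two surviving cases, each rigid, could still produce two non-isomorphic surfaces $X$, and nothing in your argument identifies them with each other or with $\pr(1,3,5)$. In fact the $\sB_0$ case must be eliminated outright, and your sketch does not do this. The paper's elimination is short: if $\psi$ is the blow-up of a smooth point of $Z\cong\pr(1,2,3)$, then since $-K_Z$ is very ample, $-K_Y$ is nef; but the extraction curve satisfies $-K_Y\cdot C=-1$ by Remark \ref{extractions}, a contradiction. (Alternatively, your own ``pin down $C$ numerically'' step, if actually carried out with $C_Z\sim nL$ on $\pr(1,2,3)$, yields $5n^2+12n-9=0$ with no admissible integer root — i.e.\ the case is empty, not rigid.) Separately, the rigidity step in the $\sB_2$ case — that the three blow-ups over $\pr(1,1,2)$ are forced to lie along the ruling line through the chosen smooth point, that $C$ is the strict transform of that ruling line, and that $\Aut\bigl(\pr(1,1,2)\bigr)$ acts transitively on smooth points — is the actual content of the paper's uniqueness argument; you flag it as a foreseen obstacle but do not supply it, so as written the proof is incomplete there as well.
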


\begin{proof}

Let $\varphi : Y \to X$ be the extraction of $\frac{1}{5}(1,2)$.
Then there is a $K_Y$-negative contraction $\psi : Y \to Z$ by Lemma \ref{two_ray_game}.
Since $\sS(Y) = \{ A_1, A_2 \}$, the candidates of types of $\psi$ is of type $\sB_0$, $\sB_1$ or $\sB_2$.
We will prove that $\psi$ is of type $\sB_2$.
If $\psi$ is of type $\sB_1$, then $Z$ is a del Pezzo surface with only one singular point of type $A_2$.
Such a surface, however, does not exist since $\det (Z) = 21$.
If $\psi$ is of type $\sB_0$, then $Z \cong \pr(1,2,3)$.
Since $\psi$ is the blow-up at a smooth point and $-K_Z$ is very ample, $-K_Y$ is nef and big.
We have, however, $-K_Y \cdot C = -1$ by Remark \ref{extractions}.
This is a contradiction.

Thus we see $\psi$ is of type $\sB_2$ and $Z \cong \pr(1,1,2)$.
Moreover, we also see that $\psi$ is constructed by blow-ups three times on the strict transform of a ruling line.
Therefore, $\psi$ depends only on a choice of a smooth point.
We see that $C$ on $Y$ is the strict transform of the ruling line on $\pr(1,1,2)$.
Since smooth points on $\pr(1,1,2)$ are transitive by the action of Aut$\pr(1,1,2)$, we see that $Y$ is unique.
Hence we see that how to construct del Pezzo surfaces with only one singular point of type $\frac{1}{5}(1,2)$ and one singular point of type $A_2$ is unique.
$\pr(1,3,5)$ can be also obtained in the same way.
Thus we see that $X \cong \pr(1,3,5)$.

\end{proof}

\begin{proposition}\label{non_exi_17_18}

There are no examples of the case $No.17$ nor $No.18$.

\end{proposition}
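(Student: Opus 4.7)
The plan is to handle cases No.17 and No.18 by the same two-ray-game technique already used repeatedly in the proofs of Propositions \ref{non_exi_11_16}, namely: extract a singular point to obtain a rank-two surface $Y$, produce a second $K_Y$-negative extremal contraction $\psi\colon Y\to Z$ by Lemma \ref{two_ray_game}, enumerate all candidates for $\psi$ via Lemma \ref{non_del}, and rule each one out.

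For No.17, where $\sS(X)=\{\frac{1}{5}(1,2),\frac{1}{4}(1,1),A_2\}$, I would take $\varphi\colon Y\to X$ to be the extraction of $\frac{1}{5}(1,2)$. Then $\sS(Y)=\{\frac{1}{4}(1,1),A_2,A_1\}$ and $\rho(Y)=2$. Since $Y$ has no $\frac{1}{5}(1,2)$-singularity, Lemma \ref{non_del} leaves only the finitely many candidate types $\sB_0,\sB_1,\sB_2,\sB_5,\sB_{10},\sB_{12}$ for birational $\psi$, plus the fibration types $\sC_1,\ldots,\sC_5$ compatible with the singular set. For each birational candidate I would tabulate $\sS(Z)$ and compute $\det(Z)$ by Lemma \ref{lattice_cal}; all but the index-two candidates are expected to fail the square test of Lemma \ref{lattice_square}, and the remaining ones where $Z$ would be a rank-one del Pezzo of index two are killed by Theorem \ref{min_index2} because their singular sets are not $\{\frac{1}{4}(1,1)\}$. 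The one tricky case will presumably be $\sB_{10}$, which contracts the $\frac{1}{4}(1,1)$ together with the $A_2$ and leaves $Z$ with a single $A_1$-singularity; here Lemma \ref{unique_1/n} forces $Z\cong\pr(1,1,2)$, but then a direct check shows the required double contraction cannot be performed starting from a smooth point of $\pr(1,1,2)$ without creating impossible configurations (or equivalently, the resulting $\det(Z)$ is not square). For fibration candidates, the fibers allowed by Table \ref{fibers} cannot absorb the remaining singular points, so they are eliminated.

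For No.18, where $\sS(X)=\{\frac{1}{4}(1,1),A_3,\frac{1}{3}(1,1)\}$, I would take $\varphi\colon Y\to X$ to be the extraction of $A_3$, so that $\sS(Y)=\{\frac{1}{4}(1,1),A_2,\frac{1}{3}(1,1)\}$. Applying Lemma \ref{two_ray_game} again yields an extremal contraction $\psi\colon Y\to Z$; by Lemma \ref{non_del} its type is one of $\sB_0,\sB_2,\sB_4,\sB_5,\sB_{14}$, or a fibration compatible with the singularities (essentially $\sC_4$). As before, each birational case determines $\sS(Z)$ and $\det(Z)$, and I expect most cases to fall to the square criterion of Lemma \ref{lattice_square}. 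The case $\sB_{14}$ produces $\sS(Z)=\{\frac{1}{5}(1,2),A_2\}$, which by Lemma \ref{unique_1/5_A2} forces $Z\cong\pr(1,3,5)$; here I would use the explicit description to verify that no extraction of a smooth point of $\pr(1,3,5)$ is compatible with our $Y$ (for example, by comparing $-K_Y\cdot C$ or by showing the strict transform configuration on the minimal resolution is inconsistent). The fibration case $\sC_4$ leaves the $\frac{1}{4}(1,1)$-singularity to lie in a fiber over $\pr^1$, which is forbidden by Table \ref{fibers}.

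The main obstacle is the handful of arithmetically consistent candidates, in particular the ones where $Z$ is forced to be a known rank-one surface ($\pr(1,1,2)$ in No.17 or $\pr(1,3,5)$ in No.18): the lattice and index tests do not immediately kill them, and one needs to trace the geometry back through $\psi$ and $\varphi$ to produce a concrete contradiction (either a forbidden $T$-line, a negative curve of impossible self-intersection by Lemma \ref{no_neg}, or incompatibility with the configurations of Remark on dual graphs). All other candidates should dispatch routinely via Lemma \ref{lattice_square}, Theorem \ref{min_index2}, and the fiber table.
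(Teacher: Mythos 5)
Your overall strategy is exactly the paper's: extract the $\frac{1}{5}(1,2)$ (resp.\ $A_3$) point, invoke Lemma \ref{two_ray_game} and Lemma \ref{non_del} to enumerate $\psi$, and dispatch candidates via Lemma \ref{lattice_square}, Theorem \ref{min_index2} and Table \ref{fibers}. However, there are concrete gaps in the enumeration and in the treatment of the surviving cases. For No.18 you omit the candidate $\sB_{10}$ altogether: since $\sS(Y)=\{\frac{1}{4}(1,1),\frac{1}{3}(1,1),A_2\}$ contains both a $\frac{1}{4}(1,1)$ and an $A_2$ point, $\sB_{10}$ is a legitimate candidate, it yields $\sS(Z)=\{\frac{1}{3}(1,1)\}$, i.e.\ $Z\cong\pr(1,1,3)$ with $\det(Z)=25$ a square and index three, so none of your routine tests touch it --- and it is in fact one of the two cases requiring the hardest work. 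For No.17 you flag only $\sB_{10}$ as tricky, but $\sB_{12}$ also survives: it gives $\sS(Z)=\{A_2,A_1\}$, hence $Z\cong\pr(1,2,3)$, which is Gorenstein with $\det(Z)=36$ a square, so again neither the square test nor Theorem \ref{min_index2} applies. Your parenthetical escape hatch for the $\sB_{10}$ case of No.17 --- ``or equivalently, the resulting $\det(Z)$ is not square'' --- is false: there $\sS(Z)=\{A_1\}$ and $\det(Z)=16$.

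The deeper issue is that for all four surviving cases (17-5, 17-6, 18-5, 18-6 in the paper's numbering) you only gesture at ``a direct check'' or ``comparing $-K_Y\cdot C$,'' whereas an actual argument is needed. The paper's method is: write $L:=\sO_Z(1)$ and $C_Z:=\psi_*C\sim nL$ for an integer $n\ge 0$, compute $E^2$ and $K_Y\cdot E$ from the two expressions $K_Y=\varphi^*K_X+(\text{discrepancy})C$ and $K_Y=\psi^*K_Z+aE$, solve for $\alpha$ in $\psi^*C_Z=C+\alpha E$ using $-K_Y\cdot\psi^*C_Z=-K_Z\cdot C_Z$, and then compare $(\psi^*C_Z-\alpha E)^2$ with the known value of $C^2$ (e.g.\ $-\tfrac52$ for the extraction of $\frac{1}{5}(1,2)$, $-\tfrac43$ for $A_3$). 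In each case this forces $n$ to be a non-integer or negative, a contradiction. Without carrying out this computation (or an equivalent geometric argument), the nonexistence in these four subcases --- which are the substance of the proposition --- is not established.
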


\begin{proof}

Let $X$ be a del Pezzo surface.
Assume that $X$ is of No.17. 
Let $\varphi : Y \to X$ be the extraction of $\frac{1}{5}(1,2)$.
By Lemma \ref{two_ray_game}, there is a $K_{Y}$-negative contraction $\psi : Y \to Z$. 
We see that $\sS (Y) = \{ \frac{1}{4}(1,1), A_2, A_1 \}$.
By Lemma \ref{non_del}, candidates of $\psi : Y \to Z$ are following. 

{\small
\begin{table}[htb]
\renewcommand{\arraystretch}{1.4}
  \begin{tabular}{|c|c|c|c|c|c|} \hline
    \multicolumn{1}{|l|}{No.} & $\psi$ & \multicolumn{1}{c|}{From}
      & \multicolumn{1}{c|}{To} & \multicolumn{1}{c|}{$\sS (Z)$} & $ \det (Z) $ \\ \hline \hline
    17-1 & $\sB_0$ & - & sm pt & $\frac{1}{4}(1,1)$, $A_2, A_1$ & 144 \\ 
    17-2 & $\sB_1$ & $A_1$ & sm pt & $\frac{1}{4}(1,1)$, $A_2$ & 84 \\
    17-3 & $\sB_2$ & $A_2$ & sm pt & $\frac{1}{4}(1,1)$, $A_1$ & 64 \\ 
    17-4 & $\sB_5$ & $\frac{1}{4}(1,1)$ & $\frac{1}{3}(1,1)$ & $\frac{1}{3}(1,1)$, $A_2, A_1$ & 96 \\ 
    17-5 & $\sB_{10}$ & $\frac{1}{4}(1,1)$, $A_2$ & sm pt & $A_1$  & 16 \\ 
    17-6 & $\sB_{12}$ & $\frac{1}{4}(1,1)$, $A_1$ & $A_1$ & $A_2, A_1$ & 36  \\ \hline 
  \end{tabular}
\end{table}
}

The cases 17-2 and 17-4 are eliminated by Lemma \ref{lattice_square}. 
In the cases 17-1 and 17-3, $Z$ is a del Pezzo surface of index two.
By Theorem \ref{min_index2}, we see these cases are impossible.
The rest two cases, 17-6 and 17-5, need more considerations.

Assume that $Z$ is of the case 17-6.
We see that $\sS (Z) = \{ A_2, A_1 \}$.
By Theorem \ref{min_Gor}, we have $Z \cong \pr(1,2,3)$.
Let $L := \mathcal{O}_Z(1)$ and $C_Z := \psi_{*} C$, where $C$ is the exceptional curve of $\varphi$.
Then we see that $-K_Z \sim 6L$ and there is an integer $n \in \mathbb{Z}_{\ge 0}$ such that $C_Z \sim nL$.
Denote the exceptional curve of $\psi$ by $E$.
Then we have $K_Y = \varphi^{*} K_X - \frac{2}{5} C$ and $K_Y = \psi^{*} K_Z + 2E$ .
By using these relations, we obtain $E^2 = -\frac{1}{4}$ and $K_Y \cdot E = -\frac{1}{2}$.
We can set $\psi^{*} C_Z = C +\alpha E$.
Then we have
\[
-K_Y \cdot \psi^{*} C_Z =-K_Y \cdot C + (-K_Y) \cdot \alpha E  \ .
\]
Since $-K_Y \cdot \psi^{*} C_Z = -K_Z \cdot C_Z = n$ and $-K_Y \cdot C = -1$, we obtain $\alpha = 2(n+1)$.
Thus we have $\psi^* C_Z - 2(n+1)E = C$.
Since $( \psi^{*} C_Z -2(n+1)E )^2 = \frac{1}{6}n^2 + 4(n+1)^2$ and $C^2 = -\frac{5}{2}$, we have $5n^2 + 12n -9 = 0$.
Thus we see that $n = \frac{3}{5}$ or $-3$. 
This is a contradiction.

Assume that $Z$ is of the case 17-5.
Then we see that $Z \cong \pr(1,1,2)$ by Theorem \ref{min_Gor} (or Lemma \ref{unique_1/n}).
Let $L := \mathcal{O}_Z(1)$ and $C_Z := \psi_{*} C \sim nL$, where $C$ is the exceptional curve of $\varphi$.
Then we see that $-K_Z \sim 4L$ and there is an integer $n \in \mathbb{Z}_{\ge 0}$ such that $C_Z \sim nL$.
Denote the exceptional curve of $\psi$ by $E$.
Then we have $K_Y = \varphi^{*} K_X - \frac{2}{5} C$ and $K_Y = \psi^{*} K_Z + 6E$.
By using these relations, we  obtain $E^2 = -\frac{1}{12}$ and $K_Y \cdot E = -\frac{1}{2}$.
We can set $\psi^{*} C_Z = C +\alpha E$.
Then we have
\[
-K_Y \cdot \psi^{*} C_Z = -K_Y \cdot C + (-K_Y) \cdot \alpha E .
\]
Since $-K_Y \cdot \psi^{*} C_Z = -K_Z \cdot C_Z = 2n$ and $-K_Y \cdot C = -1$, we obtain $\alpha = 2(2n+1)$.
Thus we have $\psi^* C_Z -2(2n + 1)E = C$.
Since $(\psi^* C_Z -2(2n + 1)E)^2 = \frac{1}{4}n^2 + 4(2n +1)^2$ and $C^2 = -\frac{5}{2}$, we have $(5n-13)(n+1) = 0$.
Thus we see that $n = \frac{13}{5}$ or $-1$. 
This is a contradiction.
Thus we see that the all cases are impossible.
Therefore, we see that there is no del Pezzo surface of No.17. 

Next, assume that $X$ is of No.18. 
Let $\varphi : Y \to X$ be the extraction of $A_3$.
By Lemma \ref{two_ray_game}, there is a $K_{Y}$-negative contraction $\psi : Y \to Z$. 
We see that $\sS (Y) = \{ \frac{1}{4}(1,1), \frac{1}{3}(1,1), A_2 \}$.
By Lemma \ref{non_del}, candidates of $\psi : Y \to Z$ are following.
{\small
\begin{table}[htb]
\renewcommand{\arraystretch}{1.4}
  \begin{tabular}{|c|c|c|c|c|c|} \hline
    \multicolumn{1}{|l|}{No.} & $\psi$ & \multicolumn{1}{c|}{From}
      & \multicolumn{1}{c|}{To} & \multicolumn{1}{c|}{$\sS (Z)$} & $ \det (Z) $ \\ \hline \hline
    18-1 & \ $\sB_0$ \ &\hspace{9mm}  - \hspace{9mm}  &  \ \ sm pt \ \ & $\frac{1}{4}(1,1)$, $\frac{1}{3}(1,1)$, $A_2$ & 228 \\ 
    18-2 & $\sB_2$ & $A_2$ & sm pt & $\frac{1}{4}(1,1)$, $\frac{1}{3}(1,1)$ & 100 \\
    18-3 & $\sB_4$ & $\frac{1}{3}(1,1)$ & $A_1$ & $\frac{1}{4}(1,1)$, $A_2, A_1$ & 144 \\
  \end{tabular}
\end{table}
}
{\small
\begin{table}[htb]
\renewcommand{\arraystretch}{1.4}
  \begin{tabular}{|c|c|c|c|c|c|} 
    \multicolumn{1}{|l|}{No.} & $\psi$ & \multicolumn{1}{c|}{From}
      & \multicolumn{1}{c|}{To} & \multicolumn{1}{c|}{$\sS (Z)$} & $ \det (Z) $ \\ \hline \hline
    18-4 & $\sB_5$ & $\frac{1}{4}(1,1)$ & $\frac{1}{3}(1,1)$ & $\frac{1}{3}(1,1)$, $\frac{1}{3}(1,1)$, $A_2$ & 153 \\  
    18-5 & $\sB_{10}$ & $\frac{1}{4}(1,1)$, $A_2$ & sm pt & $\frac{1}{3}(1,1)$ & 25  \\
    18-6 & $\sB_{14}$ & $\frac{1}{4}(1,1)$, $\frac{1}{3}(1,1)$ & $\frac{1}{5}(1,2)$ & $\frac{1}{5}(1,2)$, $A_2$ & 81 \\
    18-7 & $\sC_4$ & $\frac{1}{3}(1,1)$, $A_2$ & pt on $\pr^1$ &   &  \\  \hline 
  \end{tabular}
\end{table}
}

The cases 18-1 and 18-4 are eliminated by Lemma \ref{lattice_square}. 
In the case 18-7, the rest singular point of type $\frac{1}{4}(1,1)$ must be contracted to a point of $\pr^1$. 
By Table \ref{fibers}, it is impossible.
In the case 18-3, $Z$ is a del Pezzo surface of index two.
By Theorem \ref{min_index2}, we see that this case is impossible.
The case 18-2 is the same as the case No.6.
Hence this case is impossible by Proposition \ref{non_exi_7_10}.
The rest two cases, 18-5 and 18-6, need more considerations.

Assume that $Z$ is of the case 18-5.
Then we see that $Z \cong \pr(1,1,3)$ by Lemma \ref{unique_1/n}.
Let $L := \mathcal{O}_Z(1)$ and $C_Z := \psi_{*} C$, where $C$ is the exceptional curve of $\varphi$.
Then we can see that $-K_Z \sim 5L$ and there is an integer $n \in \mathbb{Z}_{\ge 0}$ such that $C_Z  \sim nL$
Denote the exceptional curve of $\psi$ by $E$.
Then we have $K_Y = \varphi^{*} K_X$ and $K_Y = \psi^{*} K_Z + 6E$.
By using these relations, we obtain relations, $E^2 = -\frac{1}{12}$ and $K_Y \cdot E = -\frac{1}{2}$.
We set $\psi^{*} C_Z = C +\alpha E$.
Then we have
\[
- K_Y \cdot \psi^{*} C_Z = - K_Y \cdot C + (- K_Y) \cdot \alpha E \ .
\]
Since $- K_Y \cdot \psi^{*} C_Z = -K_Z \cdot C_Z = \frac{5}{3}n$ and $-K_Y \cdot C = 0$, we obtain $\alpha = \frac{10}{3}n$.
Thus we have $\psi^* C_Z - \frac{10}{3}n E = C$.
Since $(\psi^* C_Z - \frac{10}{3}n E)^2 = \frac{1}{3}n^2 + \frac{100}{9}n^2 \cdot \frac{-1}{12}$ and $C^2 = -\frac{4}{3}$, we have $16n^2 = 36$.
Thus we see that $n = \pm \frac{3}{2}$.
This is a contradiction.

Assume that $Z$ is of the case 18-6.
By Lemma \ref{unique_1/5_A2}, we see that $Z \cong \pr(1,3,5)$.
Let $L := \mathcal{O}_Z(1)$ and $C_Z := \psi_{*} C$, where $C$ is the exceptional curve of $\varphi$.
Then wesee that $-K_Z \sim 9L$ and there is an integer $n \in \mathbb{Z}_{\ge 0}$ such that $C_Z \sim nL$.
Denote the exceptional curve of $\psi$ by $E$.
Then we have $K_Y = \varphi^{*} K_X$ and $K_Y = \psi^{*} K_Z + \frac{2}{5}E$.
By using these relations, we obtain relations $E^2 = -\frac{5}{12}$ and $K_Y \cdot E = -\frac{1}{6}$.
We set $\psi^{*} C_Z = C +\alpha E$.
Then we have
\[
- K_Y \cdot \psi^{*} C_Z = - K_Y \cdot C + (- K_Y) \cdot \alpha E \ .
\]
Since $- K_Y \cdot \psi^{*} C_Z = -K_Z \cdot C_Z = \frac{3}{5}n$ and $-K_Y \cdot C = 0$, we obtain $\alpha = \frac{18}{5}n$.
Thus we have $\psi^{*} C_Z - \frac{18}{5}nE  = C$.
Since $(\psi^{*} C_Z - \frac{18}{5}nE)^2 = \frac{1}{15}n^2 + \frac{324}{25}n^2 \cdot \frac{-5}{12}$ and $C^2 = -\frac{4}{3}$, we have $320 n^2 = 80$.
Thus we see that $n = \pm \frac{1}{2}$.
This is a contradiction. 
Thus we see that the all cases are impossible.
Therefore, we see that there is no del Pezzo surface of No.18. 

\end{proof}

\begin{proposition}\label{non_exi_19}

There is no example of the case $No.19$.

\end{proposition}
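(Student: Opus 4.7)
The plan is to mimic the two-ray-game strategy used for Propositions \ref{non_exi_11_16} and \ref{non_exi_17_18}. I would first extract the $A_3$ singularity of $X$ via $\varphi\colon Y \to X$ as in Remark \ref{extractions}. This extraction replaces the $A_3$ on $X$ with an $A_2$ on $Y$ and preserves the other singularities, yielding a rank two surface with $\sS(Y) = \{\frac{1}{4}(1,1),\,\frac{1}{4}(1,1),\,A_2,\,\frac{1}{3}(1,1)\}$. Lemma \ref{two_ray_game} then produces a $K_Y$-negative extremal contraction $\psi\colon Y \to Z$, and Lemma \ref{non_del} constrains $\psi$ to be of one of the types $\sB_0$, $\sB_2$, $\sB_4$, $\sB_5$, $\sB_{10}$, $\sB_{14}$ or $\sC_4$.

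Each case is then dispatched individually, exactly as in the earlier proofs. For $\sB_0$ and $\sB_5$ one computes via Lemma \ref{lattice_cal} that $\det(Z) = 912$ and $\det(Z) = 612$ respectively, neither of which is a square, so Lemma \ref{lattice_square} rules them out. For $\sB_2$, $\sB_{10}$, and $\sB_{14}$, the resulting surface $Z$ falls into cases No.8, No.7, and No.17 of Table \ref{19candi} respectively, all of which have already been excluded by Propositions \ref{non_exi_7_10} and \ref{non_exi_17_18}. For the $\pr^1$-fibration $\sC_4$, after the one distinguished fiber absorbs the $\frac{1}{3}(1,1)$ and $A_2$ singularities, the two remaining $\frac{1}{4}(1,1)$ singularities would have to lie on other fibers; but Table \ref{fibers} shows that the only fiber type containing a $\frac{1}{4}(1,1)$ singularity is $\sC_5$, which requires an $A_3$ partner, and no such singularity is available on $Y$, so this case is impossible.

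The step that genuinely needs input beyond the lattice-square machinery is the $\sB_4$ case, where $\sS(Z) = \{\frac{1}{4}(1,1),\,\frac{1}{4}(1,1),\,A_2,\,A_1\}$. A direct computation gives $\det(Z) = 576 = 24^2$, so Lemma \ref{lattice_square} does not eliminate this case. To finish it, I would instead observe that the Gorenstein index of $Z$ equals two: each $\frac{1}{4}(1,1)$ has local Gorenstein index two while $A_1$ and $A_2$ are Gorenstein. Theorem \ref{min_index2} then applies: the only rank one index-two del Pezzo surface of type $\tB$ is $\pr(1,1,4)$, whose singular locus is a single $\frac{1}{4}(1,1)$ point, incompatible with $\sS(Z)$. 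With all seven possibilities excluded, the existence of $X$ is contradicted. The main obstacle throughout is the $\sB_4$ case, since every other candidate yields directly to a technique already deployed in the preceding propositions, whereas $\sB_4$ is the one that forces the use of the index classification.
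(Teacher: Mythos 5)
Your proposal is correct and follows essentially the same route as the paper: the same extraction of the $A_3$ point, the same seven candidate contractions, the same determinant computations ($912$, $612$ non-square; $576=24^2$ forcing the index-two argument via Theorem \ref{min_index2}), and the same identifications of the remaining cases with No.7, No.8 and No.17. There is nothing substantive to distinguish it from the paper's own proof.
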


\begin{proof}

Let $X$ be a del Pezzo surface.
Assume that $X$ is of No.19.
Let $\varphi : Y \to X$ be the extraction of $A_3$.
By Lemma \ref{two_ray_game}, there is a $K_{Y}$-negative contraction $\psi : Y \to Z$. 
We see that $\sS (Y) = \{ \frac{1}{4}(1,1), \frac{1}{4}(1,1), \frac{1}{3}(1,1), A_2 \}$.
By Lemma \ref{non_del}, candidates of $\psi : Y \to Z$ are following.

{\small
\begin{table}[htb]
\renewcommand{\arraystretch}{1.4}
  \begin{tabular}{|c|c|c|c|c|c|} \hline
    \multicolumn{1}{|l|}{No.} & $\psi$ & \multicolumn{1}{c|}{From}
      & \multicolumn{1}{c|}{To} & \multicolumn{1}{c|}{$\sS (Z)$} & $ \det (Z) $ \\ \hline \hline
    19-1 & $\sB_0$ & - & sm pt & $\frac{1}{4}(1,1)$, $\frac{1}{4}(1,1)$, $\frac{1}{3}(1,1)$, $A_2$ & 912 \\ 
    19-2 & $\sB_2$ & $A_2$ & sm pt & $\frac{1}{4}(1,1)$, $\frac{1}{4}(1,1)$, $\frac{1}{3}(1,1)$ & 400 \\
    19-3 & $\sB_4$ & $\frac{1}{3}(1,1)$ & $A_1$ & $\frac{1}{4}(1,1)$, $\frac{1}{4}(1,1)$, $A_2, A_1$ & 576 \\
    19-4 & $\sB_5$ & $\frac{1}{4}(1,1)$ & $\frac{1}{3}(1,1)$ & $\frac{1}{4}(1,1)$, $\frac{1}{3}(1,1)$, $\frac{1}{3}(1,1)$, $A_2$ & 612 \\
    19-5 & $\sB_{10}$ & $\frac{1}{4}(1,1)$, $A_2$ & sm pt &  $\frac{1}{4}(1,1)$, $\frac{1}{3}(1,1)$ & 100  \\
    19-6 & $\sB_{14}$ & $\frac{1}{4}(1,1)$, $\frac{1}{3}(1,1)$ & sm pt & $\frac{1}{5}(1,2)$, $\frac{1}{4}(1,1)$, $A_2$  & 324 \\
    19-7 & $\sC_4$ & $\frac{1}{3}(1,1)$, $A_2$ &  pt on $\pr^1$ &  &  \\      \hline 
  \end{tabular}
\end{table}
}

The cases 19-1 and 19-4 are eliminated by Lemma \ref{lattice_square}. 
The case 19-2 is the same as the case No.8.
The case 19-5 is the same as the case No.7.
Hence these two cases are impossible by Proposition \ref{non_exi_7_10}.
The case 19-6 is the same as the case No.17.
Hence this case is impossible by Proposition \ref{non_exi_17_18}.
In the case 19-7, the rest two singular points of type $\frac{1}{4}(1,1)$ must be contracted to a point of $\pr^1$. 
By Table \ref{fibers}, it is impossible.
In the case 19-3, $Z$ is a del Pezzo surface of index two.
By Theorem \ref{min_index2}, we see that this case is impossible.
Thus we see that the all cases are impossible.
Therefore, we see that there is no del Pezzo surface of No.19.

\end{proof}

\subsection{Minimal surfaces of rank two}

In this subsection, rank two minimal del Pezzo surfaces of type $\tB$ are classified.  
A rank two minimal surface has two distinct $\pr^1$-fibrations $\pi_1, \pi_2$.

\begin{definition}\label{orb_Euler}

Let $X$ be a normal projective surface. The {\it orbifold Euler number} of $X$ is defined as
\[
e_{orb}(X) := e_{top}(X) - \sum_{x \in {\rm Sing} X} \frac{ \# \pi_{X,x} -1}{\# \pi_{X,x}}  ,
\]
where $e_{top}(X)$ is the topological Euler number of $X$ and $\pi_{X,x}$ is the local fundamental group of $X$ at $x \in X$.

\end{definition}

\begin{thm}\cite[Theorem 3.2]{HK11}\label{Miyaoka-Yau}
Let X be a normal projective surface with quotient singularities such that $-K_X$ is nef. Then 
\[
0 \le e_{orb}(X) .
\]

\end{thm}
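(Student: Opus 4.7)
The plan is to derive the inequality by combining the orbifold Bogomolov--Miyaoka--Yau (BMY) inequality with the positivity of the self-intersection of a nef divisor. The hypothesis that $-K_X$ is nef and $\mathbb{Q}$-Cartier yields immediately
\[
K_X^2 \;=\; (-K_X)^2 \;\ge\; 0,
\]
by the standard fact that the self-intersection of a nef $\mathbb{Q}$-Cartier divisor on a projective surface is nonnegative.

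Next I would invoke the orbifold BMY inequality for klt surfaces in the form
\[
K_X^2 \;\le\; 3\, e_{orb}(X),
\]
which applies since quotient singularities are klt. Combining the two gives $3\, e_{orb}(X)\ge K_X^2\ge 0$, hence $e_{orb}(X)\ge 0$ as desired.

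The main obstacle is of course the orbifold BMY inequality itself, a deep theorem due originally to Miyaoka and Sakai in the smooth/Du Val case and extended to general klt surfaces by Kobayashi, Nakamura, Langer, and others; any complete proof ultimately rests either on a Bogomolov-type inequality for Higgs sheaves on the minimal resolution or on the existence of an orbifold K\"ahler--Einstein metric.

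An alternative, more elementary route in the setting of this paper would be to pass to the minimal resolution $\pi\colon Y\to X$ and use the explicit decomposition
\[
e_{orb}(X) \;=\; e_{top}(Y) \,-\, \sum_{p\in\Sing X}\Bigl( n_p + \tfrac{r_p-1}{r_p}\Bigr),
\]
where $n_p$ is the number of exceptional curves over $p$ and $r_p$ is the order of the local fundamental group at $p$. Using $e_{top}(Y) = 12 - K_Y^2$ (valid because $-K_X$ nef together with $X$ klt forces $\kappa(X) = -\infty$ and $h^1(Y,\sO_Y)=0$, so $Y$ is rational) together with the standard discrepancy relation between $K_X^2$ and $K_Y^2$, the statement reduces to a finite numerical check over the classified list of quotient-singularity types; the BMY-based route avoids this bookkeeping entirely.
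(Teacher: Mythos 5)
The paper does not actually prove this statement; it is quoted verbatim from Hwang--Keum \cite[Theorem 3.2]{HK11}, where it is deduced from the Keel--McKernan orbifold inequality for klt surfaces with nef anticanonical class. Your main route has a genuine gap: the inequality $K_X^2 \le 3\, e_{orb}(X)$ that you invoke is the orbifold Bogomolov--Miyaoka--Yau inequality for surfaces with $K_X$ \emph{nef}; it is not available, and is in fact false, under the hypothesis that $-K_X$ is nef. A counterexample already appears in this paper's tables: for $X = \pr(1,1,3)$ one has $K_X^2 = \tfrac{25}{3}$, while $e_{top}(X) = 3$ and the single $\tfrac{1}{3}(1,1)$-point has local fundamental group of order $3$, so $e_{orb}(X) = 3 - \tfrac{2}{3} = \tfrac{7}{3}$ and $3\,e_{orb}(X) = 7 < \tfrac{25}{3} = K_X^2$. (Similarly $\pr(1,1,2)$ gives $K_X^2 = 8 > \tfrac{15}{2} = 3\,e_{orb}$.) The statement $e_{orb}(X)\ge 0$ for klt surfaces with $-K_X$ nef is itself the BMY-type theorem for the anti-nef case, due to Keel--McKernan and extended by Langer; it is proved by entirely different methods (log deformation theory of rational curves, respectively local orbifold Euler numbers) and cannot be obtained formally from the nef-canonical BMY combined with $K_X^2 \ge 0$.

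Your alternative route also does not close. The identity $e_{top}(Y) = 12 - K_Y^2$ requires $\chi(\sO_Y)=1$, and $-K_X$ nef does not force $\kappa(X) = -\infty$: the case $K_X \equiv 0$ is allowed (log Enriques surfaces, quotients of abelian or K3 surfaces), so $Y$ need not be rational. More importantly, there is no finite numerical check to perform: neither the number of singular points nor $K_Y^2$ is bounded a priori, and bounding the number of singular points is precisely what this theorem is used for in the paper (Corollary \ref{rank2_ineq}), so that reduction is circular. A self-contained argument would have to reproduce the Keel--McKernan/Langer proof; within this paper the correct move is simply to cite \cite{HK11}.
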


\begin{corollary}\label{rank2_ineq}
Let X be a rank two del Pezzo surface with at most quotient singularities. 
Then
\[
\sum_{x \in {\rm Sing} X} \frac{ \# \pi_{X,x} -1}{\# \pi_{X,x}} \le 4 .
\]

In particular, the number of singular points on $X$ is at most eight.

\end{corollary}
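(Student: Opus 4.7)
The plan is to apply Theorem \ref{Miyaoka-Yau} to $X$. Since $X$ is a del Pezzo surface, $-K_X$ is ample, hence nef, so the hypothesis is satisfied and we get
\[
\sum_{x \in \Sing X} \frac{\#\pi_{X,x}-1}{\#\pi_{X,x}} \le e_{top}(X).
\]
Thus the whole problem reduces to showing $e_{top}(X) = 4$.

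To compute the topological Euler number, I would pass to the minimal resolution $\pi : Y \to X$. Since $X$ is a del Pezzo surface with quotient singularities, it is rational, so $Y$ is a smooth rational surface and
\[
e_{top}(Y) = 2 + \rho(Y).
\]
For each $x \in \Sing X$, the exceptional divisor $E_x$ of $\pi$ over $x$ is a tree of smooth rational curves; letting $r_x$ denote the number of irreducible components, a short count gives $e_{top}(E_x) = r_x + 1$. Comparing Euler numbers on $X$ and $Y$ by removing the points $x$ and replacing them with the $E_x$ yields
\[
e_{top}(Y) = e_{top}(X) + \sum_{x \in \Sing X} r_x.
\]
On the other hand, the Picard number jumps by exactly $r_x$ at each resolved singularity, so $\rho(Y) = \rho(X) + \sum_x r_x = 2 + \sum_x r_x$. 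Substituting into the two displays shows $e_{top}(X) = 4$.

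Combining these with Miyaoka--Yau gives the first inequality. For the final sentence, the observation is simply that every quotient singularity has $\#\pi_{X,x} \ge 2$, so each summand in $\sum_x \frac{\#\pi_{X,x}-1}{\#\pi_{X,x}}$ is at least $\tfrac{1}{2}$; hence $\#\Sing X \le 8$.

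The only nontrivial step is the Euler-characteristic bookkeeping relating $Y$, $X$, and the resolution graphs; this is routine once one uses that quotient singularity resolutions are trees of smooth rational curves, so I do not expect any real obstacle.
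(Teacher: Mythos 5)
Your proposal is correct and is essentially the argument the paper intends: the corollary is stated as an immediate consequence of Theorem \ref{Miyaoka-Yau}, and the only content is the computation $e_{top}(X) = 2 + \rho(X) = 4$ for a rational surface with quotient singularities, which you carry out correctly via the minimal resolution. The final counting step ($\#\pi_{X,x}\ge 2$ so each summand is at least $\tfrac12$) is also exactly right.
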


\begin{remark}\label{l_f_g}

A local fundamental group $\pi_{X,x}$ is described more detail in \cite{Br67}.
Here we collect some of such descriptions.
$\# \pi_{X,x} = 1$ if and only if $x \in X$ is a smooth point.
$\# \pi_{X,x} = 2,3,4,3,4$ and 5, respectively for $x$ is one of $A_1$-, $A_2$-, $A_3$-, $\frac{1}{3}(1,1)$-, $\frac{1}{4}(1,1)$- and $\frac{1}{5}(1,2)$-singularities.

\end{remark}

\begin{lemma}\label{two_fib}

Let X be a rank two minimal surface of type $\tB$. 
Then the following hold:
\begin{align*}
\sharp \{ x \in X | \sharp \pi_{X,x} = 2 \} & \in  \{ 0, 4,6,8 \}, \\
\sharp \{ x \in X | \sharp \pi_{X,x} = 3 \} & \in \{ 0, 4,6 \}, \\
\sharp \{ x \in X | \sharp \pi_{X,x} = 5 \} & \in \{ 0, 4 \}.
\end{align*}

\end{lemma}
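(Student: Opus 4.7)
The plan is to exploit the fact that $X$, being rank two and minimal, has exactly two non-birational extremal contractions $\pi_1, \pi_2 : X \to \pr^1$, so every singular point of $X$ lies on a singular fiber of each $\pi_i$. Table \ref{fibers} classifies the possible singular fibers together with the singularity types they carry, and reading it off directly yields: an $A_1$ lies only on a $\sC_3$-fiber (which carries exactly two $A_1$-points), an $A_2$ or $\frac{1}{3}(1,1)$ lies only on a $\sC_4$-fiber (which carries exactly one of each), and a $\frac{1}{5}(1,2)$ lies only on a $\sC_6$-fiber (which carries exactly two). Writing $a := \#\{A_1\}$, $b := \#\{A_2\} + \#\{\frac{1}{3}(1,1)\}$, and $e := \#\{\frac{1}{5}(1,2)\}$, these configurations immediately give $a = 2\, n_{\pi_i}(\sC_3)$, $e = 2\, n_{\pi_i}(\sC_6)$, and $\#A_2 = \#\frac{1}{3}(1,1) = n_{\pi_i}(\sC_4)$ (so $b = 2\, n_{\pi_i}(\sC_4)$); in particular all three counts are even.

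Next I invoke Corollary \ref{rank2_ineq}, which in this notation reads
\[
\frac{a}{2} + \frac{2b}{3} + \frac{3(\#A_3 + \#\tfrac{1}{4}(1,1))}{4} + \frac{4e}{5} \le 4 .
\]
Dropping the non-negative $A_3$, $\tfrac{1}{4}(1,1)$ term yields $a \le 8$, $b \le 6$, $e \le 5$ individually; combined with the parity statements above this pins down $a \in \{0,2,4,6,8\}$, $b \in \{0,2,4,6\}$, $e \in \{0,2,4\}$.

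It remains to exclude the three values $a = 2$, $b = 2$, $e = 2$. In each of these cases, the pair of singular points of the relevant type must lie in a single $\pi_1$-singular fiber $F_1$ and a single $\pi_2$-singular fiber $F_2$. The plan is to pull back to the minimal resolution $\pi_Y : Y \to X$, write both $\pi_Y^* F_1$ and $\pi_Y^* F_2$ as explicit effective $\Q$-divisors using the coefficients dictated by the configurations of Table \ref{fibers}, and compute $d := F_1 \cdot F_2 = \pi_Y^* F_1 \cdot \pi_Y^* F_2$ on $Y$. Since $\Pic(X)_{\Q}$ is spanned by $F_1, F_2$ (as $\rho(X) = 2$) and $-K_X \cdot F_i = 2$, we have $-K_X \equiv \frac{2}{d}(F_1 + F_2)$ and hence $(-K_X)^2 = 8/d$. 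On the other hand the Noether formula $K_Y^2 + \rho(Y) = 10$ combined with the explicit discrepancies at each singularity yields
\[
(-K_X)^2 = 8 - a - 2\#A_2 - 3\#A_3 - \tfrac{2}{3}\#\tfrac{1}{3}(1,1) - \tfrac{8}{5} e .
\]
Crucially, $d$ must be a positive integer equal to every intersection number $F_1 \cdot F_2$ computed from any pair of fibers of $\pi_1$ and $\pi_2$; this forces divisibility constraints from each fiber-pair configuration that must all be simultaneously satisfied. Matching these constraints with the Noether expression for $(-K_X)^2$ rules out each $=2$ case. The main obstacle is the $e=2$ subcase: the $(-1)$-curve of a $\sC_6$-fiber at a $\tfrac{1}{5}(1,2)$-point can attach to either the $(-2)$-arm or the $(-3)$-arm of the exceptional chain, producing two distinct values of $d$ according to whether $F_1$ and $F_2$ attach to the same or opposite arms at the two shared singularities; a case split on these attachment patterns is needed to eliminate both sub-values and close the argument.
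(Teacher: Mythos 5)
Your first two steps --- evenness of each count via the fiber types in Table \ref{fibers}, and the upper bounds $a\le 8$, $b\le 6$, $e\le 5$ via Corollary \ref{rank2_ineq} and Remark \ref{l_f_g} --- are exactly the ingredients the paper cites, and you carry them out correctly (including the observation $\#A_2=\#\frac13(1,1)$, so $b$ is even). The entire remaining content of the lemma is the exclusion of the value $2$ in each of the three counts, and this is where your proposal does not close.

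The exclusion step is only a plan: you assert that matching the divisibility constraints with the Noether expression ``rules out each $=2$ case'' without doing the computation, and you yourself flag an unresolved case split when $e=2$. More seriously, the mechanism you propose --- integrality of $d=8/(-K_X)^2$ together with the local intersection numbers of the two special fibers at the two shared singular points --- demonstrably fails to eliminate some configurations it would need to eliminate. Take $a=2$ together with two $A_2$'s and two $\frac13(1,1)$'s (so $b=4$, allowed by the Euler bound): Noether gives $(-K_X)^2=8-2-4-\frac43=\frac23$, hence $d=12$ is an integer; the $\sC_3$-fibers $F_1,F_2$ through the two $A_1$-points satisfy $F_i\equiv\frac12 l_i$, so $F_1\cdot F_2=\frac{d}{4}=3$, while the resolution computation gives $F_1\cdot F_2=(F_1)_Y\cdot(F_2)_Y+1$, solvable with $(F_1)_Y\cdot(F_2)_Y=2$. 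Likewise $e=2$ with $a=4$ gives $(-K_X)^2=\frac45$, $d=10$, and $F_1\cdot F_2=\frac{d}{25}=\frac25$ is attained by the ``opposite-arm'' attachment pattern with $(F_1)_Y\cdot(F_2)_Y=0$. So no divisibility obstruction arises and these cases survive your test; a genuinely different input is needed to forbid the two matchings induced by $\pi_1$ and $\pi_2$ from sharing an edge. (For what it is worth, the paper's own one-line proof does not make this step explicit either, and in its later use of the lemma Corollary \ref{bound_vol} independently discards all such low-volume configurations; but that does not prove the lemma as stated, and neither does your argument.)
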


\begin{proof}

It follows from Table \ref{fibers}, Corollary \ref{rank2_ineq} and Remark \ref{l_f_g}.

\end{proof}

\begin{proposition}\label{candi_rank2}

Let $X$ be a rank two minimal surface of type $\tB$.
$X$ is one in the following table:

{\small \rm
\begin{table}[htb]
\renewcommand{\arraystretch}{1.3}
  \begin{tabular}{|c|c|c|c|} \hline
    \multicolumn{1}{|l|}{No.} & \multicolumn{1}{c|}{$\sS (X)$}
      & \multicolumn{1}{c|}{$(-K_{X})^2$}     \\ \hline \hline
    1 & $\frac{1}{5}(1,2), \frac{1}{5}(1,2), \frac{1}{5}(1,2), \frac{1}{5}(1,2)$ & $\frac{8}{5}$   \\
    2 & $\frac{1}{4}$(1,1), $\frac{1}{4}$(1,1), $A_3, A_3$  & 2   \\
    3 & $\frac{1}{4}$(1,1), $A_3, A_3$ & 2    \\
    4 & $\frac{1}{3}$(1,1), $\frac{1}{3}$(1,1), $A_2, A_2$ & $\frac{8}{3}$  \\
    5 &  $A_3, A_3$  & 2  \\ 
    6 & $A_1, A_1, A_1, A_1$ & 4  \\
    7 & $A_1, A_1, A_1, A_1, A_1, A_1$ & 2  \\
    8 & $A_3, A_1, A_1, A_1, A_1$ & 1   \\
    9 & $A_3$ & 5   \\ \hline
  \end{tabular}
\end{table}
}

\end{proposition}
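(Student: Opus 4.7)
The plan is to exploit the two $\pr^1$-fibration structure forced by the rank-two minimality hypothesis. Since $X$ has Picard number two and no birational extremal contractions, both extremal rays of $\overline{\NE}(X)$ give rise to $K$-negative $\pr^1$-fibrations $\pi_1, \pi_2 \colon X \to \pr^1$. If $X$ is smooth, then $X$ is a smooth rank-two minimal rational surface, and among these only $\PP$ is a del Pezzo surface; this disposes of the trivial case, and from now on I assume $X$ has at least one singular point. By Proposition \ref{all_ext_cont}, every singular fiber of either fibration is of one of the six types $\sC_1, \ldots, \sC_6$ in Table \ref{fibers}. The types $\sC_2, \sC_3, \sC_4, \sC_5, \sC_6$ respectively contain: one $A_3$; two $A_1$'s; one $\frac{1}{3}(1,1)$ paired with one $A_2$; one $\frac{1}{4}(1,1)$ paired with one $A_3$; and two $\frac{1}{5}(1,2)$'s.

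Write $a, b, c, d, e, f$ for the number of singularities on $X$ of type $\frac{1}{5}(1,2), A_1, A_2, A_3, \frac{1}{3}(1,1), \frac{1}{4}(1,1)$ respectively. Since every singular point lies on exactly one fiber of each fibration, the fiber contents above force the pairing identities $c = e$ (each $A_2$ sits in a $\sC_4$-fiber alongside a $\frac{1}{3}(1,1)$, and conversely) and $d \ge f$ (each $\frac{1}{4}(1,1)$ is paired with a distinct $A_3$ in a $\sC_5$-fiber, the remaining $A_3$'s living in $\sC_2$-fibers). Lemma \ref{two_fib} then contributes the parity constraints $a \in \{0, 4\}$, $b \in \{0, 4, 6, 8\}$ and $c + e \in \{0, 4, 6\}$, whence $c = e \in \{0, 2, 3\}$. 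Corollary \ref{rank2_ineq} adds the orbifold Euler bound
\begin{equation*}
\tfrac{4}{5} a + \tfrac{1}{2} b + \tfrac{2}{3}(c + e) + \tfrac{3}{4}(d + f) \le 4.
\end{equation*}

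I would then run a finite enumeration: branch first on $a$, then on $(c, e)$, then on the pair $(f, d - f)$ recording how many $A_3$'s come from $\sC_5$-fibers versus $\sC_2$-fibers, and finally on $b$. For each tuple surviving the orbifold Euler inequality I check that the required collections of singular fibers can actually be assembled into both fibrations simultaneously; the compatibility between $\pi_1$ and $\pi_2$ is automatic once $c = e$ and $d \ge f$ hold, since one simply distributes each singular point as one entry in a two-way incidence table indexed by the singular fibers of $\pi_1$ and of $\pi_2$.

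The main obstacle is to rule out the handful of numerically admissible tuples that are not in the table, most notably the saturating case $c = e = 3$, which would produce three disjoint $\sC_4$-fibers in each fibration. I expect to dispatch these corner cases by passing to the minimal resolution $Y \to X$ and comparing the number of irreducible components one is forced to introduce in $Y$ with the Noether formula $K_Y^2 + \rho(Y) = 10$, or equivalently by computing the anti-canonical volume $(-K_X)^2$ determined by the tuple and checking that it is incompatible with the existence of two distinct $\pr^1$-fibrations. Once these extraneous tuples are eliminated, only the nine configurations of the table remain.
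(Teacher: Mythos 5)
Your overall strategy is the one the paper itself uses: exploit the two $\pr^1$-fibrations forced by rank-two minimality, feed the singular-fiber types of Table \ref{fibers} into Lemma \ref{two_fib}, and close off a finite numerical enumeration with the Noether formula $K_X^2 = 8 - r - (\sum a_iE_i)^2$. Your pairing identities $c=e$ and $d\ge f$ and your explicit appeal to Corollary \ref{rank2_ineq} are correct and are genuine, if modest, refinements; the paper instead uses $K_X^2>0$ directly as the primary filter, which is strictly stronger than the orbifold Euler bound on most branches. As a result you will meet many more ``corner cases'' than the two you name: for instance eight $A_1$'s; six $A_1$'s plus an $A_3$; four $A_1$'s plus two $A_3$'s; two $A_2$'s, two $\frac{1}{3}(1,1)$'s and an $A_3$; and three $A_3$'s all pass your primary filter. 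All of these do die under your reserve tool $(-K_X)^2\le 0$, so that much is only bookkeeping.

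The genuine gap is in the endgame you promise but do not carry out. A $\frac{1}{4}(1,1)$ point contributes $+1$ to $r$ and $+1$ to $-(\sum a_iE_i)^2$, so it is invisible to the Noether formula: decorating an existing $A_3$ with a $\frac{1}{4}(1,1)$ through a $\sC_5$-fiber never changes $(-K_X)^2$. Consequently the tuple $\sS(X)=\{A_1,A_1,A_1,A_1,A_3,\frac{1}{4}(1,1)\}$ satisfies every constraint in your toolkit: $c=e=0$ and $d=f=1$, Lemma \ref{two_fib}, orbifold Euler sum $\frac{7}{2}\le 4$, $(-K_X)^2=1>0$, and even the two-fibration integrality test $(-K_X)^2=8/(l_1\cdot l_2)$ from the proof of Lemma \ref{nonexi_9}, which here only forces $l_1\cdot l_2=8\in\Z$. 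Yet it is absent from the table, and neither of your two proposed fallback tools eliminates it. (By contrast $\sS(X)=\{A_3,\frac{1}{4}(1,1)\}$, which also survives the Noether test with $(-K_X)^2=5$, is killed by $8/5\notin\Z$, so that decoration is fine.) Note that the undecorated case No.~8, $\{A_3,A_1,A_1,A_1,A_1\}$, is ultimately excluded in Theorem \ref{min_rank2} only by citing the classification of \cite{Ye02} in Lemma \ref{nonexi_8}, i.e.\ by a non-numerical argument, so you should expect its $\frac{1}{4}(1,1)$-decoration to require comparable work, or an appeal to Corollary \ref{bound_vol} as in Theorem \ref{min_rank2} --- which excludes it, but only under the extra hypothesis that $X$ arises from a del Pezzo surface of type $\tA$, which is not part of the statement you are proving. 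To be fair, the paper's own one-line proof of this proposition has exactly the same blind spot.
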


\begin{proof}

Let $\pi : Y \to X$ be the minimal resolution.
Then we have
\[
K_Y = \pi^*K_X + \sum^r a_i E_i,
\]
where each $E_i$ is the exceptional curve and $a_i \le 0$.
Thus we have
\[
K_Y^2 = K_X^2 + (\sum^r a_i E_i)^2 .
\]
By Remark \ref{Noeth_RR},
\[
K_X^2 = 8 - r - (\sum^r a_i E_i)^2 .
\]
Candidates satisfying $K_X^2 > 0$ and Lemma \ref{two_fib} are the nine cases.

\end{proof}

A del Pezzo surface of No.7 does exist.
We, however, see that this surface cannot be obtained from a del Pezzo surface of type $\tA$ by Corollary \ref{bound_vol}. 

Gorenstein del Pezzo surfaces of rank two are also classified by Qiang (\cite{Ye02}).
The following lemma holds.

\begin{lemma}\cite[Qiang]{Ye02}\label{nonexi_8}
There is no rank two del Pezzo surface which has one singular point of type $A_3$ and four singular points of type $A_1$.

\end{lemma}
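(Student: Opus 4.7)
The plan is to derive a numerical contradiction from the two $\pr^1$-fibration structure on $X$. Since $A_1$ and $A_3$ are Du Val singularities, $X$ is Gorenstein with $(-K_X)^2 = 1$, and the minimal resolution $\pi \colon Y \to X$ satisfies $K_Y^2 = K_X^2 = 1$ and $\rho(Y) = 9$. Being a minimal rank-two del Pezzo surface, $X$ admits two distinct $\pr^1$-fibrations $\pi_1, \pi_2 \colon X \to \pr^1$ with fiber classes $F_1, F_2$.

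First I would restrict the singular fibers using Table \ref{fibers}. The only fiber types compatible with the singularity set $\{A_3, A_1, A_1, A_1, A_1\}$ of $X$ are $\sC_2$ (housing the $A_3$) and $\sC_3$ (housing two $A_1$'s), since $\sC_4, \sC_5, \sC_6$ each require singularity types absent from $X$. Hence each $\pi_i$ has exactly one $\sC_2$-fiber and two $\sC_3$-fibers, and the four $A_1$-points $p_1, \ldots, p_4$ are partitioned into pairs in two distinct ways by $\pi_1$ and $\pi_2$ (the pairings must differ, since otherwise a common $\sC_3$-fiber would be a fiber of both fibrations). After relabelling I may assume $\pi_1$ pairs $\{p_1, p_2\}, \{p_3, p_4\}$ and $\pi_2$ pairs $\{p_1, p_3\}, \{p_2, p_4\}$.

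The key computation is then the following. Writing $-K_X = aF_1 + bF_2$ and using $-K_X \cdot F_i = 2$, $F_i^2 = 0$ and $(-K_X)^2 = 1$, I get $a = b = 1/4$ and $F_1 \cdot F_2 = 8$. Let $f_i \subset Y$ denote the $(-2)$-curve over $p_i$, and let $C_{12}, D_{13} \subset Y$ be the $(-1)$-curves inside the $\sC_3$-fibers through $\{p_1, p_2\}$ (for $\pi_1$) and $\{p_1, p_3\}$ (for $\pi_2$). The fiber decompositions $F_1 = f_1 + 2C_{12} + f_2$ and $F_2 = f_1 + 2D_{13} + f_3$ give $C_{12} = (F_1 - f_1 - f_2)/2$ and $D_{13} = (F_2 - f_1 - f_3)/2$ in $\Pic(Y) \otimes \mathbb{Q}$. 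A short expansion using $F_i \cdot f_j = 0$ and $f_i \cdot f_j = -2\delta_{ij}$ then yields
\[
C_{12} \cdot D_{13} = \tfrac{1}{4}\bigl(F_1 \cdot F_2 + f_1^2\bigr) = \tfrac{8 - 2}{4} = \tfrac{3}{2}.
\]

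This gives the contradiction: $C_{12}$ and $D_{13}$ are distinct irreducible curves on $Y$ (each meets a $(-2)$-curve, namely $f_2$ or $f_3$, that the other does not), so their intersection number must be a non-negative integer, in conflict with the value $3/2$. The main point requiring care is the restriction on fiber types and the verification that the two $A_1$-pairings must genuinely differ; once that combinatorial setup is fixed, the fractional intersection number drops out from routine manipulation of classes in $\Pic(Y) \otimes \mathbb{Q}$. An alternative route is to invoke Qiang's classification \cite{Ye02} of Gorenstein rank-two del Pezzo surfaces, but the argument above has the advantage of being self-contained within the tools already developed in this paper.
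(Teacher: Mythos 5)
The paper does not actually prove this lemma --- it is quoted from Qiang's classification \cite{Ye02} --- so your attempt at a self-contained argument is welcome, and most of it is sound: the computation $(-K_X)^2=1$, the identity $F_1\cdot F_2=8$ obtained exactly as in the paper's Lemma \ref{nonexi_9}, the restriction of singular fibers to types $\sC_2$ and $\sC_3$ via Table \ref{fibers}, and the resulting fact that each fibration partitions the four $A_1$-points into two pairs are all correct, as is the arithmetic giving $C_{12}\cdot D_{13}=\tfrac32$ when the two fibers share exactly the point $p_1$.

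The gap is the claim that the two pairings must differ. Your parenthetical justification --- that equal pairings would force a common fiber of both fibrations --- is a non-sequitur: if both $\pi_1$ and $\pi_2$ pair $\{p_1,p_2\}$, the two $\sC_3$-fibers through $p_1$ and $p_2$ are still distinct curves (their classes $\tfrac12 F_1$ and $\tfrac12 F_2$ are not proportional), so nothing is contradicted. And this case is not harmless for your method: if the pairings coincide, the same computation gives
\[
C_{12}\cdot D_{12}=\tfrac14\bigl(F_1\cdot F_2+f_1^2+f_2^2+2f_1\cdot f_2\bigr)=\tfrac14(8-2-2)=1,
\]
and likewise $C_{12}\cdot D_{34}=2$ and (for the $(-1)$-curves in the two $\sC_2$-fibers) $E\cdot E'=1$; every relevant intersection number is an integer, so no contradiction arises and the equal-pairing configuration remains unexcluded. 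Your proof is therefore incomplete until you either show that the two partitions of $\{p_1,\dots,p_4\}$ genuinely cannot agree or rule out the equal-pairing surface by some other means. A secondary point: the lemma as stated does not assume $X$ is minimal, whereas your argument needs the two $\pr^1$-fibrations; for a non-minimal rank-two $X$ you would instead contract an extremal ray and appeal to the rank-one Gorenstein classification (Theorem \ref{min_Gor}), which should be said explicitly if you want the statement in full generality.
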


\begin{lemma}\label{nonexi_9}

There is no del Pezzo surface of rank two which has one singular point of type $A_3$ and has two distinct $\pr^1$-fibrations $\pi_1, \pi_2$.

\end{lemma}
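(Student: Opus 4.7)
The plan is to derive a contradiction by computing the intersection number of general fibers of the two fibrations and showing that it cannot be an integer, contradicting the Cartierness of those fibers.

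First, I would compute $K_X^2$. Since $X$ has a unique singularity of type $A_3$, which is canonical with trivial discrepancy, $K_X^2$ equals $K_Y^2$ for the minimal resolution $\pi \colon Y \to X$. Using $\rho(Y) = \rho(X) + 3 = 5$ (the $A_3$ resolution contributes three $(-2)$-curves) together with the relation $K_Y^2 + \rho(Y) = 10$ from Remark \ref{Noeth_RR}, this yields $K_X^2 = 5$.

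Next, I would set up the numerical data of the fiber classes. For each fibration $\pi_i$, a general fiber $F_i$ avoids the unique singular point of $X$, is a smooth $\pr^1$, and is Cartier as the pullback of a point from $\pr^1$. Hence $F_i^2 = 0$ and, by adjunction applied on $F_i$, $-K_X \cdot F_i = 2$. Moreover, since $\pi_1$ and $\pi_2$ are distinct, the product morphism $(\pi_1, \pi_2) \colon X \to \pr^1 \times \pr^1$ is generically finite, so $d := F_1 \cdot F_2$ is a positive integer (the degree of that morphism).

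Finally, since $\rho(X) = 2$ and the classes $[F_1], [F_2]$ are non-proportional, they form a $\Q$-basis of $\ND(X)_{\Q}$. Writing $-K_X \equiv a F_1 + b F_2$ and intersecting with $F_1$ and $F_2$ gives $a = b = 2/d$, from which $(-K_X)^2 = 2ab \cdot d = 8/d$. Combining this with $K_X^2 = 5$ forces $d = 8/5$, contradicting $d \in \mathbb{Z}_{>0}$. The only nontrivial point is the recognition that $F_1 \cdot F_2$ must be a positive integer; the rest is a short intersection-theoretic computation.
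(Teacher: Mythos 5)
Your proposal is correct and follows essentially the same route as the paper: both write $-K_X$ in terms of the two fiber classes, use $-K_X\cdot F_i = 2$ to get $(-K_X)^2 = 8/d$ with $d = F_1\cdot F_2 \in \mathbb{Z}_{>0}$, and contradict $(-K_X)^2 = 5$. Your explicit justification of $K_X^2=5$ via the crepant resolution and of the integrality of $d$ merely spells out steps the paper takes for granted.
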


\begin{proof}

Assume there exists such a surface $X$ by contradiction.
Let $l_1, l_2$ be general fibers of $\pi_1, \pi_2$ respectively and set $l_1 \cdot l_2 = d \in \mathbb{Z}$.
Since $\rho(X) = 2$, we can write $-K_X = al_1 + bl_2$.
Then we have
$2 = -K_X \cdot l_1 = (al_1 + bl_2) \cdot l_1 = bd$ and $2 = -K_X \cdot l_2 = (al_1 + bl_2) \cdot l_2 = ad$.
Thus we see
\[
 (-K_X)^2 = (al_1 + bl_2)^2 = 2abd = \frac{8}{d} .
\]
Since $X$ is of rank two and has only one singular point of type $A_3$, we can see $(-K_X)^2 = 5$.
This, however, contradicts the fact that $d$ is an integer. 

\end{proof}

\section{Candidates of del Pezzo surfaces of type $\tA$}\label{candidate_sec}

In the previous two sections, extremal contractions and minimal surfaces are classified.
By using these results, we determine candidates of del Pezzo surfaces of type $\tA$ in this section.
The existence of each candidate is proved in the next section.

Recall that for a del Pezzo surface $X$ of type $\tA$, there exists a minimal directed sequence of first morphisms, second morphisms and third morphisms (Theorem \ref{directed}) 
\[
X \to \cdots \to S \to \cdots \to T_i \to \cdots \to T_{min} \to \cdots \to U_n = X_{min} .
\]
The purpose of this paper is to classify $X$ in case $X = S$.
We will prove the following theorem.

\begin{thm}\label{candi_S}

Let $S$ be a del Pezzo surface of type $\tA$ with no floating $(-1)$-curves.
Then $S$ is one of the surfaces in Table \ref{main_table}.

\end{thm}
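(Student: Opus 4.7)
The plan is to work backwards along the minimal directed sequence
\[
S \overset{\beta_1}{\to} T_1 \overset{\beta_2}{\to} \cdots \overset{\beta_m}{\to} T_{min} \overset{\gamma_1}{\to} \cdots \overset{\gamma_n}{\to} X_{min}
\]
guaranteed by Theorem \ref{directed}. Starting from the known list of minimal surfaces (Tables \ref{min_rank1_table} and \ref{min_rank2_table}) and the lists of allowable second and third morphisms (Tables \ref{2nd_mor} and \ref{3rd_mor}), I would classify the possible $T_{min}$, then the possible $T_1$ above each $T_{min}$, then the possible $T_2$ above each $T_1$, and so on, until the iteration terminates. The key quantitative input at every stage is Corollary \ref{bound_vol}, which bounds $(-K_{X_{min}})^2$ from below in terms of the singularities of $X_{min}$, combined with the additive volume shifts $d_{U/U_1}$ and $d_{T/T_1}$ recorded in the tables: each second morphism increases $(-K)^2$ by at least $\tfrac{8}{3}$, so the volume grows rapidly as we move up the sequence and quickly violates the volume bound available from any Gorenstein or index-two ceiling.

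For the base layer, I would enumerate compositions $T_{min}\to X_{min}$ built out of third morphisms $\tmor_1,\ldots,\tmor_9$ whose centers are distinct singular points of $X_{min}$, and then impose $\smor$-minimality: by Lemma \ref{A-min}, no further composition of extremal contractions from $T_{min}$ may produce an exceptional curve contracted to a smooth point. Combined with the configurations in Corollary \ref{config} (which say exactly which smooth centers are available after a given $\smor_i$), this pins $T_{min}$ down to six cases. For the next layer, I would, for each $T_{min}$, walk through the eight types $\smor_1,\ldots,\smor_8$ and check which of them can be realized by blowing up a smooth point of a surface $T_1$ of type $\tA$ with no floating $(-1)$-curves, mapping onto $T_{min}$. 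The admissibility of each $\smor_i$ depends on whether the required pattern of $\frac{1}{3}(1,1)$- and $\frac{1}{4}(1,1)$-singularities (column "($n_3,n_4$)" of Table \ref{2nd_mor}) is available at a suitable point of $T_{min}$, and whether $-K_{T_1}$ can still be ample—so Lemma \ref{Fuku} eliminates any configuration producing a $T$-line, and the growing-volume obstruction limits how many blow-ups can pile up over one point. Iterating once gives the list of candidate $T_2$, and once more the single candidate $T_3$; by Corollary \ref{T_>4} the process terminates at this point, so no $T_m$ exists for $m \geq 4$.

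Assembling the layers gives $6 + 13 + 19 + 1 = 39$ candidate surfaces, which one then matches to Table \ref{main_table} by reading off $X_{min}$, the directed sequence, the pair $(n_3,n_4)$, and the Picard number, using Remark \ref{Noeth_RR} to compute $(-K_X)^2$ and $h^0(-K_X)$. The main obstacle I expect is combinatorial rather than conceptual: for each $T_i$ on the list one must verify the compatibility of every $\smor_j$-preimage with every distribution of singularities of $T_i$, and several cases collide because two different $\smor_j$ can produce the same $T_{i+1}$ (as already observed in the proof of Proposition \ref{mor_list} for $\smor_3$--$\smor_5$ and for $\smor_6$--$\smor_8$); this forces a careful bookkeeping based on the dual graphs in Corollary \ref{config} to avoid overcounting. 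Once this enumeration is carried out and pruned by the volume inequality and Lemma \ref{Fuku}, the residual 39 cases are exactly the entries of Table \ref{main_table}, and the existence of each entry—claimed in the moreover part of Theorem \ref{main_thm}—is handled separately in Section \ref{construct_sec}.
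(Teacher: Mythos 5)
Your proposal follows essentially the same route as the paper: Theorem \ref{candi_S} is proved there by combining Propositions \ref{T_min}, \ref{T_1}, \ref{T_2} and \ref{T_3} (classifying $T_{min}$, $T_1$, $T_2$ and $T_3$ into $6+13+19+1=39$ cases) with Corollary \ref{T_>4} for termination, pruned exactly as you describe via $\smor$-minimality, $T$-lines, floating $(-1)$-curves and the volume shifts $d_{T/T_1}$. The one loose point is the direction of the volume count: the anticanonical volume \emph{decreases} by at least $\tfrac{8}{3}$ with each second morphism as one moves away from $T_{min}$, and termination comes from $(-K_{T_m})^2 \le 9 - \tfrac{8}{3}m$ becoming non-positive for $m \ge 4$, not from exceeding a ceiling.
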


\begin{proof}

The assertion follows from Table \ref{T_min_table}, \ref{T_1_table}, \ref{T_2_table} and \ref{T_3_table}.

\end{proof}

In order to prove this theorem, we first classify $T_{min}$.
Next, we classify $T_i$, where $S =: T_m \to \cdots \to T_2 \to T_1 \to T_{min}$ is a directed $\smor$-sequence.
By observing the anti-canonical volume $(-K_X)^2$, we see that there is no $T_m$ for $m \ge 4$ (Corollary \ref{T_>4}).
Hence it suffices to classify $T_1, T_2$ and $T_3$.

\subsection{Candidates of $T_{min}$}\label{T}

We first determine the candidates of a $\smor$-minimal del Pezzo surface $T_{min}$.

\begin{definition}

Let $X$ be a normal projective surface.
Let $\pi : Y \to X$ be the minimal resolution.
Let $C \subset X$ be a quasi-line.
Assume that $C$ passes through exactly two singular points $P_1, P_2$.
If $P_1$ is a singular point of type $\frac{1}{3}(1,1)$ and $P_2$ is one of type $A_1$, then $C$ is called an {\it $S_1$-line}.
The dual graph of the total transform with reduced structure of an $S_1$-line on $Y$ is the following. 
\begin{center}
$\pi^{*}(S_1$-line$)_{\rm red}$ : \ \  \xygraph{
	\square ([]!{+(.0,-.3)} {-3}) 
	- [r]	\bullet ([]!{+(.0,-.3)} {-1}) 
        - [r]	\triangle ([]!{+(.0,-.3)} {-2}) }
\end{center}
If $P_1$ is a singular point of type $\frac{1}{4}(1,1)$ and $P_2$ is one of type $A_2$, then $C$ is called an {\it $S_2$-line}.
The dual graph of the total transform with reduced structure of an $S_2$-line on $Y$ is the following. 
\begin{center}
$\pi^{*}(S_2$-line$)_{\rm red}$ : \ \  \xygraph{
	\bigcirc ([]!{+(.0,-.3)} {-4}) 
	- [r]	\bullet ([]!{+(.0,-.3)} {-1}) 
        - [r]	\triangle ([]!{+(.0,-.3)} {-2}) 
        - [r]	\triangle ([]!{+(.0,-.3)} {-2}) }
\end{center}
Let $D \subset X$ be a different curve from $C$.
Assume that $C \cap D = \{ P_1 \}$ and $D$ passes through exactly two singular points $P_1, P_3$.
If $P_1$ is a singular point of type $\frac{1}{4}(1,1)$, $P_2$ is one of type $\frac{1}{3}(1,1)$ and $P_3$ is one of type $A_1$, then $C \cup D $ is called an {\it $S_3$-line pair}.
The dual graph of the total transform with reduced structure of $S_3$-line pair on $Y$ is the following.
\begin{center}
$\pi^{*}(S_3$-line pair$)_{\rm red}$ : \ \  \xygraph{
	\square ([]!{+(.0,-.3)} {-3}) 
	- [r]	\bullet ([]!{+(.0,-.3)} {-1}) 
        - [r]	\bigcirc ([]!{+(.0,-.3)} {-4}) 
        - [r]	\bullet ([]!{+(.0,-.3)} {-1}) 
        - [r]	\triangle ([]!{+(.0,-.3)} {-2}) }
\end{center}

\end{definition}

\begin{lemma}\label{cont_to_sm}

Let $X$ be a $\smor$-minimal del Pezzo surface.
Take a $\tmor$-sequence $X =: U_0 \to \cdots \to U_i \to \cdots \to U_n =: X_{min}$.
For $0 \le i \le n$, $U_i$ does not have neither $S_1$-lines, $S_2$-lines nor $S_3$-line pairs. 

\end{lemma}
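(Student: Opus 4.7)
The plan is to argue by contradiction, exploiting Definition \ref{2_min} directly. Assume that some $U_i$ contains an $S_1$-line, an $S_2$-line, or an $S_3$-line pair. I will construct a composition of $K$-negative birational extremal contractions $g : U_i \to Z$ whose overall center is a smooth point of $Z$. Since each third morphism $U_{j-1} \to U_j$ is by definition a composition of extremal contractions, the chain $f : X \to U_i$ is also such a composition; hence $g \circ f : X \to Z$ is a composition of birational extremal contractions whose center is not contained in $\Sing Z$, contradicting the $\smor$-minimality of $X$.

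For the setup I would observe that by Proposition \ref{keep_B} together with Lemma \ref{keep_ldP} applied iteratively, every $U_j$ appearing in the $\tmor$-sequence is a del Pezzo surface of type $\tB$; in particular Proposition \ref{all_ext_cont} enumerates all birational extremal contractions available on each such $U_j$.

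For the $S_1$- and $S_2$-line cases, the configuration of the exceptional curves together with the quasi-line $C$ on the minimal resolution $\pi : Y \to U_i$ matches precisely the configuration of a contraction of type $\sB_8$ or $\sB_{10}$ respectively. A direct calculation, expanding $\pi^{*}C = C_Y + \tfrac{1}{3}E_3 + \tfrac{1}{2}E_1$ in the $S_1$-case (and analogously in the $S_2$-case), yields $-K_{U_i}\cdot C > 0$ and $C^2 < 0$; since any irreducible curve with negative self-intersection spans an extremal ray on a surface, $C$ can be contracted as a single $K$-negative extremal contraction, and the target point is smooth exactly as predicted by $\sB_8$ (resp.\ $\sB_{10}$).

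The main obstacle is the $S_3$-line pair case, which requires a two-step argument. I would first contract the quasi-line $C$ through the $\frac{1}{3}(1,1)$- and $\frac{1}{4}(1,1)$-points by an extremal contraction of type $\sB_{14}$; this introduces a $\frac{1}{5}(1,2)$-singularity and, by Lemma \ref{keep_ldP}, keeps the surface a del Pezzo of type $\tB$. The strict transform of the second quasi-line $D$ now passes through this new $\frac{1}{5}(1,2)$-point and the $A_1$-point that $D$ originally met, so that on the minimal resolution its dual graph matches the configuration of $\sB_9$; an analogous self-intersection computation shows that it still has negative self-intersection and hence spans a $K$-negative extremal ray, so we may contract it to a smooth point by $\sB_9$. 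Composing the two contractions yields the desired $g : U_i \to Z$ with smooth center, completing the contradiction.
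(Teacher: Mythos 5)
Your proposal is correct and follows essentially the same route as the paper: the paper's proof simply observes that $S_1$-lines, $S_2$-lines and $S_3$-line pairs are contracted to smooth points (via $\sB_8$, $\sB_{10}$, and $\sB_{14}$ followed by $\sB_9$, exactly as you identify) and then invokes Lemma \ref{A-min}, whose content is precisely your composition-with-$f$ contradiction against $\smor$-minimality. Your version merely spells out the intersection-number verifications and re-derives Lemma \ref{A-min} inline.
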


\begin{proof}

Since $S_1$-lines, $S_2$-lines and $S_3$-line pairs are contracted to smooth points, the assertion holds by Lemma \ref{A-min}.

\end{proof}

\begin{proposition}\label{T_min}

Candidates of $\smor$-minimal del Pezzo surfaces are the following:
{\small \rm
\begin{table}[htb]
\caption{Candidates of $\smor$-minimal del Pezzo surfaces}\label{T_min_table}
\renewcommand{\arraystretch}{1.4}
  \begin{tabular}{|c|c|c|c|c|c|c|c|c|}  \hline
    \multicolumn{1}{|l|}{No.} & \multicolumn{2}{c|}{$X_{min}$} & directed sequence
      & \multicolumn{1}{c|}{($n_3, n_4$)} & \multicolumn{1}{c|}{$(-K_{T_{min}})^2$} & $\rho (T_{min})$   \\ \hline \hline
    1 & $M_{13}$ & $\frac{1}{5}(1,2) \times 4 $ & $\tmor_1 \circ \tmor_1 \circ \tmor_1 \circ \tmor_1$ & (4,4) & $\frac{4}{3}$ & 6   \\ 
    2  & $M_8$ & $A_2 \times 3$ &  $\tmor_5 \circ \tmor_5 \circ \tmor_5$ & (6,0) & 2 & 4  \\
    3 & $M_6$ & $\frac{1}{4}$(1,1) & - & (0,1) & 9 & 1   \\
    4 & $M_5$ & $\frac{1}{3}$(1,1) & - & (1,0) & $\frac{25}{3}$  & 1   \\ 
    5 & $M_{12}$ & $\pr^2$ & - & (0,0) & 9 & 1  \\
    6 & $M_{19}$ & $\PP$ & - & (0,0) & 8 & 2  \\ \hline
  \end{tabular}
\end{table}
}

\end{proposition}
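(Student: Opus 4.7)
The plan is to classify $T_{min}$ by working backwards from its minimal directed $\tmor$-sequence $T_{min} = U_0 \to U_1 \to \cdots \to U_n = X_{min}$ (which exists by Lemma \ref{3rd_part}). By Theorems \ref{min_rank1} and \ref{min_rank2}, the terminal surface $X_{min}$ is one of the $19$ minimal surfaces of type $\tB$.

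Since $T_{min}$ is of type $\tA$ and no third morphism $\tmor_j$ in Table \ref{3rd_mor} produces a $\frac{1}{4}(1,1)$-singularity, every $\frac{1}{4}(1,1)$-singularity on $X_{min}$ must already be present on $T_{min}$. On the other hand, every singular point of $X_{min}$ of type $\frac{1}{5}(1,2), A_1, A_2,$ or $A_3$ must be produced by exactly one $\gamma_i$, whose type is fixed by the traced-back configuration on $T_{min}$; a $\frac{1}{3}(1,1)$-singularity of $X_{min}$ is either inherited from $T_{min}$ or produced by $\tmor_2$.

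I would then proceed by case analysis over the $19$ possibilities for $X_{min}$. For each candidate, I enumerate the compatible assignments of $\gamma_i$'s using Table \ref{3rd_mor}, compute $(n_3, n_4)$ and $(-K_{T_{min}})^2$ for the resulting candidate $T_{min}$, and apply two filters. The first filter is Lemma \ref{cont_to_sm}, which forbids $S_1$-lines, $S_2$-lines, and $S_3$-line pairs on every intermediate $U_i$; in particular, any singularity of $X_{min}$ forced to arise from $\tmor_7$, $\tmor_8$, or $\tmor_9$ induces such a forbidden configuration on some $U_i$, ruling out most $X_{min}$'s that contain an $A_1$. The second filter combines the numerical bounds of Corollary \ref{bound_vol}, Corollary \ref{rank2_ineq}, and Remark \ref{Noeth_RR}; for example, the hypothetical $T_{min}$ with $(n_3, n_4) = (3,6)$ that would map via three $\tmor_4$'s to $M_8$ is excluded by Theorem \ref{num_sing_pt_rank1} together with a Picard-number computation using Remark \ref{Noeth_RR}. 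After applying both filters only $X_{min} \in \{M_5, M_6, M_8, M_{12}, M_{13}, M_{19}\}$ survive, and the associated $T_{min}$ is exactly as listed in Table \ref{T_min_table}.

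The main obstacle is the sheer volume of case work: for each compatible pair consisting of $X_{min}$ and an assignment of $\gamma_i$'s, one must draw the composite dual graph on the minimal resolution of every intermediate $U_i$ and check the absence of the forbidden $S_i$-configurations. The subtlest situations are the rank-two candidates $M_{14}$ through $M_{18}$ and the rank-one $M_i$ containing several small singularities (notably $M_7, M_9, M_{10}, M_{11}$), whose eliminations require combining the forbidden-configuration argument with the numerical bounds; and choosing the correct $\tmor_j$ among competing options (e.g., $\tmor_4$, $\tmor_5$, or $\tmor_6$ for each $A_2$ in $M_8$) so that the resulting $T_{min}$ is realizable as a del Pezzo surface of type $\tA$.
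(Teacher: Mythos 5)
Your overall strategy---a backward case analysis over the $19$ minimal surfaces of type $\tB$, tracing the possible $\tmor$-sequences from Table \ref{3rd_mor} and discarding cases by forbidden configurations on intermediate surfaces together with numerical bounds---is the same as the paper's. However, your two filters do not suffice to reach the claimed list, and this is a genuine gap: you omit the third elimination mechanism the paper relies on, namely \emph{minimal directedness} of the $\tmor$-sequence (Definition \ref{dir_seq}). Concretely, the candidate $T$ with $T \overset{\tmor_3}{\to} U_1 \overset{\tmor_3}{\to} M_{14}$ has $(n_3,n_4)=(4,4)$ and $(-K_T)^2=\tfrac{4}{3}$, no intermediate surface carries an $A_1$ or $A_2$ point (so no $S_1$-line, $S_2$-line or $S_3$-line pair can appear), and all the numerical constraints of Corollary \ref{bound_vol} and Remark \ref{Noeth_RR} are satisfied; it survives both of your filters. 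It is eliminated only by observing that such a $T$ also admits the longer sequence $\tmor_1\circ\tmor_1\circ\tmor_1\circ\tmor_1$ down to $M_{13}$, which is smaller in the order $\prec$, so the sequence ending at $M_{14}$ is not minimal directed and the surface already occurs in row 1 of Table \ref{T_min_table}. The same mechanism is indispensable for $M_{16}$ (the sub-cases $\tmor_4\circ\tmor_4$ and $\tmor_5\circ\tmor_5$) and for $M_1$ when the first morphism is $\tmor_1$ followed by $\tmor_1\circ\tmor_3$. Without it your case analysis terminates with spurious extra rows.

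Two further inaccuracies. First, Lemma \ref{cont_to_sm} is not the only source of forbidden configurations: several cases are killed by exhibiting a $\smor_j$-line pair or a floating $(-1)$-curve on an intermediate surface, contradicting $\smor$-minimality via Lemma \ref{A-min}; for instance, for $M_{11}=\pr(1,1,2)$ the morphisms $\tmor_7$ and $\tmor_8$ produce floating $(-1)$-curves and $\tmor_9$ produces a $\smor_1$-line pair, not $S_i$-configurations, so your blanket claim about $\tmor_7,\tmor_8,\tmor_9$ is not accurate as stated. Second, your proposed exclusion of the candidate over $M_8$ with $(n_3,n_4)=(3,6)$ via Theorem \ref{num_sing_pt_rank1} and Remark \ref{Noeth_RR} fails: that theorem applies only to rank-one surfaces, whereas this candidate has $\rho = 1+6 = 7$, and the Noether relation $1+7+\tfrac{2}{3}\cdot 3=10$ is in fact satisfied. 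The correct exclusion, as for every sub-case of $M_8$ whose last morphism is $\tmor_4$ or $\tmor_6$, is the presence of an $S_2$-line on the surface directly above $M_8$, i.e.\ your first filter applied with the explicit configuration of quasi-lines joining the three $A_2$ points of $M_8$.
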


\begin{proof}

Let $T$ be a $\smor$-minimal del Pezzo surface.
In order to classify $\smor$-minimal surfaces, for each $X_{min}$ in Tables \ref{min_rank1_table} and \ref{min_rank2_table}, we check possible minimal directed $\tmor$-sequences $T := U_0 \to \cdots \to U_i \to \cdots \to U_n = X_{min}$, where each $U_i$ is a del Pezzo surface of type $\tB$.
For $0 \le i \le n$, let $\pi_i : Y_i \to U_i$ be the minimal resolution.
Then a sequence $Y_0 \to Y_1 \to \cdots \to Y_n =: Y_{min}$ is induced.
Note that for a minimal directed $\tmor$-sequence, we can change the order of the third morphisms since the centers of the third morphisms are disjoint.
In this proof, a sequence obtained by changing third morphisms in a minimal directed $\tmor$-sequence is also called a minimal directed $\tmor$-sequence. 

\noindent{\bf Case 1 :  $X_{min} = M_1 \ ( = \pr(3,4,5))$ } 

Note that $\sS(X_{min}) = \{ \frac{1}{5}(1,2), A_3, A_2 \}$.
Since $X_{min}$ has a singular point of type $\frac{1}{5}(1,2)$ and one of type $A_3$, we obtain a minimal directed $\tmor$-sequence $T \overset{\tmor_i}{\rightarrow} U_1 \overset{\tmor_1}{\rightarrow}  U_2 \overset{\tmor_3}{\rightarrow} X_{min}$, where $4 \le i \le 6$.
Then we can find the following configuration of negative curves on $Y_1$.
\begin{center}
 \  \xygraph{
	\bigcirc ([]!{+(.0,-.3)} {-4}) 
	- [r]	\bullet ([]!{+(.0,-.3)} {-1}) 
        - [r]	\square ([]!{+(.0,-.3)} {-3}) 
        - [r]	\bullet ([]!{+(.0,-.3)} {-1}) 
        - [r]	\triangle ([]!{+(.0,-.3)} {-2})
        - [r]	\triangle ([]!{+(.0,-.3)} {-2})
        - [r]	\bullet ([]!{+(.0,-.3)} {-1})
        - [r]	\square ([]!{+(.0,-.3)} {-3}) 
        - [r]	\bullet ([]!{+(.0,-.3)} {-1}) 
        - [r]	\bigcirc ([]!{+(.0,-.3)} {-4})  }
\end{center}
If $i=5$ or 6, then we can find a $\smor_4$-line pair on $T$.
Hence this case is impossible by Lemma \ref{A-min}.
If $i=4$, then we can obtain a sequence $T \overset{\tmor_1}{\to} U_1 \overset{\tmor_1}{\to} U_2 \overset{\tmor_1}{\to} U_3 \overset{\tmor_1}{\to} M_{13}$ by observing negative curves on $Y$ more carefully.
This contradicts the minimal directedness.

\noindent{\bf Case 2 :  $X_{min} = M_2 \ ( = \pr(1,3,5))$} 

Note that $\sS(X_{min}) = \{ \frac{1}{5}(1,2), A_2 \}$.
We know that $Y_{min}$ has the following negative curves.
\begin{center}
 \  \xygraph{
	\triangle ([]!{+(.0,-.3)} {-2}) 
        - [r]	\square ([]!{+(.0,-.3)} {-3}) 
        - [r]	\bullet ([]!{+(.0,-.3)} {-1}) 
        - [r]	\triangle ([]!{+(.0,-.3)} {-2})
        - [r]	\triangle ([]!{+(.0,-.3)} {-2}) }
\end{center}
Since $X_{min}$ has a singular point of type $\frac{1}{5}(1,2)$, we see that $T \overset{\tmor_i}{\rightarrow} U_1 \overset{\tmor_1}{\rightarrow} X_{min}$, where $i = 4, 5$ or 6.
Thus we can find an $S_2$-line on $U_1$.
Hence this case is impossible by Lemma \ref{cont_to_sm}.

\noindent{\bf Case 3 :  $X_{min} = M_3 \ ( = \pr(1,2,5))$} 

Note that $\sS(X_{min}) = \{ \frac{1}{5}(1,2), A_1 \}$.
We know that $Y_{min}$ has the following negative curves.
\begin{center}
 \  \xygraph{
	\square ([]!{+(.0,-.3)} {-3}) 
        - [r]	\triangle ([]!{+(.0,-.3)} {-2}) 
        - [r]	\bullet ([]!{+(.0,-.3)} {-1})
        - [r]	\triangle ([]!{+(.0,-.3)} {-2}) }
\end{center}
Since $X_{min}$ has a singular point of type $\frac{1}{5}(1,2)$, we have $T \overset{\tmor_i}{\rightarrow} U_1 \overset{\tmor_1}{\rightarrow} X_{min}$, where $i = 7, 8$ or 9.
Thus we can find an $S_1$-line on $U_1$.
Hence this case is impossible by Lemma \ref{cont_to_sm}.

\noindent{\bf Case 4 :  $X_{min} = M_4 \ ( = \pr(1,3,4))$} 

Note that $\sS(X_{min}) = \{ \frac{1}{3}(1,1), A_3 \}$.
We know that $Y_{min}$ has the following negative curves.
\begin{center}
 \  \xygraph{
	\square ([]!{+(.0,-.3)} {-3}) 
        - [r]	\bullet ([]!{+(.0,-.3)} {-1})
        - [r]	\triangle ([]!{+(.0,-.3)} {-2})
        - [r]	\triangle ([]!{+(.0,-.3)} {-2})
        - [r]	\triangle ([]!{+(.0,-.3)} {-2}) }
\end{center}
We can find $\smor_4$-line pair on $U_1$ where $U_1 \overset{\tmor_3}{\to} X_{min}$. 
Hence this case is impossible.

\noindent{\bf Case 5 :  $X_{min} = M_5 \  (= \pr(1,1,3))$} 

$\pr(1,1,3)$ is a $\smor$-minimal surface.
This case is No.4 in Table \ref{T_min_table}.
If the singular point of type $\frac{1}{3}(1,1)$ is produced by $\tmor_2$, that is, $T \overset{\tmor_2}{\to} \pr(1,1,3)$, then we can find a floating $(-1)$-curve on $T$.
This is a contradiction.

\noindent{\bf Case 6 :  $X_{min} = M_6 \ ( = \pr(1,1,4))$} 

Since any singular points of type $\frac{1}{4}(1,1)$ cannot be produced by extremal contractions in Table \ref{bir_ext_cont}, we see that $T = X_{min} = \pr(1,1,4)$ in this case.
This case is No.3 in Table \ref{T_min_table}.

\noindent{\bf Case 7 :  $X_{min} = M_7$} 

Note that $\sS(X_{min}) = \{ A_1, A_3, A_3 \}$.
We know that $Y_{min}$ has the following negative curves.
\begin{center}
 \  \xygraph{
	\triangle ([]!{+(.0,-.3)} {-2}) 
        - [r]	\triangle ([]!{+(.0,-.3)} {-2}) 
        - [r]	\triangle ([]!{+(.0,-.3)} {-2}) 
        - [r]	\bullet ([]!{+(.0,-.3)} {-1}) 
        - [r]	\triangle ([]!{+(.0,-.3)} {-2}) 
        - [r]	\bullet ([]!{+(.0,-.3)} {-1}) 
        - [r]	\triangle ([]!{+(.0,-.3)} {-2})
        - [r]	\triangle ([]!{+(.0,-.3)} {-2}) 
        - [r]	\triangle ([]!{+(.0,-.3)} {-2}) }
\end{center}
Since $X_{min}$ has two singular points of type $A_3$, we have $T \overset{\tmor_i}{\to} U_1 \overset{\tmor_3}{\to} U_2 \overset{\tmor_3}{\to} X_{min}$, where $7 \le i \le 9$.
Thus we can find an $S_3$-line pair on $U_2$.
Hence this case is impossible.

\noindent{\bf Case 8 :  $X_{min} = M_8$} 

Since $\sS(X_{min}) = \{ A_2, A_2, A_2 \}$, we have a sequence $T \overset{\tmor_i}{\to} U_1 \overset{\tmor_j}{\to} U_2 \overset{\tmor_k}{\to} X_{min}$, where $4 \le i, j, k \le 6$.
If $k = 4$ or 6, then we can find an $S_2$-line on $U_2$.
Thus we may assume $i = j = k = 5$. 
This case is No.2 in Table \ref{T_min_table}.

\noindent{\bf Case 9 :  $X_{min} = M_9$} 

Note that $\sS(X_{min}) = \{ A_1, A_1, A_3 \}$.
We know that $Y_{min}$ has the following negative curves.
\begin{center}
 \  \xygraph{
	\triangle ([]!{+(.0,-.3)} {-2}) 
        - [r]	\bullet ([]!{+(.0,-.3)} {-1})
        - [r]	\triangle ([]!{+(.0,-.3)} {-2})
        - [r]	\triangle ([]!{+(.0,-.3)} {-2})
        - [r]	\triangle ([]!{+(.0,-.3)} {-2}) 
        - [r]	\bullet ([]!{+(.0,-.3)} {-1}) 
        - [r]	\triangle ([]!{+(.0,-.3)} {-2})   }
\end{center}
Since $X_{min}$ has a singular point of type $A_3$, we have $T \overset{\tmor_i}{\to} U_1 \overset{\tmor_j}{\to} U_2 \overset{\tmor_3}{\to} X_{min}$, where $7 \le i, j \le 9$.
Thus we can find an $S_1$-line on $U_2$.
Hence this case is impossible.

\noindent{\bf Case 10 :  $X_{min} = M_{10} \ ( = \pr(1,2,3))$} 

Since $\sS(X_{min}) = \{ A_1, A_2 \}$, we have a sequence $T \overset{\tmor_i}{\to} U_1 \overset{\tmor_j}{\to} X_{min}$, where we can assume that $7 \le i \le 9$ and $4 \le j \le 6$.
We know that $Y_{min}$ has the following negative curves.
\begin{center}
 \ \  \xygraph{
	\triangle ([]!{+(.0,-.3)} {-2}) 
	- [r]	\bullet ([]!{+(.0,-.3)} {-1}) 
        - [r]	\triangle ([]!{+(.0,-.3)} {-2}) 
        - [r]	\triangle ([]!{+(.0,-.3)} {-2}) }
\end{center}
If $j = 4$, then we can find an $S_3$-line pair on $U_1$. 
If $j = 5$, then we can find an $S_1$-line on $U_1$. 
If $j = 6$, we see that there are two candidates of $U_1 \overset{\tmor_6}{\to} X_{min}$ by above configuration. 
Then we can find $S_1$-line or $S_3$-line pair on $U_1$ for these two cases.
Hence this case is impossible.

\noindent{\bf Case 11 :  $X_{min} = M_{11} \ ( = \pr(1,1,2))$} 

Since $\sS(X_{min}) = \{ A_1 \}$, we have a sequence $T \overset{\tmor_i}{\to} X_{min}$, where $7 \le i \le 9$.
If $i = 7$ or 8, then we can find a floating $(-1)$-curve on $T$.
If $i = 9$, then we can find a $\smor_1$-line pair on $T$.
Hence this case is impossible.

\noindent{\bf Case 12 :  $X_{min} = M_{12} \ ( = \pr^2)$}

Since $X_{min} = \pr^2$ is nonsingular and minimal, we see that $T_{min} = X_{min}$.  
This case is No.5 in Table \ref{T_min_table}.

\noindent{\bf Case 13 :  $X_{min} = M_{13}$} 

Since $\sS(X_{min}) = \{ \frac{1}{5}(1,2), \frac{1}{5}(1,2), \frac{1}{5}(1,2), \frac{1}{5}(1,2) \}$, there is only one possibility of a $\tmor$-sequence, $T \overset{\tmor_1}{\to} U_1 \overset{\tmor_1}{\to} U_2 \overset{\tmor_1}{\to} U_3 \overset{\tmor_1}{\to} X_{min}$.
This case is No.1 in Table \ref{T_min_table}.

\noindent{\bf Case 14 :  $X_{min} = M_{14}$} 

Since $\sS(X_{min}) = \{ \frac{1}{4}(1,1), \frac{1}{4}(1,1), A_3, A_3 \}$, there is only one possibility of a $\tmor$-sequence, $T \overset{\tmor_3}{\to} U_1 \overset{\tmor_3}{\to} X_{min}$.
Then we can find a sequence $T \overset{\tmor_1}{\to} U_1 \overset{\tmor_1}{\to} U_2 \overset{\tmor_1}{\to} U_3 \overset{\tmor_1}{\to} M_{13}$.
This is a contradiction to the minimal directedness.
Hence this case is impossible.

\noindent{\bf Case 15 :  $X_{min} = M_{15}$} 

Since $\sS(X_{min}) = \{ \frac{1}{4}(1,1), A_3, A_3 \}$, there is only one possibility of a $\tmor$-sequence, $T \overset{\tmor_3}{\to} U_1 \overset{\tmor_3}{\to} X_{min}$.
We know that $Y_{min}$ has the following negative curves.
\begin{center}
\ \ \xygraph{
    \bigcirc ([]!{+(0,-.4)} {-4}) - [r]
    \bullet ([]!{+(0,-.4)} {-1}) - [r]
    \triangle ([]!{+(0,-.4)} {-2}) - [r]
    \triangle ([]!{+(.4,-.3)} {-2}) (
        - [d] \bullet ([]!{+(.5,0)} {-1}),
        - [r] \triangle ([]!{+(.2,-.4)} {-2}) }
\end{center}
Then we can find a $\smor_7$-line pair on $U_1$.
Hence this case is impossible.

\noindent{\bf Case 16 :  $X_{min} = M_{16}$} 

Note that $\sS(X_{min}) = \{ \frac{1}{3}(1,1), \frac{1}{3}(1,1), A_2, A_2 \}$.
We know that $Y_{min}$ has the following negative curves.
\begin{center}
 \ \  \xygraph{
	\square ([]!{+(.0,-.3)} {-3}) 
	- [r]	\bullet ([]!{+(.0,-.3)} {-1}) 
        - [r]	\triangle ([]!{+(.0,-.3)} {-2}) 
        - [r]	\triangle ([]!{+(.0,-.3)} {-2})
	- [r]	\bullet ([]!{+(.0,-.3)} {-1})
	- [r]	\square ([]!{+(.0,-.3)} {-3}) }
\end{center}
If at least one of the singular points of type $\frac{1}{3}(1,1)$ is produced by the third morphism of type $\tmor_2$, then we can find an $S_2$-line on $U_1$, where $U_1 \overset{\tmor_2}{\to} X_{min}$.
Hence we may assume a minimal directed $\tmor$-sequence of $T$ is $T \overset{\tmor_i}{\to} U_1 \overset{\tmor_j}{\to} X_{min}$, where $4 \le i, j \le 6$.
If $j=6$, then we can find a $\smor_4$-line pair on $U_1$.
Hence we may assume that $4 \le i,j \le 5$.  
If $i=j=4$, then we can find a sequence $T \overset{\tmor_1}{\to} U_1 \overset{\tmor_1}{\to} U_2 \overset{\tmor_1}{\to} U_3 \overset{\tmor_1}{\to} M_{13}$.
This is a contradiction to the minimal directedness.
If $i = 4$ and $j = 5$, then we can find a $\smor_4$-line pair on $T$.
These cases are impossible.
If $i = j = 5$, then we can find a sequence $T \overset{\tmor_3}{\to} U_1 \overset{\tmor_3}{\to} U_2 \overset{\tmor_3}{\to} M_8$.
This is also a contradiction to the minimal directedness.

\noindent{\bf Case 17 :  $X_{min} = M_{17}$} 

Note that $\sS(X_{min}) = \{ A_3, A_3 \}$. 
We know that $Y_{min}$ has the following negative curves:
\begin{center}
 \ \  \xygraph{
	\triangle ([]!{+(.0,-.3)} {-2}) 
        - [r]	\triangle ([]!{+(.0,-.3)} {-2}) 
        - [r]	\triangle ([]!{+(.0,-.3)} {-2})
	- [r]	\bullet ([]!{+(.0,-.3)} {-1}) 
        - [r]	\triangle ([]!{+(.0,-.3)} {-2}) 
        - [r]	\triangle ([]!{+(.0,-.3)} {-2})
        - [r]	\triangle ([]!{+(.0,-.3)} {-2})  }
\end{center}
Thus there is one possibility of a $\tmor$-sequence, $T \overset{\tmor_3}{\to} U_1 \overset{\tmor_3}{\to} X_{min}$.
Then we can find a $\smor_4$-line pair on $T$.
Hence this case is impossible.

\noindent{\bf Case 18 :  $X_{min} = M_{18}$} 

Note that $\sS(X_{min}) = \{ A_1, A_1, A_1, A_1 \}$. 
We know that $Y_{min}$ has the following negative curves:
\begin{center}
 \ \  \xygraph{
	\triangle ([]!{+(.0,-.3)} {-2}) 
	- [r]	\bullet ([]!{+(.0,-.3)} {-1}) 
        - [r]	\triangle ([]!{+(.0,-.3)} {-2}) 
	- [r]	\bullet ([]!{+(.0,-.3)} {-1}) 
        - [r]	\triangle ([]!{+(.0,-.3)} {-2})
	- [r]	\bullet ([]!{+(.0,-.3)} {-1}) 
        - [r]	\triangle ([]!{+(.0,-.3)} {-2})  }
\end{center}
Thus we have a sequence $T \overset{\tmor_i}{\to} U_1 \overset{\tmor_j}{\to} U_2 \overset{\tmor_k}{\to}  U_3 \overset{\tmor_l}{\to} X_{min}$, where $7 \le i, j, k, l \le 9$.
If $l = 7$, then we can find an $S_1$-line on $U_3$.
Thus we may assume that $8 \le i,j,k,l \le 9$.
We, however, see that $(-K_T)^2 \le 0$.
This contradicts the fact that $T$ is a del Pezzo surface. 

\noindent{\bf Case 19 :  $X_{min} = M_{19}$} 

Since $X_{min} = \PP$ is nonsingular and minimal, we see that $T_{min} = X_{min}$.  
This case is No.6 in Table \ref{T_min_table}. 

\end{proof}

\begin{remark}

From now on we write $M_5, M_6, M_{12}$ and $M_{19}$ for $\pr(1,1,3)$, $\pr(1,1,4)$, $\pr^2$ and $\PP$.

\end{remark}

\subsection{Candidates of $T_m$}

Let $T_m$ be a del Pezzo surface such that the length of its minimal directed $\smor$-sequence is $m$.
We determine candidates of a surface $T_m$.





We prepare some lemmas.
Let $\varphi : U \to U_1$ be a second morphism.
We use the same notation as in Corollary \ref{config}.

\begin{definition}

Let $X$ be a del Pezzo surface of type $\tA$ and $P \in X$ a smooth point.
If there is a quasi-0-curve passing through $P$ and a singular point of type $\frac{1}{4}(1,1)$, then we say that $P$ satisfies a condition ({\bf P}). 

\end{definition}

\begin{lemma}\label{quasi-0}

Let $U$ be a del Pezzo surface of type $\tA$ and $\varphi : U \to U_1$ a second morphism.
Let $P$ be the image of $\varphi$-exceptional curve.
If $P$ satisfies a condition {\rm ({\bf P})}, then $\varphi$ is of type $\smor_1$, $\smor_3$, $\smor_6$ or $\smor_8$.

\end{lemma}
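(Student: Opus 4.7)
The plan is to pull back the quasi-$0$-curve from $U_1$ to the minimal resolution $Y$ of $U$ and, using the $T$-line obstruction of Lemma \ref{Fuku}, to force the exceptional $E_1$ of the first blow-up $\sigma_1$ of $g$ over $P$ to be a $(-3)$-curve rather than a $(-4)$-curve; consulting Corollary \ref{config} then singles out the four permitted types.

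First I would let $\ell$ be the quasi-$0$-curve on $U_1$ through $P$ and a $\frac{1}{4}(1,1)$-singularity $Q$ provided by condition (\textbf{P}), and note that because the $\varphi$-exceptional locus lies over $P$, the point $Q$ remains a $\frac{1}{4}(1,1)$-singularity on $U$, distinct from any singularity produced by $\varphi$. With $\pi \colon Y \to U$, $\pi_1 \colon Y_1 \to U_1$ the minimal resolutions and $g = \sigma_N \circ \cdots \circ \sigma_1 \colon Y \to Y_1$ as in Corollary \ref{config}, the centre of the first blow-up $\sigma_1$ is the smooth point $P_{Y_1} := \pi_1^{-1}(P)$, so the lift $\ell_{Y_1}$ of $\ell$ is a $0$-curve with multiplicity $m_1 \ge 1$ at $P_{Y_1}$. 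A standard blow-up calculation gives
\[
\ell_Y^2 \;=\; -\sum_i m_i^2 \;\le\; -1,
\]
where $m_2, m_3, \ldots$ are the multiplicities of the successive strict transforms of $\ell$ at the remaining infinitely-near centres of $\sigma_2, \ldots, \sigma_N$.

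Next I would observe that $\ell_U := \pi(\ell_Y)$ is an irreducible curve on the del Pezzo surface $U$, so Lemma \ref{no_neg} forces $\ell_Y^2 \ge -1$. Combined with the previous inequality this gives $\ell_Y^2 = -1$, $m_1 = 1$, and $m_i = 0$ for every $i \ge 2$; hence $\ell_Y$ is a $(-1)$-curve meeting only the exceptional $E_1$ of $\sigma_1$, transversally and at a single point. Equivalently, $\ell_U$ is a quasi-line on $U$ passing through $Q$ and through the singular point $\pi(E_1)$. Inspecting the eight diagrams of Corollary \ref{config}, $E_1$ is a $(-3)$-curve in the types $\smor_1, \smor_3, \smor_6, \smor_8$ and a $(-4)$-curve in the types $\smor_2, \smor_4, \smor_5, \smor_7$, so $\pi(E_1)$ is respectively a $\frac{1}{3}(1,1)$- or a $\frac{1}{4}(1,1)$-singularity. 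In the $(-4)$ case, $\ell_U$ would be a quasi-line through the two distinct $\frac{1}{4}(1,1)$-singularities $\pi(E_1)$ and $Q$---that is, a $T$-line---contradicting Lemma \ref{Fuku}. Therefore $E_1$ is a $(-3)$-curve and $\varphi$ is of type $\smor_1$, $\smor_3$, $\smor_6$, or $\smor_8$.

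The main obstacle will be the combinatorial bookkeeping that $\ell_Y$ really meets only $E_1$ among the $g$-exceptionals, and that in each of the eight dual graphs of Corollary \ref{config} the labelled curve $E_1$ is indeed the exceptional of the first blow-up $\sigma_1$ at the smooth centre $P_{Y_1}$. Once these identifications are pinned down, the $T$-line prohibition makes the rest immediate.
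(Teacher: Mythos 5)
Your proof is correct and follows essentially the same route as the paper: both pull the quasi-$0$-curve back to the minimal resolution, use Lemma \ref{no_neg} to conclude its strict transform is a $(-1)$-curve meeting only $E_1$, and then invoke the $T$-line prohibition of Lemma \ref{Fuku} to rule out the types where $E_1$ is a $(-4)$-curve. The only difference is that you spell out the multiplicity computation justifying $C_Y^2=-1$ and $C_Y\cdot E_1=1$, which the paper asserts without detail.
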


\begin{proof}

Denote by $C$ the quasi-0-curve passing through $P$ and a singular point of type $\frac{1}{4}(1,1)$.
We write $C_U$ for the strict transform of $C$ by $\varphi$ and $C_Y$ for the strict transform of $C_U$ by $\pi$.
Then $C_Y$ is a $(-1)$-curve and $C_Y \cdot E_1 = 1$, where $E_1$ is the one in Corollary \ref{config}.
If $\varphi$ is of type $\smor_2$, $\smor_4$, $\smor_5$ or $\smor_7$, then $E_1$ is a $(-4)$-curve.
Thus $C_U$ is a $T$-line.
This is a contradiction.

\end{proof}

\begin{lemma}\label{P^2}

Let $X$ be a del Pezzo surface of type $\tA$ with no floating $(-1)$-curves and $\varphi : X \to X_1$ a second morphism.
If $X_1 \cong \pr^2$, then $\varphi$ is of type $\smor_3$, $\smor_4$, $\smor_6$, $\smor_7$ or $\smor_8$.

\end{lemma}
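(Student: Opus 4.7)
The plan is to rule out the three types $\smor_1$, $\smor_2$ and $\smor_5$, which leaves precisely the five types $\smor_3, \smor_4, \smor_6, \smor_7, \smor_8$ in the statement. In each ruled-out case I will exhibit a line $L \subset \pr^2$ through the center $P$ of $\varphi$ whose strict transform on $X$ is a floating $(-1)$-curve, contradicting the hypothesis on $X$.

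Since $X_1 = \pr^2$ is smooth, its minimal resolution is itself, so by Corollary \ref{config} the minimal resolution $\pi : Y \to X$ sits in a factorization $g = \sigma_N \circ \cdots \circ \sigma_1 : Y \to \pr^2$, where $\sigma_1$ is the blow-up at $P$ and its exceptional $E_1^{(1)}$ parametrizes tangent directions at $P$. Inspection of the dual graphs in Corollary \ref{config} reveals the following dichotomy: precisely for $\smor_1$, $\smor_2$, $\smor_5$ does the graph contain a $(-1)$-curve $E_k$ whose only neighbour of self-intersection $\le -3$ is $E_1$ (for $\smor_1$ and $\smor_2$ every $(-1)$-curve qualifies; for $\smor_5$ it is the curve $E_i$ at the $E_1$-end of the chain). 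In each of the remaining five types, every $(-1)$-curve of the graph either meets two distinct singular exceptionals or meets a singular exceptional different from $E_1$.

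Fix such an $E_k$. Since $E_k$ is adjacent to $E_1$ and only to $E_1$ (among singular exceptionals), the center of $\sigma_k$ traces back through $\sigma_{k-1}, \ldots, \sigma_2$ to a well-defined tangent direction $Q_k \in E_1^{(1)}$ at $P$. Take $L \subset \pr^2$ to be the line through $P$ with tangent $Q_k$. Tracing: $L^2 = 1$ on $\pr^2$; after $\sigma_1$, $L_{Y_1}$ is a $0$-curve transverse to $E_1^{(1)}$ at $Q_k$; for each $j < k$ the center of $\sigma_j$ is either off $E_1^{(j-1)}$ or on $E_1^{(j-1)}$ at a tangent direction $Q_j \neq Q_k$, so $L$'s strict transform is unchanged; at the step $\sigma_k$, $L$'s strict transform becomes a $(-1)$-curve meeting $E_k$ transversally (and disjoint from $E_1^{(k)}$ by transversality at $Q_k$); for each $\ell > k$, the center of $\sigma_\ell$ lies on some $E_j^{(\ell-1)}$ with $j \neq k$ (since $E_k$ has final self-intersection $-1$) and is therefore disjoint from $L$'s strict transform. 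Hence $L_Y$ is a $(-1)$-curve on $Y$ meeting $\Exc(g)$ only along $E_k$. Since $E_k$ is not contracted by $\pi$ and $L_Y$ meets no singular exceptional, $L_X := \pi(L_Y)$ lies in the smooth locus of $X$ and has self-intersection $-1$. Thus $L_X$ is a floating $(-1)$-curve, contradicting the hypothesis.

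The main delicate step is the transversality claim in case $\smor_5$: the realization of that dual graph involves a blow-up at an intersection $E_1^{(\ast)} \cap E_2^{(\ast)}$ which could a priori sit on $L_Y$. The key point is that this intersection pulls back to the tangent direction $Q_2 \in E_1^{(1)}$ associated with the creation of $E_2$, and $Q_2 \neq Q_k$; combined with the transversality of $L$ to $E_1^{(1)}$ at $Q_k$, this ensures the intersection lies off $L_Y$. For $\smor_1$ and $\smor_2$ no intersection blow-ups occur in the construction, so this check is vacuous, and the three cases are handled uniformly.
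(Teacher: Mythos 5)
Your proposal is correct and follows the same strategy as the paper's (one-line) proof: rule out $\smor_1$, $\smor_2$ and $\smor_5$ by exhibiting a floating $(-1)$-curve on $X$, which the paper merely asserts ``can be confirmed.'' Your explicit construction --- the strict transform of a line through the center of $\varphi$ tangent to the direction of a $(-1)$-curve attached only to $E_1$ in the configurations of Corollary \ref{config} --- correctly supplies the missing details, including the transversality check needed to avoid the blow-up at $E_1 \cap E_2$ in the $\smor_5$ case.
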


\begin{proof}

If $\varphi$ is of type $\smor_1, \smor_2$ or $\smor_5$, then we can confirm that $X$ has a floating $(-1)$-curve.
Thus $\varphi$ is of type $\smor_3$, $\smor_4$, $\smor_6$, $\smor_7$ or $\smor_8$.

\end{proof}

\begin{lemma}\label{P(1,1,4)_P1}

If $X$ is $\pr(1,1,4)$, any point of $X \backslash \Sing X$ satisfies {\rm ({\bf P})}. 

\end{lemma}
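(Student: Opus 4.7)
The plan is to use the explicit minimal resolution of $\pr(1,1,4)$, which is the Hirzebruch surface $\mathbb{F}_4$. Write $\pi \colon \mathbb{F}_4 \to \pr(1,1,4)$ for this minimal resolution; the exceptional locus is the negative section $\sigma$, which is a $(-4)$-curve contracted to the unique singular point $Q$ of type $\frac{1}{4}(1,1)$. Recall that the ruling $\mathbb{F}_4 \to \pr^1$ has every fiber $l$ a smooth rational $(0)$-curve meeting $\sigma$ transversally in exactly one point.

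First I would take an arbitrary smooth point $P \in \pr(1,1,4) \setminus \{Q\}$. Since $\pi$ is an isomorphism away from $\sigma$, there exists a unique preimage $\tilde P \in \mathbb{F}_4 \setminus \sigma$ with $\pi(\tilde P) = P$. Next, since the ruling is a $\pr^1$-bundle, there is a unique fiber $l$ of the ruling passing through $\tilde P$; this $l$ is a $(0)$-curve and meets $\sigma$ in a single point.

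Finally, set $C := \pi(l) \subset \pr(1,1,4)$. Because $l$ is not contained in $\sigma$, the strict transform $C_{\mathbb{F}_4}$ coincides with $l$, and it is a smooth rational $(0)$-curve. Hence by Definition \ref{neg-curve}, $C$ is a quasi-$0$-curve. By construction $P = \pi(\tilde P) \in C$, and since $l$ meets $\sigma$, the image $C$ passes through $Q$, which is a singular point of type $\frac{1}{4}(1,1)$. This verifies condition ({\bf P}) for $P$.

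I do not expect any real obstacle: the statement is essentially a structural observation about the ruling on $\mathbb{F}_4$, and no delicate argument is required once the minimal resolution is identified.
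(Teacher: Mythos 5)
Your proof is correct and is exactly the argument the paper intends: the paper's own proof is a one-line observation that the minimal resolution is $\mathbb{F}_4$, and you have simply spelled out the same reasoning (the ruling of $\mathbb{F}_4$ provides, through any point off the negative section, a fiber that is a $0$-curve meeting $\sigma$, whose image is the required quasi-$0$-curve through the $\frac{1}{4}(1,1)$ point).
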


\begin{proof}

Let $Y \to X$ be the minimal resolution.
Since $Y \cong \mathbb{F}_4$, we see that any point of $X \backslash \Sing X$ satisfies {\rm ({\bf P})}.

\end{proof}

\subsubsection{Candidates of $T_1$}

We will determine candidates of $T_1$.
Then $T_1$ has a minimal directed $\smor$-sequence $T_1 \overset{\varphi}{\to} T_{min}$.

\begin{lemma}\label{T_1_dir}

If $\varphi$ is of type $\smor_i$, then $T_1$ has no $\smor_j$-line pair where $j < i$.

\end{lemma}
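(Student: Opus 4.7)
The plan is to argue by contradiction, producing from a hypothetical $\smor_j$-line pair with $j<i$ a competing $\smor$-sequence whose image under $N_{\smor}$ is strictly smaller than the image $(i)$ of the given minimal directed $\smor$-sequence, thereby violating its minimality.

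First I would observe that a $\smor_j$-line pair on $T_1$ can be contracted by a second morphism $\varphi' : T_1 \to T_1'$ of type $\smor_j$: the exceptional configuration listed in Corollary \ref{config} is precisely that of a $\smor_j$-line pair, and the constituent extremal contractions in Table \ref{2nd_mor} may be performed in succession because each one gives rise to a del Pezzo surface of type $\tB$ by Lemma \ref{keep_ldP}. By Lemma \ref{keep_ldP} and Corollary \ref{float}, the target $T_1'$ is again a del Pezzo surface of type $\tA$ with no floating $(-1)$-curves. Hence, by Lemma \ref{2nd_part}, we may extend $\varphi'$ to a $\smor$-sequence
\[
T_1 \overset{\smor_j}{\longrightarrow} T_1' \overset{\smor_{c_2}}{\longrightarrow} \cdots \overset{\smor_{c_m}}{\longrightarrow} T_{min}',
\]
terminating at some $\smor$-minimal del Pezzo surface $T_{min}'$. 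Since the centers of the second morphisms in this sequence are pairwise disjoint smooth points (Lemma \ref{sm_on_qline}), the composition can be reordered so that the indices are non-decreasing; this yields an element $(a_1,\ldots,a_m) \in \mathrm{Im}\, N_{\smor}$ with $a_1 = \min\{j,c_2,\ldots,c_m\} \le j$.

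I then compare $(a_1,\ldots,a_m)$ with the image $(i)$ of the given minimal directed $\smor$-sequence $T_1 \overset{\smor_i}{\to} T_{min}$. If $m \ge 2$, then $(a_1,\ldots,a_m) \prec (i)$ by clause (1) of the order on $D_{\smor}$ (longer sequences are smaller). If $m = 1$, then the reordering is trivial, $a_1 = j < i$, and $(a_1) \prec (i)$ by clause (2) (lexicographic order). Either way, this contradicts the minimality of $(i) = N_{\smor}(T_1 \to T_{min})$, and the lemma follows.

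The only delicate point in this plan is justifying that a $\smor_j$-line pair in $T_1$ really does give rise to a composition of extremal contractions of type $\smor_j$; I expect this to be immediate from Corollary \ref{config} together with the inductive argument in Proposition \ref{keep_B} ensuring that each intermediate surface remains of type $\tB$ and hence that each requisite extremal contraction in Table \ref{bir_ext_cont} is available. Once this is in hand, the order-theoretic comparison is routine.
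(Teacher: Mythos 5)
Your proposal is correct and follows essentially the same route as the paper: assume a $\smor_j$-line pair exists, contract it to start a competing $\smor$-sequence whose $N_{\smor}$-image is strictly smaller than $(i)$ (either because it is longer or because its first index is $\le j < i$), contradicting minimal directedness. The paper's proof is just a terser version of this same argument.
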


\begin{proof}

Assume that $T_1$ has such a $\smor_j$-line pair.
Then there is a minimal directed $\smor$-sequence $T_1 \overset{\smor_{k_1}}{\to} X_1 \overset{\smor_{k_2}}{\to} \cdots \overset{\smor_{k_l}}{\to} X_l$, where $k_1 \le j < i$ and $X_l$ is $\smor$-minimal.
This contradicts the fact that $T_1 \overset{\smor_i}{\to} T_{min}$ is a minimal directed $\smor$-sequence.

\end{proof}

\begin{proposition}\label{T_1}

Candidates of $T_1$ are the following:

{\small \rm
\begin{table}[htb]
\caption{Candidates of $T_1$}\label{T_1_table}
\renewcommand{\arraystretch}{1.4}
  \begin{tabular}{|c|c|c|c|c|c|c|c|}  \hline
    \multicolumn{1}{|l|}{No.} & \multicolumn{1}{c|}{$T_{min}$} & directed seq.
      & \multicolumn{1}{c|}{($n_3, n_4$)} & \multicolumn{1}{c|}{$(-K_{T_1})^2$} & $\rho (T_1)$ &  in Table \ref{main_table}   \\ \hline \hline
    1 & $\pr(1,1,4)$ & $\smor_8$ & (2,2) & $\frac{14}{3}$ & 4 & No.15   \\  
    2 & $\pr(1,1,3)$ & $\smor_7$ & (2,2) & $\frac{14}{3}$  & 4 & No.16  \\ \hline
    3 & $\pr(1,1,3)$ & $\smor_4$ & (3,1) & 5  & 3 & No.18   \\  \hline
    4 & $\pr(1,1,4)$ & $\smor_6$ & (1,2) & $\frac{16}{3}$ & 4 & No.22  \\  \hline
    5 & $\pr(1,1,4)$ & $\smor_3$ & (2,1) & $\frac{17}{3}$ & 3 & No.23  \\
    6 & $\pr(1,1,3)$ & $\smor_5$ & (2,1) & $\frac{14}{3}$  & 4 & No.24  \\ 
    7 & $\PP$ & $\smor_4$ & (2,1) & $\frac{14}{3}$ & 4 & No.25  \\  \hline
    8 & $\pr(1,1,3)$ & $\smor_3$ & (3,0) & 5  & 3 & No.27   \\  
  \end{tabular}
\end{table}
}

{\small \rm
\begin{table}[htb]
\renewcommand{\arraystretch}{1.4}
  \begin{tabular}{|c|c|c|c|c|c|c|c|} 
    \multicolumn{1}{|l|}{No.} & \multicolumn{1}{c|}{$T_{min}$} & directed seq.
      & \multicolumn{1}{c|}{($n_3, n_4$)} & \multicolumn{1}{c|}{$(-K_{T_1})^2$} & $\rho (T_1)$ &  in Table \ref{main_table}   \\ \hline \hline
    9 & $\pr(1,1,4)$ & $\smor_1$ & (1,1) & $\frac{19}{3}$ & 3  & No.29 \\
    10 & $\pr(1,1,3)$ & $\smor_2$ & (1,1) & $\frac{16}{3}$  & 4 & No.30  \\ 
    11 & $\pr(1,1,3)$ & $\smor_1$ & (2,0) & $\frac{17}{3}$  & 3 &  No.32  \\  \hline
    12 & $\PP$ & $\smor_2$ & (0,1) & 5 & 5 & No.35  \\   \hline
    13 & $\PP$ & $\smor_1$ & (1,0) & $\frac{16}{3}$ & 4 &  No.37 \\  \hline
  \end{tabular}
\end{table}
}

\end{proposition}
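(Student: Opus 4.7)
The plan is to enumerate, for each of the six candidates of $T_{min}$ in Table~\ref{T_min_table}, all possible second morphisms $\varphi\colon T_1\to T_{min}$ and check which of these yield a del Pezzo surface $T_1$ of type $\tA$ realizing a minimal directed $\smor$-sequence of length one.

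First, the two candidates for which $X_{min}\in\{M_{13},M_8\}$ (Nos.~1 and 2 of Table~\ref{T_min_table}) are ruled out by a numerical argument: their anti-canonical volumes $4/3$ and $2$ are strictly smaller than the minimal value $d_{T/T_1}=8/3$ attained by any second morphism in Table~\ref{2nd_mor}, so $(-K_{T_1})^2=(-K_{T_{min}})^2-d_{T/T_1}\le 0$, contradicting ampleness of $-K_{T_1}$. Hence $T_{min}\in\{\pr(1,1,4),\pr(1,1,3),\pr^2,\PP\}$.

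For each of these four remaining bases and each type $\smor_i$ with $i=1,\dots,8$, I would read off $(n_3,n_4)$, $(-K_{T_1})^2$ and $\rho(T_1)=\rho(T_{min})+N_i$ (with $N_i$ the number of exceptional components in Corollary~\ref{config}) directly from Tables~\ref{2nd_mor} and~\ref{T_min_table}. The key restrictions are then applied as follows. When $T_{min}\cong\pr(1,1,4)$, Lemma~\ref{P(1,1,4)_P1} ensures every smooth point satisfies condition~({\bf P}), so Lemma~\ref{quasi-0} forces $\varphi\in\{\smor_1,\smor_3,\smor_6,\smor_8\}$, yielding exactly the four listed entries with base $\pr(1,1,4)$. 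When $T_{min}\cong\pr^2$, Lemma~\ref{P^2} restricts $\varphi$ to $\{\smor_3,\smor_4,\smor_6,\smor_7,\smor_8\}$, after which each of these five types must be shown to produce a $T_1$ containing a $\smor_j$-line pair with $j<i$, which by Lemma~\ref{T_1_dir} contradicts minimal directedness and eliminates the entire $\pr^2$ column. For $T_{min}\cong\pr(1,1,3)$ and $\PP$, an analogous blow-up analysis of the minimal resolution $Y_1$ of $T_1$, viewed as an iterated blow-up of $\mathbb{F}_3$ or $\mathbb{F}_0$ realizing the dual graph of Corollary~\ref{config}, is required; whenever such a construction forces a smaller-labeled $\smor_j$-line pair the case is excluded, and the surviving configurations correspond to the listed entries.

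The hardest part will be the explicit configuration analysis for the bases $\pr(1,1,3)$, $\pr^2$ and $\PP$, where the property~({\bf P})-based lemmas do not immediately suffice. In each such case one must reconstruct $Y_1$ over the Hirzebruch base, locate the image of $\Exc\varphi$ on $\mathbb{F}_n$, perform the prescribed blow-ups to realize the dual graph of Corollary~\ref{config}, and then systematically search $Y_1$ for negative-curve configurations matching the dual graph of a $\smor_j$-line pair with $j<i$. Carrying out this case-by-case enumeration and cross-referencing with the invariants $(n_3,n_4,(-K_{T_1})^2,\rho(T_1))$ then reproduces exactly the 13 surfaces of Table~\ref{T_1_table}.
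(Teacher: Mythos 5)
Your proposal follows the paper's own proof essentially step for step: the numerical elimination of the $T_{min}$'s coming from $M_{13}$ and $M_8$ via $(-K_{T_{min}})^2 < 8/3 = \min_i d_{T/T_1}$, the use of Lemmas \ref{quasi-0} and \ref{P(1,1,4)_P1} to restrict to $\smor_1,\smor_3,\smor_6,\smor_8$ over $\pr(1,1,4)$, Lemma \ref{P^2} over $\pr^2$, and Lemma \ref{T_1_dir} to discard any type $\smor_i$ whose exceptional configuration forces a $\smor_j$-line pair with $j<i$ (which is exactly how the paper kills all of the $\pr^2$ column and the types $\smor_6,\smor_8$ over $\pr(1,1,3)$ and $\smor_3,\smor_5,\smor_6,\smor_7,\smor_8$ over $\PP$). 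The only remaining work, the explicit location of those smaller-labeled line pairs on the resolutions over $\mathbb{F}_n$, is the same case-by-case check the paper performs, so the approach is correct and not genuinely different.
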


\begin{proof}

By Proposition \ref{T_min}, the candidates of $T_{min}$ are six cases.
Observing their anti-canonical volumes, we see that $T_{min}$ is one of $\pr(1,1,4)$, $\pr(1,1,3)$, $\pr^2$ and $\PP$.

\noindent{\bf Case 1 :  $T_{min} = \pr(1,1,4)$} 

By Lemmas \ref{quasi-0} and \ref{P(1,1,4)_P1}, we see that $\varphi$ is of type $\smor_1$, $\smor_3$, $\smor_6$ or $\smor_8$.
Thus cadidates are the following:
{\small \rm
\begin{table}[htb]
\renewcommand{\arraystretch}{1.4}
  \begin{tabular}{|c|c|c|c|c|c|}  \hline
      directed seq.  & \multicolumn{1}{c|}{($n_3, n_4$)} & \multicolumn{1}{c|}{$(-K_{T_1})^2$} & $\rho (T_1)$  &  in Table \ref{T_1_table}  \\ \hline \hline
     $\smor_1$ & (1,1) & $\frac{19}{3}$ & 3  &  No.9 \\
     $\smor_3$ & (2,1) & $\frac{17}{3}$ & 3 &  No.5 \\
     $\smor_6$ & (1,2) & $\frac{16}{3}$ & 4 & No.4 \\
     $\smor_8$ & (2,2) & $\frac{14}{3}$ & 4 &  No.1 \\    \hline
  \end{tabular}
\end{table}
} \\
We cannot eliminate all cases (we can confirm the existence in the next section).

\noindent{\bf Case 2 :  $T_{min} = \pr(1,1,3)$} 

Candidates are the following:
{\small \rm
\begin{table}[htb]
\renewcommand{\arraystretch}{1.4}
  \begin{tabular}{|c|c|c|c|c|c|}  \hline
 directed seq.   & \multicolumn{1}{c|}{($n_3, n_4$)} & \multicolumn{1}{c|}{$(-K_{T_1})^2$} & $\rho (T_1)$ & in Table \ref{T_1_table}   \\ \hline \hline
     $\smor_1$ & (2,0) & $\frac{17}{3}$  & 3 & No.11  \\ 
     $\smor_2$ & (1,1) & $\frac{16}{3}$  & 4 & No.10  \\ 
     $\smor_3$ & (3,0) & 5  & 3 & No.8  \\ 
     $\smor_4$ & (3,1) & 5  & 3 & No.3  \\ 
     $\smor_5$ & (2,1) & $\frac{14}{3}$  & 4 & No.6  \\ 
\rowcolor[gray]{0.9}     $\smor_6$ & (2,1) & $\frac{14}{3}$  & 4 & -  \\ 
     $\smor_7$ & (2,2) & $\frac{14}{3}$  & 4 & No.2  \\ 
\rowcolor[gray]{0.9}     $\smor_8$ & (3,1) & 4  & 4 & -  \\    \hline
  \end{tabular}
\end{table}
} 

If $\varphi$ is of type $\smor_6$ (resp. $\smor_8$), then we can find a $\smor_4$-line pair (resp. $\smor_4$-line pair) on $T_1$.
This contradicts Lemma \ref{T_1_dir}.
We cannot eliminate the other cases.

\noindent{\bf Case 3 :  $T_{min} = \pr^2$} 

By Lemma \ref{P^2}, we see that $\varphi$ is of type $\smor_3$, $\smor_4$, $\smor_6$, $\smor_7$ or $\smor_8$.
Thus cadidates are the following:
{\small \rm
\begin{table}[htb]
\renewcommand{\arraystretch}{1.4}
  \begin{tabular}{|c|c|c|c|c|c|c|}  \hline
  directed seq.  & \multicolumn{1}{c|}{($n_3, n_4$)} & \multicolumn{1}{c|}{$(-K_{T_1})^2$} & $\rho (T_1)$ & in Table \ref{T_1_table}   \\ \hline \hline
\rowcolor[gray]{0.9}    $\smor_3$ & (2,0) & $\frac{17}{3}$ & 3 & -  \\
\rowcolor[gray]{0.9}    $\smor_4$ & (2,1) & $\frac{17}{3}$ & 3 &- \\
\rowcolor[gray]{0.9}    $\smor_6$ & (1,1) & $\frac{16}{3}$ & 4  & - \\
\rowcolor[gray]{0.9}    $\smor_7$ & (1,2) & $\frac{16}{3}$ & 4  & - \\
\rowcolor[gray]{0.9}    $\smor_8$ & (2,1) & $\frac{14}{3}$ & 4  & - \\  \hline
  \end{tabular}
\end{table}
} \\
In this case, we can eliminate all the possibilities.
If $\varphi$ is of type $\smor_3$ (resp. $\smor_4$, $\smor_6$, $\smor_7$, $\smor_8$), then we can find a $\smor_1$-line pair (resp. $\smor_3$, $\smor_2$, $\smor_6$, $\smor_5$-line pair) on $T_1$.
This contradicts Lemma \ref{T_1_dir}.

\noindent{\bf Case 4 :  $T_{min} = \PP$} 

Candidates are the following:
{\small \rm
\begin{table}[htb]
\renewcommand{\arraystretch}{1.4}
  \begin{tabular}{|c|c|c|c|c|c|c|c|}  \hline
directed seq.   & \multicolumn{1}{c|}{($n_3, n_4$)} & \multicolumn{1}{c|}{$(-K_{T_1})^2$} & $\rho (T_1)$ & in Table \ref{T_1_table}  \\ \hline \hline
      $\smor_1$ & (1,0) & $\frac{16}{3}$ & 4 & No.13 \\
      $\smor_2$ & (0,1) & 5 & 5 & No.12 \\ 
\rowcolor[gray]{0.9}  $\smor_3$ & (2,0) & $\frac{14}{3}$ & 4 & - \\ 
      $\smor_4$ & (2,1) & $\frac{14}{3}$ & 4 & No.7 \\ 
\rowcolor[gray]{0.9}      $\smor_5$ & (1,1) & $\frac{13}{3}$ & 5 & -  \\ 
\rowcolor[gray]{0.9} $\smor_6$ & (1,1) & $\frac{13}{3}$ & 5 & - \\ 
\rowcolor[gray]{0.9} $\smor_7$ & (1,2) & $\frac{13}{3}$ & 5 & -  \\ 
\rowcolor[gray]{0.9}  $\smor_8$ & (2,1) & $\frac{11}{3}$ & 5 & -  \\ \hline
  \end{tabular}
\end{table}
} \\
We cannot eliminate the cases that $\varphi$ is of type $\smor_1$, $\smor_2$ or $\smor_4$.
If $\varphi$ is of type $\smor_3$ (resp. $\smor_5$, $\smor_6$, $\smor_7$, $\smor_8$), then we can find a $\smor_1$-line pair (resp. $\smor_2$, $\smor_1$, $\smor_6$, $\smor_1$-line pair) on $T_1$.
This contradicts Lemma \ref{T_1_dir}.

\end{proof}

\subsubsection{Candidates of $T_2$}

We will determine candidates of $T_2$.
Then $T_2$ has a minimal directed $\smor$-sequence $T_2 \overset{\varphi}{\to} T_1 \to T_{min}$.
Note that the center of $\varphi$ is not on any quasi-lines on $T_1$ by Lemma \ref{sm_on_qline}.

\begin{lemma}\label{T_1_P}

Let $T_1$ be one in Table \ref{T_1_table}.
Set $Q(T_1) := \{ x \in T_1 \ | $ there exists a quasi-line $L$ such that $x \in L \} $.
If $T_1$ is one of No.1, No,4, No.5 and No.9 in Table \ref{T_1_table}, then any point on $T_1 \backslash (\Sing T_1 \cup Q(T_1) )$ satisfies the condition {\rm ({\bf P})}. 

\end{lemma}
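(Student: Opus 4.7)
The plan is to exploit that $T_{min} = \pr(1,1,4)$ in all four cases No.1, 4, 5, 9, and to pull back the pencil of quasi-$0$-curves on $T_{min}$ through its $\frac{1}{4}(1,1)$-singularity (guaranteed by Lemma \ref{P(1,1,4)_P1}) along the second morphism $\varphi \colon T_1 \to T_{min}$. Let $\pi \colon Y \to T_1$ and $\pi_1 \colon Y_1 = \mathbb{F}_4 \to T_{min}$ be the minimal resolutions, $g = \sigma_N \circ \cdots \circ \sigma_1 \colon Y \to Y_1$ the composition of blow-ups provided by Corollary \ref{config}, $D \subset Y_1$ the minimal $(-4)$-section, $P_c \in T_{min}$ the (smooth) image of $\Exc \varphi$, and $P_c' := \pi_1^{-1}(P_c) \in Y_1$. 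Two structural observations are essential: (i) $D$ must remain a $(-4)$-curve on $Y$---otherwise its image under $\pi$ would be a singularity outside type $\tA$ on $T_1$---so no center of $g$ lies on $D$; and (ii) the $E_i$-configuration in each of the $\smor_1, \smor_3, \smor_6, \smor_8$ rows of Corollary \ref{config} is a connected tree, so the whole exceptional divisor of $g$ lies over the single point $P_1 = P_c'$, and $\sigma_1$ is the unique blow-up of $g$ whose center lies on $Y_1$ itself. Let $C_0 \subset T_{min}$ denote the unique quasi-$0$-curve through $P_c$.

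Given $P' \in T_1 \setminus (\Sing T_1 \cup Q(T_1))$, every component of $\Exc \varphi$ is a quasi-line (being the $\pi$-image of a $\bullet$ in Corollary \ref{config}), so $\Exc \varphi \subset Q(T_1)$ and $P' \notin \Exc \varphi$. Thus $Q := \varphi(P')$ is a smooth point of $T_{min}$ with $Q \ne P_c$; let $C_Q$ be the quasi-$0$-curve through $Q$. If $C_Q \ne C_0$, then the fiber $(C_Q)_{Y_1}$ of $\mathbb{F}_4 \to \mathbb{P}^1$ avoids $P_c'$ and, by (ii), is disjoint from every center of $g$; its strict transform on $Y$ is still a $0$-curve, so its image $(C_Q)_{T_1}$ is a quasi-$0$-curve on $T_1$ passing through $P'$ and through the inherited $\frac{1}{4}(1,1)$-singularity (the $\pi$-image of $D$), which yields condition ({\bf P}).

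The hard part will be excluding the case $C_Q = C_0$. I would handle it by proving that $(C_0)_{T_1}$ is itself a quasi-line, whence $P' \in (C_0)_{T_1} \subset Q(T_1)$ would contradict $P' \notin Q(T_1)$. Let $l_0 := (C_0)_{Y_1}$ and $C_0' \subset Y$ its strict transform. Since $\sigma_1$ blows up $P_1 = P_c' \in l_0$, the strict transform becomes a $(-1)$-curve immediately after $\sigma_1$. Suppose for contradiction that $C_0'$ were a $(-k)$-curve on $Y$ with $k \ge 2$. Using $\pi^* K_{T_1} = K_Y + \sum_{P \in \Sing T_1} (1 - 2/n_P)\, E_P$ (where $E_P$ is the exceptional $(-n_P)$-curve over the singularity $P$) together with adjunction on the rational curve $C_0'$, I obtain
\[
(-K_{T_1}) \cdot (C_0)_{T_1} \;=\; (2 - k) \;+\; \sum_{P \in \Sing T_1} (2/n_P - 1)\, E_P \cdot C_0'.
\]
By (i) and $l_0 \cdot D = 1$ on $Y_1$, one has $D \cdot C_0' = 1$ on $Y$, contributing $-\tfrac{1}{2}$; every other term in the sum is $\le 0$ since $2/n_P - 1 < 0$ and $E_P \cdot C_0' \ge 0$. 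Hence $(-K_{T_1}) \cdot (C_0)_{T_1} \le (2 - k) - \tfrac{1}{2} \le -\tfrac{1}{2} < 0$, contradicting the ampleness of $-K_{T_1}$. Therefore $k = 1$, $C_0'$ is a $(-1)$-curve not contracted by $\pi$, and $(C_0)_{T_1}$ is a quasi-line, as required. The main obstacle is combining the structural observations (i)--(ii) with this del Pezzo ampleness computation; once they are in place, the argument applies uniformly to all four cases.
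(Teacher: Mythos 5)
Your proof is correct and takes essentially the same route the paper intends: since $T_{min}=\pr(1,1,4)$ in all four cases, you transport the rulings of $\mathbb{F}_4$ through the second morphism $\varphi$, which is exactly what the paper's one-line appeal to Lemma \ref{P(1,1,4)_P1} leaves implicit. The only remark is that your adjunction computation ruling out $k\ge 2$ for the special fiber $C_0$ through the center is a re-derivation of the paper's Lemma \ref{no_neg} (after observing that $C_0'$ cannot be $\pi$-exceptional because it meets $D$), so that step could simply be cited.
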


\begin{proof}

The assertion follows from Lemma \ref{P(1,1,4)_P1}.

\end{proof}

\begin{lemma}\label{T_2_dir}

Assume that $T_2 \overset{\smor_{a_1}}{\to} T_1 \overset{\smor_{a_2}}{\to} T_{min}$ is a minimal directed $\smor$-sequence of $T_2$.
If there exists a second morphism $T_2 \overset{\smor_i}{\to} T$ where $i < a_1$, then $T$ is $\smor$-minimal.

\end{lemma}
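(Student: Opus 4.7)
The plan is to argue by contradiction and show that any continuation of $T_2 \overset{\smor_i}{\to} T$ into a longer $\smor$-sequence would produce an element of $\mathrm{Im}\,N_{\smor}$ strictly smaller than $(a_1,a_2)$ in the order $\prec$, contradicting the fact that $T_2 \to T_1 \to T_{min}$ is a minimal directed $\smor$-sequence.

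First I would verify that $T$ satisfies the hypotheses of Lemma \ref{2nd_part}, so that a $\smor$-sequence from $T$ genuinely exists whenever $T$ is not $\smor$-minimal. Since $\smor_i$ only contracts curves meeting $\frac{1}{3}(1,1)$- and $\frac{1}{4}(1,1)$-singularities and introduces no worse singularities (Tables \ref{2nd_mor} and \ref{bir_ext_cont}), $T$ is a del Pezzo surface of type $\tA$; and by Corollary \ref{float}, $T$ has no floating $(-1)$-curves. So Lemma \ref{2nd_part} applies and, if $T$ were not $\smor$-minimal, we could extend to a $\smor$-sequence $T \to T' \to \cdots \to T''_{min}$ of positive length. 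Prepending $T_2 \overset{\smor_i}{\to} T$ would give a $\smor$-sequence from $T_2$ of length $\ell \geq 2$.

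Next I would invoke the fact that the centers of the second morphisms in any such sequence are disjoint (via Lemma \ref{sm_on_qline}), so they commute and we may rearrange them to obtain a representative whose types $(b_1, \ldots, b_\ell)$ satisfy $b_1 \leq \cdots \leq b_\ell$, i.e.\ an element of $\mathrm{Im}\,N_{\smor}$. Since one of the $b_j$'s equals $i$, we have $b_1 \leq i < a_1$. The final step is a two-case comparison against $(a_1,a_2)$ in the order $\prec$: if $\ell > 2$, clause (1) of the definition of $\prec$ gives $(b_1,\ldots,b_\ell) \prec (a_1,a_2)$ directly; if $\ell = 2$, clause (2) gives it via the lexicographic inequality $b_1 < a_1$. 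Either outcome contradicts the minimality of $(a_1,a_2)$ in $\mathrm{Im}\,N_{\smor}$, forcing $T$ to be $\smor$-minimal.

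The only subtle point, and the one I expect to be the main obstacle to a fully rigorous write-up, is the rearrangement step: we need to be confident that once a $\smor$-sequence from $T_2$ exists, a representative with non-decreasing indices also lies in $\mathrm{Mor}_{\smor}(T_2)$. This rests on the commutativity of second morphisms, which in turn uses Lemma \ref{sm_on_qline} to ensure the smooth centers of successive second morphisms survive the earlier contractions. Once this is granted, all remaining steps are immediate comparisons in $D_{\smor}$.
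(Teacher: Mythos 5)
Your proposal is correct and takes essentially the same route as the paper: assume $T$ is not $\smor$-minimal, extend $T_2 \overset{\smor_i}{\to} T$ to a full $\smor$-sequence from $T_2$ whose well-ordered index tuple starts with $k_1 \le i < a_1$, and contradict the minimality of $(a_1,a_2)$ under $\prec$. You merely make explicit some steps the paper leaves implicit (that $T$ inherits the hypotheses of Lemma \ref{2nd_part}, the reordering via disjointness of centers, and the two-case comparison by length versus lexicographic order), all of which are consistent with the paper's argument.
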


\begin{proof}

Assume that $T$ is not $\smor$-minimal.
Then we can find a $\smor$-sequence $T_2 \overset{\smor_{k_1}}{\to} X_1 \overset{\smor_{k_2}}{\to} \cdots \overset{\smor_{k_l}}{\to} X_l$, where $k_1 \le i < a_1$ and $X_l$ is $\smor$-minimal.
This contradicts the fact that $T_2 \overset{\smor_{a_1}}{\to} T_1 \overset{\smor_{a_2}}{\to} T_{min}$ is a minimal directed $\smor$-sequence.

\end{proof}

\begin{proposition}\label{T_2}

Candidates of $T_2$ are the following:

{\small \rm
\begin{table}[htb]
\caption{Candidates of $T_2$}\label{T_2_table}
\renewcommand{\arraystretch}{1.4}
  \begin{tabular}{|c|c|c|c|c|c|c|c|}  \hline
    \multicolumn{1}{|l|}{No.} & \multicolumn{1}{c|}{$X_{min}$} & directed seq.
      & \multicolumn{1}{c|}{($n_3, n_4$)} & \multicolumn{1}{c|}{$(-K_{T_2})^2$} & $\rho (T_2)$ & in Table \ref{main_table} \\ \hline \hline
     1 & $\pr(1,1,4)$ & $\smor_8 \circ \smor_8$ & (4,3) & $\frac{1}{3}$ & 7 & No.2  \\  
     2 & $\pr(1,1,3)$ & $\smor_7 \circ \smor_4$ & (4,3) & $\frac{4}{3}$  & 6 & No.3   \\  \hline
     3 & $\pr(1,1,3)$ & $\smor_4 \circ \smor_4$ & (5,2) & $\frac{5}{3}$  & 5 & No.4   \\  \hline
     4 & $\pr(1,1,3)$ & $\smor_7 \circ \smor_5$ & (3,3) & 1  & 7 & No.5   \\  \hline
     5 & $\pr(1,1,3)$ & $\smor_7 \circ \smor_3$ & (4,2) & $\frac{4}{3}$  & 6 & No.6   \\
     6 & $\PP$ & $\smor_4 \circ \smor_4$ & (4,2) & $\frac{4}{3}$ & 6 & No.7  \\  \hline
     7 & $\pr(1,1,3)$ & $\smor_4 \circ \smor_3$ & (5,1) & $\frac{5}{3}$  & 5 & No.8   \\  \hline
     8 & $\pr(1,1,3)$ & $\smor_7 \circ \smor_2$ & (2,3) & $\frac{5}{3}$  & 7 & No.10   \\ \hline
     9 & $\pr(1,1,4)$ & $\smor_8 \circ \smor_1$ & (3,2) & 2 & 6  & No.11  \\  
     10 & $\pr(1,1,3)$ & $\smor_7 \circ \smor_1$ & (3,2) & 2  & 6  & No.12  \\     \hline
     11 & $\pr(1,1,3)$ & $\smor_4 \circ \smor_1$ & (4,1) & $\frac{7}{3}$ & 5 & No.13   \\    \hline
     12 & $\pr(1,1,3)$ & $\smor_3 \circ \smor_3$ & (5,0) & $\frac{5}{3}$  & 5 & No.14    \\   \hline
     13 & $\PP$ & $\smor_4 \circ \smor_2$ & (2,2) & $\frac{5}{3}$ & 7 & No.17  \\   \hline
     14 & $\PP$ & $\smor_4 \circ \smor_1$ & (3,1) & 2 & 6 & No.19   \\ 
  \end{tabular}
\end{table}
}

{\small \rm
\begin{table}[htb]
\renewcommand{\arraystretch}{1.4}
  \begin{tabular}{|c|c|c|c|c|c|c|c|} 
    \multicolumn{1}{|l|}{No.} & \multicolumn{1}{c|}{$X_{min}$} & directed seq.
      & \multicolumn{1}{c|}{($n_3, n_4$)} & \multicolumn{1}{c|}{$(-K_{T_2})^2$} & $\rho (T_2)$ & in Table \ref{main_table} \\ \hline \hline
     15 & $\pr(1,1,3)$ & $\smor_3 \circ \smor_1$ & (4,0) & $\frac{7}{3}$  & 5 & No.21   \\     \hline
     16 & $\pr(1,1,4)$ & $\smor_1 \circ \smor_1$ & (2,1) & $\frac{11}{3}$ & 5 & No.26    \\  \hline
     17 & $\PP$ & $\smor_2 \circ \smor_2$ & (0,2) & 2 & 8 & No.28   \\   \hline
     18 & $\PP$ & $\smor_2 \circ \smor_1$ & (1,1) & $\frac{7}{3}$ & 7 & No.31    \\    \hline
     19 & $\PP$ & $\smor_1 \circ \smor_1$ & (2,0) & $\frac{8}{3}$ & 6 & No.33  \\  \hline
  \end{tabular}
\end{table}
}

\end{proposition}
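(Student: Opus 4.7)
The plan is to mimic the argument of Proposition~\ref{T_1}, working case by case through the 13 candidates for $T_1$ listed in Table~\ref{T_1_table}. For each such $T_1$, a minimal directed $\smor$-sequence of $T_2$ has the shape $T_2 \overset{\smor_{a_1}}{\to} T_1 \overset{\smor_{a_2}}{\to} T_{min}$ with $a_1 \le a_2$ (by the definition of minimal directedness and the fact that we order the second morphisms increasingly), so for each fixed $T_1$ the possibilities for $\varphi := \smor_{a_1}$ are confined to $\{\smor_j : j \le a_2\}$. Within each such possibility I will compute the contributions to $n_3, n_4$ from Table~\ref{2nd_mor}, update $(-K)^2$ using $d_{T_2/T_1}$, and read off $\rho$.

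To eliminate the spurious cases I will use four restriction tools in order of increasing specificity. First, Lemma~\ref{sm_on_qline} forces the center of $\varphi$ to be a smooth point of $T_1$ that does not lie on any quasi-line; so I will first catalogue the quasi-lines already present on each $T_1$ (read off from the explicit dual graphs produced in Corollary~\ref{config} applied to $T_1 \to T_{min}$). Second, Lemma~\ref{T_2_dir} forbids $\varphi$ from being of type $\smor_i$ with $i < a_1$ unless the intermediate surface it produces is $\smor$-minimal; in the generic case this rules out all $\smor_i$ with $i < a_1$, matching the analogue of Lemma~\ref{T_1_dir}. Third, whenever the $T_1$ in question is one of the four surfaces of Lemma~\ref{T_1_P} (Nos.\ 1, 4, 5, 9 in Table~\ref{T_1_table}), any eligible center satisfies condition $(\mathbf{P})$, so by Lemma~\ref{quasi-0} only the four types $\smor_1,\smor_3,\smor_6,\smor_8$ are admissible. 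Fourth, when $T_{min} = \PP$ we also invoke Lemma~\ref{P^2} applied to the composition $T_2 \to T_1 \to \PP$, or more directly we check by hand whether floating $(-1)$-curves arise on $T_2$.

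The bulk of the work is the bookkeeping: for each pair $(T_1,\varphi)$ I will verify that $T_2$ is genuinely a del Pezzo surface of type $\tA$ with no floating $(-1)$-curves and that the resulting directed sequence $T_2 \to T_1 \to T_{min}$ really is the minimal directed $\smor$-sequence, i.e.\ there is no $\smor_j$-line pair on $T_2$ with $j < a_1$ save those producing a $\smor$-minimal intermediate surface. In practice this amounts to examining the configuration of negative curves on the minimal resolution of $T_2$, which is obtained by pulling back the dual graphs of Table~\ref{dual_graph_table} along the minimal resolution of $T_1$; the relevant adjacencies are determined by where the center of $\varphi$ sits relative to the $\smor$-exceptional configuration already on $T_1$. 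In all but the 19 surviving cases of Table~\ref{T_2_table}, one of the four restrictions above will be violated.

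The main obstacle will be the case $T_{min} = \PP$, $\PP$, or $\pr(1,1,3)$, where $\rho(T_1) \ge 4$ and there can be several non-equivalent smooth points (modulo the automorphism group of the configuration) at which a given $\smor_i$ could be centered. There it is not enough to use the numerical invariants $(n_3,n_4,(-K)^2,\rho)$: one must confirm via the explicit negative-curve configuration that the resulting $T_2$ actually has no floating $(-1)$-curve and no $\smor_j$-line pair with $j<a_1$. I expect this final geometric check — distinguishing, e.g., the four different $\smor_4$-candidate centers on the No.~7 candidate $T_1$ over $\PP$ — to be the place where the argument takes the most care, while the purely combinatorial exclusions via Lemmas~\ref{T_2_dir},~\ref{quasi-0}, and~\ref{P^2} are straightforward tabulation.
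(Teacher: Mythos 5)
Your proposal follows the paper's proof essentially verbatim: the same case division over the 13 candidates for $T_1$, the same restriction $a_1 \le a_2$ coming from minimal directedness, and the same elimination tools (Lemmas \ref{quasi-0} and \ref{T_1_P} for the candidates lying over $\pr(1,1,4)$, Lemma \ref{T_2_dir} combined with the classification of $\smor$-minimal surfaces in Proposition \ref{T_min} to kill $\smor_j$-line pairs with $j < a_1$, and direct inspection of the negative-curve configurations to detect floating $(-1)$-curves and $T$-lines). The one slip is invoking Lemma \ref{P^2} for $T_{min} = \PP$ — that lemma concerns $\pr^2$, which does not occur among the $T_1$ candidates — but your stated fallback of checking for floating $(-1)$-curves by hand is exactly what the paper does in those cases, so the argument is unaffected.
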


\begin{proof}

By Proposition \ref{T_1}, candidates of $T_1$ are 13 cases.
We consider candidates of $T_2$ for each candidate of $T_1$.

\noindent{\bf Case 1 : $T_1$ is of No.1 in Table \ref{T_1_table} i.e. $T_2 \overset{\varphi}{\to} T_1 \overset{\smor_8}{\to} \pr(1,1,4)$}

By Lemmas \ref{quasi-0} and \ref{T_1_P}, we see that the type of $\varphi$ is one of $\smor_1, \smor_3, \smor_6$ and $\smor_8$.
Candidates are the following:
{\small \rm
\begin{table}[htb]
\renewcommand{\arraystretch}{1.4}
  \begin{tabular}{|c|c|c|c|c|c|c|c|}  \hline
 \multicolumn{1}{|c|}{$X_{min}$} & directed seq.
      & \multicolumn{1}{c|}{($n_3, n_4$)} & \multicolumn{1}{c|}{$(-K_{T_2})^2$} & $\rho (T_2)$ & in Table \ref{T_2_table}  \\ \hline \hline
     $\pr(1,1,4)$ & $\smor_8 \circ \smor_1$ & (3,2) & 2 & 6 & No.9  \\  
\rowcolor[gray]{0.9}      $\pr(1,1,4)$ & $\smor_8 \circ \smor_3$ & (4,2) & $\frac{4}{3}$ & 6 & -   \\ 
\rowcolor[gray]{0.9}      $\pr(1,1,4)$ & $\smor_8 \circ \smor_6$ & (3,3) & 1 & 7 & -   \\ 
     $\pr(1,1,4)$ & $\smor_8 \circ \smor_8$ & (4,3) & $\frac{1}{3}$ & 7 & No.1  \\  \hline
  \end{tabular}
\end{table}
} \\
If $\varphi$ is of type $\smor_3$, then we can find a sequence $T_2 \overset{\smor_3}{\to} T_1 \overset{\smor_7}{\to} \pr(1,1,3)$.
This contradicts the minimal directedness.
If $\varphi$ is of type $\smor_6$, then we can find a $\smor_5$-line pair on $T_2$.
Consider a second morphism $T_2 \overset{\smor_5}{\to} T$ of type $\smor_5$.
By Lemma \ref{T_2_dir}, $T$ is $\smor$-minimal.
Here $T$ has two singular points of type $\frac{1}{3}(1,1)$ and two singular points of type $\frac{1}{4}(1,1)$.
This contradicts the classification of $\smor$-minimal surfaces in Proposition \ref{T_min}.
Thus $\varphi$ is not of type $\smor_6$.
We cannot eliminate the other cases.

\noindent{\bf Case 2 : $T_1$ is of No.2 in Table \ref{T_1_table} i.e. $T_2 \overset{\varphi}{\to} T_1 \overset{\smor_7}{\to} \pr(1,1,3)$}

By the minimal directedness, we see that $\varphi$ is of type $\smor_i$ where $i \le 7$.
Candidates are the following:
{\small \rm
\begin{table}[htb]
\renewcommand{\arraystretch}{1.4}
  \begin{tabular}{|c|c|c|c|c|c|c|c|}  \hline
 \multicolumn{1}{|c|}{$X_{min}$} & directed seq.
      & \multicolumn{1}{c|}{($n_3, n_4$)} & \multicolumn{1}{c|}{$(-K_{T_2})^2$} & $\rho (T_2)$ & in Table \ref{T_2_table}  \\ \hline \hline
 $\pr(1,1,3)$ & $\smor_7 \circ \smor_1$ & (3,2) & 2  & 6 & No.10 \\  
 $\pr(1,1,3)$ & $\smor_7 \circ \smor_2$ & (2,3) & $\frac{5}{3}$  & 7 & No.8  \\
 $\pr(1,1,3)$ & $\smor_7 \circ \smor_3$ & (4,2) & $\frac{4}{3}$  & 6 & No.5  \\
 $\pr(1,1,3)$ & $\smor_7 \circ \smor_4$ & (4,3) & $\frac{4}{3}$  & 6 & No.2  \\
  \end{tabular}
\end{table}
} 

{\small \rm
\begin{table}[htb]
\renewcommand{\arraystretch}{1.4}
  \begin{tabular}{|c|c|c|c|c|c|c|c|}  
 \multicolumn{1}{|c|}{$X_{min}$} & directed seq.
      & \multicolumn{1}{c|}{($n_3, n_4$)} & \multicolumn{1}{c|}{$(-K_{T_2})^2$} & $\rho (T_2)$ & in Table \ref{T_2_table}  \\ \hline \hline
 $\pr(1,1,3)$ & $\smor_7 \circ \smor_5$ & (3,3) & 1  & 7 & No.4  \\  
\rowcolor[gray]{0.9} $\pr(1,1,3)$ & $\smor_7 \circ \smor_6$ & (3,3) & 1  & 7 & -  \\ 
\rowcolor[gray]{0.9} $\pr(1,1,3)$ & $\smor_7 \circ \smor_7$ & (3,4) & 1  & 7 & -  \\  \hline
  \end{tabular}
\end{table}
} 
If $\varphi$ is of type $\smor_6$ or $\smor_7$, then we can find a $T$-line on $T_2$.
This is a contradiction.
We cannot eliminate the other cases.

\noindent{\bf Case 3 : $T_1$ is of No.3 in Table \ref{T_1_table} i.e. $T_2 \overset{\varphi}{\to} T_1 \overset{\smor_4}{\to} \pr(1,1,3)$}

By the minimal directedness, we see that $\varphi$ is of type $\smor_i$ where $i \le 4$.
Candidates are the following:

{\small \rm
\begin{table}[htb]
\renewcommand{\arraystretch}{1.4}
  \begin{tabular}{|c|c|c|c|c|c|c|c|}  \hline
 \multicolumn{1}{|c|}{$X_{min}$} & directed seq.
      & \multicolumn{1}{c|}{($n_3, n_4$)} & \multicolumn{1}{c|}{$(-K_{T_2})^2$} & $\rho (T_2)$ & in Table \ref{T_2_table}  \\ \hline \hline
 $\pr(1,1,3)$ & $\smor_4 \circ \smor_1$ & (4,1) & $\frac{7}{3}$ & 5 & No.11  \\ 
\rowcolor[gray]{0.9} $\pr(1,1,3)$ & $\smor_4 \circ \smor_2$ & (3,2) & 2  & 6 & -  \\ 
 $\pr(1,1,3)$ & $\smor_4 \circ \smor_3$ & (5,1) & $\frac{5}{3}$  & 5 & No.7  \\  
 $\pr(1,1,3)$ & $\smor_4 \circ \smor_4$ & (5,2) & $\frac{5}{3}$  & 5 & No.3  \\  \hline
  \end{tabular}
\end{table}
} 
If $\varphi$ is of type $\smor_2$, then we can find a $\smor_1$-line pair.
Then we can obtain a contradiction in the same way as in Case 1.
We cannot eliminate the other cases.

\noindent{\bf Case 4 : $T_1$ is of No.4 in Table \ref{T_1_table} i.e. $T_2 \overset{\varphi}{\to} T_1 \overset{\smor_6}{\to} \pr(1,1,4)$}

By Lemma \ref{quasi-0}, Lemma \ref{T_1_P} and the minimal directedness, we see that the type of $\varphi$ is one of $\smor_1, \smor_3$ and $\smor_6$.
Candidates are the following:
{\small \rm
\begin{table}[htb]
\renewcommand{\arraystretch}{1.4}
  \begin{tabular}{|c|c|c|c|c|c|c|c|}  \hline
 \multicolumn{1}{|c|}{$X_{min}$} & directed seq.
      & \multicolumn{1}{c|}{($n_3, n_4$)} & \multicolumn{1}{c|}{$(-K_{T_2})^2$} & $\rho (T_2)$ & in Table \ref{T_2_table}  \\ \hline \hline
\rowcolor[gray]{0.9} $\pr(1,1,4)$ & $\smor_6 \circ \smor_1$ & (2,2) & $\frac{8}{3}$ & 6 & - \\
\rowcolor[gray]{0.9} $\pr(1,1,4)$ & $\smor_6 \circ \smor_3$ & (3,2) & 2 & 6 & - \\
\rowcolor[gray]{0.9} $\pr(1,1,4)$ & $\smor_6 \circ \smor_6$ & (2,3) & $\frac{5}{3}$ & 7 & - \\  \hline
  \end{tabular}
\end{table}
} \\
In this case, we can eliminate all the possibilities.
If $\varphi$ is of type $\smor_1$, then we can find some floating $(-1)$-curves on $T_2$.
This is a contradiction to assumption.
If $\varphi$ is of type $\smor_3$ (resp. $\smor_6$), then we can find a $\smor_1$-line pair (resp. $\smor_2$-line pair).
Then we can obtain a contradiction in the same way as in Case 1 respectively.
Hence this case is impossible.

\noindent{\bf Case 5 : $T_1$ is of No.5 in Table \ref{T_1_table} i.e. $T_2 \overset{\varphi}{\to} T_1 \overset{\smor_3}{\to} \pr(1,1,4)$}

By Lemma \ref{quasi-0}, Lemma \ref{T_1_P} and the minimal directedness, we see that the type of $\varphi$ is one of $\smor_1$ and $\smor_3$.
Candidates are the following:
{\small \rm
\begin{table}[htb]
\renewcommand{\arraystretch}{1.4}
  \begin{tabular}{|c|c|c|c|c|c|c|c|}  \hline
 \multicolumn{1}{|c|}{$X_{min}$} & directed seq.
      & \multicolumn{1}{c|}{($n_3, n_4$)} & \multicolumn{1}{c|}{$(-K_{T_2})^2$} & $\rho (T_2)$ & in Table \ref{T_2_table}  \\ \hline \hline
\rowcolor[gray]{0.9} $\pr(1,1,4)$ & $\smor_3 \circ \smor_1$ & (3,1) & 3 & 5 & - \\
\rowcolor[gray]{0.9} $\pr(1,1,4)$ & $\smor_3 \circ \smor_3$ & (4,1) & $\frac{7}{3}$ & 5 & - \\  \hline
  \end{tabular}
\end{table}
} \\
In this case, we also can eliminate all the possibilities.
If $\varphi$ is of type $\smor_1$, then we can find a foating $(-1)$-curve on $T_2$.
If $\varphi$ is of type $\smor_3$, then we can find a $\smor_1$-line pair.
Then we can obtain a contradiction in the same way as in Case 1.
Hence this case is impossible.

\noindent{\bf Case 6 : $T_1$ is of No.6 in Table \ref{T_1_table} i.e. $T_2 \overset{\varphi}{\to} T_1 \overset{\smor_5}{\to} \pr(1,1,3)$}

By the minimal directedness, we see that $\varphi$ is of type $\smor_i$ where $i \le 5$.
Candidates are the following:

{\small \rm
\begin{table}[htb]
\renewcommand{\arraystretch}{1.4}
  \begin{tabular}{|c|c|c|c|c|c|c|c|}  \hline
 \multicolumn{1}{|c|}{$X_{min}$} & directed seq.
      & \multicolumn{1}{c|}{($n_3, n_4$)} & \multicolumn{1}{c|}{$(-K_{T_2})^2$} & $\rho (T_2)$ & in Table \ref{T_2_table}  \\ \hline \hline
\rowcolor[gray]{0.9} $\pr(1,1,3)$ & $\smor_5 \circ \smor_1$ & (3,1) & 2  & 6 & -  \\ 
\rowcolor[gray]{0.9} $\pr(1,1,3)$ & $\smor_5 \circ \smor_2$ & (2,2) & $\frac{5}{3}$  & 7 & -  \\ 
\rowcolor[gray]{0.9} $\pr(1,1,3)$ & $\smor_5 \circ \smor_3$ & (4,1) & $\frac{4}{3}$  & 6 & - \\ 
\rowcolor[gray]{0.9} $\pr(1,1,3)$ & $\smor_5 \circ \smor_4$ & (4,2) & $\frac{4}{3}$  & 6 & - \\ 
\rowcolor[gray]{0.9} $\pr(1,1,3)$ & $\smor_5 \circ \smor_5$ & (3,2) & 1  & 7 & -  \\   \hline
  \end{tabular}
\end{table}
} 
In this case, we also can eliminate all the possibilities.
If $\varphi$ is of type $\smor_1$ or $\smor_2$, then we can find a foating $(-1)$-curve on $T_2$.
If $\varphi$ is of type $\smor_3$ (resp. $\smor_4, \smor_5$), then we can find a $\smor_1$-line pair (resp. $\smor_3, \smor_1$-line pair).
Then we can obtain a contradiction in the same way as in Case 1 respectively.
Hence this case is impossible.

\noindent{\bf Case 7 : $T_1$ is of No.7 in Table \ref{T_1_table} i.e. $T_2 \overset{\varphi}{\to} T_1 \overset{\smor_4}{\to} \PP$}

By the minimal directedness, we see that $\varphi$ is of type $\smor_i$ where $i \le 4$.
Candidates are the following:
{\small \rm
\begin{table}[htb]
\renewcommand{\arraystretch}{1.4}
  \begin{tabular}{|c|c|c|c|c|c|c|c|}  \hline
 \multicolumn{1}{|c|}{$X_{min}$} & directed seq.
      & \multicolumn{1}{c|}{($n_3, n_4$)} & \multicolumn{1}{c|}{$(-K_{T_2})^2$} & $\rho (T_2)$ & in Table \ref{T_2_table}  \\ \hline \hline
 $\PP$ & $\smor_4 \circ \smor_1$  & (3,1) & 2 & 6 & No.14 \\
 $\PP$ & $\smor_4 \circ \smor_2$ & (2,2) & $\frac{5}{3}$ & 7 & No.13 \\
\rowcolor[gray]{0.9} $\PP$ & $\smor_4 \circ \smor_3$ & (4,1) & $\frac{4}{3}$ & 6 & - \\
 $\PP$ & $\smor_4 \circ \smor_4$ & (4,2) & $\frac{4}{3}$ & 6 & No.6 \\  \hline
  \end{tabular}
\end{table}
} 

If $\varphi$ is of type $\smor_3$, then we can find a $\smor_1$-line pair on $T_2$.
Then we can obtain a contradiction in the same way as in Case 1.
We cannot eliminate the other cases.

\noindent{\bf Case 8 : $T_1$ is of No.8 in Table \ref{T_1_table} i.e. $T_2 \overset{\varphi}{\to} T_1 \overset{\smor_3}{\to} \pr(1,1,3)$}

By the minimal directedness, we see that $\varphi$ is of type $\smor_i$ where $i \le 3$.
Candidates are the following:
{\small \rm
\begin{table}[htb]
\renewcommand{\arraystretch}{1.4}
  \begin{tabular}{|c|c|c|c|c|c|c|c|}  \hline
 \multicolumn{1}{|c|}{$X_{min}$} & directed seq.
      & \multicolumn{1}{c|}{($n_3, n_4$)} & \multicolumn{1}{c|}{$(-K_{T_2})^2$} & $\rho (T_2)$ & in Table \ref{T_2_table}  \\ \hline \hline
 $\pr(1,1,3)$ & $\smor_3 \circ \smor_1$ & (4,0) & $\frac{7}{3}$  & 5 & No.15  \\  
\rowcolor[gray]{0.9} $\pr(1,1,3)$ & $\smor_3 \circ \smor_2$  & (3,1) & 2  & 6 & -   \\ 
 $\pr(1,1,3)$ & $\smor_3 \circ \smor_3$ & (5,0) & $\frac{5}{3}$  & 5 & No.12   \\   \hline
  \end{tabular}
\end{table}
} \\
If $\varphi$ is of type $\smor_2$, then we can find a $\smor_1$-line pair on $T_2$.
Then we can obtain a contradiction in the same way as in Case 1.
We cannot eliminate the other cases.

\noindent{\bf Case 9 : $T_1$ is of No.9 in Table \ref{T_1_table} i.e. $T_2 \overset{\varphi}{\to} T_1 \overset{\smor_1}{\to} \pr(1,1,4)$}

By the minimal directedness, we see that $\varphi$ is of type $\smor_1$.
The candidate is the following:
{\small \rm
\begin{table}[htb]
\renewcommand{\arraystretch}{1.4}
  \begin{tabular}{|c|c|c|c|c|c|c|c|}  \hline
 \multicolumn{1}{|c|}{$X_{min}$} & directed seq.
      & \multicolumn{1}{c|}{($n_3, n_4$)} & \multicolumn{1}{c|}{$(-K_{T_2})^2$} & $\rho (T_2)$ & in Table \ref{T_2_table}  \\ \hline \hline
 $\pr(1,1,4)$ & $\smor_1 \circ \smor_1$ & (2,1) & $\frac{11}{3}$ & 5  & No.16  \\  \hline
  \end{tabular}
\end{table}
} \\
This case cannot be eliminated.

\noindent{\bf Case 10 : $T_1$ is of No.10 in Table \ref{T_1_table} i.e. $T_2 \overset{\varphi}{\to} T_1 \overset{\smor_2}{\to} \pr(1,1,3)$}

By the minimal directedness, we see that $\varphi$ is of type $\smor_i$ where $i \le 2$.
Candidates are the following:
{\small \rm
\begin{table}[htb]
\renewcommand{\arraystretch}{1.4}
  \begin{tabular}{|c|c|c|c|c|c|c|c|}  \hline
 \multicolumn{1}{|c|}{$X_{min}$} & directed seq.
      & \multicolumn{1}{c|}{($n_3, n_4$)} & \multicolumn{1}{c|}{$(-K_{T_2})^2$} & $\rho (T_2)$ & in Table \ref{T_2_table}  \\ \hline \hline
\rowcolor[gray]{0.9} $\pr(1,1,3)$ & $\smor_2  \circ \smor_1$ & (2,1) & $\frac{8}{3}$  & 6 & -  \\ 
\rowcolor[gray]{0.9} $\pr(1,1,3)$ & $\smor_2 \circ \smor_2$ & (1,2) & $\frac{7}{3}$  & 7 & -  \\   \hline
  \end{tabular}
\end{table}
} \\
In this case, we also can eliminate all the possibilities.
If $\varphi$ is of type $\smor_1$ or $\smor_2$, then we can find a foating $(-1)$-curve on $T_2$.
This is a contradiction.

\noindent{\bf Case 11 : $T_1$ is of No.11 in Table \ref{T_1_table} i.e. $T_2 \overset{\varphi}{\to} T_1 \overset{\smor_1}{\to} \pr(1,1,3)$}

By the minimal directedness, we see that $\varphi$ is of type $\smor_1$.
The candidate is the following:
{\small \rm
\begin{table}[htb]
\renewcommand{\arraystretch}{1.4}
  \begin{tabular}{|c|c|c|c|c|c|c|c|}  \hline
 \multicolumn{1}{|c|}{$X_{min}$} & directed seq.
      & \multicolumn{1}{c|}{($n_3, n_4$)} & \multicolumn{1}{c|}{$(-K_{T_2})^2$} & $\rho (T_2)$ & in Table \ref{T_2_table}  \\ \hline \hline
\rowcolor[gray]{0.9} $\pr(1,1,3)$ & $\smor_1 \circ \smor_1$ & (3,0) & 3  & 5 & -  \\  \hline
  \end{tabular}
\end{table}
} \\
We can find a floating $(-1)$-curve on $T_2$.
This is a contradiction.

\noindent{\bf Case 12 : $T_1$ is of No.12 in Table \ref{T_1_table} i.e. $T_2 \overset{\varphi}{\to} T_1 \overset{\smor_2}{\to} \PP$}

By the minimal directedness, we see that $\varphi$ is of type $\smor_i$ where $i \le 2$.
Candidates are the following:
{\small \rm
\begin{table}[htb]
\renewcommand{\arraystretch}{1.4}
  \begin{tabular}{|c|c|c|c|c|c|c|c|}  \hline
 \multicolumn{1}{|c|}{$X_{min}$} & directed seq.
      & \multicolumn{1}{c|}{($n_3, n_4$)} & \multicolumn{1}{c|}{$(-K_{T_2})^2$} & $\rho (T_2)$ & in Table \ref{T_2_table}  \\ \hline \hline
 $\PP$ & $\smor_2 \circ \smor_1$ & (1,1) & $\frac{7}{3}$ & 7 & No.18 \\ 
 $\PP$ & $\smor_2 \circ \smor_2$ & (0,2) & 2 & 8 & No.17 \\   \hline
  \end{tabular}
\end{table}
} \\
Both cases cannot be eliminated.

\noindent{\bf Case 13 : $T_1$ is of No.13 in Table \ref{T_1_table} i.e. $T_2 \overset{\varphi}{\to} T_1 \overset{\smor_1}{\to} \PP$}

By the minimal directedness, we see that $\varphi$ is of type $\smor_1$.
Candidates are the following:

{\small \rm
\begin{table}[htb]
\renewcommand{\arraystretch}{1.4}
  \begin{tabular}{|c|c|c|c|c|c|c|c|}  \hline
 \multicolumn{1}{|c|}{$X_{min}$} & directed seq.
      & \multicolumn{1}{c|}{($n_3, n_4$)} & \multicolumn{1}{c|}{$(-K_{T_2})^2$} & $\rho (T_2)$ & in Table \ref{T_2_table}  \\ \hline \hline
 $\PP$ & $\smor_1 \circ \smor_1$ & (2,0) & $\frac{8}{3}$ & 6 & No.19 \\  \hline
  \end{tabular}
\end{table}
} 
This case also cannot be eliminated.

\end{proof}

\subsubsection{Candidates of $T_3$}

We determine the candidates of $T_3$ and show that there is no example of $T_m$ where $m \ge 4$.

\begin{proposition}\label{T_3}
The candidate of $T_3$ is the following:

{\small \rm
\begin{table}[htb]
\caption{Candidates of $T_3$}\label{T_3_table}
\renewcommand{\arraystretch}{1.4}
  \begin{tabular}{|c|c|c|c|c|c|c|c|}  \hline
    \multicolumn{1}{|l|}{No.} & \multicolumn{1}{c|}{$X_{min}$} & directed seq.
      & \multicolumn{1}{c|}{($n_3, n_4$)} & \multicolumn{1}{c|}{$(-K_{T_3})^2$} & $\rho (T_3)$ & in Table \ref{main_table}  \\ \hline \hline
     1 & $\pr(1,1,4)$ & $\smor_1 \circ \smor_1 \circ \smor_1$ & (3,1) & 1 & 7 & No.20  \\   \hline
  \end{tabular}
\end{table}
}

\end{proposition}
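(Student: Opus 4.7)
The plan is to mirror the strategy used for Propositions \ref{T_1} and \ref{T_2}: for each of the $19$ candidates of $T_2$ in Table \ref{T_2_table}, enumerate the possible second morphisms $\varphi \colon T_3 \to T_2$ and eliminate them using minimal directedness together with the del Pezzo condition. The decisive observation is a pure volume bound: every second morphism drops the anti-canonical volume by at least $\frac{8}{3}$ (the smallest entry in the $d_{T/T_1}$ column of Table \ref{2nd_mor}, attained uniquely by $\smor_1$). Since $T_3$ is required to be a del Pezzo surface, $(-K_{T_3})^2 > 0$ forces $(-K_{T_2})^2 > \frac{8}{3}$.

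Scanning the $(-K_{T_2})^2$ column of Table \ref{T_2_table}, this strict inequality holds only in No.16, where $(-K_{T_2})^2 = \frac{11}{3}$ and $T_2 \overset{\smor_1}{\to} T_1 \overset{\smor_1}{\to} \pr(1,1,4)$. The borderline entry No.19 with $(-K_{T_2})^2 = \frac{8}{3}$ would yield $(-K_{T_3})^2 \leq 0$ and is excluded, and all other candidates are discarded immediately. For the surviving No.16, the direct analogue of Lemma \ref{T_2_dir} applied to length-$3$ $\smor$-sequences forces $\varphi$ to be of type $\smor_i$ with $i \leq 1$, so $\varphi = \smor_1$. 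This gives the unique candidate
\[
T_3 \overset{\smor_1}{\longrightarrow} T_2 \overset{\smor_1}{\longrightarrow} T_1 \overset{\smor_1}{\longrightarrow} \pr(1,1,4),
\]
with $(-K_{T_3})^2 = 1$, which is precisely No.1 of Table \ref{T_3_table}.

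The only remaining step — and the main point to check — is that this candidate is actually compatible with the hypotheses: the three centers of the successive $\smor_1$-contractions must be disjoint smooth points and no $T$-line may arise along the way. The first follows from Lemma \ref{sm_on_qline} (iteratively applied, as in the proof of Lemma \ref{2nd_part}); the second follows from Lemma \ref{Fuku} combined with the explicit configuration of the $\smor_1$-line pair recorded in Table \ref{dual_graph_table}, which produces only a new $\frac{1}{3}(1,1)$-singularity and no $\frac{1}{4}(1,1)$-pair along any quasi-line. Actual existence of such a $T_3$ is deferred to Section \ref{construct_sec}.
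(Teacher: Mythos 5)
Your proposal is correct and follows essentially the same route as the paper: the paper's proof likewise observes anti-canonical volumes to single out No.16 of Table \ref{T_2_table} (the only $T_2$ with $(-K_{T_2})^2 > \frac{8}{3}$, the minimal drop of a second morphism) and then invokes minimal directedness to force $\varphi$ to be of type $\smor_1$. Your extra verification paragraph and the explicit exclusion of the borderline case No.19 are fine but go beyond what the paper records; note only that the constraint $i \le 1$ comes directly from the ordering convention in the definition of ${\rm Mor}_{\smor}$ rather than from Lemma \ref{T_2_dir} itself.
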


\begin{proof}

Observing anti-canonical volumes, we see that the candidate of $T_3$ is only the surface of No.16 in Table \ref{T_2_table}.
By the minimal directedness, there is only one possibility of $\varphi$.

\end{proof}

Observing anti-canonical volumes, we also see the following corollary.

\begin{corollary}\label{T_>4}

For $m \ge 4$, there is no a $\smor$-sequence $T_m \to T_{m-1} \to \cdots \to T_1 \to T_{min}$. 

\end{corollary}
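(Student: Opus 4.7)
The plan is to combine the candidate list for $T_3$ with the uniform lower bound on the volume drop across any second morphism. By Proposition \ref{T_3}, the unique candidate for $T_3$ satisfies $(-K_{T_3})^2 = 1$. On the other hand, inspecting the column $d_{T/T_1}$ of Table \ref{2nd_mor}, every second morphism $\varphi: T \to T_1$ between del Pezzo surfaces satisfies
\[
d_{T/T_1} \;=\; K_{T_1}^2 - K_T^2 \;\ge\; \tfrac{8}{3},
\]
the minimum being achieved by $\smor_1$.

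Suppose, for a contradiction, that an $\smor$-sequence $T_m \to \cdots \to T_1 \to T_{min}$ with $m \ge 4$ exists. Composing the last $m-3$ arrows shows in particular that there is a second morphism $\beta: T_4 \to T_3$ with $T_3$ one of the candidates classified in Proposition \ref{T_3}. Then
\[
(-K_{T_4})^2 \;=\; (-K_{T_3})^2 - d_{T_4/T_3} \;\le\; 1 - \tfrac{8}{3} \;=\; -\tfrac{5}{3} \;<\; 0,
\]
contradicting the fact that $T_4$ is a del Pezzo surface (whose anti-canonical volume must be strictly positive). Hence no such $T_4$ exists, and \emph{a fortiori} no $T_m$ for $m \ge 4$ can occur.

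The argument is genuinely routine once Proposition \ref{T_3} is in hand: the only point to verify carefully is that the candidate list for $T_3$ really reduces to the single entry with $(-K_{T_3})^2 = 1$, which in turn uses the minimal directedness condition to rule out all $T_2$-entries of volume $<\frac{8}{3}+\varepsilon$ as possible images. There is no hidden obstacle beyond checking that the minimum of the $d_{T/T_1}$ column of Table \ref{2nd_mor} is indeed $\frac{8}{3}$ and that it exceeds $(-K_{T_3})^2 = 1$.
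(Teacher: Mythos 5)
Your proposal is correct and is exactly the argument the paper intends: the corollary appears immediately after Proposition \ref{T_3} with only the remark ``Observing anti-canonical volumes, we also see the following corollary,'' and the observation being invoked is precisely that the unique candidate for $T_3$ has $(-K_{T_3})^2=1$ while every second morphism drops the anti-canonical volume by at least $\min_i d = \tfrac{8}{3}$ (attained by $\smor_1$ in Table \ref{2nd_mor}), so a further second morphism would force $(-K_{T_4})^2<0$. Your write-up simply makes the paper's one-line observation explicit.
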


\section{Constructions of del Pezzo surfaces}\label{construct_sec}

In this section, we construct each candidate on Table \ref{main_table} and check the ampleness of anti-canonical divisors.

\subsection{Reduction to some cases}
Let $f : X \to X_1$ be a composition of birational extremal contractions.
By Lemma \ref{keep_ldP}, we see that if $X$ is a del Pezzo surface, then $X_1$ is also a del Pezzo surface.
Moreover, we also see that if $X$ has no floating $(-1)$-curves, then $X_1$ also has no floating $(-1)$-curves by Corollary \ref{float}.

\begin{notation}

We prepare notation for Table \ref{reduction_table}.
Recall that a second morphism is a composition of several birational extremal contractions in Table \ref{bir_ext_cont}.
Denote a contraction $\varphi$ of type $\sB_5$ by $\smor_4^7$ (resp. $\smor_1^2$, $\smor_3^5$), where $\smor_4 \circ \varphi = \smor_7$ (resp. $\smor_1 \circ \varphi = \smor_2$, $\smor_3 \circ \varphi = \smor_5$).

\end{notation}

\begin{proposition}\label{reduction}

If del Pezzo surfaces with no floating $(-1)$-curves of No.1, 2, 3, 5, 7, 10, 11, 17, 20, 22 and 28 in Table \ref{main_table} exist, then all surfaces in Table \ref{main_table} really exist.

\end{proposition}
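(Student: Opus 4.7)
The plan is to produce Table \ref{reduction_table}, which for each of the $28$ surfaces in Table \ref{main_table} whose number is not in $\{1,2,3,5,7,10,11,17,20,22,28\}$ exhibits a birational extremal contraction from a base surface already assumed to exist. By Lemma \ref{keep_ldP} such a contraction produces a del Pezzo surface with at most quotient singularities, and by Corollary \ref{float} the property of having no floating $(-1)$-curves is preserved; hence only the numerical invariants in Table \ref{main_table} need to be matched.

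The key building blocks are the three $\sB_5$-type contractions $\smor_4^7$, $\smor_1^2$ and $\smor_3^5$ introduced above. On the minimal resolution of a base surface, each $\smor_j$-line pair ($j=7,2,5$) contains a distinguished $(-1)$-curve meeting a $(-4)$-curve (see the configurations in Corollary \ref{config}); contracting this $(-1)$-curve is $K$-negative and extremal, since it passes through the unique $\frac{1}{4}(1,1)$-singularity of the pair, and by the description of $\sB_5$ in Table \ref{bir_ext_cont} it replaces that singularity by a $\frac{1}{3}(1,1)$-singularity. This transforms a $\smor_7$-, $\smor_2$- or $\smor_5$-line pair into a $\smor_4$-, $\smor_1$- or $\smor_3$-line pair respectively, yielding a surface whose directed sequence is one of those appearing in Table \ref{main_table}. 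In addition one may contract an entire second morphism $\smor_i$ to descend to a strictly shorter directed sequence.

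Concretely, for each target No.$k$ not in the base list, one identifies from Table \ref{main_table} a base surface No.$j$ (or a non-base surface whose existence has already been established) whose directed sequence and invariants differ from those of No.$k$ by a single application of one of the above contractions, and then verifies using Corollary \ref{config} that the required $(-1)$-curve is present on the minimal resolution of No.$j$. The resulting contraction is extremal by Tables \ref{bir_ext_cont} and \ref{3rd_mor}, the numerical invariants $(n_3,n_4)$, $(-K)^2$, $\rho$ and $h^0(-K)$ transform as dictated by the corresponding $d_{V/V_1}$ together with the Noether and Riemann--Roch formulas of Remark \ref{Noeth_RR}, and the resulting values match those of No.$k$ in Table \ref{main_table}.

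The main obstacle is the bookkeeping: several target surfaces require a two-step descent, first applying one of $\smor_4^7$, $\smor_1^2$, $\smor_3^5$ to pass from a base surface to an intermediate non-base surface, then applying a further birational contraction. Thus the entries of Table \ref{reduction_table} must be populated in an order compatible with this dependency. One must also check, at every step, that the $(-1)$-curve being contracted does not pass through a second $\frac{1}{4}(1,1)$-singularity, which would contradict Lemma \ref{Fuku}; these checks are routine given the explicit dual graphs in Corollary \ref{config} and Table \ref{dual_graph_table}.
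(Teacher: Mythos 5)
Your contraction mechanism is exactly the one the paper uses for the cases it can reach this way: the partial contractions $\smor_4^7$, $\smor_1^2$, $\smor_3^5$ (the $\sB_5$ piece of an $\smor_7$-, $\smor_2$- or $\smor_5$-line pair, which converts the $\frac{1}{4}(1,1)$-point into a $\frac{1}{3}(1,1)$-point) together with contractions of entire second morphisms, combined with Lemma \ref{keep_ldP} and Corollary \ref{float}. For the seventeen targets that the paper's Table \ref{reduction_table} derives in this manner your argument is sound.

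However, your opening claim --- that \emph{every} one of the $28$ non-base entries admits such a reduction to a base surface --- is false, and this is a genuine gap. The surfaces with $n_4=0$ (No.\ 9, 14, 21, 27, 32, 33, 36, 37) and the entries No.\ 34, 38, 39 cannot all be reached. Concretely: No.\ 9 is the $\smor$-minimal surface over $M_8$ with $(n_3,n_4)=(6,0)$ and $(-K)^2=2$; since every second morphism has $d_{T/T_1}\ge \frac{8}{3}$, no del Pezzo surface in Table \ref{main_table} admits a second morphism onto it (its would-be predecessor would have $(-K)^2\le 2-\frac{8}{3}<0$), and no $\sB_5$-type contraction from a base surface produces it either, since on the relevant line pairs (e.g.\ the $\tmor_1$- and $\smor_4$-configurations $\bigcirc - \bullet - \square$ and $\bigcirc - \bullet - \square - \bullet - \square$) every quasi-line through a $\frac{1}{4}(1,1)$-point also meets a $\frac{1}{3}(1,1)$-point, so contracting it is $\sB_{14}$ and creates a $\frac{1}{5}(1,2)$-point, leaving type $\tA$. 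The same obstruction rules out No.\ 14 (which would need a predecessor with $(n_3,n_4)=(4,1)$, $(-K)^2=\frac{4}{3}$, $\rho=6$, absent from the table) and No.\ 38, since $\pr^2$ is not the image of any second morphism from a surface of the table (all candidates are eliminated in Case 3 of the proof of Proposition \ref{T_1}). The paper closes this gap by a different means: the surfaces with only $\frac{1}{3}(1,1)$-singularities are quoted from the classification of \cite{CH17}, No.\ 34 is identified with $\pr(1,1,4)$ via Lemma \ref{unique_1/n}, and $\pr^2$, $\PP$ are trivially known. Your proposal needs an analogous independent input for these eleven entries; the contraction bookkeeping alone cannot produce them.
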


\begin{proof}
The existence of smooth cases is well-known.
The existence of del Pezzo surfaces with at most $\frac{1}{3}(1,1)$-singularity is also shown in \cite{CH17}.
By Lemma \ref{unique_1/n}, we see that a surface of No.34 is $\pr(1,1,4)$.
The relations of the other cases are the following:
{\footnotesize \rm
\begin{table}[htb]
\caption{The relations of reductions}\label{reduction_table}
\renewcommand{\arraystretch}{1.4}
  \begin{tabular}{|c|c||c|c||c|c|}  \hline
    \multicolumn{1}{|l|}{No.} &   \multicolumn{1}{c||}{how to obtain} & No.
     & \multicolumn{1}{c|}{how to obtain} & No.     & \multicolumn{1}{c|}{how to obtain}   \\ \hline \hline
     1 &  - &                    	         	14 &  known in \cite{CH17}  &	 	27 &  known in \cite{CH17}   	\\ \rowcolor[gray]{0.9}  
     2 &  - &                          		15 &  $\smor_8$ from No.2 &  		28 &  - 				\\   
     3 &  - &                              		16 &  $\smor_4$ from No.3 &		29 &  $\smor_8$ from No.11	\\ \rowcolor[gray]{0.9} 
     4 &  $\smor_4^7$ from No.3  &  		17 & - & 					30 &  $\smor_7$ from No.10	\\ 
     5 &  - &                                        	18 & $\smor_7$ from No.3 &		31 & $\smor_1^2$ from No.28  	\\  \rowcolor[gray]{0.9}
     6 & $\smor_3^5$ from No.5 &		19 &  $\smor_1^2$ from No.17  &	32 &  known in \cite{CH17}	\\ 
     7 &  - &						20 & - &					33 &  known in \cite{CH17} 	\\ \rowcolor[gray]{0.9}
     8 &  $\smor_4^7$ from No.6  &		21 &  known in \cite{CH17}	&	34 &  well known 			\\ 
     9 &  known in \cite{CH17}  &		22 &  - &					35 & $\smor_4$ from No.17	\\  \rowcolor[gray]{0.9}
     10 &  - & 					23 &  $\smor_8$ from No.6	&	36 &  known in \cite{CH17}	\\ 
     11 &  - & 					24 &  $\smor_7$ from No.5	&	37 &  known in \cite{CH17}  	\\ \rowcolor[gray]{0.9}
     12 &  $\smor_1^2$ from No.10  &		25 &  $\smor_4$ from No.7	&	38 &  well known 			\\   
     13 &  $\smor_4^7$ from No.12 &		26 &  $\smor_1$ from No.20	&	39 &  well known  	   		\\ \hline	
  \end{tabular}
\end{table}
}

By this table, we obtain the assertion.

\end{proof}

\begin{lemma}\label{red_float}

Let $X$ be a del Pezzo surface of type $\tA$.
Assume that $X$ is one of No.1, 3, 5, 7, 10, 11, 22 or 28.
Then $X$ has no floating $(-1)$-curves.

\end{lemma}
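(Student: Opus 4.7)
The plan is to argue by contradiction, invoking the classification Theorem \ref{candi_S}, whose proof in Section \ref{candidate_sec} is independent of Section \ref{construct_sec} and so is available here without circularity. Suppose $X$ is one of No.$1, 3, 5, 7, 10, 11, 22, 28$ and carries a floating $(-1)$-curve $E$. Since $E$ lies in the smooth locus, contracting it is an ordinary smooth blow-down, so the resulting surface $X^{(1)}$ is again a del Pezzo surface of type $\tA$ (by Lemma \ref{keep_ldP}) with the same $(n_3,n_4)$ as $X$, with $\rho(X^{(1)}) = \rho(X) - 1$ and $(-K_{X^{(1)}})^2 = (-K_X)^2 + 1$. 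Iterating this contraction until no floating $(-1)$-curve remains, the process terminates after finitely many steps (as $\rho$ strictly decreases) at a del Pezzo surface $S$ of type $\tA$ with no floating $(-1)$-curve, sharing the invariants $(n_3,n_4)$ with $X$ and satisfying $\rho(S) < \rho(X)$.

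By Theorem \ref{candi_S}, this $S$ must appear in Table \ref{main_table}, so it suffices to verify case by case that no entry of Table \ref{main_table} with the relevant $(n_3,n_4)$ has Picard number strictly smaller than $\rho(X)$. A direct inspection confirms this: the pair $(4,4)$ appears only at No.1 with $\rho = 6$; the pair $(4,3)$ only at No.2 ($\rho = 7$) and No.3 ($\rho = 6$); the pair $(3,3)$ only at No.5 ($\rho = 7$); the pair $(4,2)$ only at No.6 and No.7, both with $\rho = 6$; the pair $(2,3)$ only at No.10 ($\rho = 7$); the pair $(3,2)$ only at No.11 and No.12, both with $\rho = 6$; the pair $(1,2)$ only at No.22 ($\rho = 4$); and the pair $(0,2)$ only at No.28 ($\rho = 8$). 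In each of the eight cases, no Table \ref{main_table} entry has Picard number strictly less than the assumed $\rho(X)$, so no candidate for $S$ exists, yielding the desired contradiction.

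The one thing to be careful about, which is not really an obstacle, is the non-circularity of the argument: Theorem \ref{candi_S} is the ``necessity'' half of the main classification, proved in Section \ref{candidate_sec} using only the structural results of Sections \ref{contractions_sec} and \ref{minimal_sec}, and it provides a list of numerical candidates without asserting existence. The present lemma, located in Section \ref{construct_sec}, belongs to the ``existence'' half, so invoking the necessity statement to exclude floating $(-1)$-curves on the constructed $X$ is logically legitimate. With this organizational point in hand, no delicate case analysis on the minimal resolution $Y \to X$ or explicit enumeration of $(-1)$-curves is required.
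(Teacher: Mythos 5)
Your proposal is correct and follows essentially the same argument as the paper: contract floating $(-1)$-curves to reach a surface $S$ with the same $(n_3,n_4)$, invoke Theorem \ref{candi_S} to place $S$ in Table \ref{main_table}, and derive a contradiction by inspecting the table. The only cosmetic difference is that you compare Picard numbers where the paper compares anti-canonical volumes, but by the Noether formula of Remark \ref{Noeth_RR} these are equivalent once $n_3$ is fixed.
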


\begin{proof}

Assume that $X$ has a floating $(-1)$-curve $C$ by contradiction.
Then we have a sequence of first morphisms $X \to  X_1 \to \cdots \to S$ such that $S$ has no floating $(-1)$-curves.
By Lemma \ref{keep_ldP}, we see that $S$ is also a del Pezzo surface of type $\tA$.
We also see that the numbers of singular points on $X$ and $S$ are equal and $(-K_S)^2 > (-K_X)^2$.
We, however, know that $S$ must be in Table \ref{main_table} by Theorem \ref{candi_S}.
This is a contradiction if $X$ is one of No.1, 3, 5, 7, 10, 11, 22 or 28.

\end{proof}

By this lemma, it is enough to show that surfaces of No.2, 17 and 20 do not have any floating $(-1)$-curves.

\subsection{Construction}

In this subsection, we confirm the existences of the eleven cases in Proposition \ref{reduction}.

\begin{notation}
We first prepare notation.
In $\mathbb{F}_0 := \PP$, fix two distinct fibers of $\pi_1$ as $l_1, l_2$ and two distinct fibers  of $\pi_2$ as $l_3, l_4$. 
Then we know that $-K_{\mathbb{F}_0} \sim l_1 + l_2 + l_3 + l_4$.
Denote a del Pezzo surface of degree six by $S_6$.
We also denote the $(-1)$-curves on $S_6$ by $l_1, \ldots , l_6$. 
We see that $(l_1 \cup l_3 \cup l_5) \cap (l_2 \cup l_4 \cup l_6)$ is a set of six points and call them {\it the six points on $S_6$}.
We know that $-K_{S_6} \sim l_1 + l_2 + l_3 + l_4 + l_5 + l_6$. \\

\end{notation}

\noindent{\bf No.1}

Let $X$ be a del Pezzo surface of No.1.
Let $Y \to X$ be the minimal resolution.
By observing the configuration of negative curves on $Y$, we can find the following blow-downs $\alpha_1$, $\alpha_2$:
\begin{figure}[htbp]
\centering\includegraphics[width=13.5cm, bb=0 0 1440 578]{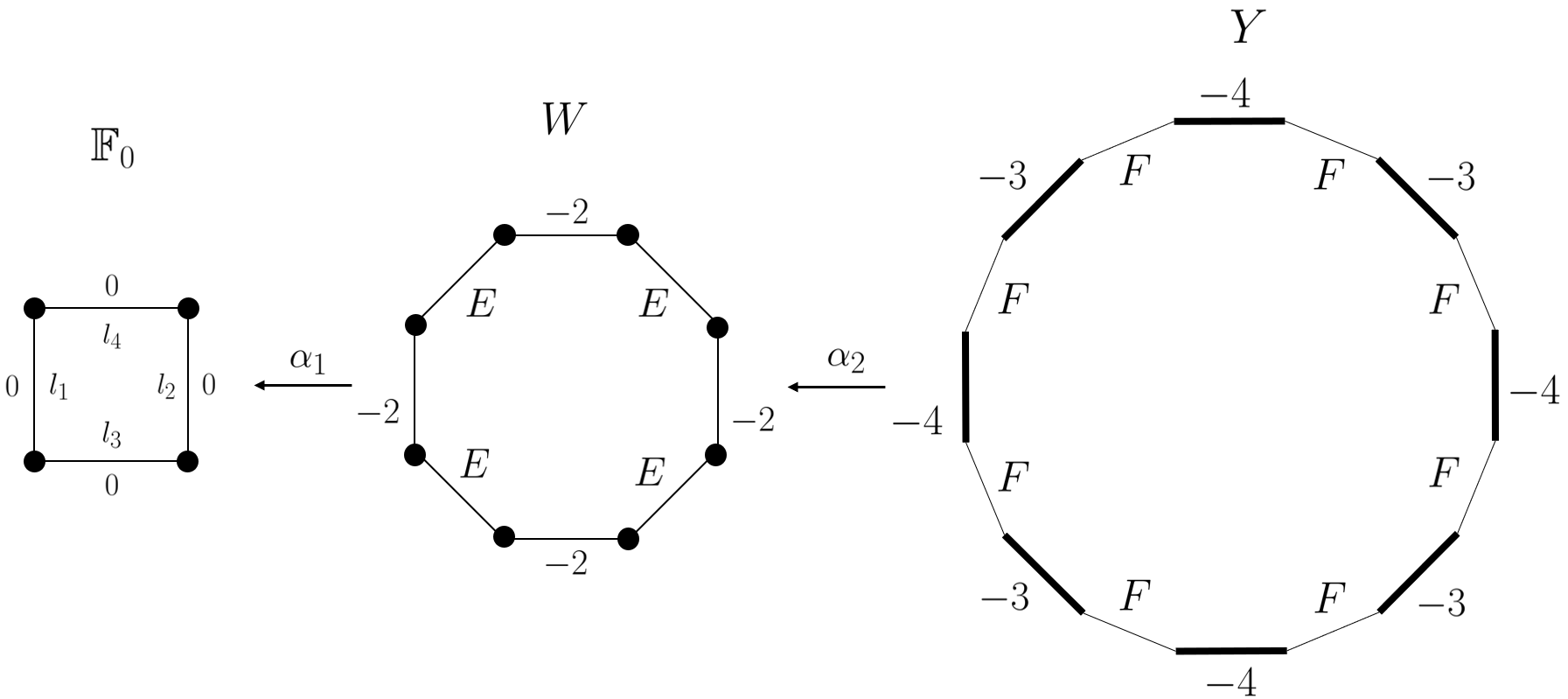}
\end{figure}

Let us construct an example of No.1.
Let $-K_{\mathbb{F}_0} \sim l_1 + l_2 + l_3 + l_4 =: L$.
Let $\alpha_1 : W \to \mathbb{F}_0$ be the blow-up at $(l_1 \cup l_2) \cap (l_3 \cup l_4)$.
Denote the exceptional divisor of $\alpha_1$ by $E$.
Then we have
\[
 -K_W = \alpha_1^* (-K_{\mathbb{F}_0}) - E  = \alpha_1^* L - E  = L_W + E . 
\]

Let $\alpha_2 : Y \to W$ be the blow-up at $E \cap L_W$.
Denote the exceptional divisor of $\alpha_2$ by $F$. 
Then we have
\begin{eqnarray*}
 -K_Y &=& \alpha_2^* (-K_{W}) - F  \\
  &=& \alpha_2^* (L_W + E) - F  \\
  &=& L_Y + E_Y + F . 
\end{eqnarray*} 
Let $f : Y \to X$ be the contraction of $L_Y$ and $E_Y$.
Then we have
\[
K_Y = f^*K_X - \frac{1}{2}L_Y - \frac{1}{3}E_Y .
\]
by the above construction.
Hence we obtain the following relation;
\[
f^*(-K_X) = \frac{1}{2}L_Y + \frac{2}{3}E_Y + F  .
\]

\begin{claim}\label{K^2_nef}

$(-K_X)^2 = \frac{4}{3}$ and $-K_X$ is nef.

\end{claim}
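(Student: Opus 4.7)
The plan is to handle the two assertions separately. For the volume $(-K_X)^2 = \tfrac{4}{3}$, I would pull everything back to $Y$: from the relation $K_Y = f^{*}K_X - \tfrac{1}{2}L_Y - \tfrac{1}{3}E_Y$ displayed just above, together with the fact that $L_Y$ and $E_Y$ are $f$-exceptional and hence $f^{*}K_X\cdot L_Y = f^{*}K_X\cdot E_Y = 0$, one immediately gets
\[
 K_X^2 \;=\; K_Y^2 \;-\; \bigl(\tfrac{1}{2}L_Y + \tfrac{1}{3}E_Y\bigr)^{2}.
\]
Both terms on the right are determined by (i) the total number of blow-ups used to build $Y$ from $\mathbb{F}_0$, and (ii) the self-intersections of the components of $L_Y$ and $E_Y$ together with the fact that these two divisors are pairwise disjoint.

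For (ii) I would verify combinatorially that each component of $L_W$ meets $E$ in exactly two points, one on each of the two $\alpha_1$-exceptional curves sitting over the two points where that ruling hits the other pencil. After $\alpha_2$ each of the four components of $L_Y$ then becomes a $(-4)$-curve, each of the four components of $E_Y$ becomes a $(-3)$-curve, and $L_Y\cap E_Y = \emptyset$ because every point of $L_W\cap E$ is a centre of $\alpha_2$. This also shows $|L_W\cap E| = 8$, so $Y$ is $\mathbb{F}_0$ blown up $12$ times and $K_Y^2 = 8-12 = -4$; a short arithmetic then produces $(-K_X)^2 = -4 + \tfrac{16}{3} = \tfrac{4}{3}$.

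For nefness, the effective formula $f^{*}(-K_X) = \tfrac{1}{2}L_Y + \tfrac{2}{3}E_Y + F$ already displayed does most of the work. By the projection formula it suffices to show $f^{*}(-K_X)\cdot C'\ge 0$ for every irreducible curve $C'\subset Y$ that is not $f$-exceptional. Any such $C'$ is not a component of $L_Y$ or $E_Y$, so its intersection numbers with these effective divisors are automatically non-negative, and the same is true for $F$ unless $C'$ is itself one of the eight $(-1)$-curve components $F_i$ of $F$. For each such $F_i$ the same combinatorial picture shows that $F_i$ meets exactly one component of $L_Y$ and one of $E_Y$ transversally in a single point, whence
\[
f^{*}(-K_X)\cdot F_i \;=\; \tfrac{1}{2}\cdot 1 + \tfrac{2}{3}\cdot 1 + (-1) \;=\; \tfrac{1}{6} \;>\; 0,
\]
and nefness is established.

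The only real obstacle is combinatorial bookkeeping: pinning down that the four centres of $\alpha_1$ and the eight centres of $\alpha_2$ sit exactly where they need to, so that $L_Y$ and $E_Y$ become the promised pairwise disjoint $(-4)$- and $(-3)$-curves and each $F_i$ meets exactly one component of each transversally in one point. Once this picture is firmly in place, both parts of the claim reduce to short intersection-number calculations, and the same bookkeeping will be reused when the subsequent claim upgrades nefness to ampleness.
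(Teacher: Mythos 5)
Your proposal is correct and follows essentially the same route as the paper: the same identification of $L_Y$, $E_Y$, $F$ as four $(-4)$-curves, four $(-3)$-curves and eight $(-1)$-curves with the stated incidences, and the same nefness argument via effectivity of $f^*(-K_X)$ plus the computation $f^*(-K_X)\cdot F_i = \tfrac{1}{6}$. The only (cosmetic) difference is that you compute the volume as $K_Y^2 - \bigl(\tfrac{1}{2}L_Y+\tfrac{1}{3}E_Y\bigr)^2$ while the paper evaluates $f^*(-K_X)\cdot(-K_Y)$ term by term; both rest on the same combinatorial bookkeeping and give $\tfrac{4}{3}$.
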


\begin{proof}
By the previous equation, we have
\[
-K_Y \cdot f^* (-K_X) = -K_Y \cdot \frac{1}{2}L_Y + (-K_Y) \cdot \frac{2}{3}E_Y + (-K_Y) \cdot F.
\]
Since we see that $L_Y$ is a sum of four $(-4)$-curves, $E_Y$ is a sum of four $(-3)$-curves and $F$ is a sum of eight $(-1)$-curves, we have
\[
(-K_X)^2 = \frac{1}{2} \cdot (-2 \cdot 4) + \frac{2}{3} \cdot (-1 \cdot 4) + 1 \cdot 8 = \frac{4}{3}. 
\]
Let $C$ be an irreducible curve on $X$.
Since $-K_X \cdot C = f^* (-K_X) \cdot C_Y$, it is enough to show that $f^* (-K_X)$ is nef.
Let $D \subset Y$ be an irreducible curve.
We see that $f^* (-K_X) \cdot D = 0$ if $D \subset L_Y \cup E_Y$.
If $D \subset F$, then we have  $f^* (-K_X) \cdot D = \frac{1}{2} \cdot 1 + \frac{2}{3} \cdot 1 + 1 \cdot (-1) = \frac{1}{6} \ge 0$.
In the other cases, since $f^* (-K_X)$ is effective, we see that $f^* (-K_X) \cdot D \ge 0$.
Hence $f^* (-K_X)$ is nef.

\end{proof}

\begin{claim}\label{exi_1}

$X$ is a del Pezzo surface, that is, $-K_X$ is ample.

\end{claim}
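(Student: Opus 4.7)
The strategy is to finish the proof of ampleness via the Nakai--Moishezon criterion, building on Claim~\ref{K^2_nef}, which already supplies $(-K_X)^2 = \tfrac{4}{3} > 0$ and the nefness of $-K_X$. It therefore suffices to upgrade nefness to strict positivity, i.e.\ to show $-K_X \cdot C > 0$ for every irreducible curve $C \subset X$.

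First I would translate the question to intersection numbers on $Y$: for an irreducible curve $C \subset X$, its strict transform $C_Y$ is not a component of $\Exc(f) = L_Y \cup E_Y$, so the projection formula yields
\[
-K_X \cdot C \;=\; f^{*}(-K_X) \cdot C_Y \;=\; \tfrac{1}{2} L_Y \cdot C_Y + \tfrac{2}{3} E_Y \cdot C_Y + F \cdot C_Y.
\]
Then I would split into cases according to where $C_Y$ lies. If $C_Y$ is a component $F_i$ of $F$, the computation inside Claim~\ref{K^2_nef} already gives $f^{*}(-K_X) \cdot F_i = \tfrac{1}{6} > 0$. Otherwise $C_Y$ is not contained in $L_Y \cup E_Y \cup F$, so each of the three intersection numbers on the right is non-negative, and their sum can vanish only if $C_Y$ is disjoint from $L_Y \cup E_Y \cup F$.

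The main (and really the only) obstacle is ruling out this last configuration. For that I would pull back $L = -K_{\mathbb{F}_0}$ along $\alpha_1 \circ \alpha_2$. Since $L$ has multiplicity $2$ at each of the four points blown up by $\alpha_1$, one obtains $\alpha_1^{*} L = L_W + 2E$; and since each of the eight points blown up by $\alpha_2$ lies on both $L_W$ and $E$, each with multiplicity one, the second step gives $(\alpha_1 \circ \alpha_2)^{*} L = L_Y + 2 E_Y + 3F$. Setting $C_0 := (\alpha_1 \circ \alpha_2)_{*} C_Y$, which is a genuine irreducible curve because $C_Y$ is not contained in $\Exc(\alpha_1 \circ \alpha_2) = E_Y \cup F$ in this case, the projection formula gives $L \cdot C_0 = (L_Y + 2E_Y + 3F) \cdot C_Y = 0$, contradicting the ampleness of $-K_{\mathbb{F}_0}$ on $\mathbb{F}_0 = \PP$. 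Hence $f^{*}(-K_X) \cdot C_Y > 0$ in every case, and Nakai--Moishezon yields the ampleness of $-K_X$.
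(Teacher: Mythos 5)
Your proof is correct and follows essentially the same route as the paper: Nakai--Moishezon plus Claim~\ref{K^2_nef}, the same case split on where $C_Y$ sits relative to $L_Y \cup E_Y \cup F$, and the same final contradiction via the ampleness of $-K_{\mathbb{F}_0}$. The only cosmetic difference is that you pull back $L$ to get $\alpha^{*}L = L_Y + 2E_Y + 3F$ and intersect upstairs, whereas the paper pushes $f^{*}(-K_X)$ forward to $-\tfrac{1}{2}K_{\mathbb{F}_0}$ and intersects downstairs; these are the two sides of the same projection-formula computation.
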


\begin{proof}

Let $C$ be an irreducible curve on $X$.
By the Nakai-Moishezon criterion, it suffices to show that $-K_X \cdot C > 0$ since we see that $(-K_X)^2 = \frac{4}{3} > 0$.

Assume that $-K_X \cdot C = 0$ by contradiction.
We see that $C_Y \not\subset L_Y \cup E_Y$ by definition.
If $C_Y \subset F$, then we see that $-K_X \cdot C = f^* (-K_X) \cdot C_Y = \frac{1}{6}$.
This is a contradiction.
Hence we see that $C_Y \not\subset L_Y \cup E_Y \cup F$.
Thus by assumption, we see $L_Y \cdot C_Y = E_Y \cdot C_Y = F \cdot C_Y = 0$.
Hence ${\alpha}_* C_Y $ is an irreducible curve on ${\mathbb{F}_0}$ and ${\alpha}^* {\alpha}_* C_Y = C_Y $, where $\alpha := \alpha_1 \circ \alpha_2$.
Since ${\alpha}_* f^* (-K_X) = -\frac{1}{2} K_{\mathbb{F}_0} $, we see that $-K_X \cdot C = f^*(-K_X) \cdot C_Y = -\frac{1}{2} K_{\mathbb{F}_0} \cdot {\alpha}_* C_Y > 0$.
This is a contradiction.
Thus we see that $X$ is a del Pezzo surface. 

\end{proof}

\noindent From this construction, it follows that $X$ is a del Pezzo surface such that $(-K_X)^2 = \frac{4}{3}$ and $(n_3, n_4) = (4,3)$.
If $X$ has some floating $(-1)$-curves, we obtain a contradiction as in the proof of Lemma \ref{red_float}.
Thus we see that $X$ has no floating $(-1)$-curves.
Hence $X$ is of No.1. \\

\noindent{\bf No.2}

Let $X$ be a del Pezzo surface of No.2.
Let $Y \to X$ be the minimal resolution.
By observing the configuration of negative curves on $Y$, we can find the following blow-downs $\alpha_1$, $\alpha_2$:
\begin{figure}[htbp]
\centering\includegraphics[width=13.5cm, bb=0 0 1450 568]{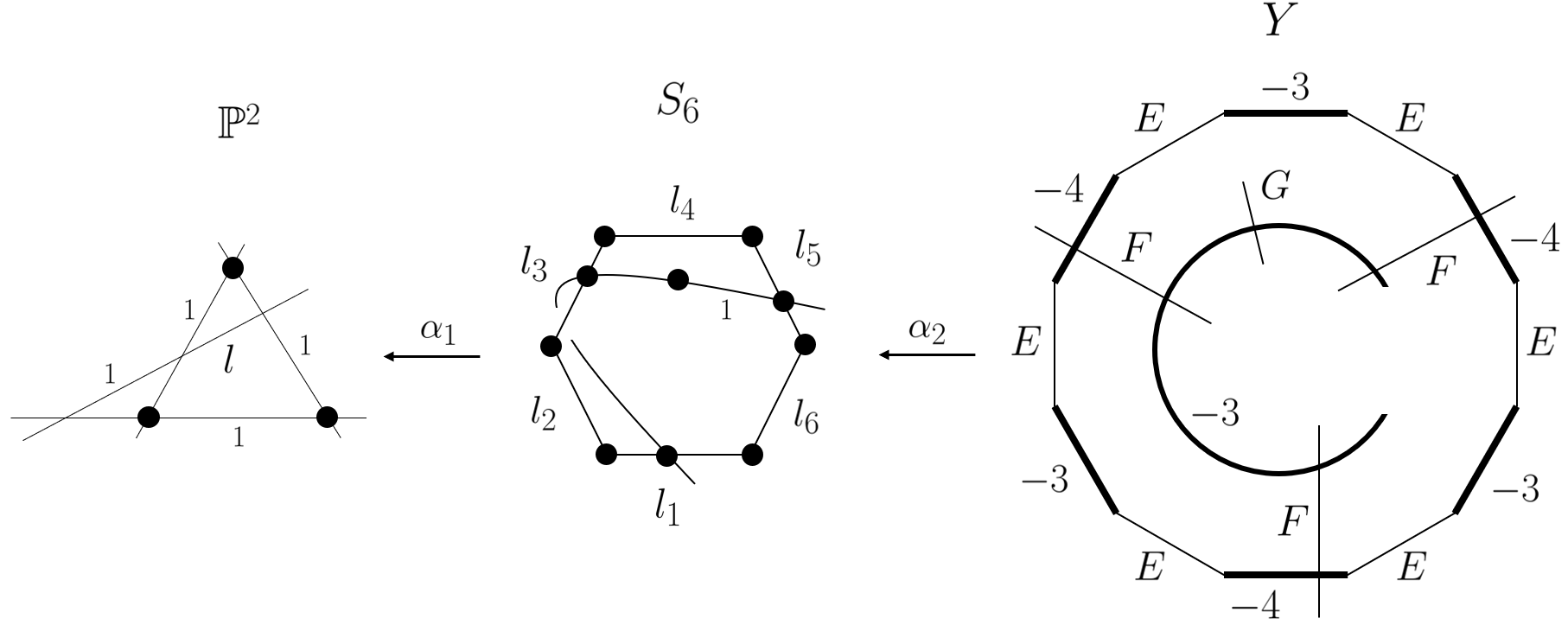}
\end{figure}

Let us construct an example of No.2.
In $\pr^2$, take non-collinear distinct three points $P_1, P_2, P_3$ and a line $l$ which does not pass through them.
Let $\alpha_1 : S_6 \to \pr^2$ be the blow-up at the three points.

Set $L := l_1 + l_3 + l_5$ and $M := l_2 + l_4 + l_6$.
We may assume that $L \cdot l_{S_6} = 3$ and $M \cdot l_{S_6} = 0$.
Take a general point $P$ on $l_{S_6}$.
Let $\alpha_2 : Y \to S_6$ be the blow-up at ten points, $((l_1 \cup l_3 \cup l_5) \cap (l_2 \cup l_4 \cup l_6 \cup l_{S_6})) \cup \{ P \}$.
Let $E$ denote the exceptional divisor over $(l_1 \cup l_3 \cup l_5) \cap (l_2 \cup l_4 \cup l_6)$, $F$ the exceptinal divisor over $(l_1 \cup l_3 \cup l_5) \cap l_{S_6}$ and $G$ the exceptional divisor over $P$.
We also have $l_{S_6} \sim \frac{1}{3} L + \frac{2}{3} M$.
Then we have
\begin{eqnarray*}
 -K_Y &=& \alpha_2^* (-K_{S_6}) - E - F - G  \\
  &=& \alpha_2^* (\frac{2}{3} L + \frac{1}{3} M + l_{S_6}) - E - F - G  \\
  &=& \frac{2}{3} (L_Y + E + F) + \frac{1}{3} (M_Y + E) + (l_Y + F + G)  \\
  & & - E - F - G \\
  &=& \frac{2}{3} L_Y + \frac{1}{3} M_Y  + l_Y + \frac{2}{3} F   .
\end{eqnarray*} 
Let $f : Y \to X$ be the contraction of $L_Y$, $M_Y$ and $l_Y$.
Then we also have
\[
K_Y = f^* K_X - \frac{1}{2} L_Y - \frac{1}{3} M_Y - \frac{1}{3} l_Y  .
\]
Hence we obtain the following relation;
\[
f^* (-K_X) = \frac{1}{6} L_Y + \frac{2}{3} l_Y + \frac{2}{3} F  .
\]

\begin{claim}

$-K_X$ is ample.

\end{claim}

\begin{proof}

We see that $(-K_X)^2 = \frac{1}{3} > 0$ and $-K_X$ is nef as in Claim \ref{K^2_nef}.
Let $C$ be an irreducible curve on $X$.
Assume that $-K_X \cdot C = 0$ by contradiction. 
Set $\alpha_* C_Y \sim dl$, where $\alpha := \alpha_1 \circ \alpha_2$.
By assumption, we see that $C_Y \cdot F = 0$.
Thus we have
\[
\alpha^* \alpha_* C_Y = C_Y + \sum^6_{i=1} a_iE_{i} + bG + \sum^3_{j=1}c_jM_{Y,j}  ,
\]
where $E_{i}$ and $M_{Y,j}$ are irreducible components of $E$ and $M_Y$ respectively.
Here we also see that $l_Y \cdot C_Y = 0$ and $l_Y \cdot E = 0$ by assumption.
Thus we have
\begin{equation}\label{no2}
l_Y \cdot \alpha^* \alpha_* C_Y = \alpha_* l_Y \cdot \alpha_* C_Y = l \cdot dl = d ,
\end{equation}
and
\[
l_Y \cdot M_Y = (\alpha_2^* l_{S_6} - E - F ) \cdot M_Y = l_{S_6} \cdot M + 0 + 0 = 0 .
\]
Moreover, since we see that $l_Y \cdot G = 1$, we obtain $d = b$ by calculating $l_Y \cdot (\ref{no2})$.

$\Gamma := \alpha_* C_Y$ is an irreducible curve of degree $d$ in $\pr^2$ and it passes through the center of $G$ $d$ times.
By these facts, we can conclude that $d=1$ and $\Gamma$ is a line.
Therefore, $C_Y$ is the strict transform of a line on $\pr^2$ passing through the center of $G$.
We see that $L_Y \cdot C_Y = 0$, $L_Y \cdot G = 0$ and $L_Y \cdot M_Y = 0$.
We also have
\[
L_Y \cdot \alpha^* \alpha_* C_Y = \alpha_* L_Y \cdot \alpha_* C_Y = 3l \cdot l = 3
\]
and
\[
L_Y \cdot \sum^6_{i=1} a_i E_{i} = \sum^6_{i=1} a_i . 
\]
Thus we have
\[
3 =  \sum^6_{i=1} a_i
\]
by calculating $L_Y \cdot (\ref{no2} )$.
This means that $\Gamma$ passes through $P_1, P_2, P_3$ three times.
This, however, contradicts the fact that $\Gamma$ is a line in $\pr^2$.

\end{proof}

\begin{claim}

There is no floating $(-1)$-curves on $X$.

\end{claim}

\begin{proof}

Assume there exists a floating $(-1)$-curve $C$ on $X$.
Since $C$ does not pass through any singular points on $X$, we have $f^* C = C_Y$.
Thus we have
$1 = -K_X \cdot C = f^*(-K_X) \cdot C_Y = \frac{1}{6} L_Y \cdot C_Y + \frac{2}{3} l_Y \cdot C_Y + \frac{2}{3} E_2 \cdot C_Y =  \frac{2}{3} E_2 \cdot C_Y$.
Hence we have $E_Y \cdot C_Y = \frac{3}{2}$.
Since $Y$ is a smooth surface, $E_Y \cdot C_Y$ must be an integer number.
This is a contradiction.  

\end{proof}

\noindent From this construction, it follows that $X$ is a del Pezzo surface having no floating $(-1)$-curves such that $(-K_X)^2 = \frac{1}{3}$ and $(n_3, n_4) = (4,3)$.
Hence $X$ is of No.2. \\

\noindent{\bf No.3}

Let $X$ be a del Pezzo surface of No.3.
Let $Y \to X$ be the minimal resolution.
By observing the configuration of negative curves on $Y$, we can find the following blow-downs $\alpha_1$, $\alpha_2$:
\begin{figure}[htbp]
\centering\includegraphics[width=14cm, bb=0 0 1435 608]{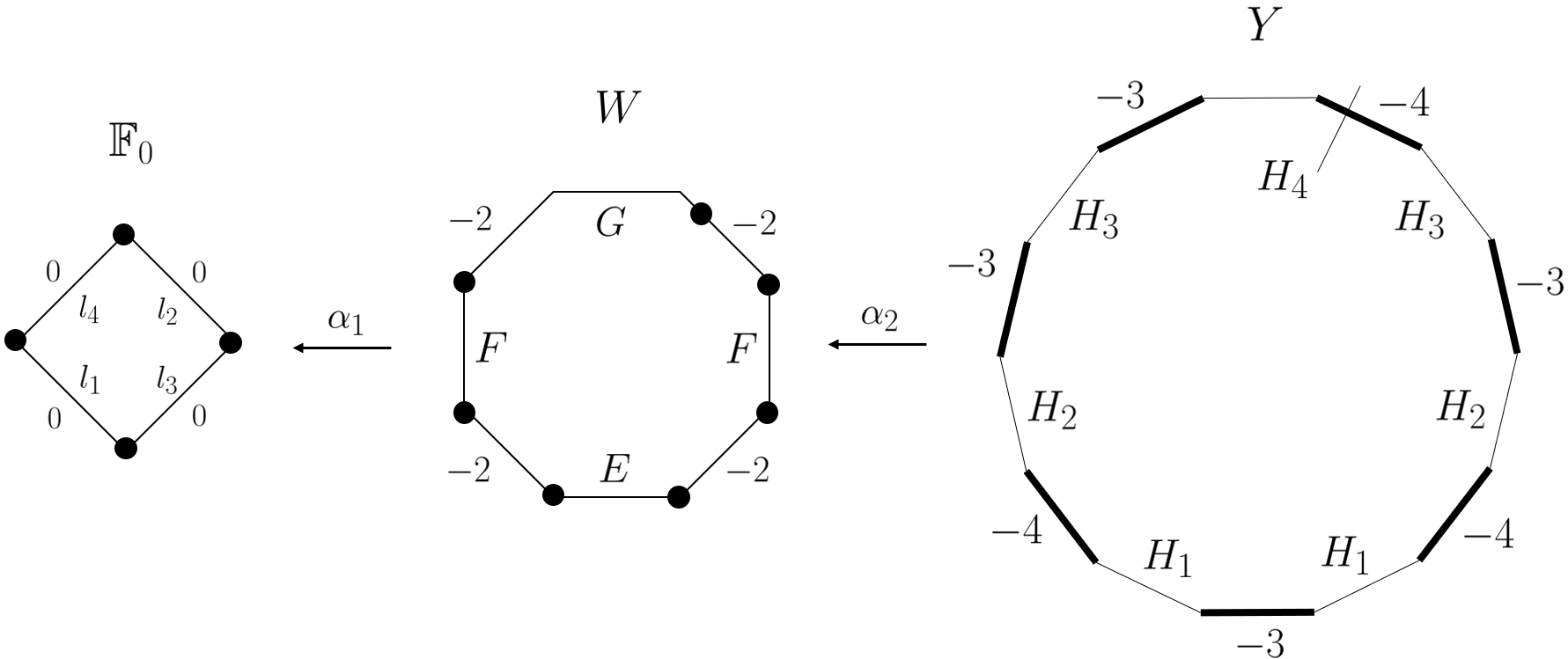}
\end{figure}

Let us construct an example of No.3.
Let $\alpha_1 : W \to \mathbb{F}_0$ be the blow-up at $(l_1 + l_2) \cap (l_3 + l_4)$.
Denote the exceptional divisor over $l_1 \cap l_3$ by $E$, over $(l_1 \cap l_4) \cup (l_2 \cap l_3)$ by $F$ and $l_2 \cap l_4$ by $G$. 
Denote the strict transform of $l_1, \ldots , l_4$ by $l'_1, \ldots , l'_4$. 
Then we have
\begin{eqnarray*}
 -K_W &=& \alpha_1^* (-K_{\mathbb{F}_0}) - E - F - G  \\
  &=& \alpha_1^* (l_1 + l_2 + l_3 + l_4) - E - F - G  \\
  &=& (l'_1 + l'_3 + 2E + F) + (l'_2 + l'_4 + F + 2G) -E - F - G \\
  &=& l'_1 + l'_2 + l'_3 + l'_4 + E + F + G.
\end{eqnarray*}

Take a general point $P$ on $l'_2$.
Let $\alpha_2 : Y \to W$ be a blow up at seven points, $((l'_1 \cup l'_2 \cup l'_3 \cup l'_4) \cap (E \cup F)) \cup \{ P \}$.
Denote the exceptional divisors over $(l'_1 \cup l'_3) \cap E$ by $H_1$, over $(l'_1 \cup l'_3) \cap F$, over $(l'_2 \cup l'_4) \cap F$ by $H_3$ and over $P$ by $H_4$.
Denote the strict transform of $l'_1, \ldots , l'_4$ by $l''_1, \ldots , l''_4$.
Then we have
\begin{eqnarray*}
 -K_Y &=& \alpha_2^* (-K_W) - H_1 - H_2 - H_3 - H_4  \\
  &=& \alpha_2^* (l'_1 + l'_2 + l'_3 + l'_4 + E + F + G) - H_1 - H_2 - H_3 - H_4  \\
  &=&   (l''_1 + l''_3 + H_1 + H_2) + (l''_2 + l''_4 + H_3 + H_4) + (E_Y + H_1)  \\
   &  & + (F_Y + H_2 + H_3) + G_Y - H_1 - H_2 - H_3 - H_4 \\
  &=& l''_1 + l''_2 + l''_3 + l''_4 + E_Y + F_Y + G_Y + H_1 + H_2 + H_3 .
\end{eqnarray*} 
Let $f : Y \to X$ be the contraction of $l''_1, \ldots , l''_4$, $E_Y$ and $F_Y$.
Then we also have
\[
K_Y = f^*K_X - \frac{1}{2}l''_1 - \frac{1}{2}l''_2 - \frac{1}{2}l''_3 - \frac{1}{3}l''_4 - \frac{1}{3}E_Y - \frac{1}{3}F_Y .
\]        
Hence we obtain the following relation;
\[
f^*(-K_X) = \frac{1}{2}l''_1 + \frac{1}{2}l''_2 + \frac{1}{2}l''_3 + \frac{2}{3}l''_4 + \frac{2}{3}E_Y + \frac{2}{3}F_Y + G_Y + H_1 + H_2 + H_3.
\]

\begin{claim}

$-K_X$ is ample.

\end{claim}

\begin{proof}

We see that $(-K_X)^2 = \frac{4}{3} > 0$ and $-K_X$ is nef as in Claim \ref{K^2_nef}.
Let $C$ be an irreducible curve on $X$.
Assume that $-K_X \cdot C = 0$ by contradiction. 
By definition, we see that $C_Y \cdot E_Y = C_Y \cdot F_Y = 0$ and $C_Y \cdot l''_i = 0$ for $i \in \{ 1, \ldots , 4 \}$.
If $C_Y \subset G_Y \cup H_1 \cup \cdots \cup H_4$, then we see that $f^* (-K_X) \cdot C_Y \not= 0$ by calculation.
Thus we may assume that $C_Y \not\subset G_Y \cup H_1 \cup \cdots \cup H_4$.
Then we see that $C_Y \cdot G_Y = C_Y \cdot H_1 = C_Y \cdot H_2 = C_Y \cdot H_3 = 0$ since $f^* (-K_X) \cdot C_Y = 0$.
Thus $\alpha_* C_Y$ is an irreducible curve and let $\alpha_* C_Y = al_1 +bl_3$, where $\alpha := \alpha_1 \circ \alpha_2$.
Then we have
\[
\alpha^* \alpha_* C_Y = C_Y + dH_4 ,
\]
where $d = C_Y \cdot H_4 \ge 0$.
Thus we have
\[
l''_1 \cdot \alpha^* \alpha_* C_Y = l''_1 \cdot C_Y + l''_1 \cdot dH_4 .
\]
We see that $l''_1 \cdot \alpha^* \alpha_* C_Y = l_1 \cdot (al_1 +bl_3) = b$.
We also see that $l''_1 \cdot C_Y = l''_1 \cdot H_4 = 0$.
Hence we obtain $b=0$.
We also obtain $a=0$ similarly.
Therefore, we have $\alpha_* C_Y = 0$.
This contradicts the fact that $\alpha_* C_Y$ is an irreducible curve.
Thus we see that $X$ is a del Pezzo surface. 

\end{proof}

\noindent From this construction, it follows that $X$ is a del Pezzo surface such that $(-K_X)^2 = \frac{4}{3}$ and $(n_3, n_4) = (4,3)$.
If $X$ has some floating $(-1)$-curves, we obtain a contradiction as in the proof of Lemma \ref{red_float}.
Thus we see that $X$ has no floating $(-1)$-curves.
Hence $X$ is of No.3. \\

\noindent{\bf No.5} 

Let $X$ be a del Pezzo surface of No.5.
Let $Y \to X$ be the minimal resolution.
By observing the configuration of negative curves on $Y$, we can find the following blow-downs $\alpha_1$, $\alpha_2$:
\begin{figure}[htbp]
\centering\includegraphics[width=14cm, bb=0 0 1450 560]{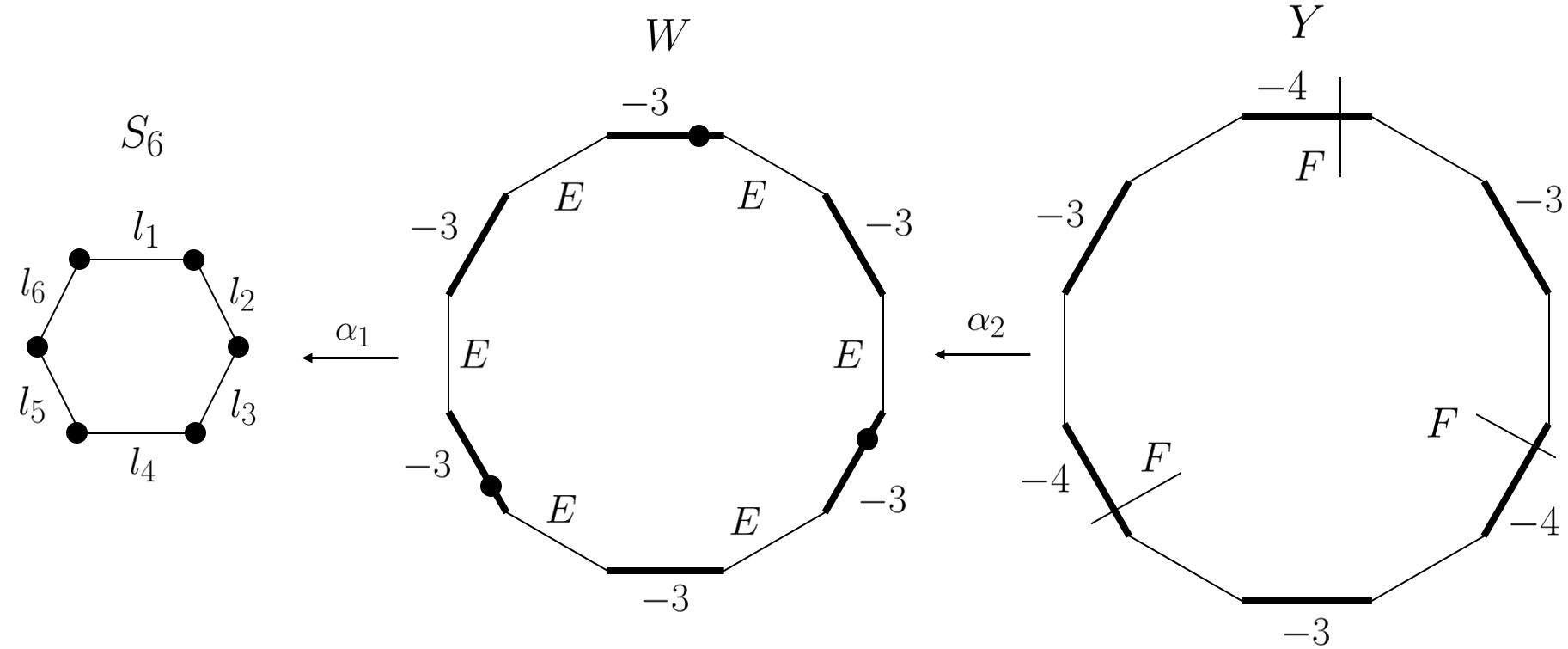}
\end{figure}

Let us construct an example of No.5.
Denote $L := l_1 + l_3 + l_5$ and $M := l_2 + l_4 + l_6$.
Let $\alpha_1 : W \to S_6$ be the blow-up at the six points on $S_6$.
Denote the exceptional divisor by $E$.
Denote the strict transforms of $l_1, l_3, l_5$ by $l'_1, l'_3, l'_5$ respectively.
Then we have
\begin{eqnarray*}
 -K_W &=& \alpha_1^* (-K_{S_6}) - E  \\
  &=& \alpha_1^* (L + M) - E  \\
  &=&   L_W + M_W + E .
\end{eqnarray*} 

Let $\alpha_2 : Y \to W$ be the blow-up at general points on $l'_1, l'_3, l'_5$.
Take a point $P_1$ on $l'_1$, $P_2$ on $l'_3$ and $P_3$ on $l'_5$.
Denote the exceptional divisor over $P_i$ by $F_i$ for $i \in \{1,2,3 \}$.
Set $F := F_1 + F_2 + F_3$.
Then we have 
\begin{eqnarray*}
 -K_Y &=& \alpha_1^* (-K_W) - F  \\
  &=& \alpha_1^* (L_W + M_W + E) - F  \\
  &=&   (L_Y + F) + M_Y + E_Y - F   \\
  &=&  L_Y + M_Y + E_Y .
\end{eqnarray*}
Let $f : Y \to X$ be the contraction of $L_Y$ and $M_Y$.
Then we also have
\[
K_Y  = f^*K_X - \frac{1}{2}L_Y - \frac{1}{3}M_Y .
\]
Hence we obtain the following relation;
\[
f^*(-K_X) =  \frac{1}{2}L_Y + \frac{2}{3}M_Y + E_Y .
\]

\begin{claim}

$-K_X$ is ample.

\end{claim}

\begin{proof}

We see that $(-K_X)^2 = 1 > 0$ and $-K_X$ is nef as in Claim \ref{K^2_nef}.
Let $C$ be an irreducible curve on $X$.
Assume that $-K_X \cdot C = 0$ by contradiction.
By assumption, we see that $L_Y \cdot C_Y = M_Y \cdot C_Y = E_Y \cdot C_Y = 0$.
Let $p : S_6 \to \pr^2$ be a contraction of $l_2, l_4, l_6$.
Set $\alpha_* C_Y = dl$, where $\alpha := p \circ \alpha_1 \circ \alpha_2 : Y \to \pr^2$.
Denote the strict transforms of $l_2, l_4, l_6$ by $l''_2, l''_4, l''_6$.
Then we have
\begin{equation}\label{No.5}
\alpha^* \alpha_* C_Y = C_Y + xl''_2 + yl''_4 + zl''_6 + \sum_{i=1}^6 c_i E_{Y,i} + \sum_{i=1}^3 d_i F_i ,
\end{equation}
where $x \, y, z, c_i, d_i \in \Z$.
If we multiply both sides of (\ref{No.5}) by $L_Y$, then we have
\[
3d = \sum_{i=1}^6 c_i + \sum_{i=1}^3 d_i . 
\]
By multiplying both sides of (\ref{No.5}) by $M_Y$ and $E_Y$, we have
\[
0 = -3(x+y+z) + \sum_{i=1}^6 c_i \ \ {\rm and} \ \ 0 = 2(x+y+z) - \sum_{i=1}^6 c_i . 
\]
By these relations, we obtain $x+y+z = \sum_{i=1}^6 c_i = 0$.
Thus we have
\[
3d - \sum_{i=1}^3 d_i = 0.
\] 
Then we see that $\alpha(F)$ is a set of three points in $\pr^2$.
Let $\varphi : S'_6 \to \pr^2$ be the blow-up at the three points.
Since we take $P_1$, $P_2$ and $P_3$ generally, $S'_6$ is a del Pezzo surface.
Then there exists a birational morphism $\psi : Y \to S'_6$ such that $\alpha = \varphi \circ \psi : Y \to \pr^2$.
We may denote $\psi_* F$ by $F$ again.
Then we have
\[
\varphi^* \alpha_* C_Y = C_{S'_6} +  \sum_{i=1}^3 d_i F_i.
\]
By this relation, we have
\[
-K_{S'_6} \cdot \varphi^* \alpha_* C_Y = -K_{S'_6} \cdot C_{S'_6} + (-K_{S'_6}) \cdot \sum_{i=1}^3 d_i F_i.
\]
We see that $-K_{S'_6} \cdot \varphi^* \alpha_* C_Y = -K_{\pr^2} \cdot dl = 3d$ and $-K_{S'_6} \cdot \sum_{i=1}^3 d_i F_i = \sum_{i=1}^3 d_i$.
Hence we have
\[
-K_{S'_6} \cdot C_{S'_6} = 3d - \sum_{i=1}^3 d_i  = 0.
\]
This contradicts the fact that $S'_6$ is a del Pezzo surface.

\end{proof}

\noindent From this construction, it follows that $X$ is a del Pezzo surface such that $(-K_X)^2 = 1$ and $(n_3, n_4) = (3,3)$.
If $X$ has some floating $(-1)$-curves, we obtain a contradiction as in the proof of Lemma \ref{red_float}.
Thus we see that $X$ has no floating $(-1)$-curves.
Hence $X$ is of No.5. \\

\noindent{\bf No.7}

Let $X$ be a del Pezzo surface of No.7.
Let $Y \to X$ be the minimal resolution.
By observing the configuration of negative curves on $Y$, we can find the following blow-downs $\alpha_1$, $\alpha_2$:
\begin{figure}[htbp]
\centering\includegraphics[width=14cm, bb=0 0 1440 500]{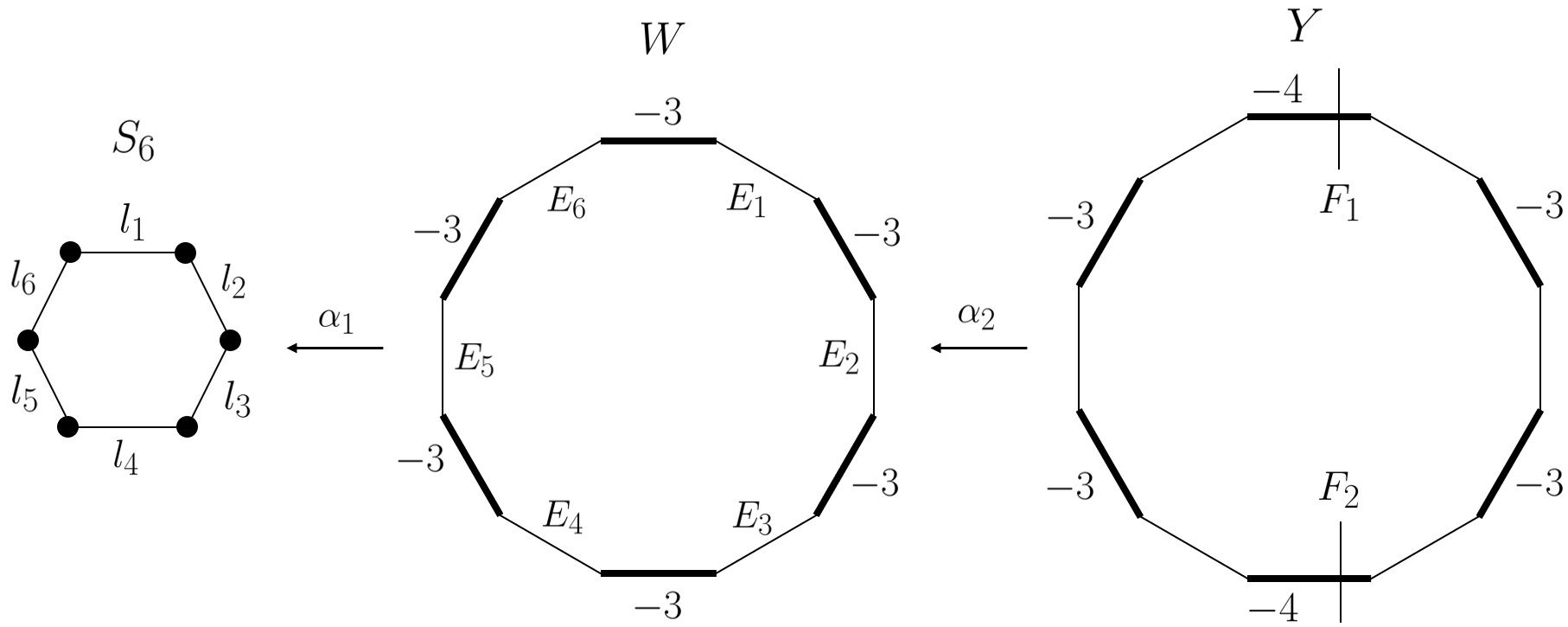}
\end{figure}

Let us construct an example of No.7.
In $S_6$, set $L := l_1 + l_4$ and $M := l_2 + l_3 + l_5 + l_6$.
Let $\alpha_1 : W \to S_6$ be the blow-up at the six points on $S_6$.
Denote the exceptional divisor by $E$.
Denote each irreducible component of $E$ by $E_i$ as in the picture.
Denote the strict transform of $l_1, l_4$ by $l'_1, l'_4$.
Then we have
\begin{eqnarray*}
 -K_W &=& \alpha_1^* (-K_{S_6}) - E  \\
  &=& \alpha_1^* (L + M) - E  \\
  &=&   L_W + M_W + E .
\end{eqnarray*} 
Take a general point $P_1$ on $l'_1$ and $P_2$ on $l'_4$.
Let $\alpha_2 : Y \to W$ be the blow-up at $P_1$ and $P_2$.
Denote the exceptional divisor over $P_i$ by $F_i$ and set $F := F_1 + F_1$.
Then we have
\begin{eqnarray*}
 -K_Y &=& \alpha_1^* (-K_W) - F  \\
  &=& \alpha_1^* (L_W + M_W + E ) - F  \\
  &=&   (L_Y + F) + M_Y + E_Y - F   \\
  &=&  L_Y + M_Y + E_Y .
\end{eqnarray*} 
Let $f : Y \to X$ be the contraction of $L_Y$ and $M_Y$.
Then we also have
\[
K_Y = f^*K_X - \frac{1}{2}L_Y - \frac{1}{3}M_Y  ,
\] 
Hence we obtain the following relation;
\[
f^*(-K_X) = \frac{1}{2}L_Y + \frac{2}{3}M_Y + E_Y  .
\]

\begin{claim}

$-K_X$ is ample.

\end{claim}

\begin{proof}

We see that $(-K_X)^2 = \frac{4}{3} > 0$ and $-K_X$ is nef as in Claim \ref{K^2_nef}.
Let $C$ be an irreducible curve on $X$.
Assume that $-K_X \cdot C = 0$ by contradiction.
Let $p : S_6 \to \F_0$ be a contraction of $l_3, l_6$.
Denote $p_* l_1$ by $G$ and $p_* l_2$ by $H$. 
Let $\alpha_* C_Y = a_1 G + a_2 H$, where $\alpha := p \circ \alpha_1 \circ \alpha_2 : Y \to \F_0$.
Then we have
\begin{equation}\label{No.7}
\alpha^* \alpha_* C_Y = C_Y + xl''_3 + yl''_6 + \sum_{i=1}^6 b_i E_{Y,i} + \sum_{i=1}^2 c_i F_i ,
\end{equation}
where $x, y, b_i, c_i \in \Z$.
If we multiply both sides of (\ref{No.7}) by $(l''_1 + \cdots + l''_6)$, then we have
\[
2(a_1 + a_2) = -3(x+y) + 2\sum_{i=1}^6 b_i + c_1 + c_2 . 
\]
By multiplying both sides of (\ref{No.7}) by $(l''_3 + l''_6 - (E_1)_Y - (E_4)_Y)$ and $E_Y$, then we have
\[
0 = -3(x+y) + \sum_{i=1}^6 b_i \ \ {\rm and} \ \ 0 = 2(x+y) - \sum_{i=1}^6 b_i . 
\]
By these relations, weobtain $x+y = \sum_{i=1}^6 b_i = 0$.
Thus we have
\[
2(a_1 + a_2) - ( c_1 + c_2 ) = 0 . 
\]
Then we see that $\alpha(F)$ is a set of two points in $\F_0$.
Let $\varphi : S'_6 \to \F_0$ be the blow-up at the two points.
Since we take $P_1$ and $P_2$ generally, $S'_6$ is a del Pezzo surface.
Then there exists a birational morphism $\psi : Y \to S'_6$ such that $\alpha = \varphi \circ \psi : Y \to \F_0$.
We may denote $\psi_* F$ be $F$ again. 
Then we have
\[
\varphi^* \alpha_* C_Y = C_{S'_6} + \sum_{i=1}^2 c_i F_i.
\]
By this relation, we have
\[
-K_{S'_6} \cdot \varphi^* \alpha_* C_Y = -K_{S'_6} \cdot C_{S'_6} + (-K_{S'_6}) \cdot  \sum_{i=1}^2 c_i F_i.
\]
We see that $-K_{S'_6} \cdot \varphi^* \alpha_* C_Y = -K_{\F_0} \cdot (a_1 G + a_2 H) = 2(a_1 + a_2)$ and $-K_{S'_6} \cdot  \sum_{i=1}^2 c_i F_i = c_1 + c_2$.
Hence we have
\[
-K_{S'_6} \cdot C_{S'_6} = 2(a_1 + a_2) - ( c_1 + c_2 ) = 0 . 
\]
This contradicts the fact that $S'_6$ is a del Pezzo surface.

\end{proof}

\noindent From this construction, it follows that $X$ is a del Pezzo surface such that $(-K_X)^2 = \frac{4}{3}$ and $(n_3, n_4) = (4,2)$.
If $X$ has some floating $(-1)$-curves, we obtain a contradiction as in the proof of Lemma \ref{red_float}.
Thus we see that $X$ has no floating $(-1)$-curves and $X$ is No.6 or No.7.
Moreover, we can find a $\smor$-sequence $X \overset{\smor_4}{\to} X_1  \overset{\smor_4}{\to} \PP$.
Hence $X$ is of No.7.
We also show that surfaces of No.6 and No.7 are distinct in the next section. \\

\noindent{\bf No.10}

Let $X$ be a del Pezzo surface of No.10.
Let $Y \to X$ be the minimal resolution.
By observing the configuration of negative curves on $Y$, we can find the following blow-downs $\alpha_1$, $\alpha_2$:
\begin{figure}[htbp]
\centering\includegraphics[width=14cm, bb=0 0 1400 550]{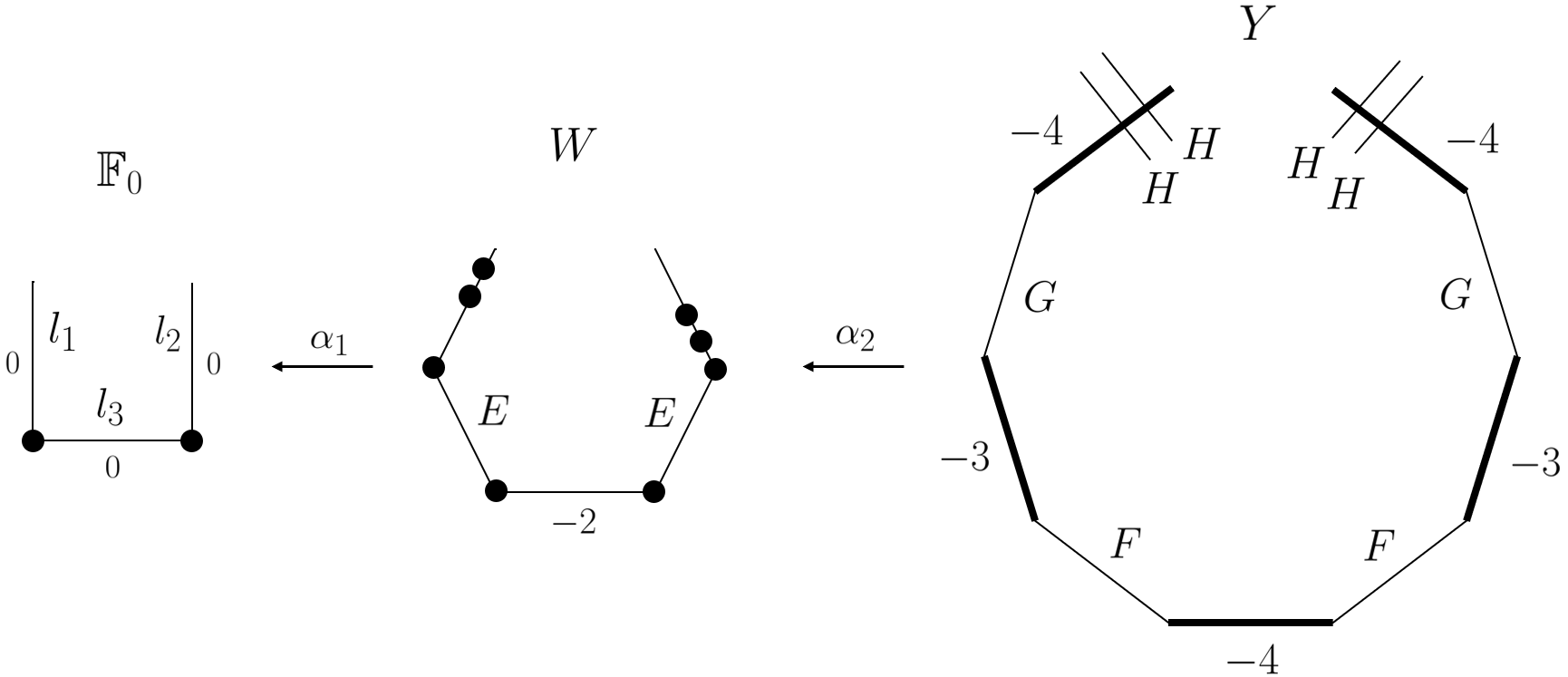}
\end{figure}

Let us construct an example of No.10.
Let $L := l_1 + l_2$ in $\F_0$.
Let $\alpha_1 : S \to \mathbb{F}_0$ be the blow-up at $L \cap l_3$.
Denote the exceptional divisor by $E$.
Denote the strict transform of $l_3$ by $l'_3$.
We have
\begin{eqnarray*}
 -K_W &=& \alpha_1^* (-K_{\mathbb{F}_0}) - E  \\
  &=& \alpha_1^* (L + 2l_3) - E  \\
  &=&   (L_W + E) + 2(l'_3 + E) - E \\
  &=&  L_W + 2l'_3 + 2E .
\end{eqnarray*} 
Take two general points $ P_1, P_2$ on $l'_1$ and two general points $ P_3, P_4 $ on $l'_2$.
Let $\alpha_2 : Y \to W$ be the blow-up at $((L_W + l'_3) \cap E) \cup \{ P_1, P_2, P_3, P_4 \}$.
Denote the exceptional divisor over $l'_1 \cap E$ by $F$, over $E \cap L_W$ by $G$ and over $\{ P_1, P_2, P_3, P_4 \}$ by $H$.
Denote the strict transform of $l'_3$ by $l''_3$.
We have
\begin{eqnarray*}
 -K_Y &=& \alpha_1^* (-K_W) - F - G - H  \\
  &=& \alpha_1^* (L_W + 2l'_3 + 2E) - F - G - H  \\
  &=&   (L_Y + G + H) + 2(l''_3 + F) + 2(E_Y + F + G) - F - G - H   \\
  &=&  L_Y + 2l''_3 + 2E_Y + 3F + 2G.
\end{eqnarray*} 
Let $f : Y \to X$ be the contraction of $L_Y$, $l''_3$ and $E_Y$.
Then we also have
\[
K_Y = f^*K_X - \frac{1}{2}L_Y - \frac{1}{2}l''_3 - \frac{1}{3}E_Y  .
\] 
Hence we obtain the following relation;
\[
f^*(-K_X) = \frac{1}{2}L_Y + \frac{3}{2}l''_3 + \frac{5}{3}E_Y + 3F + 2G  .
\]

\begin{claim}

$-K_X$ is ample.

\end{claim}

\begin{proof}

We see that $(-K_X)^2 = \frac{5}{3} > 0$ and $-K_X$ is nef as in Claim \ref{K^2_nef}.
Let $C$ be an irreducible curve on $X$.
Assume that $-K_X \cdot C = 0 $ by contradiction.
Set $\alpha_* C_Y = xl_1 + yl_3$, where $\alpha := \alpha_1 \circ \alpha_2$.
Then we have
\begin{equation}\label{No.10}
\alpha^* \alpha_* C_Y = C_Y + \sum_{i=1}^2 a_i (E_i)_Y + \sum_{i=1}^2 b_i F_i + \sum_{i=1}^2 c_i G_i + \sum_{i=1}^4 d_i H_i.
\end{equation}
By multiplying both sides of (\ref{No.10}) by $l''_3$, $E_Y$, $F$ and $G$, we have
\begin{equation*}
\begin{cases}
\ \displaystyle x =  \sum_{i=1}^2 b_i \\
\ \displaystyle 0 = -3\sum_{i=1}^2 a_i + \sum_{i=1}^2 b_i + \sum_{i=1}^2 c_i \\
\ \displaystyle 0 = \sum_{i=1}^2 a_i - \sum_{i=1}^2 b_i \\
\ \displaystyle 0 = \sum_{i=1}^2 a_i - \sum_{i=1}^2 c_i .
\end{cases}
\end{equation*}
Thus we obtain
\[
x =  \sum_{i=1}^2 a_i = \sum_{i=1}^2 b_i = \sum_{i=1}^2 c_i  = 0 .
\]
Hence we see that $\alpha_* C_Y = yl_3$.
We also see that $\alpha_* C_Y$ is irreducible.
Thus we see that $y=1$ and $\alpha_* C_Y$ is a fiber of $\pi_2$.
Then by multiplying both sides of (\ref{No.10}) by $L_Y$, we also obtain
\[
2  =  \sum_{i=1}^4 d_i.
\]
From these facts, we see that $C_Y$ is the strict transform of a fiber of $\pi_2$ and two of $P_1, \ldots , P_4$ are on the fiber.
Since we take $P_1, \ldots , P_4$ generally, this is a contradiction.

\end{proof}

\noindent From this construction, it follows that $X$ is a del Pezzo surface such that $(-K_X)^2 = \frac{5}{3}$ and $(n_3, n_4) = (2,3)$.
If $X$ has some floating $(-1)$-curves, we obtain a contradiction as in the proof of Lemma \ref{red_float}.
Thus we see that $X$ has no floating $(-1)$-curves.
Hence $X$ is of No.10. \\

\noindent{\bf No.11}

Let $X$ be a del Pezzo surface of No.11.
Let $Y \to X$ be the minimal resolution.
By observing the configuration of negative curves on $Y$, we can find the following blow-downs $\alpha_1$, $\alpha_2$:
\begin{figure}[htbp]
\centering\includegraphics[width=14cm, bb=0 0 1450 550]{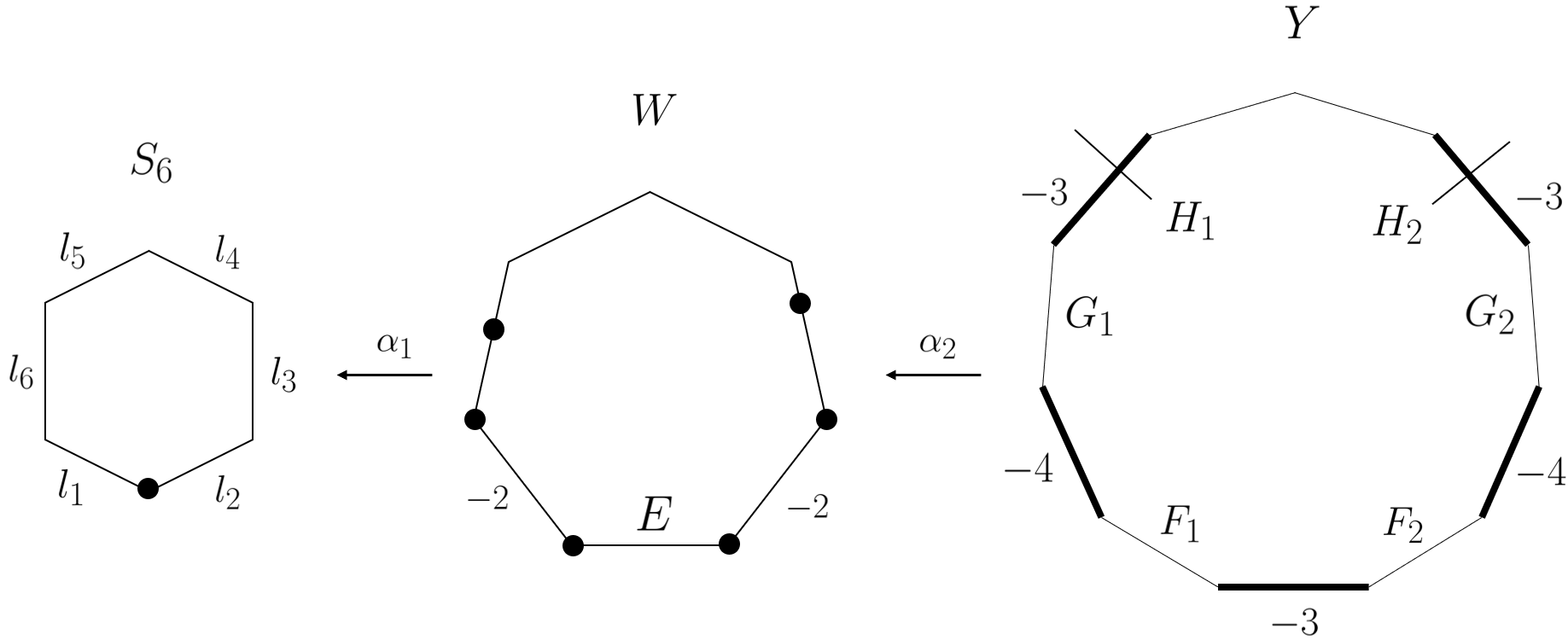}
\end{figure}

Let us construct an example of No.11.
Let $L : = l_1 + l_2$, $M := l_3 + l_6$ and $N := l_4 + l_5$.
Let $\alpha_1 : W \to S_6$ be the blow-up at $l_1 \cap l_2$.
Denote the exceptional divisor by $E$.
Then we have
\begin{eqnarray*}
 -K_S &=& \alpha_1^* (-K_{S_6}) - E  \\
  &=& \alpha_1^* (2L + M) - E  \\
  &=&   2(L_W + 2E) + M_W  - E \\
  &=&  2L_W + M_W + 3E .
\end{eqnarray*} 
Take a general point $P_1$ on $l'_6$ and a general point $P_2$ on $l'_3$.
Let $\alpha_2 : Y \to W$ be the blow-up at $(L_W \cap (M_W + E) ) \cup \{ P_1, P_2 \}$.
Denote the exceptional divisor over $L_W \cap E$ by $F$, over $L_W \cap M_W$ by $G$ and over $\{ P_1, P_2 \}$ by $H$.
Then we have
\begin{eqnarray*}
 -K_Y &=& \alpha_2^* (-K_W) - F - G - H  \\
  &=& \alpha_2^* (2L_W + M_W + 3E) - F - G - H  \\
  &=&   2(L_Y + F + G) + (M_Y + G + H) + 3(E_Y + F) - F - G - H   \\
  &=&  2L_Y + M_Y  + 3E_Y + 4F + 2G.
\end{eqnarray*} 
Let $f : Y \to X$ be the contraction of $L_Y$, $M_Y$ and $E_Y$.
Then we also have
\[
K_Y = f^*K_X - \frac{1}{2}L_Y - \frac{1}{3}M_Y - \frac{1}{3}E_Y  ,
\] 
Hence we obtain the following relation;
\[
f^*(-K_X) = \frac{3}{2}L_Y + \frac{2}{3}M_Y + \frac{8}{3}E_Y + 4F + 2G  .
\]

\begin{claim}

$-K_X$ is ample.

\end{claim}

\begin{proof}

We see that $(-K_X)^2 = 2 > 0$ and $-K_X$ is nef as in Claim \ref{K^2_nef}.
Let $C$ be an irreducible curve on $X$.
Assume that $-K_X \cdot C = 0$ by contradiction.
Let $p : S_6 \to \F_0$ be a contraction of $l_1$ and $l_4$.
Denote $p_* l_2$ by $I$ and $p_* l_3$ by $J$. 
Let $\alpha_* C_Y = a_1 I + a_2J$, where $\alpha := p \circ \alpha_1 \circ \alpha_2 : Y \to F_0$. 
We may assume that $I$ is a fiber of $\pi_1$ and $J$ is a fiber of $\pi_2$.
Then we have
\begin{equation}\label{No.11}
\alpha^* \alpha_* C_Y = C_Y + xl''_1 + yl''_4 + bE_Y + \sum_{i=1}^2 c_i F_i + \sum_{i=1}^2 d_i G_i + \sum_{i=1}^2 e_i H_i .
\end{equation}
By multiplying both sides of (\ref{No.11}) by $l''_1$, $l''_2$, $E_Y$, $F_1$, $F_2$, $G_1$ and $G_2$, we have
\begin{equation*}
\begin{cases}
\ 0 = -4x + c_1 + d_1 \\
\ a_2 = c_2 + d_2 \\
\ 0 = -3b + c_1 + c_2  \\
\ 0 = x + b - c_1   \\
\ 0 = b - c_2  \\
\ 0 = x - d_1  \\
\ 0 = -d_2 .
\end{cases}
\end{equation*}
Thus we obtain $x = a_2 = b = c_1 = c_2 = d_1 = d_2 = 0$.
Since $\alpha_* C_Y$ is irreducible, we see that $a_1 = 1$.
Hence $\alpha_* C_Y = I$ is a fiber.
If we multiply both sides of (\ref{No.11}) by $l''_4$, we have
\[
0 =  C_Y \cdot l''_4 - y .
\]
Thus we see that $y = C_Y \cdot l''_4 \ge 0$.
If we multiply both sides of (\ref{No.11}) by $l''_5$, we have
\[
0 =  C_Y \cdot l''_5 + y .
\]
Thus we see that $y = C_Y \cdot l''_5 \le 0$.
Hence we obtain $y = 0$.
If we multiply both sides of (\ref{No.11}) by $M_Y$, we have
\[
2 -  \sum_{i=1}^2 e_i = 0 .
\]
From these facts, we see that $C_Y$ is the strict transform of a fiber of $\pi_1$ and $P_1$ and $P_2$ are on the fiber.
This contradicts how to take $P_1$ and $P_2$.

\end{proof}

\noindent From this construction, it follows that $X$ is a del Pezzo surface such that $(-K_X)^2 = 2$ and $(n_3, n_4) = (3,2)$.
If $X$ has some floating $(-1)$-curves, we obtain a contradiction as in the proof of Lemma \ref{red_float}.
Thus we see that $X$ has no floating $(-1)$-curves and $X$ is No.11 or No.12.
Moreover, we can find a $\smor$-sequence $X \overset{\smor_1}{\to} X_1  \overset{\smor_8}{\to} \pr(1,1,4)$.
Hence $X$ is of No.11.
We also show that surfaces of No.11 and No.12 are distinct in the next section. \\

\noindent{\bf No.17} 

Let $X$ be a del Pezzo surface of No.17.
In order to prove that $X$ does not have any floating $(-1)$-curves, we use more explicit notation than the other cases.
Let $Y \to X$ be the minimal resolution.
By observing the configuration of negative curves on $Y$, we can find the following blow-downs $\alpha_1$, $\alpha_2$:
\begin{figure}[htbp]
\centering  \includegraphics[width=14cm, bb=0 0 1420 520]{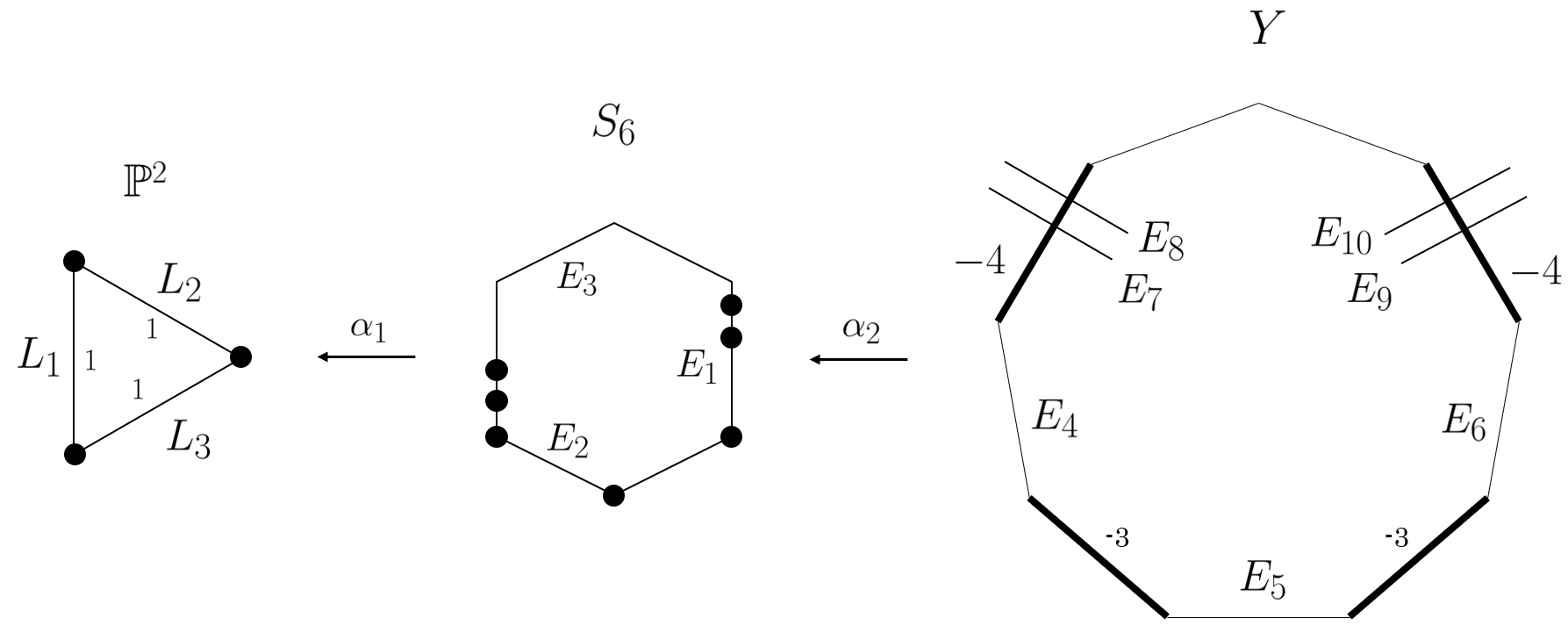}
\end{figure}

Let us construct an example of No.17.
Let $\alpha_1 : S_6 \to \pr^2$ be the blow-up at distinct three points $P_1, P_2, P_3$ on $\pr^2$ which are not on a line. 
Denote by $L_i$ a line which passes through $P_j$ and $P_k$ where $(i, j, k) = (1,2,3), (2,3,1), (3,1,2)$.
Denote the exceptional curve over $P_i$ by $E_i$ for each $i \in \{ 1,2,3 \} $.
We see that $(L_1)_{S_6} \cdot E_2 = 1$, $E_2 \cdot (L_3)_{S_6} = 1$ and $(L_3)_{S_6} \cdot E_1 = 1$.
Hence we set $P_4 := (L_1)_{S_6} \cap E_2$, $P_5 := E_2 \cap (L_3)_{S_6}$ and $P_6 := (L_3)_{S_6} \cap E_1$.
Take two general points $P_7, P_8$ on $(L_1)_{S_6}$ and two general points $P_9, P_{10}$ on $E_1$.

Let $\alpha_2 : Y \to S_6$ be the blow-up at $P_4, \ldots , P_{10}$.
Denote the exceptional curves over $P_i$ by $E_i$.

Set $L : = E_2 + (L_3)_{S_6}$, $M := E_1 + (L_1)_{S_6}$ and $N := E_3 + (L_2)_{S_6}$.
Set $E := E_5$, $F := E_4 + E_6$ and $G := E_7 + \cdots + E_{10}$.
Then we have
\begin{eqnarray*}
 -K_Y &=& \alpha_2^* (-K_{S_6}) - E - F - G  \\
  &=& \alpha_2^* (L + M + N) - E - F - G  \\
  &=&   (L_Y + 2E + F) + (M_Y + F + G) + N_Y - E - F - G  \\
  &=&  L_Y + M_Y + N_Y + E + F.
\end{eqnarray*} 
Let $f : Y \to X$ be the contraction of $L_Y$ and $M_Y$.
Then we also have
\[
K_Y = f^*K_X - \frac{1}{3}L_Y - \frac{1}{2}M_Y   .
\] 
Hence we obtain the following relation;
\[
f^*(-K_X) = \frac{2}{3}L_Y + \frac{1}{2}M_Y + N_Y + E + F.
\]

\begin{claim}

$-K_X$ is ample.

\end{claim}

\begin{proof}

We see that $(-K_X)^2 = \frac{5}{3} > 0$ and $-K_X$ is nef as in Claim \ref{exi_1}. 
Let $C$ be an irreducible curve on $X$.
Assume that $-K_X \cdot C = 0$ by contradiction.
Let $p : S_6 \to \F_0$ be a contraction of $E_3$ and $(L_3)_{S_6}$.
Denote $p_* E_1$ by $H$ and $p_* E_2$ by $I$.
Set $\alpha_* C_Y = a_1 H + a_2 I$, where $\alpha := p \circ \alpha_2$.
We may assume that $H$ is a fiber of $\pi_1$ and $I$ is a fiber of $\pi_2$.
Then we have
\begin{equation}\label{No.17}
\alpha^* \alpha_* C_Y = C_Y + x(E_3)_Y + y(L_3)_Y + cE + \sum_{i=1}^2 d_i F_i + \sum_{i=1}^4 e_i G_i
\end{equation}
By multiplying both sides of (\ref{No.17}) by $(E_2)_Y$, $(L_3)_Y$, $(E_3)_Y$, $(L_2)_Y$, $E$, $F_1$ and $F_2$, we have 
\begin{equation*}
\begin{cases}
\ a_1 = c + d_1 \\
\ 0 = -3y + c + d_2 \\
\ 0 = -3x \\
\ a_1 = x \\
\ 0 = -c + d_1 + d_2  \\
\ 0 = -d_1  \\ 
\ 0 = y - d_2 . \\ 
\end{cases}
\end{equation*}
Thus we obtain $x = d_1 = d_2 = c = y = a_1 = 0$.
Since $a_1 = 0$, we see that $a_2 = 1$ and $\alpha_* C_Y$ is a fiber.
If we multiply both sides of (\ref{No.17}) by $M_Y$, then we have
\[
2 - \sum_{i=1}^4 e_i = 0.
\]
From these facts, we see that $C_Y$ is the strict transform of a fiber of $\pi_1$ and two of $P_7, \ldots , P_{10}$ are on the fiber.
Since we take $P_7, \ldots , P_{10}$ generally, this is a contradiction.

\end{proof}

\begin{claim}

There is no floating $(-1)$-curves on $X$.

\end{claim}

\begin{proof}

Assume that there is a floating $(-1)$-curve $C$ on $X$.
Set $C_Y \sim xl + \sum_{i=1}^{10} a_i e_i$ and it is also a $(-1)$-curve.
$Y$ has exactly two $(-4)$-cruves $(L_1)_Y, (E_1)_Y$ and exactly two $(-3)$-curves $(E_2)_Y, (L_3)_Y$. 
Then we have
\[
(L_1)_Y \sim l - e_2 - e_3 - e_4 - e_7 - e_8,
\]
\[
(E_1)_Y \sim e_1 - e_6 - e_9 - e_{10},
\]
\[
(E_2)_Y \sim e_2 - e_4 - e_5
\]
and
\[
(L_3)_Y \sim l - e_1 - e_2 - e_5 - e_6  .
\]
Since $C$ does not pass through any singular points, $C_Y$ does not cross with $(L_1)_Y$, $(E_1)_Y$, $(E_2)_Y$ and $(L_3)_Y$.
Hence we have
\begin{numcases}
{}
\ 0 = x + a_2 + a_3 + a_4 + a_7 + a_8 \label{a}  \\ 
\ 0 = -a_1 + a_6 + a_9 + a_{10} \label{b} \\ 
\ 0 = -a_2 + a_4 + a_5 \label{c} \\
\ 0 = x + a_1 + a_2 + a_5 + a_6 \label{d} \ .
\end{numcases}
By calculating (\ref{a})$+$(\ref{b})$+ 2 \times $(\ref{c})$+ 2 \times $(\ref{d}), we have
\[
0 = 3x + 2a_4 + 3a_5 + 2a_6 + \sum_{i=1}^{10} a_i.
\]
Since $-K_Y \cdot C_Y = 1$, we have
\[
1 = 3x + \sum_{i=1}^{10} a_i  .
\]
Therefore, we obtain
\[
1 = -2a_4 - 3a_5 - 2a_6.
\]
For $i = 4, 5$ and $6$, we see that $a_i \le 0$ since $E_i \cdot C_Y \ge 0$.
Thus this is a contradiction.

\end{proof}

\noindent From this construction, it follows that $X$ is a del Pezzo surface having no floating $(-1)$-curves such that $(-K_X)^2 = \frac{5}{3}$ and $(n_3, n_4) = (2,2)$.
Hence $X$ is of No.17. \\

\noindent{\bf No.20} 

Let $X$ be a del Pezzo surface of No.20.
Let $Y \to X$ be the minimal resolution.
By observing the configuration of negative curves on $Y$, we can find the following blow-down $\alpha_1$:
\begin{figure}[htbp]
\centering\includegraphics[width=14cm, bb=0 0 1060 350]{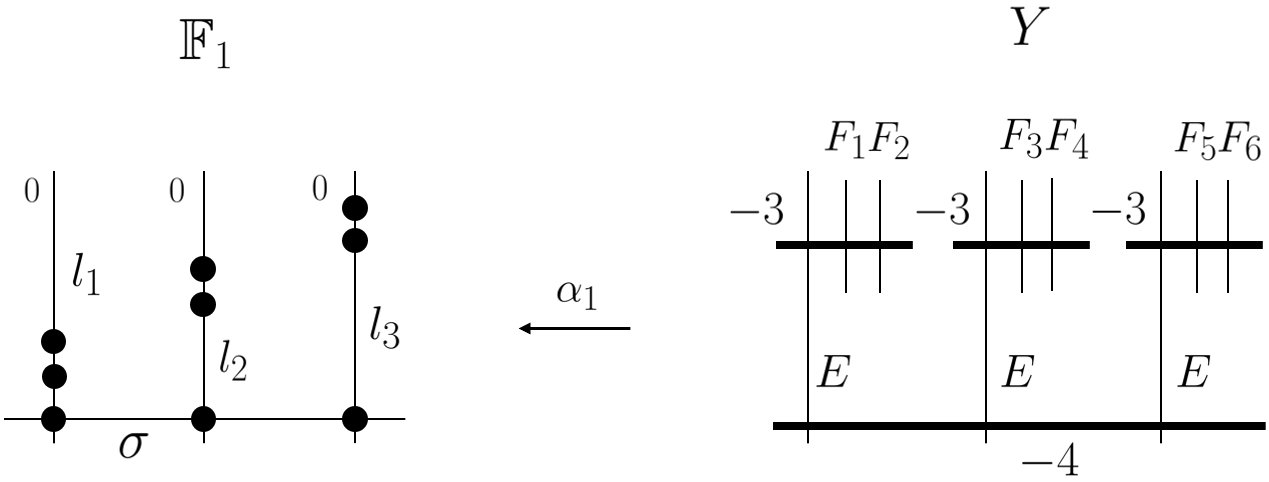}
\end{figure}

Let us construct an example of No.20.
In $\mathbb{F}_1$, take three distinct fibers $l_1, l_2, l_3$. 
Set $L := l_1 + l_2 + l_3$.
For $i \in \{ 1,2,3 \}$, take two general points $\{ P_i, Q_i \}$ on $l_i$ respectively.
Let $\alpha_1 : Y \to \mathbb{F}_1$ be the blow-up at $(L \cap \sigma) \cup \{P_1, P_2, P_3, Q_1, Q_2, Q_3 \}$. 
Denote the exceptional divisor over $(L \cap \sigma)$ by $E$ and over $P_1$, $Q_1$, $P_2$, $Q_2$, $P_3$, $Q_3$ by $F_1$, $F_2$, $F_3$, $F_4$, $F_5$, $F_6$ respectively.
Set $F := F_1 + \cdots + F_6$.
Denote the strict transform of $l_i$ by $l'_i$.
Then we have
\begin{eqnarray*}
 -K_Y &=& \alpha_1^* (-K_{\mathbb{F}_1}) - E - F  \\
  &=& \alpha_1^* (2 \sigma + L) - E - F  \\
  &=& 2(\sigma_Y + E) + (L_Y + E + F) -E - F \\
  &=& 2 \sigma_Y + L_Y + 2E_Y.
\end{eqnarray*}
Let $f : Y \to X$ be the contraction of $L_Y$ and $\sigma_Y$.
Then we also have
\[
K_Y = f^*K_X - \frac{1}{2}\sigma_Y - \frac{1}{3}L_Y.
\]        
Hence we obtain the following relation;
\[
 f^*( -K_X) =  \frac{3}{2}\sigma_Y + \frac{2}{3}L_Y + 2E_Y.
\]     

\begin{claim}

$-K_X$ is ample.

\end{claim}

\begin{proof}

We see that $(-K_X)^2 = 1 > 0$ and $-K_X$ is nef as in Claim \ref{K^2_nef}.
Let $C$ be an irreducible curve on $X$.
Assume that $-K_X \cdot C = 0$ by contradiction.
Let $p : \mathbb{F}_1 \to \pr^2$ be a contraction of $\sigma$.
Take a line $l$ on $\pr^2$.
Let $\alpha_* C_Y = dl$, where $\alpha := p \circ \alpha_1$.
Since $-K_X \cdot C = 0$, we see that $\alpha_{*} C_Y$ is an irreducible curve.
Then we have
\[
\alpha^* \alpha_* C_Y = C_Y + \sum_{i=1}^6 d_i F_i ,
\]
where $F_1, \ldots , F_6$ are irreducible components of $F$ and $d_i = C_Y \cdot F_i \ge 0$ for $i \in \{1, \ldots , 6 \}$.
We have
\[
l'_1 \cdot \alpha^* \alpha_* C_Y = l'_1 \cdot C_Y + l'_1 \cdot \sum_{i=1}^6 d_i F_i .
\]
Since we see that $l'_1 \cdot \alpha^* \alpha_* C_Y = l \cdot dl = d$, $l'_1 \cdot C_Y = 0$ and $l'_1 \cdot \sum_{i=1}^6 d_i F_i = d_1 + d_2$, we have
\[
d = d_1 + d_2.
\]
Thus we obtain $d = d_3 + d_4 = d_5 + d_6$ similarly. 

Then we see that $\alpha(F)$ on $\pr^2$ is a set of six points.
Let $\varphi : S_3 \to \pr^2$ be the blow-up at the six points.
Since we take $P_1$, $Q_1$, $P_2$, $Q_2$, $P_3$, $Q_3$ generally,  we see that $S_3$ is a cubic del Pezzo surface with an Eckerd point.
Then there exists a birational morphism $\psi : Y \to S_3$ such that $\alpha = \varphi \circ \psi : Y \to \pr$.
We may denote $\psi_* F_i$ by $F_i$ again.
Then we have
\[
\varphi^* \alpha_* C_Y = C_{S_3} + \sum_{i=1}^6 d_i F_i .
\]
By this relation, we have
\[
-K_{S_3} \cdot \varphi^* \alpha_* C_Y = -K_{S_3} \cdot C_{S_3}  - K_{S_3} \cdot \sum_{i=1}^6 d_i F_i .
\]
Then we see that $-K_{S_3} \cdot \varphi^* \alpha_* C_Y = -K_{\pr^2} \cdot dl = 3d$ and $- K_{S_3} \cdot \sum_{i=1}^6 d_i F_i = \sum_{i=1}^6 d_i$.
Hence we have
\[
-K_{S_3} \cdot C_{S_3} = 3d - \sum_{i=1}^6 d_i = 0.
\]
This contradicts the fact that $S_3$ is a del Pezzo surface.

\end{proof}

\begin{claim}

There is no floating $(-1)$-curves on $X$.

\end{claim}

\begin{proof}

Assume there exists a floating $(-1)$-curve $C$ on $X$.
Since $C$ does not pass through any singular points on $X$, we have $f^* C = C_Y$.
Thus we have $1 = -K_X \cdot C = f^*(-K_X) \cdot C_Y = \frac{3}{2}\sigma_Y \cdot C_Y + \frac{2}{3}L_Y \cdot C_Y + 2E_Y \cdot C_Y =  2E_Y \cdot C_Y$.
Hence we have $E_Y \cdot C_Y = \frac{1}{2}$.
Since $Y$ is a smooth surface, $E_Y \cdot C_Y$ must be an integer number.
This is a contradiction.  

\end{proof}

\noindent From this construction, it follows that $X$ is a del Pezzo surface having no floating $(-1)$-curves such that $(-K_X)^2 = 1$ and $(n_3, n_4) = (3,1)$.
Hence $X$ is of No.20. \\

\noindent{\bf No.22}
 
Let $X$ be a del Pezzo surface of No.22.
Let $Y \to X$ be the minimal resolution.
By observing the configuration of negative curves on $Y$, we can find the following blow-downs $\alpha_1$, $\alpha_2$:

\begin{figure}[htbp]
\centering\includegraphics[width=14cm, bb=0 0 1257 350]{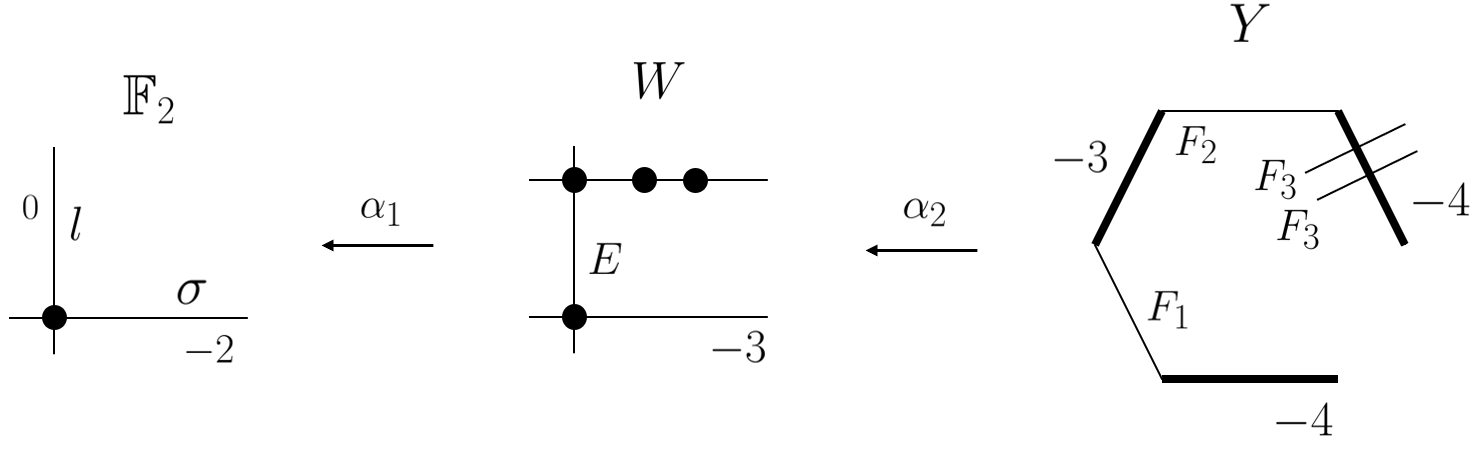}
\end{figure}

Let us construct an example of No.22.
Take a point $P$ on the minimal section $\sigma$.
Let $l$ be a fiber passing through $P$.
Let $\alpha_1 : W \to \mathbb{F}_2$ be the blow-up at $P$.
Denote the exceptional divisor by $E$.
Then we have
\begin{eqnarray*}
 -K_W &=& \alpha_1^* (-K_{\mathbb{F}_2}) - E  \\
  &=& \alpha_1^* (2 \sigma + 4l) - E  \\
  &=& 2(\sigma_W + E) + 4(l_W + E) -E \\
  &=& 2 \sigma_W + 4l_W + 5E.
\end{eqnarray*}

Take two general points $P_1, P_2$ on $l_W$.
Let $\alpha_2 : Y \to W$ be the blow-up at $((\sigma_W \cup l_W) \cap E) \cup \{ P_1, P_2 \}$.
Denote the exceptional divisor over $\sigma_W \cap E$ by $F_1$, over $l_W \cap E$ by $F_2$ and over $P_1, P_2$ by $F_3$.
Then we have
\begin{eqnarray*}
 -K_Y &=& \alpha_2^* (-K_W) - F_1 - F_2 - F_3 \\
  &=& \alpha_2^* (2 \sigma_W + 4l_W + 5E) - F_1 - F_2 - F_3  \\
  &=& 2(\sigma_Y + F_1) + 4(l_Y + F_2 + F_3) + 5(E_Y + F_1 + F_2)  \\
  & & - F_1 - F_2 - F_3 \\
  &=& 2\sigma_Y + 4l_Y+ 5E_Y + 6F_1 + 8F_2 + 3F_3.
\end{eqnarray*}
Let $f : Y \to X$ be the contraction of $\sigma_Y$, $l_Y$ and $E_Y$.
Then we also have
\[
K_Y = f^*K_X - \frac{1}{2}\sigma_Y - \frac{1}{2}l_Y -\frac{1}{3}E_Y.
\]        
Hence we obtain the following relation;
\[
 f^*( -K_X) =  \frac{3}{2}\sigma_Y + \frac{7}{2}l_Y + \frac{14}{3}E_Y + 6F_1 + 8F_2 + 3F_3.
\]

\begin{claim}

$-K_X$ is ample.

\end{claim}

\begin{proof}

We see that $(-K_X)^2 = \frac{16}{3} > 0$ and $-K_X$ is nef as in Claim \ref{K^2_nef}.
Let $C$ be an irreducible curve on $X$.
Assume that $-K_X \cdot C = 0$.
Denote $\alpha_* C_Y = a \sigma + bl$, where  $\alpha := \alpha_1 \circ \alpha_2$. 
Since $-K_X \cdot C = 0$, we see that $\alpha_* C_Y$ is an irreducible curve.
We also see that $\alpha^* \alpha_* C_Y = C_Y$.
Thus we have $0 = \sigma_Y \cdot C_Y = \sigma \cdot (a \sigma + bl) = -2a + b$ and $0 = l_Y \cdot C_Y = l \cdot (a \sigma + bl) = a$.
Thus we see that $\alpha_* C_Y = 0$.
This is a contradiction.

\end{proof}

\noindent From this construction, it follows that $X$ is a del Pezzo surface such that $(-K_X)^2 = \frac{16}{3}$ and $(n_3, n_4) = (1,2)$.
If $X$ has some floating $(-1)$-curves, we obtain a contradiction as in the proof of Lemma \ref{red_float}.
Thus we see that $X$ has no floating $(-1)$-curves.
Hence $X$ is of No.22. \\

\noindent{\bf No.28} 

Let $X$ be a del Pezzo surface of No.28.
Let $Y \to X$ be the minimal resolution.
By observing the configuration of negative curves on $Y$, we can find a blow-down $\alpha$:
\begin{figure}[htbp]
\centering\includegraphics[width=14cm, bb=0 0 1100 290]{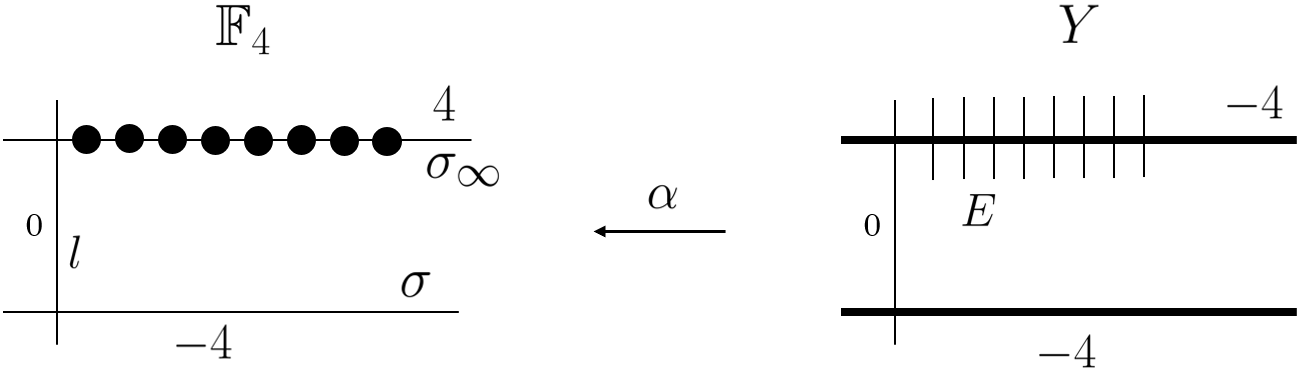}
\end{figure}

Let us construct an example of No.28.
Take a section at infinity $\sigma_{\infty}$.
Take distinct eight points $P_1, \ldots , P_8$ on $\sigma_{\infty}$.
Let $l$ be a fiber which doesn't pass through the points.
Let $\alpha : Y \to \mathbb{F}_4$ be the blow-up at the eight points.
Denote the exceptional divisor over $P_i$ by $E_i$ for each $i \in \{ 1, \ldots , 8 \}$.
Set $E := E_1 + \cdots + E_8$.
We have
\begin{eqnarray*}
 -K_Y &=& \alpha^* (-K_{\mathbb{F}_4}) - E  \\
  &=& \alpha^* (\sigma + \sigma_{\infty} + 2l) - E  \\
  &=& \sigma_Y + ((\sigma_{\infty})_Y + E) + 2l_Y - E \\
  &=& \sigma_Y + (\sigma_{\infty})_Y  + 2l_Y.
\end{eqnarray*}
Let $f : Y \to X$ be the contraction of $\sigma_Y$ and $(\sigma_{\infty})_Y$.
Then we also have
\[
K_Y = f^*K_X - \frac{1}{2}\sigma_Y - \frac{1}{2}(\sigma_{\infty})_Y.
\]  
We obtain the following relation;
\[
 f^*( -K_X) =  \frac{1}{2}\sigma_Y + \frac{1}{2}(\sigma_{\infty})_Y + 2l_Y.
\]  

\begin{claim}

$-K_X$ is ample.

\end{claim}

\begin{proof}

We see that $(-K_X)^2 = 2 > 0$ and $-K_X$ is nef as in Claim \ref{K^2_nef}.
Let $C$ be an irreducible curve on $X$.
Assume that $-K_X \cdot C = 0$ by contradiction.
If $C_Y \subset E$, then $f^* (-K_X) \cdot C_Y = \frac{1}{2} > 0$.
We may assume that $C_Y \not\subset E$.
Hence $\alpha_{*} C_Y$ is an irreducible curve on $\F_4$ and set $\alpha_{*} C_Y = a \sigma + bl$.
Then we have
\[
\alpha^* \alpha_{*} C_Y = C_Y + \sum_{i=1}^8 d_i E_i ,
\]
where $d_i = C_Y \cdot E_i \ge 0$ for $i \in \{1, \ldots , 8 \}$.
We have 
\[
\sigma_Y \cdot \alpha^* \alpha_{*} C_Y = \sigma_Y \cdot C_Y + \sigma_Y \cdot \sum_{i=1}^8 d_i E_i 
\]
and 
\[
l_Y \cdot \alpha^* \alpha_{*} C_Y = l_Y \cdot C_Y + l_Y \cdot \sum_{i=1}^8 d_i E_i .
\]
By calculating these, we see that $-4a + b = 0$ and $a=0$.
Hence we see that $\alpha_{*} C_Y =0$.
This contradicts the fact that $\alpha_{*} C_Y$ is an irreducible curve.

\end{proof}

\noindent From this construction, it follows that $X$ is a del Pezzo surface such that $(-K_X)^2 = 2$ and $(n_3, n_4) = (0,2)$.
If $X$ has some floating $(-1)$-curves, we obtain a contradiction as in the proof of Lemma \ref{red_float}.
Thus we see that $X$ has no floating $(-1)$-curves.
Hence $X$ is of No.28. \\

\section{Distinction of del Pezzo surfaces}\label{distinction_sec}

In the last section, we confirm the existence of each candidate in Table \ref{main_table}.
There are, however, several surfaces which have the same invariants, $n_3, n_4$, anti-canonical volume and Picard number. 
The pairs of surfaces which we must confirm that are different types are the following four pairs.

{\small \rm
\begin{table}[htb]
\caption{Del Pezzo surfaces which we must confirm that are different surfaces}\label{dist_table} 
\renewcommand{\arraystretch}{1.4}
  \begin{tabular}{c|c|c|c|c|c|c} 
    \multicolumn{1}{l|}{No.} &  \multicolumn{1}{c|}{$X_{min}$} & directed seq.
      & \multicolumn{1}{c|}{($n_3, n_4$)} & \multicolumn{1}{c|}{$(-K_{X})^2$} & $\rho (X)$ & $h^0(-K_X)$  \\ \hline \hline 
     6 &  $\pr(1,1,3)$ & $\smor_7 \circ \smor_3$& (4,2) & $\frac{4}{3}$  & 6  & 1 \\
     7 &  $\PP$ & $\smor_4 \circ \smor_4$ & (4,2) & $\frac{4}{3}$ & 6 & 1 \\  \rowcolor[gray]{0.9} \hline
     11 & $\pr(1,1,4)$ & $\smor_8 \circ \smor_1$ & (3,2) & 2 & 6  & 2 \\  \rowcolor[gray]{0.9}
     12 & $\pr(1,1,3)$ & $\smor_7 \circ \smor_1$ & (3,2) & 2  & 6 & 2 \\  
     15 & $\pr(1,1,4)$ & $\smor_8$ & (2,2) & $\frac{14}{3}$ & 4 & 5 \\ 
     16 & $\pr(1,1,3)$ & $\smor_7$ & (2,2) & $\frac{14}{3}$  & 4 & 5  \\ \rowcolor[gray]{0.9} \hline
     24 & $\pr(1,1,3)$ & $\smor_5$ & (2,1) & $\frac{14}{3}$  & 4 & 5   \\  \rowcolor[gray]{0.9}
     25 & $\PP$ & $\smor_4$ & (2,1) & $\frac{14}{3}$ & 4 & 5 \\    
  \end{tabular}
\end{table}
}

In this section, we confirm such surfaces are different by showing that they cannot have the same directed sequences.

\noindent{\bf No.24  and No.25}

Let $X_{24}$ be a del Pezzo surface of No.24 and $X_{25}$ one of No.25.
Let $\pi_i : Y_i \to X_i$ be the minimal resolution for $i = 24, 25$.
Since there is a sequence $X_{25} \overset{\smor_4}{\to} \PP$, we see that $Y_{25}$ has the following negative curves:
\begin{center}
 \ \xygraph{
	\square ([]!{+(.0,-.3)} {-3}) 
	- [r]	\bullet ([]!{+(.0,-.3)} {-1}) 
        - [r]	\square ([]!{+(.0,-.3)} {-3}) }
\end{center}
Moreover, we see that there are exactly two $(-3)$-curves on $Y_{24}$ by Lemma \ref{no_neg}.
Denote them by $C_1$ and $C_2$.
Then for the distinction of $X_{24}$ and $X_{25}$, it is enough to show the following claim.

\begin{claim}\label{24_25}

Let $C$ be a $(-1)$-curve on $Y_{24}$.
Then we have 
\[
C \cdot (C_1 + C_2) \le 1.
\]

\end{claim}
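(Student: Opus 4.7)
The plan is to use the explicit description of $Y_{24}$ as a five-step blow-up of $\mathbb{F}_3$ coming from the minimal resolution of $X_{24}\xrightarrow{\smor_5}\pr(1,1,3)$. By Corollary~\ref{config} for $\smor_5$, this resolves into a chain $\bullet(E_i)-\bigcirc(E_1)-\bullet(E_j)-\square(E_2)-\bullet(E_k)$ with $\{i,j,k\}=\{3,4,5\}$, and accordingly I identify $C_1=E_2$ (exceptional over the new $\tfrac{1}{3}(1,1)$-singularity of $X_{24}$) and $C_2=\sigma_{Y_{24}}$, the strict transform of the minimal section $\sigma\subset\mathbb{F}_3$.

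The first step is to pin down the blow-up centres. They all lie over a single point $P\in\mathbb{F}_3$ because $\smor_5$ contracts a connected chain to a single smooth point on $\pr(1,1,3)$, and $P\notin\sigma$ because otherwise the strict transform of $\sigma$ would have self-intersection at most $-4$, creating on $X_{24}$ a singularity worse than $\tfrac{1}{3}(1,1)$. Writing $h\colon Y_{24}\to\mathbb{F}_3$ for the composed morphism, we obtain $C_2=h^*\sigma$ in $\Pic(Y_{24})$, and therefore $C_2\cdot E_m=h_*E_m\cdot\sigma=0$ for every exceptional divisor $E_m$ of $h$.

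Now let $C\subset Y_{24}$ be a $(-1)$-curve. If $C\in\{E_3,E_4,E_5\}$, the chain structure gives $C\cdot E_2\le 1$ and $C\cdot C_2=0$, so the claim is immediate. Otherwise $D:=h_*C$ is an irreducible curve on $\mathbb{F}_3$, and $C\cdot C_2=D\cdot\sigma$; I would then write $E_2$ as a $\mathbb{Z}$-combination of exceptional classes $e_1,\ldots,e_5$ (the $(-3)$-curve $E_2$ arises as an initial exceptional with two subsequent blow-ups placed on it, so takes the form $e_r-e_s-e_t$), and express $C\cdot E_2$ in terms of the multiplicities of $D$ at the five (infinitely near) blow-up centres. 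Combining $C^2=-1$ with $-K_{Y_{24}}\cdot C=1$ and the classification of irreducible classes $a\sigma+bl$ on $\mathbb{F}_3$ (with $b\ge 3a\ge 0$ unless $D=\sigma$ or $D=l$), a short case analysis on $(a,b)$ should yield $C\cdot(C_1+C_2)\le 1$.

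The hard part will be this last case analysis, because the infinitely near nature of the five blow-ups makes the precise expression of $E_2\in\Pic(Y_{24})$ depend on the birth order of the chain, which must be fixed first. A back-up strategy, should this become unwieldy, is a contradiction argument: assuming $C\cdot C_1=C\cdot C_2=1$, the image $\overline{C}\subset X_{24}$ is a quasi-line through both $\tfrac{1}{3}(1,1)$-singularities and yields a $\sB_{13}$-extremal contraction $X_{24}\to Z$ with $\sS(Z)=\{A_2,\tfrac{1}{4}(1,1)\}$. The $\smor_5$-chain curves $E_1$ and $E_j$ survive in $Z$ and provide a $\bigcirc-\bullet-\triangle-\triangle$ configuration permitting a $\sB_{10}$-continuation $Z\to\PP$; the composite $X_{24}\to\PP$ is a second morphism of type $\smor_4$, which precedes $\smor_5$ in the lexicographic order on directed sequences, contradicting the minimal directedness of the $\smor_5$-sequence assigned to $X_{24}$ in Table~\ref{main_table}.
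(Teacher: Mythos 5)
Your setup is sound and runs parallel to the paper's: the paper also proves this by writing down an explicit basis of $\Pic Y_{24}$ (it realizes $Y_{24}$ as a blow-up of $\mathbb{F}_1$ rather than of $\mathbb{F}_3$, identifies the two $(-3)$-curves as $C_1\sim e_1-e_2-e_3$ and $C_2\sim l-e_1-e_2-e_4-e_5$, and then uses $C^2=-1$, $-K_{Y_{24}}\cdot C=1$) and then checks $C\cdot(C_1+C_2)\le 1$ on the resulting finite list of $27$ candidate $(-1)$-classes. The problem is that you stop exactly where the proof begins: the ``short case analysis on $(a,b)$'' that you defer is the entire content of the claim, and you yourself flag it as the hard part without fixing the birth order of the infinitely near centres or writing $E_2$ in the basis. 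As it stands the main route is a plan, not a proof. A smaller point: your reason for $P\notin\sigma$ is off --- a blow-up centre on $\sigma$ would make $\sigma_{Y_{24}}$ a $(-4)$-curve, which yields exactly a $\frac{1}{4}(1,1)$-point, not something ``worse''; the correct justification is that the centre of a second morphism is by definition a smooth point of $\pr(1,1,3)$, whose preimage under the minimal resolution lies off $\sigma$, and all five centres are infinitely near to it.

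Your back-up argument is the more interesting contribution and genuinely different from the paper: contracting a hypothetical $\square-\bullet-\square$ configuration gives a $\sB_{13}$ contraction to $Z$ with $\sS(Z)=\{A_2,\frac{1}{4}(1,1)\}$, the surviving chain curve provides a $\sB_{10}$ continuation, and the composite is a second morphism of type $\smor_4$, contradicting minimal directedness of the $\smor_5$-sequence (this works whether the resulting sequence is longer or merely lexicographically smaller, by the definition of $\prec$). However, this only rules out $C\cdot C_1=C\cdot C_2=1$. The claim as stated is $C\cdot(C_1+C_2)\le 1$, so you must also exclude $C\cdot C_1\ge 2$ and $C\cdot C_2\ge 2$, and ampleness of $-K_{X_{24}}$ alone does not do this (e.g.\ $C\cdot C_1=2$ still gives $-K_{X_{24}}\cdot\bar{C}=\frac{1}{3}>0$). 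So the back-up cannot substitute for the omitted lattice computation; either complete the enumeration of $(-1)$-classes as the paper does, or supplement the directedness argument with a separate exclusion of the multiplicity-two cases.
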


\begin{proof}

Observing the configuration of negative curves on $Y_{24}$, we see that there are the following blow-downs $\alpha_1, \alpha_2$:

\begin{figure}[htbp]
\centering  \includegraphics[width=14cm, bb=0 0 1294 422]{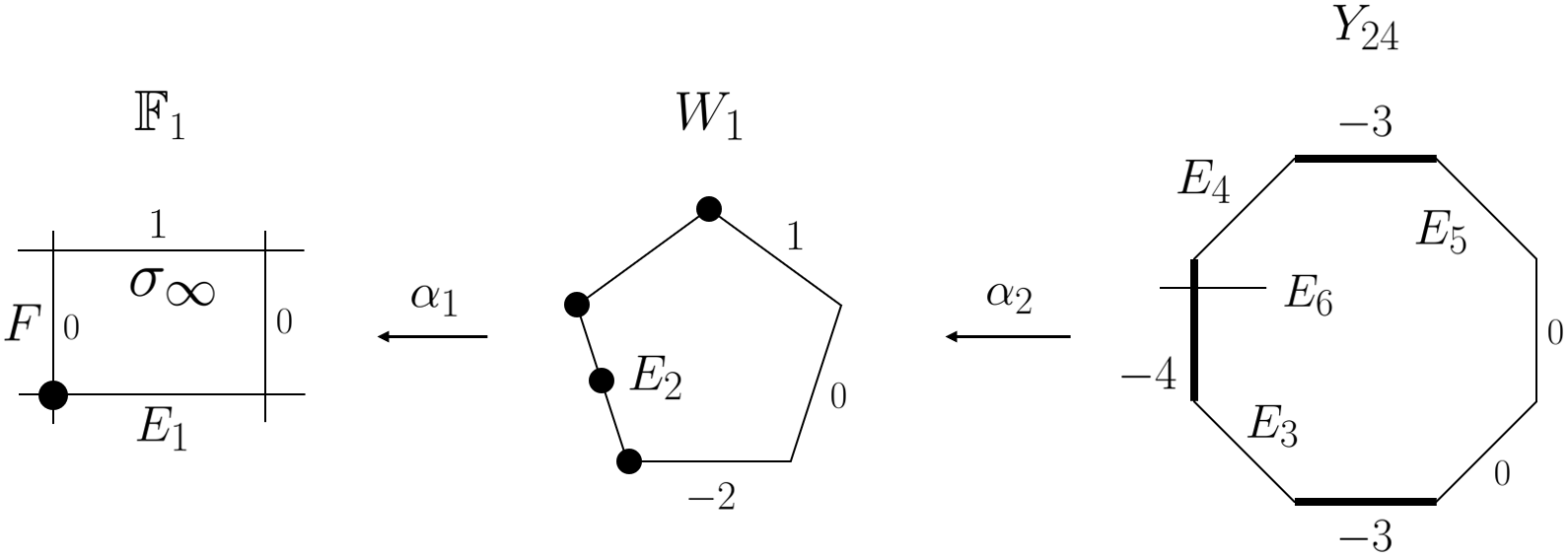}
\end{figure}

$\alpha_1 : W_1 \to \F_1$ is the blow-up at a point on the minimal section (Here we denote the minimal section by not $\sigma$ but $E_1$).
Denote the fiber which passes through the blow-up point by $F$ and the exceptional curve of $\alpha_1$ by $E_2$.
Take a section at infinity $\sigma_{\infty}$.
Then we see that $(E_1)_{W_1} \cdot E_2 = 1$, $E_2 \cdot F_{W_1} = 1$ and $F_{W_1} \cdot \sigma_{\infty} = 1$.
Hence we denote $(E_1)_{W_1} \cap E_2$, $E_2 \cap F_{W_1}$ and $F_{W_1} \cap \sigma_{\infty}$ by $P_3$, $P_4$ and $P_5$ respectively.
Take a general point $P_6$ on $E_2$.
$\alpha_2 : Y_{24} \to W_1$ is the blow-up at the four points $P_3, \ldots , P_6$.
Denote the exceptional curve over $P_i$ by $E_i$ for $i \in \{3, \ldots ,6 \}$.
Set $l := \alpha_2^* \alpha_1^* \sigma_{\infty}$, $e_1  := \alpha_2^* \alpha_1^* E_1$, $e_2 := \alpha_2^*E_2$ and $e_i := E_i$ for each $i \in \{ 3, \ldots , 6 \}$.
Then we see that $\Pic Y_{24}$ is spanned by $l, e_1, \ldots , e_6$ and they are disjoint. 
Since one of the two $(-3)$-curves is the strict transform of $E_1$, we may assume that $C_1 \sim e_1 - e_2 - e_3$. 
The other $(-3)$-curve is the strict transform of $F$.
Thus we may assume that $C_2 \sim l - e_1 - e_2 - e_4 - e_5$.
Thus we see that $C_1 + C_2 \sim l -2e_2 - e_3 - e_4 - e_5$. 
Let $C \sim xl + \sum_{i=1}^6 a_i e_i$ be a $(-1)$-curve.
Since $-K_{Y_{24}} \cdot C = 1$ and $C^2 = -1$, we have
\[
1 = 3x + \sum_{i=1}^6 a_i \ \ \ {\rm and} \ \ \ -1 = x^2 - \sum_{i=1}^6 a_i^2
\]
The same caluculation is discussed in \cite{Ha77}.
We know that the solutions are 27 cases.
For each case, we see that $C \cdot (C_1 + C_2) = x + 2a_2 + a_3 + a_4 + a_5 \le 1$.

\end{proof}

\noindent{\bf No.15 and 16} 

Let $X_{15}$ be a del Pezzo surface of No.15 and $X_{16}$ one of No.16.
Let $\pi_i : Y_i \to X_i$ be the minimal resolution for each $i = 15, 16$.
Since there is a sequence $X_{16} \overset{\smor_7}{\to} \pr(1,1,3)$, we see that $Y_{16}$ has the following negative curves:
\begin{center}
 \ \xygraph{
	\bigcirc ([]!{+(.0,-.3)} {-4}) 
	- [r]	\bullet ([]!{+(.0,-.3)} {-1}) 
        - [r]	\square ([]!{+(.0,-.3)} {-3}) 
        - [r]	\bullet ([]!{+(.0,-.3)} {-1}) 
        - [r]	\bigcirc ([]!{+(.0,-.3)} {-4}) 
        - [r]	\bullet ([]!{+(.0,-.3)} {C_{Y_{16}}}) }
\end{center}
$C_{Y_{16}}$ is a $(-1)$-curve.
We see that $C_{Y_{16}}$ does not cross any $(-3)$-curves.
Hence it is enough to show the following claim.

\begin{claim}\label{15_16}

Let $C$ be a $(-1)$-curve on $Y_{15}$.
There exists a $(-3)$-curve $D$ such that $C \cdot D \ge 1$.

\end{claim}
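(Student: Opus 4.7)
As in the proof of Claim \ref{24_25}, the plan is to realize $Y_{15}$ as an explicit iterated blow-up of a Hirzebruch surface, express $\Pic Y_{15}$ in a convenient basis, enumerate all numerical $(-1)$-classes by combining $-K_{Y_{15}}\cdot C = 1$ with $C^2 = -1$ via the Cauchy-Schwarz method of Lemma \ref{rat_pic_le_5}, and verify directly that every such class meets $C_1 + C_2$, where $C_1$ and $C_2$ denote the two $(-3)$-curves on $Y_{15}$.

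By the directed sequence $X_{15}\overset{\smor_8}{\to}\pr(1,1,4)$ and Corollary \ref{config}, the induced morphism $g : Y_{15}\to \mathbb{F}_4$ between the minimal resolutions is a composition of six blow-ups $\sigma_1,\ldots,\sigma_6$, producing the chain $E_1 - E_i - E_2 - E_j - E_3 - E_k$ with $E_1, E_3$ the two $(-3)$-curves, $E_2$ a new $(-4)$-curve, and $E_i, E_j, E_k$ three $(-1)$-curves $(\{i,j,k\}=\{4,5,6\})$. Tracking these blow-ups, I would write down a basis $l, e_0, e_1,\ldots,e_6$ of $\Pic Y_{15}$ (with $l$ a fiber class and $e_0$ the pulled-back minimal section of $\mathbb{F}_4$), and express $C_1, C_2$, the extra $(-4)$-curve $E_2$, the proper transform of the minimal section of $\mathbb{F}_4$, and each of the three $(-1)$-curves of the chain in this basis.

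A candidate numerical $(-1)$-class $C \sim xl + ye_0 + \sum_{t=1}^{6} a_t e_t$ is then constrained by the two numerical equations $-K_{Y_{15}}\cdot C = 1$ and $C^2 = -1$, together with the nonnegativity conditions $C\cdot D \ge 0$ for each of the many known negative curves on $Y_{15}$. Applying Cauchy-Schwarz to the two numerical equations, as in the proof of Lemma \ref{rat_pic_le_5}, confines $(x,y,a_1,\ldots,a_6)$ to a bounded region, producing a finite and manageable list of candidates. For each candidate I would compute $C\cdot(C_1+C_2)$ and either verify $C\cdot(C_1+C_2)\ge 1$ directly or rule the candidate out by showing it violates $C\cdot D\ge 0$ for one of the other known negative curves.

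The principal obstacle is simply the size of the case analysis: $\Pic Y_{15}$ has rank $8$, so a priori there are many $(-1)$-classes. The argument is saved by the abundance of known negative curves on $Y_{15}$ (the two $(-3)$-curves $C_1, C_2$, the two $(-4)$-curves, and the three $(-1)$-curves in the $\smor_8$-chain), whose positivity conditions rapidly eliminate classes disjoint from $C_1+C_2$. Once this is completed, the claim follows; combined with the existence of the distinguished $(-1)$-curve $C_{Y_{16}}$ on $Y_{16}$ that meets no $(-3)$-curve, this shows $X_{15}\not\cong X_{16}$.
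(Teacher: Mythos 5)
Your strategy is workable but it is genuinely different from, and considerably heavier than, what the paper does. The paper also fixes an explicit basis $l, e_1,\ldots,e_7$ of $\Pic Y_{15}$ (built from $\F_1$ through $W_1$ and $W_2$ rather than from $\F_4$, but that is only a change of coordinates) and writes the two $(-3)$-curves as $C_1 \sim e_3 - e_4 - e_5$ and $C_2 \sim l - e_1 - e_2 - e_6 - e_7$; however, it never enumerates $(-1)$-classes and never uses $C^2=-1$. Instead, for $C \sim xl + \sum_{i=1}^7 a_i e_i$ the single linear condition $-K_{Y_{15}}\cdot C = 1$, i.e.\ $1 = 3x + \sum a_i$, is regrouped as
\[
1 \;=\; 2(x + a_3) \;+\; C\cdot C_1 \;+\; C\cdot C_2 ,
\]
so $C\cdot C_1 + C\cdot C_2$ is an odd integer; since $C$ is irreducible and distinct from $C_1, C_2$, both summands are nonnegative, hence the sum is at least $1$. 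This parity identity replaces your entire rank-$8$ case analysis. Your enumeration via Cauchy--Schwarz (in the style of Lemma \ref{rat_pic_le_5} and Claim \ref{24_25}) would in the end confirm the same conclusion, and it has the advantage of being a uniform recipe that needs no clever regrouping; but in rank $8$ the list of numerical $(-1)$-classes is long, and you would still have to discard, via positivity against the other known negative curves, any numerical class with $C\cdot(C_1+C_2)<0$ that is not actually an irreducible curve. The paper's identity sidesteps all of that, so if you carry out your plan you should at least check afterwards whether the linear combination of the two $(-3)$-classes against $-K$ already forces the parity conclusion, as it does here.
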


\begin{proof}

Observing the configuration of negative curves on $Y_{15}$, we see that there are the following blow-downs $\alpha_3, \alpha_4$:

\begin{figure}[htbp]
\centering\includegraphics[width=13cm, bb=0 0 1310 472]{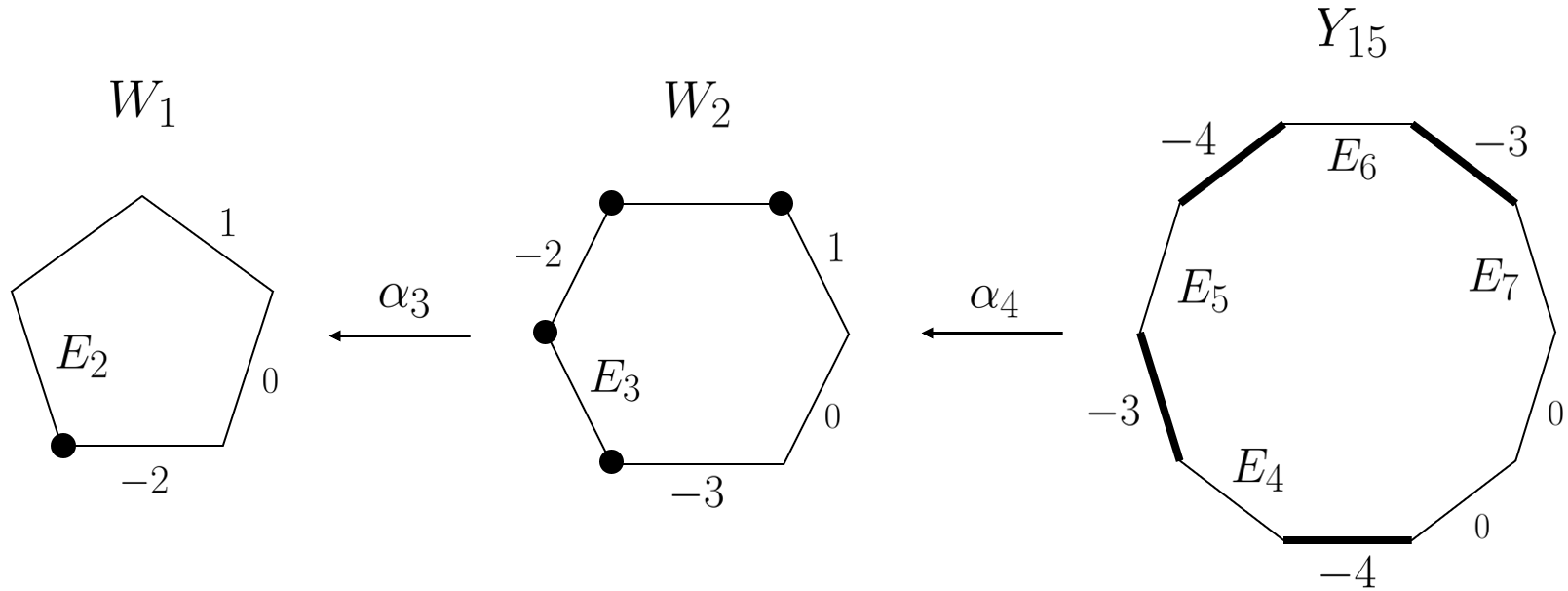}
\end{figure}

$W_1$ is the same surface as the one in Claim \ref{24_25}.
$\alpha_3 : {W_2} \to W_1$ is the blow-up at a point $P_3 = (E_1)_{W_1} \cap E_2$.
Denote the exceptional curve of $\alpha_3$ by $E_3$.
Then we see that $(E_1)_{W_2} \cdot E_3 = 1$, $E_3 \cdot (E_2)_{W_2} = 1$, $(E_2)_{W_2} \cdot F_{W_2} = 1$ and $F_{W_2} \cdot L_{W_2} = 1$.
Denote  $(E_1)_{W_2} \cap E_3$, $E_3 \cap (E_2)_{W_2}$, $(E_2)_{W_2} \cap F_{W_2}$ and $F_{W_2} \cap L_{W_2}$ by $P_4, \ldots , P_7$ respectively.
$\alpha_4 : Y_{15} \to W_2$ is the blow-up at $P_4, \ldots , P_7$.
Denote the exceptional curve over $P_i$ by $E_i$ for $i \in \{ 4, \ldots , 7 \}$.
Set $l := \alpha_4^* \alpha_3^* \alpha_1^* \sigma_{\infty}$, $e_1 := \alpha_4^* \alpha_3^* \alpha_1^* E_1$, $e_2 := \alpha_4^* \alpha_3^* E_2$, $e_3 := \alpha_4^* E_3$ and $e_i := E_i$ for each $i \in \{ 4, \ldots , 7 \}$.
Then we see that $\Pic Y_{15}$ is spanned by $l, e_1, \ldots , e_7$ and they are disjoint.
We see that the $(-3)$-curves on $Y_{15}$ are the strict transforms of $E_3$ and $F$.
Here we have
\[
(E_3)_{Y_{15}} \sim e_3 - e_4 -e_5 
\]
and 
\[
F_{Y_{15}} \sim l - e_1 - e_2 - e_6 - e_7 . 
\]
Let $C \sim xl + \sum_{i=1}^7 a_i e_i$ be a $(-1)$-curve.
What we should prove is the inequality $C \cdot ( (E_3)_{Y_{15}} + F_{Y_{15}}) \ge 1$.
Since $C$, $(E_3)_{Y_{15}}$ and $F_{Y_{15}}$ are distinct irreducible curves, we see that $ C \cdot (E_3)_{Y_{15}} \ge 0$ and $C \cdot F_{Y_{15}} \ge 0$.
Since $-K_{Y_{15}} \cdot C = 1$, we have
\[
1 = 3x + \sum_{i=1}^7 a_i.
\]
By this relations, we have
\begin{eqnarray*}
1 & = & 2x + 2a_3 + (-a_3 + a_4 + a_5) + (x + a_1 + a_2 +a_6 + a_7) \\
  & = & 2x + 2a_3 + C \cdot (E_3)_{Y_{15}} + C \cdot F_{Y_{15}} .
\end{eqnarray*}
Thus we see that $C \cdot (E_3)_{Y_{15}} + C \cdot F_{Y_{15}} \ge 1$.

\end{proof}

\noindent{\bf No.11 and 12} 

Let $X_{11}$ be a del Pezzo surface of No.11 and $X_{12}$ one of No.12.
Let $\pi_i : Y_i \to X_i$ be the minimal resolution for each $i = 11, 12$.
Since there is a sequence $X_{12} \overset{\smor_7}{\to} U_1$, we see that $Y_{12}$ has the following negative curves:
\begin{center}
 \ \xygraph{
	\bigcirc ([]!{+(.0,-.3)} {-4}) 
	- [r]	\bullet ([]!{+(.0,-.3)} {-1}) 
        - [r]	\square ([]!{+(.0,-.3)} {-3}) 
        - [r]	\bullet ([]!{+(.0,-.3)} {-1}) 
        - [r]	\bigcirc ([]!{+(.0,-.3)} {-4}) 
        - [r]	\bullet ([]!{+(.0,-.3)} {C_{Y_{12}}}) }
\end{center}
$C_{Y_{12}}$ is a $(-1)$-curve.
We see that $C_{Y_{12}}$ does not cross any $(-3)$-curves.
Hence it is enough to show the following claim.

\begin{claim}

Let $C$ be a $(-1)$-curve on $Y_{11}$.
There exists a $(-3)$-curve $D$ such that $C \cdot D \ge 1$.

\end{claim}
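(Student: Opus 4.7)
The plan is to imitate the strategy of Claims \ref{24_25} and \ref{15_16}: make the birational geometry of $Y_{11}$ explicit, pin down a convenient $\mathbb{Z}$-basis of $\Pic Y_{11}$, write all $(-3)$-curves in this basis, and finally reduce the claim to an elementary linear identity involving $-K_{Y_{11}} \cdot C$ and the intersection numbers $C \cdot D$ for the $(-3)$-curves $D$.

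First, I would read off the blow-down structure of $Y_{11}$ from the construction given in Section~\ref{construct_sec} for No.11. That construction exhibits $Y_{11} \overset{\alpha_2}{\to} W \overset{\alpha_1}{\to} S_6$, where $\alpha_1$ is the blow-up of one of the six nodes of the hexagon of $(-1)$-curves and $\alpha_2$ is a blow-up of six further points. Composing with a contraction $S_6 \to \mathbb{F}_0$ of two disjoint $(-1)$-curves, I obtain a sequence of eight successive point blow-ups $Y_{11} \to \mathbb{F}_0$, which gives a basis $l_1, l_2, e_1, \ldots, e_8$ of $\Pic Y_{11}$ with $l_1^2 = l_2^2 = 0$, $l_1 \cdot l_2 = 1$, $e_i^2 = -1$ and all other pairwise intersections zero. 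The three $(-3)$-curves $D_1, D_2, D_3$ on $Y_{11}$ are precisely the strict transforms of the curves that become the three $\frac{1}{3}(1,1)$-singularities under $f \colon Y_{11} \to X_{11}$; each is a fixed combination of $l_1, l_2, e_1, \ldots, e_8$ that I would compute once and for all from the construction.

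Next, writing a general $(-1)$-curve as $C \sim x_1 l_1 + x_2 l_2 + \sum_{i=1}^{8} a_i e_i$, the conditions $-K_{Y_{11}} \cdot C = 1$ and $C^2 = -1$ give the two standard relations
\[
1 = 2 x_1 + 2 x_2 + \sum_{i=1}^{8} a_i, \qquad -1 = 2 x_1 x_2 - \sum_{i=1}^{8} a_i^2.
\]
The main step is then to find an algebraic identity of the shape
\[
1 = (\text{manifestly non-positive terms}) + \sum_{j=1}^{3} C \cdot D_j
\]
analogous to the one used in the proof of Claim~\ref{15_16}, exploiting the fact that $D_1 + D_2 + D_3$ is a specific class in $\Pic Y_{11}$ determined by the construction. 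Since $C$ is distinct from each $D_j$ (they have different self-intersections) we have $C \cdot D_j \ge 0$ for each $j$, so such an identity immediately forces $\sum_j C \cdot D_j \ge 1$ and hence $C \cdot D_j \ge 1$ for at least one $j$.

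The main obstacle I anticipate is bookkeeping, not conceptual: correctly expressing $D_1, D_2, D_3$ in the basis $l_1, l_2, e_1, \ldots, e_8$ (there are choices of order of the point blow-ups that affect the formulas), and then guessing the right linear combination of the two relations above that cleanly exhibits $1$ as a non-positive combination plus $\sum_j C \cdot D_j$. Once this is done, the argument finishes as in Claims~\ref{24_25} and~\ref{15_16}: every $(-1)$-curve on $Y_{11}$ meets some $(-3)$-curve, whereas the $(-1)$-curve $C_{Y_{12}}$ on $Y_{12}$ meets none, so $Y_{11} \not\cong Y_{12}$ and therefore $X_{11} \not\cong X_{12}$.
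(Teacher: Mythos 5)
Your overall architecture is the same as the paper's: fix a basis of $\Pic Y_{11}$ coming from a chain of point blow-ups, express the three $(-3)$-curves $D_1,D_2,D_3$ in that basis, and massage $-K_{Y_{11}}\cdot C=1$ into an identity $1=N+\sum_j C\cdot D_j$ (the paper happens to build its basis over $\mathbb{F}_1$ by extending the model of $Y_{15}$ rather than over $\mathbb{F}_0$ via the No.11 construction, which is immaterial). The genuine gap is in the finishing move. You ask for $N$ to consist of \emph{manifestly non-positive} terms, and no such identity exists: in the paper's basis the remainder is $N=2(x+a_3+a_8)$, where $x=C\cdot l$ is the intersection of $C$ with the pullback of a nef class on $\mathbb{F}_1$ and is therefore $\ge 0$ for every irreducible curve, not $\le 0$. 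Equivalently, $-K_{Y_{11}}-(D_1+D_2+D_3)=2(l-e_3-e_8)$ is twice an \emph{effective} $(-1)$-class, so its intersection with a $(-1)$-curve is a priori non-negative, and you cannot conclude $\sum_j C\cdot D_j\ge 1$ from non-positivity of $N$. What actually closes the argument --- both here and in Claim \ref{15_16}, which you cite as your template --- is parity: $-K_{Y_{11}}-(D_1+D_2+D_3)$ lies in $2\Pic Y_{11}$, so $1=2m+\sum_j C\cdot D_j$ with $m\in\mathbb{Z}$; since each $C\cdot D_j$ is a non-negative integer ($C$ is irreducible and distinct from each $D_j$), the sum is an odd non-negative integer and hence at least $1$. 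Without this observation your plan stalls precisely at the step you defer to ``guessing the right linear combination.''

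Two smaller corrections. The bookkeeping is off by one: $\rho(Y_{11})=6+3+2=11$, so the chain $Y_{11}\to\mathbb{F}_0$ consists of nine blow-ups and the basis is $l_1,l_2,e_1,\ldots,e_9$, not $e_1,\ldots,e_8$. Also, the relation $C^2=-1$ is not needed here; the single relation $-K_{Y_{11}}\cdot C=1$ suffices once the parity of the remainder is in hand (it is only in Claim \ref{24_25}, where an \emph{upper} bound on intersection numbers is required, that the paper enumerates the $(-1)$-classes using both relations).
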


\begin{proof}

Observing the configuration of negative curves on $Y_{11}$, we see that there are the following blow-downs $\alpha_5, \alpha_6$:

\begin{figure}[htbp]
\centering\includegraphics[width=14cm, bb=0 0 1521 530]{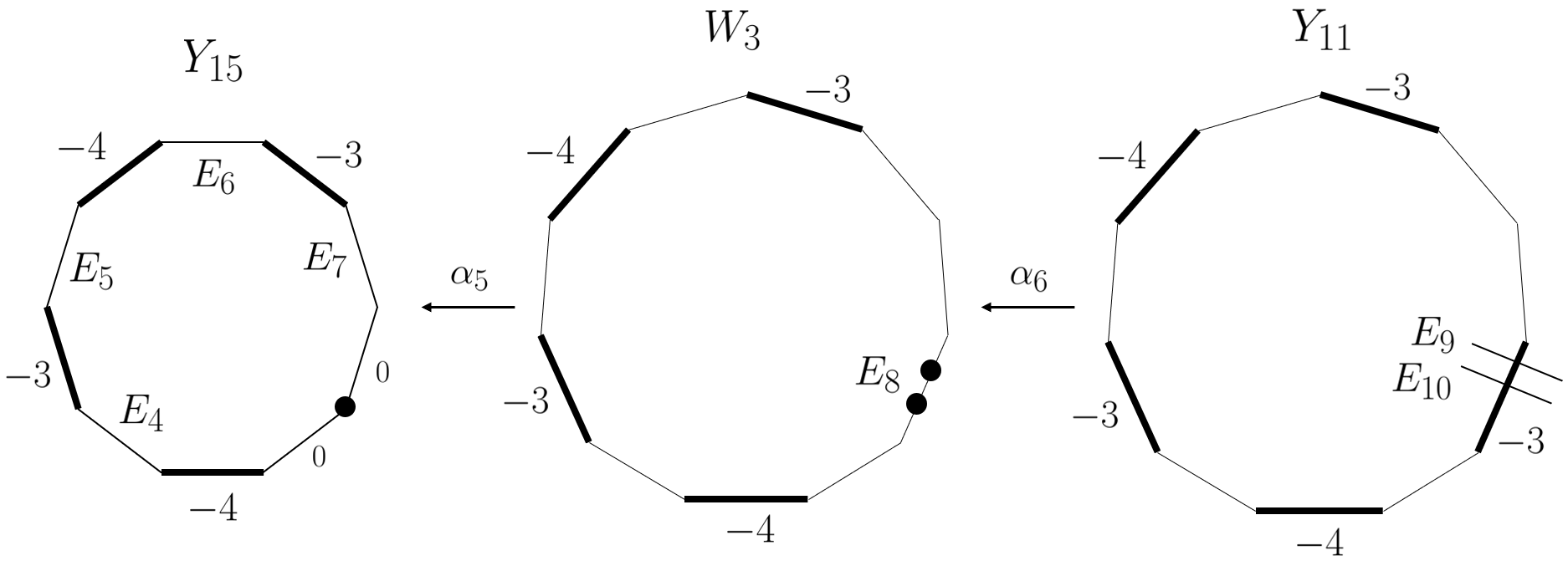}
\end{figure}

Here $Y_{15}$ is the same surface as the one in Claim \ref{15_16}.
Take a point $P_8$ which any negative curves does not pass through.
$\alpha_5 : W_3 \to Y_{15}$ is the blow-up at $P_8$.
Denote the exceptional curve by $E_8$.
Take distinct two points $P_9$ and $P_{10}$ on $E_8$.
$\alpha_6 : Y_{11} \to W_3$ is the blow-up at $P_9$ and $P_{10}$. 
Denote the exceptional curves over $P_9, P_{10}$ by $E_9, E_{10}$ respectively.
Set $l := \alpha_6^* \alpha_5^* l$, $e_i := \alpha_6^* \alpha_5^* e_i$ for each $i \in \{ 1, \ldots , 7 \}$ again.
Set $e_8 := \alpha_6^* E_8$, $e_9 := E_9$ and $e_{10} := E_{10}$.
Then we see that $\Pic Y_{11}$ is spanned by $l, e_1, \ldots , e_{10}$ and they are disjoint.
We see that the $(-3)$-curves on $Y_{11}$ are the strict transforms of $E_3$, $F$ and $E_8$.
We have
\[
C_1 := (E_3)_{Y_{11}} \sim e_3 - e_4 - e_5 ,
\]
\[
C_2 := F_{Y_{11}} \sim l - e_1 - e_2 - e_6 - e_7
\]
and
\[
C_3 := (E_8)_{Y_{11}} \sim e_8 - e_9 - e_{10} .
\]
Let $C \sim xl + \sum_{i=1}^{10}$ be a $(-1)$-curve.
What we should prove is the inequality $C \cdot (C_1 + C_2 + C_3) \ge 1$.
Since $C_1, C_2$ and $C_3$ are distince irreducible curves, we have $C \cdot C_i \ge 0$ for $1 \le i \le 3$.
Since $-K_X \cdot C = 1$, we have
\[
1 = 3x + \sum_{i=1}^{10} a_i.
\]
By this relations, we have
\begin{eqnarray*}
1 & = & 2x + 2a_3 + 2a_8 + (-a_3 + a_4 + a_5)  \\
  &  & + (x + a_1 + a_2 + a_6 + a_7) + (-a_8 + a_9 + a_{10} ) \\
  & = & 2x + 2a_3 + 2a_8 + C \cdot C_1 + C \cdot C_2 + C \cdot C_3 .
\end{eqnarray*}
Thus we see that $C \cdot C_1 + C \cdot C_2 + C \cdot C_3 \ge 1$.

\end{proof}

\noindent{\bf No.6 and No.7}

Let $X_6$ be a del Pezzo surface of No.6 and $X_7$ one of No.7.
Let $\pi_i : Y_i \to X_i$ be the minimal resolution for each $i = 6, 7$.
We see that $Y_7$ has exactly two $(-4)$-curves $C, D$.

\begin{claim}

There are distinct four $(-1)$-curves $C_1, C_2, D_1$ and $D_2$ on $Y_7$ such that $C \cdot C_i =1$, $D \cdot C_i = 0$, $C \cdot D_i = 0$ and $D \cdot D_i =1$ for $i = 1,2$.

\end{claim}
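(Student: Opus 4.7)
The plan is to exhibit the four required $(-1)$-curves explicitly from the blow-up construction of $X_7$ given in the previous section, and read off the intersection numbers directly. Recall that $Y_7$ was built as $Y_7 \overset{\alpha_2}{\to} W \overset{\alpha_1}{\to} S_6$, where $\alpha_1$ is the blow-up of the six vertices of the hexagon $(l_1\cup l_3\cup l_5)\cap (l_2\cup l_4\cup l_6)$ on $S_6$, and $\alpha_2$ is the blow-up at general points $P_1\in l'_1$ and $P_2\in l'_4$. The two $(-4)$-curves on $Y_7$ are $C:=(l_1)_{Y_7}$ and $D:=(l_4)_{Y_7}$, as $l_1$ and $l_4$ are $(-1)$-curves on $S_6$ that get their self-intersection dropped by two via $\alpha_1$ and by one more via $\alpha_2$.

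First I will define the four candidates. Let $C_1,C_2$ be the exceptional curves of $\alpha_1$ over $l_1\cap l_2$ and $l_1\cap l_6$ respectively, and let $D_1,D_2$ be the exceptional curves of $\alpha_1$ over $l_3\cap l_4$ and $l_4\cap l_5$ respectively. Since $P_1$ was taken as a general point of $l'_1$, in particular distinct from the points $l'_1\cap C_i$, neither $C_1$ nor $C_2$ is touched by $\alpha_2$, and they remain $(-1)$-curves on $Y_7$; the same argument applied to $P_2\in l'_4$ shows $D_1,D_2$ remain $(-1)$-curves. The four curves lie over four distinct points of $S_6$, so they are pairwise distinct.

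Next I will verify the intersection numbers. For $i=1,2$, the curve $C_i$ is the exceptional divisor of the blow-up of a point that lies on $l_1$ but not on $l_4$, so its strict transform meets $(l_1)_{Y_7}=C$ transversely at one point and is disjoint from $(l_4)_{Y_7}=D$; hence $C\cdot C_i=1$ and $D\cdot C_i=0$. By the symmetric argument, with the roles of $l_1$ and $l_4$ interchanged, $D\cdot D_i=1$ and $C\cdot D_i=0$.

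As far as the stated claim is concerned, the main work is simply bookkeeping: checking that $\alpha_2$ does not inadvertently alter the six exceptional curves over the hexagon points, which follows from generality of $P_1$ and $P_2$. The substantive role of this claim lies in the distinction between No.6 and No.7 which follows it: the configuration of negative curves on $Y_6$ arising from the directed sequence $\smor_7\circ\smor_3$ places the two $(-4)$-curves in a rigid chain where only limited numbers of $(-1)$-curves abut them, so the four curves $C_1,C_2,D_1,D_2$ produced here on $Y_7$ cannot simultaneously exist on $Y_6$, yielding $X_6\not\cong X_7$.
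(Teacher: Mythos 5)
Your verification of the literal statement is correct: the exceptional curves of $\alpha_1$ over $l_1\cap l_2$, $l_1\cap l_6$, $l_3\cap l_4$ and $l_4\cap l_5$ are indeed four distinct $(-1)$-curves on $Y_7$ with the stated intersection numbers against $C=(l_1)_{Y_7}$ and $D=(l_4)_{Y_7}$. The paper, however, argues differently: it uses the directed sequence $X_7\to U_1\to\PP$ of type $\smor_4\circ\smor_4$, notes that the two centers $P_1,P_2\in\PP$ do not lie on a common fiber, and takes the strict transforms on $Y_7$ of the two fibers (one from each ruling) through each center. The difference is not cosmetic: those fiber transforms meet only the $(-4)$-curve over the corresponding center and are disjoint from all four $(-3)$-curves of $Y_7$, whereas each of your curves meets a $(-3)$-curve (for instance your $C_1$ meets $(l_2)_{Y_7}$).

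This matters because the claim exists solely to feed the distinction of No.6 from No.7, and the companion claim about $Y_6$ proved immediately afterwards carries the hypothesis ``if $C$ does not cross any $(-3)$-curves''; it only establishes uniqueness of such a $(-1)$-curve among those avoiding the $(-3)$-curves. In fact $Y_6$ itself carries four $(-1)$-curves with exactly the intersection pattern demanded by the present claim: in the paper's model of $Y_6$ one may take $E_7$ and $E_{10}$ (each meeting $(L_1)_{Y_6}$ once and $(L_2)_{Y_6}$ not at all) together with $E_4$ and $E_{11}$ (the other way around). So the statement as you have proved it also holds on $Y_6$ and cannot separate the two surfaces, and your closing assertion that your four curves ``cannot simultaneously exist on $Y_6$'' is false. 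What is actually needed, and what the paper's choice of curves silently supplies, is four $(-1)$-curves with the stated pattern that in addition avoid every $(-3)$-curve; your curves do not have this property, so your proof, while establishing the claim as written, does not support the argument the claim is used for.
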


\begin{proof}

We have a sequence $X_7 \overset{\varphi_1}{\to} U_1 \overset{\varphi_2}{\to} \PP$, where both $\varphi_1$ and $\varphi_2$ are of type $\smor_4$.
Since $\varphi_1$ and $\varphi_2$ are disjoint, we may denote the center of $\varphi_i$ on $\PP$ by $P_i$ for $i = 1,2$.
Since $P_1$ and $P_2$ are not on the same fiber, there are two fibers for $P_1$ and $P_2$ respectively.
The strict transforms of the fibers on $Y_7$ by $\varphi_2 \circ \varphi_1 \circ \pi_7$ are what we need.

\end{proof}

We will prove that there is no such a pair of $(-1)$-curves on $Y_6$.
By observing the configuration of negative curves on $Y_6$, we see that there is a sequence of blow-downs:
\begin{figure}[htbp]
\centering \includegraphics[width=14cm, bb=0 0 1400 500]{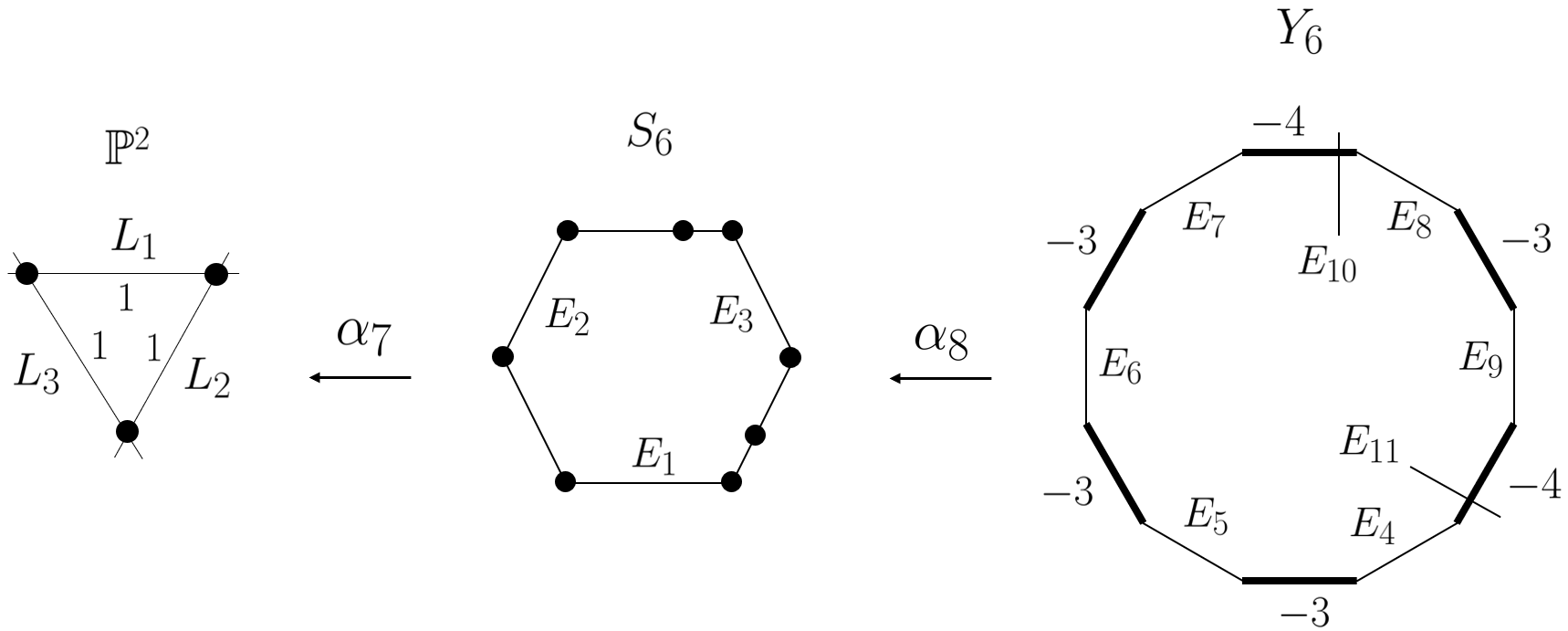}
\end{figure}

$\alpha_7 : S_6 \to \pr^2$ is the blow-up at distinct three points $P_1, P_2, P_3$ on $\pr^2$ which are not on a line. 
Denote by $L_i$ a line which does passes through $P_j$ and $P_k$ where $(i, j, k) = (1,2,3), (2,3,1), (3,1,2)$.
Denote the exceptional curve over $P_i$ by $E_i$ for each $i \in \{ 1,2,3 \} $.
We see that $E_i \cdot (L_j)_{S_6} = 1$ for $i,j \in \{ 1,2,3 \}$ where $i \neq j$.
Hence we set $P_4 := E_1 \cap (L_2)_{S_6}$, $P_5 := E_1 \cap (L_3)_{S_6}$, $P_6 := E_2 \cap (L_3)_{S_6}$, $P_7 := E_2 \cap (L_1)_{S_6}$, $P_8 := E_3 \cap (L_1)_{S_6}$ and $P_9 := E_3 \cap (L_2)_{S_6}$.
Take a general point $P_{10}$ on $(L_1)_{S_6}$ and $P_{11}$ on $(L_2)_{S_6}$.
$\alpha_8 : Y_6 \to S_6$ is the blow-up at $P_4, \ldots , P_{11}$.
Denote the exceptional curve over $P_i$ by $E_i$ for each $i \in \{ 4, \ldots , 11 \}$.
Denote $l := \alpha_8^* \alpha_7^* L_1$, $e_i := \alpha_8^* E_i$ for $i \in \{ 1,2,3 \}$ and $e_j := E_j$ for $j \in \{ 4, \ldots , 11 \}$.
Let $C \sim xl + \sum_{i=1}^{11} a_i e_i$ be a $(-1)$-curve on $Y_6$.

Since $(L_1)_{Y_6}$ is a $(-4)$-curve on $Y_6$, we see that it is enough to show the following claim.

\begin{claim}

If $C$ does not cross any $(-3)$-curves, $(L_1)_{Y_{6}} \cdot C = 1$ and $(L_2)_{Y_{6}} \cdot C = 0$, then $C=E_{10}$. 

\end{claim}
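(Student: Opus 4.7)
The plan is to exploit the Picard-group basis $l,e_1,\ldots,e_{11}$ of $Y_{6}$ introduced above, express $C$ in this basis, and combine the numerical conditions with intersection-nonnegativity of $C$ against a well-chosen list of other irreducible $(-1)$-curves on $Y_{6}$.

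First, I would write down the classes of all negative curves on $Y_{6}$. From the construction, the four $(-3)$-curves are
\begin{align*}
(E_1)_{Y_{6}}&\sim e_1-e_4-e_5, & (E_2)_{Y_{6}}&\sim e_2-e_6-e_7,\\
(E_3)_{Y_{6}}&\sim e_3-e_8-e_9, & (L_3)_{Y_{6}}&\sim l-e_1-e_2-e_5-e_6,
\end{align*}
and the two $(-4)$-curves are $(L_1)_{Y_{6}}\sim l-e_2-e_3-e_7-e_8-e_{10}$ and $(L_2)_{Y_{6}}\sim l-e_1-e_3-e_4-e_9-e_{11}$. Writing $C\sim xl+\sum a_ie_i$, the $(-1)$-curve conditions read $x^2-\sum a_i^2=-1$ and $3x+\sum a_i=1$. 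Non-crossing with the four $(-3)$-curves gives $a_1=a_4+a_5$, $a_2=a_6+a_7$, $a_3=a_8+a_9$, and $x=-(a_4+2a_5+2a_6+a_7)$, and the hypotheses $(L_1)_{Y_{6}}\cdot C=1$, $(L_2)_{Y_{6}}\cdot C=0$ give two further linear equations.

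Next I would handle the case that $C$ coincides with one of the exceptional $(-1)$-curves $E_4,\ldots,E_{11}$. Each $E_j$ with $j\in\{4,\ldots,9\}$ meets exactly one of $(E_1)_{Y_{6}},(E_2)_{Y_{6}},(E_3)_{Y_{6}}$ or $(L_3)_{Y_{6}}$ transversally, and $E_{11}$ satisfies $(L_1)_{Y_{6}}\cdot E_{11}=0\neq 1$, so among these only $C=E_{10}$ fulfills every hypothesis, and this choice indeed does. Assuming now $C\notin\{E_4,\ldots,E_{11}\}$, the irreducibility of each $E_j$ forces $a_j=E_j\cdot C\geq 0$ for $j\in\{4,\ldots,11\}$, whence $a_1,a_2,a_3\geq 0$ and $x\leq 0$. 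The key extra ingredient is to introduce the strict transform $\Lambda_{Y_{6}}$ of the line in $\pr^2$ through the images of the general points $P_{10}$ and $P_{11}$. Since $P_{10},P_{11}$ are general, that line avoids $P_1,P_2,P_3$ and its proper transform on $S_6$ avoids $P_4,\ldots,P_9$, so $\Lambda_{Y_{6}}\sim l-e_{10}-e_{11}$ is an irreducible $(-1)$-curve; moreover $\Lambda_{Y_{6}}\cdot (L_3)_{Y_{6}}=1$ shows $\Lambda_{Y_{6}}\neq C$. Thus $C\cdot\Lambda_{Y_{6}}=x-a_{10}-a_{11}\geq 0$, which combined with $x\leq 0$ and $a_{10},a_{11}\geq 0$ forces $x=0$ and $a_{10}=a_{11}=0$; then $x=0$ gives $a_4=a_5=a_6=a_7=0$ and hence $a_1=a_2=0$. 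Substituting into $(L_2)_{Y_{6}}\cdot C=0$ yields $a_8+2a_9=0$, so $a_8=a_9=0$, but then $(L_1)_{Y_{6}}\cdot C=2a_8+a_9=0\neq 1$, a contradiction.

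The main conceptual obstacle is recognizing that the constraints coming from $E_4,\ldots,E_{11}$ and the four $(-3)$-curves alone do not pin down $x$: one genuinely needs one more irreducible $(-1)$-curve, and the genericity of $P_{10}$ and $P_{11}$ built into the construction is exactly what makes $\Lambda_{Y_{6}}$ available and not equal to $C$. The most error-prone computational step is the bookkeeping for the classes of the six negative curves in terms of the basis $l,e_1,\ldots,e_{11}$, since each is naturally defined on a different intermediate surface ($\pr^2$, $S_6$, or $Y_{6}$) and the corresponding correction by exceptional divisors for the blow-ups at $P_4,\ldots,P_{11}$ must be tracked carefully.
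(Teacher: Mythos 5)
Your setup (the classes of the six negative curves and the linear conditions from non-crossing) agrees with the paper's, and your case analysis of $C\in\{E_4,\ldots,E_{11}\}$ is fine. The problem is a sign error on which your final contradiction depends. In the basis you (and the paper) use, $l^2=1$, $e_i^2=-1$ and $e_i\cdot e_j=0$, so for $C\sim xl+\sum a_ie_i$ one has $E_j\cdot C=-a_j$, not $a_j$; hence irreducibility and $C\neq E_j$ give $a_j\le 0$ for $j\in\{4,\ldots,11\}$, whence $a_1,a_2,a_3\le 0$ and $x=-(a_4+2a_5+2a_6+a_7)\ge 0$ --- the opposite of what you assert. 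Likewise $C\cdot\Lambda_{Y_6}=(xl+\sum a_ie_i)\cdot(l-e_{10}-e_{11})=x+a_{10}+a_{11}$, not $x-a_{10}-a_{11}$. (Your own equation $a_1=a_4+a_5$, obtained from $(e_1-e_4-e_5)\cdot C=0$, already commits you to this convention, so the two sign choices inside your argument are mutually inconsistent.) After correcting the signs, the inequality $x+a_{10}+a_{11}\ge 0$ together with $x\ge 0$ and $a_{10},a_{11}\le 0$ forces nothing, so the deduction ``$x=0$ and $a_{10}=a_{11}=0$'' collapses and the contradiction is never reached.

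The missing ingredient is the one equation you list but never use: $-K_{Y_6}\cdot C=3x+\sum_{i=1}^{11}a_i=1$. The paper adds the three relations coming from $(L_1)_{Y_6}\cdot C=1$, $(L_2)_{Y_6}\cdot C=0$, $(L_3)_{Y_6}\cdot C=0$, substitutes $a_1=a_4+a_5$, $a_2=a_6+a_7$, $a_3=a_8+a_9$, and compares with $3x+\sum a_i=1$ to obtain $a_4+a_5+\cdots+a_9=0$; since $a_i\le 0$ for $i\in\{4,\ldots,9\}$ (these $E_i$ meet $(-3)$-curves, so $C\ne E_i$), all six vanish, and back-substitution into the linear system yields $a_1=a_2=a_3=0$, $x=0$, $a_{11}=0$, $a_{10}=1$, i.e.\ $C\sim e_{10}$ and hence $C=E_{10}$. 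Your auxiliary curve $\Lambda_{Y_6}\sim l-e_{10}-e_{11}$ is correctly identified as an irreducible $(-1)$-curve distinct from $C$, but by itself it does not replace this use of $-K_{Y_6}\cdot C=1$.
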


\begin{proof}

We have
\[
(L_1)_{Y_6} \sim l - e_2 - e_3 - e_7 - e_8 - e_{10}
\]
and 
\[
(L_2)_{Y_6} \sim l - e_1 - e_3 - e_4 - e_9 - e_{11}.
\]
We see that $(-3)$-curves on $Y_6$ are the strict transforms of $L_3, E_1, E_2$ and $E_3$.
We have
\[
(L_3)_{Y_6} \sim l - e_1 - e_2 - e_5 - e_6 \ , 
\]
\[
(E_1)_{Y_6} \sim e_1 - e_4 - e_5 \ ,
\]
\[
(E_2)_{Y_6} \sim e_2 - e_6 - e_7
\]
and 
\[
(E_3)_{Y_6} \sim e_3 - e_8 - e_9 \ .
\]
By assumption, we have 
\begin{equation}\label{crazy}
\begin{cases}
\ 1 = x + a_2 + a_3 + a_7 + a_8 + a_{10}  \\
\ 0 = x + a_1 + a_3 + a_4 + a_9 + a_{11}  \\ 
\ 0 = x + a_1 + a_2 + a_5 + a_6 \\
\ 0 = -a_1 + a_4 + a_5 \\
\ 0 = -a_2 + a_6 + a_7 \\
\ 0 = -a_3 + a_8 + a_9 \ .
\end{cases}
\end{equation}
Since $-K_{Y_6} \cdot C = 1$, we also have
\[
1 = 3x + \sum_{i=1}^{11} a_i \ . 
\]
By these relations, we have
\[
a_4 + a_5 + a_6 + a_7 + a_8 +a_9 = 0 .
\]
Here we see that $C$ is not one of $E_4, \ldots , E_9$.
Thus for $i \in \{ 4, \ldots , 9 \}$, $C \cdot E_i \le 0$, that is, $a_i \le 0$.
Therefore, we see that 
\[
a_4 = \cdots = a_9 = 0.
\]
Then we see that $C \sim e_{10}$ by the relations (\ref{crazy}).
Hence $C = E_{10}$.

\end{proof}

Thus we distinguish the four pairs.
Therefore, we see that all surfaces in Table \ref{main_table} are distinct.


\end{document}